\documentclass[11pt,letterpaper]{amsart}

\usepackage{palatino, amssymb, amsmath, amsthm, color, amsfonts, mathrsfs, enumerate, graphicx}

\usepackage{comment}

\usepackage[longnamesfirst]{natbib}
\bibpunct[ ]{(}{)}{,}{a}{}{,}

\newtheorem{theorem}{Theorem}
    \newtheorem{lemma}[theorem]{Lemma}
    \newtheorem{proposition}[theorem]{Proposition}
    \newtheorem{corollary}[theorem]{Corollary}

        \newtheorem{conjecture}[theorem]{Conjecture}

\theoremstyle{definition} 
    \newtheorem{definition}[theorem]{Definition}
    \newtheorem*{definition*}{Definition}
    \newtheorem{remark}[theorem]{Remark}
    \newtheorem*{remark*}{Remark}
    \newtheorem{example}[theorem]{Example}

\newcommand{\Qt}{{Q^{\otimes 2}}}
\newcommand{\eps}{\varepsilon}
\newcommand{\one}{{\mathbf 1}}
\newcommand{\cd}{\stackrel{d}{\longrightarrow}}

\newcommand{\diag}{\operatorname{diag}}
\newcommand{\basis}{{\rm e}}
\newcommand{\sint}{\int{\raisebox{-.82em}{\hspace{-1.18em}\scriptsize \bf o}}\hspace{.7em}}
\newcommand{\sintn}{\int_{[n]}{\raisebox{-.82em}{\hspace{-1.69em}\scriptsize \bf o}}\hspace{.7em}}
\newcommand{\ssint}{\int{\raisebox{-.33em}{\hspace{-.87em}\scriptsize \bf o}}\hspace{.5em}}
\newcommand{\ssintn}{\int_{[n]}{\raisebox{-.30em}{\hspace{-1.65em}\scriptsize \bf o}}\hspace{1.1em}}
\newcommand{\seq}{\mathbb R^{\mathbb N}}
\newcommand{\bx}{\operatorname{box}}

\definecolor{gray-asparagus}{rgb}{0.0, 0.6, 0.3}

\title{KPZ fluctuations in the planar stochastic heat equation}
\author{Jeremy Quastel \and Alejandro Ram\'\i rez \and B\'alint Vir\'ag}

\begin{document}
\begin{abstract}We use a version of the Skorokhod integral to give a simple and  rigorous formulation of the Wick-ordered  stochastic heat equation with planar white noise, representing the free energy of an undirected random polymer. The solution for all times is expressed as the $L^1$ limit of a martingale given by the Feyman-Kac formula and defines  a randomized shift, or Gaussian multiplicative chaos.   The 
fluctuations far from the centre are shown to be given by the one-dimensional KPZ equation.
\end{abstract}

\maketitle

\tableofcontents

\section{Introduction and main results}

Although the KPZ universality class should describe random planar geometry, until recently models shown to be universal have all been directed, missing  some crucial symmetries. 
In this article we study the planar Wick-ordered heat equation
	\begin{figure}
           \vspace{-4em}
		\centering
		\includegraphics[scale=0.8]{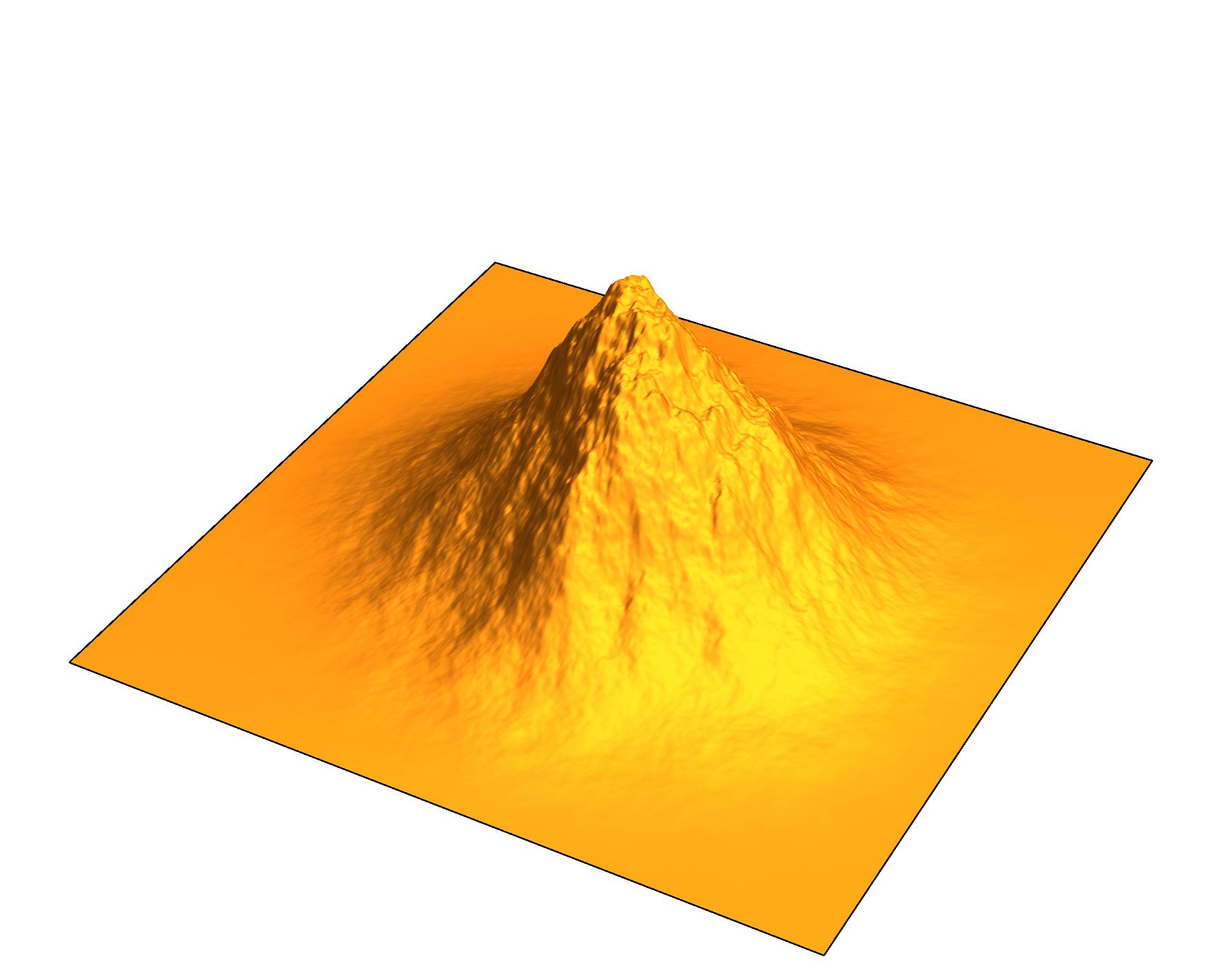}
		\caption{Solution of the planar Wick-ordered heat equation at fixed time started from $\delta_0$ }
			\label{f:wick}
	\end{figure}
\begin{equation}\label{2dshe}
\partial_t u = \tfrac12\Delta u  +   u \,\xi, \qquad u(\cdot,0)=\varsigma
\end{equation}
where $\xi$ is a planar white noise
that does not depend on time (Figure \ref{f:wick}). The precise definition uses the mild form  \eqref{inteq-intro} of the heat equation, in which the white noise term is given by a Skorokhod integral, a close relative of the It\^o integral that does not need a specific time direction. The term "Wick-ordered \!" refers to the fact that the solutions of such equations are given in terms of Gaussian exponentials with quadratic correction.

The solution $u(x,t)$ was known to exist and be expressed in terms of a chaos series, but only up to a critical time, see \cite{NualartZakai} and \cite{HuChaos}, after which the $L^2$ norm blows up, and the chaos expansion fails to converge. The blow-up time, $t_c$, which coincides with the largest time for which the mutual intersection local time of a pair of independent standard planar Brownian motions has a rate one exponential moment, is known exactly.  It is  half the optimal constant in the Gagliardo-Nirenberg-Sobolev inequality, see \cite{Chen} and \cite{BassChen}.

In this article we extend these results in several ways. First, we use an elementary version of the Skorokhod integral to define the solution \eqref{2dshe} for all times, including $t>t_c$. We give a construction of $u$  as a randomized shift, or as the free energy of an undirected polymer in a random environment. This holds for all times as an $L^1$ functional
of the white noise, and  coincides for $t<t_c$ with the chaos series.  Let $p$ denote the planar heat kernel,\begin{equation}\label{planarheatkernel} p(x,t)= \tfrac{1}{2\pi t}\exp\{- |x|^2/2t\}\end{equation} and $(\Xi,\mathcal F, P)$ be a probability space containing a Gaussian sequence that we will use to define our planar white noise and  Skorokhod integral, \emph{planar} in this article always referring to $\mathbb{R}^2$. We have the following definition.
\begin{definition}\label{d:Wick-ordered -she-intro}
Let $\varsigma$ be a finite  measure on $\mathbb R^2$.
Then 
$u\in L^1(\mathbb R^2\times(0,T]\times\Xi)$
is a solution of the planar {\bf Wick-ordered  heat equation \eqref{2dshe}} with initial condition $\varsigma$
if
for almost all $(x,t,\xi)\in \mathbb R^2\times (0,T]\times \Xi$ the random function of $y$,
\begin{equation}\label{dKay}
\mathcal K_{x,t}(u)(y) =  \int_0^t p(x-y,t-s)\, u(y,s) ds,
\end{equation}
is Skorokhod-integrable (Definition \ref{d:Skorokohod-functions}) and satisfies
\begin{equation}\label{inteq-intro}
u(x,t) = \int p(x-y,t)d\varsigma(y) +   \sint \mathcal K_{x,t} (u)~ \xi.
\end{equation}
\end{definition}
 By linearity, it suffices to understand Dirac $\delta$ initial conditions. The explicit solution is given as follows.
\begin{theorem}\label{t:solution-intro}
With $\varsigma=\delta_0$, the planar Wick-ordered  heat equation has the following unique solution. Let $\basis_j$ be bounded functions forming  an orthonormal basis of $L^2(\mathbb R^2)$, let $\xi_j=\langle \basis_j,\xi\rangle$, let $B$ be a planar Brownian bridge from $0$ to $x$ in time $t$ defined on a probability space $(\Omega,\mathcal G, Q)$ independent of $\xi$. Then
\begin{align}\label{approx2}
u(x,t) &=p(x,t)\lim_{n\to \infty} Z_n, \quad\! Z_n=E_{Q}\exp \bigg\{ \sum_{j=1}^n m_j\xi_j - \tfrac12 \sum_{j=1}^n m_j^2\bigg\}, \\ m_j&=\int_0^t \basis_j(B(s))\,ds.
\notag\end{align}
$Z_n$ is a uniformly integrable martingale, so it converges almost surely and in $L^1(P)$ to a limit $Z$. For $t<t_c$, it converges in $L^2(P)$. The solution does not depend on the choice of basis $\basis_j$.
\end{theorem}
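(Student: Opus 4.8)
\emph{Proof proposal.}

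\textbf{Step 1: the martingale property and the almost sure limit.} Fix the basis $(\basis_j)$ and write $\mathcal E_n=\mathcal E_n(B)=\exp\{\sum_{j=1}^n m_j\xi_j-\tfrac12\sum_{j=1}^n m_j^2\}$, so that $Z_n=E_Q[\mathcal E_n]$. Conditionally on $B$ the vector $(m_j)_j$ is deterministic and $\sum_{j\le n}m_j\xi_j$ is a centred Gaussian of variance $\sum_{j\le n}m_j^2$ under $P$; hence $\mathcal E_n(B)$ is the Wick (Dol\'eans) exponential, which for each fixed $B$ is a mean-one $(\mathcal F_n)$-martingale with $\mathcal F_n:=\sigma(\xi_1,\dots,\xi_n)$. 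Since $B$ is independent of $\xi$, Tonelli and the conditional computation give $E_P[Z_{n+1}\mid\mathcal F_n]=E_Q\,E_P[\mathcal E_{n+1}\mid\mathcal F_n]=E_Q[\mathcal E_n]=Z_n$ and $E_P Z_n=1$. Being a nonnegative martingale, $Z_n$ converges $P$-almost surely to some $Z_\infty\ge 0$ with $E_P Z_\infty\le 1$; the whole point is to upgrade this to $L^1$, i.e.\ to $E_P Z_\infty=1$.

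\textbf{Step 2: the $L^2$ regime $t<t_c$.} Introduce an independent copy $B'$ of the bridge, with joint law $\Qt$. Conditioning on $(B,B')$, the Gaussian Laplace transform gives $E_P[\mathcal E_n(B)\mathcal E_n(B')]=\exp\{\sum_{j\le n}m_j(B)m_j(B')\}$, so
\[
E_P Z_n^2\;=\;E_{\Qt}\exp\Big\{\textstyle\sum_{j=1}^n m_j(B)m_j(B')\Big\}\;=\;E_{\Qt}\,e^{\alpha_n(B,B')},
\]
where $\alpha_n(B,B')=\int_0^t\!\int_0^t\sum_{j\le n}\basis_j(B(s))\basis_j(B'(r))\,ds\,dr$ is the level-$n$ mollified mutual intersection local time of $B$ and $B'$. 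Choosing the $\basis_j$ adapted to a nested partition of $\mathbb R^2$ makes $\alpha_n$ the natural discretisation of the mutual intersection local time $\alpha(B,B')$, and $\sup_n E_{\Qt}\,e^{\alpha_n}<\infty$ precisely when $t<t_c$, by the sharp exponential--moment bounds of \cite{Chen} and \cite{BassChen} recalled in the introduction. Thus for $t<t_c$ the martingale is bounded in $L^2(P)$, hence converges in $L^2$ and is uniformly integrable there, while for $t\ge t_c$ one has $E_P Z_n^2\to\infty$.

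\textbf{Step 3 (the main obstacle): uniform integrability for all $t$.} For $t\ge t_c$ the second moment is useless and the martingale is genuinely not $L^2$; worse, for each fixed $B$ one has $\mathcal E_n(B)\to 0$ $P$-a.s.\ (the exponent $\sum_{j\le n}m_j\xi_j$ fluctuates like $o(\sum_{j\le n}m_j^2)$ by the law of iterated logarithm, while $\sum_{j\le n}m_j^2\to\infty$), so $Z_\infty$ is carried by rare, atypically ``thick'' paths and no dominated-convergence argument can work. The plan is a Kahane-type truncation. For a barrier $b_k$ growing slightly faster than the typical size of $\sum_{j\le k}m_j(B)\xi_j$ (e.g.\ $b_k=K+c\,(\sum_{j\le k}m_j(B)^2)^{3/4}$), set $G_{n,K}(B)=\{\sum_{j\le k}m_j(B)\xi_j\le b_k\ \text{for all}\ k\le n\}$ and $Z_n^{(K)}=E_Q[\mathcal E_n(B)\,\one_{G_{n,K}(B)}]$. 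One then proves: (i) $Z_n^{(K)}$ is \emph{bounded in $L^2(P)$} uniformly in $n$ --- in the two-path second-moment computation the barrier on a single path quantitatively controls the pair interaction $\alpha_n(B,B')$, defeating the exponential factor that caused the blow-up in Step 2 --- hence $Z_n^{(K)}\to Z_\infty^{(K)}$ in $L^1(P)$; (ii) $Z_\infty^{(K)}\uparrow Z_\infty$ as $K\to\infty$, by monotone convergence, since $G_{n,K}$ increases in $K$ and $Q$-a.e.\ path eventually satisfies the barrier by a Gaussian tail estimate; (iii) $E_P Z_n^{(K)}\ge 1-\delta(K)$ uniformly in $n$, with $\delta(K)\to 0$, by a first-moment union bound on the barrier-crossing probability $E_Q\,P(G_{n,K}(B)^c)$. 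Combining, $E_P Z_\infty=\lim_K E_P Z_\infty^{(K)}=\lim_K\lim_n E_P Z_n^{(K)}\ge\lim_K(1-\delta(K))=1$, and with $E_P Z_\infty\le 1$ from Step 1 this gives $E_P Z_\infty=1$, i.e.\ $L^1$ convergence. The delicate point, and the one I expect to absorb most of the work, is (i): turning a one-path barrier into a usable bound on the two-path intersection local time requires a multiscale/entropy analysis of the pair $(B,B')$.

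\textbf{Step 4: it is \emph{the} solution, and it is basis-free.} To see that $u=p\cdot Z_\infty$ solves \eqref{inteq-intro}: for $t<t_c$, expand $Z_n$ in Wiener chaos and verify term by term that $\sint\mathcal K_{x,t}(u)\,\xi$ raises the chaos order by one while Chapman--Kolmogorov glues a bridge $0\to y$ in time $s$ to a bridge $y\to x$ in time $t-s$ at the intermediate point $(y,s)$, reproducing the defining series --- this is the Feynman--Kac identity --- and then extend to all $t$ using the $L^1$ convergence of Step~3 and continuity of the Skorokhod integral in the relevant norm. For uniqueness, the difference $v$ of two solutions satisfies the homogeneous equation $v(x,t)=\sint\mathcal K_{x,t}(v)\,\xi$; iterating $N$ times expresses $v$ as an $N$-fold Skorokhod integral of $v$ whose norm, in the space in which solutions are required to live by the Skorokhod-integrability condition of Definition~\ref{d:\Wickordered-she-intro}, tends to $0$, so $v\equiv 0$. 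Basis independence is then immediate: for any orthonormal basis, \eqref{approx2} produces, by the above, a solution of the basis-free equation \eqref{inteq-intro}, and uniqueness forces all these solutions to coincide $P$-a.s.
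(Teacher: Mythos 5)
Your Steps 1 and 2 are fine and match the paper. The serious problem is Step 3, and it is not merely that point (i) is left open: the scheme as written fails at point (iii). You bound the mass lost by truncation via the \emph{unweighted} crossing probability $E_Q\,P(G_{n,K}(B)^c)$, but the quantity that actually controls $E_P[Z_n-Z_n^{(K)}]$ is the \emph{weighted} one, $E_Q\,E_P[\mathcal E_n(B)\one_{G_{n,K}(B)^c}]$. By the Cameron--Martin shift, under the size-biased law $\mathcal E_n(B)\,dP$ the partial sums $\sum_{j\le k}m_j\xi_j$ are recentred at $\sigma_k^2=\sum_{j\le k}m_j^2$, and for the planar bridge $\sigma_k^2\to\infty$ $Q$-a.s.\ (the occupation measure is not in the Cameron--Martin space; this is exactly the divergence of self-intersection local time noted after Proposition \ref{p3}). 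Your barrier $b_k=K+c\,\sigma_k^{3/2}$ is centred at $0$, hence eventually lies far \emph{below} the recentred mean $\sigma_k^2$, so under the weighting the barrier is crossed with probability tending to one and $E_PZ_n^{(K)}\to 0$ as $n\to\infty$ for every fixed $K$. Thus $\lim_K\lim_n E_PZ_n^{(K)}=0$, not $1$, and the conclusion $E_PZ_\infty=1$ does not follow. Re-centring the barrier at the size-biased trajectory might rescue (iii), but then (i) becomes the open question of whether a constraint on the noise can control the pair functional $\alpha_n(B,B')$, which is a functional of the paths alone; you correctly flag this as the hard step, and no argument is given. The paper avoids this entirely by truncating in \emph{path space}: one removes a $\mathcal G$-measurable set of paths of small $Q$-probability (large H\"older norm or irregular discretized occupation measure, \eqref{e:holder}), shows that off this set the mutual intersection local time has all exponential moments (Proposition \ref{p:nodrift}, Lemma \ref{l:occupation}, Corollary \ref{c:large-t}), so the truncated martingale is $L^2(P)$-bounded, and then uses the uniform-in-$n$ bound $E_P|Z_n(m)-Z_n(\one_A m)|\le 2Q(A^c)$ (Proposition \ref{shift1}) to conclude $L^1$ convergence via Proposition \ref{p:shifts-existence}.

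Step 4 also has a gap beyond $t_c$: the term-by-term chaos verification only makes sense for $t<t_c$, and ``continuity of the Skorokhod integral in the relevant norm'' is not available in $L^1$ -- handling $L^1$ integrands is precisely the delicate point, which the paper resolves through the projection structure (Propositions \ref{p:spde-projection}, \ref{p:fshe-solve}, \ref{p:solving2d} and Corollary \ref{c:Sk-martingale}): the conditional expectations $E[u(x,t)\mid\mathcal F_n]$ solve explicit finite-dimensional PDEs with unique solutions $p\,Z_n$, and uniform integrability lets one pass to the limit inside the Skorokhod integral. Your uniqueness argument by Picard iteration presupposes a contraction in some norm in which solutions live, which is not justified for $L^1$ solutions past $t_c$; in the paper uniqueness is immediate because any solution is determined by its conditional expectations, which are unique. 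Basis independence likewise does not follow from your uniqueness claim alone in the paper's scheme; it is obtained from the basis-free characterization of the limit (uniqueness of the mutual intersection local time, Proposition \ref{p:milt-unique}, and Remark \ref{rem:unique}).
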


The choice of the Skorokhod integral is motivated by the
  interpretation of \eqref{2dshe} as a polymer free energy.  By the Feynman-Kac formula one expects the solution of the stochastic heat equation \eqref{2dshe} to have a representation
\begin{equation}\label{e:Wick:}
u(x,t) =\lim_{n\to\infty} E_{Q} \exp \bigg\{\int_0^t \xi_{[n]}(B(s)) ds -\operatorname{norm}(n,\omega)\bigg\}p(x,t).
\end{equation}
where $\xi_{[n]}$ is a mollified version of the noise, and $\operatorname{norm}$ is some normalization.  
Our solution \eqref{approx2} is exactly in this form, with $\xi_{[n]}=\sum_{j=1}^n \xi_j \basis_j$, a  Gaussian noise with $n$ degrees of freedom. We will use this mollification throughout the paper. 

We expect the random polymer measure $M_\xi$ to be the 
a measure on continuous paths $[0,t]\to \mathbb{R}^2$   written as
\begin{equation}\label{e:Wick2}  M_\xi=\lim_{n\to\infty} \exp \bigg\{\int_0^t \xi_{[n]}(B(s)) ds -\operatorname{norm}(n,\omega) ds \bigg\}Q 
\end{equation}
where $Q$ is the Brownian  bridge law. We might write  the integral  as
$\langle \xi_{[n]}, X\rangle$ where 

$$
X(A) = \int_0^t\one(B(s)\in A)ds
$$
is the occupation measure of the Brownian bridge up to time $t$. Since $X$ is singular with respect to Lebesgue measure,  $\langle \xi_{[n]}, X\rangle$ will not have a limit, as the measure $X$  will not smooth out the limiting white noise. 

However, we expect that the $L^2$ norm $\langle X_n,X_n\rangle$ of the mollified version $X_n=\sum_{j=1}^n \basis_j \langle X,  \basis_j\rangle$  converges, after subtracting a normalizing constant, to twice the two-dimensional self-intersection local time $
    \langle X_n,X_n\rangle - E_{Q_{BM}} \langle X_n,X_n\rangle  \to 2\gamma_t $.
Here  $Q_{BM}$ signifies that the normalization uses Brownian motion from $0$ rather than Brownian bridge from $0$ to $x$. 
Such results are shown in \cite{Varadhan}, \cite{LeGall} for certain mollifications. 

If we take expectation of $u(x,t)$ over the randomness of the white noise, we get 
\begin{equation}
E_P  E_{Q_{BM}}e^{\langle \xi_{[n]}, X\rangle}=   E_{Q_{BM}}  E_P  e^{\langle \xi, X_n\rangle}= E_{Q_{BM}}    e^{\tfrac12\langle X_{n}, X_{n}\rangle}
\end{equation}
which suggests that we could use $\operatorname{norm}(n,\omega)=E_{Q_{BM}}\langle X_n,X_n\rangle  /2$. This breaks the semigroup property slightly, but the problem can be fixed by adding a $t$-dependent term to get $\operatorname{norm}(n,\omega)=E_{Q_{BM}}    \langle X_{n}, X_{n}\rangle/2-\frac{1}{2\pi}t\log t$, see Example \ref{PAM}.

This choice  corresponds to renormalizing the solution of the equation $\partial_t u_n=\tfrac12\Delta u_n +u_n\xi_{[n]}$ as $e^{-E_{Q_{BM}}\langle X_n,X_n\rangle  /2+\frac{1}{2\pi}t\log t}u_n$ to obtain a limit. The result is called PAM in the literature, for parabolic Anderson model, see, for example \cite{hairer2015simple}.  PAM satisfies the usual semigroup property. 
Note however, that from the above construction the expectation of the polymer measure over the noise $\xi$ is \emph{not} the original Brownian bridge $Q$, but instead the re-weighted bridge measure $e^{\gamma_t+\frac{1}{2\pi}t\log t}Q$. So $M_\xi$ is a randomized version of the  $e^{\gamma_t+\frac{1}{2\pi}t\log t}Q$, instead of $Q$.

Starting from the physical assumption that the polymer measure should be a randomized version of the original Brownian bridge, one is led to renormalize by the self-intersections themselves instead of just their expectations.  This corresponds to renormalizing Gaussian exponentials as
\begin{equation}\label{renormexp}
  \exp\{W-\tfrac12 E_P W^2 \} .
\end{equation}
Our renormalization is essentially that, up to the $t$-dependent factor: we use $\operatorname{norm}(n,\omega)=\langle X_n,X_n\rangle  /2$ instead of $E_{Q_{BM}}\langle X_n,X_n\rangle  /2-\frac{1}{2\pi}t\log t$. This leads to \eqref{approx2}, and, remarkably, to the Skorokhod interpretation of the planar SHE. The price is that the resulting solution of \eqref{2dshe} is no longer a semigroup, since the Brownian motions weighted by their renormalized self-intersection local time no longer have the Markov property.

In summary, one has to choose whether to  keep the expected path measure as Brownian bridge, or to keep the semigroup property.   As in the It\^o-Stratonovich dichotomy, the choice depends on which symmetries are more intrinsic to the scientific problem at hand.  One advantage of the Skorokhod approach is that it  leads to a  simple construction of PAM as a re-weighting of the paths in this measure by the exponential of self-intersection local time,  recovering the semigroup property,  Example \ref{PAM}. This will be used in a future article to extend the present results to PAM itself. 
  
  The cutoff \eqref{approx2} is not directly related to standard discrete polymer models. Instead, consider a random environment of i.i.d.\; random variables $\zeta_{i,j}$, $i,j\in \mathbb{Z}$ and a random walk $R$ on $\mathbb{Z}^2$.  The standard energy 
 is \begin{equation}\sum_{n=1}^N \zeta_{R_n}=\sum_{i,j} \zeta_{i,j} m_{i,j},\end{equation} where $m_{i,j}$ is the number of visits of the walk to site $i,j$ up to time $N$. To obtain a polymer model which is a discrete analogue of the 2d Wick ordered polymer, this energy should be balanced by subtracting $ \sum_{i,j} \log E_Pe^{m_{i,j}\xi_{i,j}}$ so that the expectation over the random environment of the exponential of the energy is unity.  Such {\bf balanced polymer models} can be  expected to converge to our $Z$ in the weak noise (intermediate disorder) limit under appropriate moment conditions, although chaos expansions can only work for small times, and the methods developed in this paper only apply directly in the Gaussian case. We leave this for future work.

The Skorokhod integral also has some  pleasant computational properties.
To solve the planar Wick-ordered  heat equation, and prove Theorem 2, we use the fact that projections of this equation to a finite-dimensional Gaussian space satisfy a version of \eqref{inteq-intro}, a key property of the Skorokhod integral. These projections are in fact deterministic finite-dimensional linear PDEs, that can be solved in a pathwise sense explicitly via the Feynman-Kac  formula.  The solutions are given by  $p(x,t)Z_n(x,t)$ as in Theorem \ref{t:solution-intro}, and they are real-analytic in the noise coordinates. 

To understand the convergence of the projections, we think of the sequence $m_j$ as a {\it randomized shift}. A general theory around this notion was developed by \cite{shamov} in studying Gaussian multiplicative chaos. We develop a simple version of this theory, reviewing and establishing existence and convergence theorems along the way.

The Gaussian multiplicative chaos in this setting is the random polymer measure, so we obtain a rigorous version of the polymer measure implied in \eqref{e:Wick:} for free.

\begin{theorem}[GMC representation]\label{t:2dpolymer-intro} With the notation of Theorem \ref{t:solution-intro}, the law of the marginal law of $(\xi_i+m_i, i\ge 0)$ , is absolutely continuous with respect to the law of $(\xi_i,i\ge 0)$ and the Radon-Nikodym derivative is given by the partition function $Z$. The unnormalized random polymer is the unique $P$-random measure $M_\xi$ on $\Omega$ satisfying
\begin{equation}\label{shdef}
E_PE_{M_\xi}F(\xi,\omega)=E_{P\times Q}F( \xi+m(\omega),\omega).
\end{equation}
for all bounded measurable functions $F:\Xi\times \Omega\to \mathbb R$. 
\end{theorem}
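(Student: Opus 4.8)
The plan is to establish Theorem~\ref{t:2dpolymer-intro} in two parts: first the Radon--Nikodym (Cameron--Martin type) statement, then existence and uniqueness of the random measure $M_\xi$. For the first part, I would work at the level of the finite-dimensional projections. Fix $n$ and consider the map $\xi\mapsto \xi+m^{(n)}$ where $m^{(n)}=\sum_{j=1}^n m_j\basis_j$ is a finite-rank shift whose coefficients $m_j=\int_0^t \basis_j(B(s))\,ds$ depend on the independent Brownian bridge $B$ through $\omega$. For fixed $\omega$ this is a deterministic shift in the Cameron--Martin directions $\basis_1,\dots,\basis_n$, so the classical Cameron--Martin theorem gives that the law of $\xi+m^{(n)}(\omega)$ is absolutely continuous with respect to the law of $\xi$ with density $\exp\{\sum_{j=1}^n m_j\xi_j-\tfrac12\sum_{j=1}^n m_j^2\}$. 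Averaging over $\omega$ (Fubini, legitimate since $B$ is independent of $\xi$) shows the mixture law of $\xi+m^{(n)}$ has density $E_Q\exp\{\sum_{j=1}^n m_j\xi_j-\tfrac12\sum_{j=1}^n m_j^2\}=Z_n$. Now invoke Theorem~\ref{t:solution-intro}: $Z_n\to Z$ in $L^1(P)$ (and a.s.), and $Z_n=E_Q[\text{density}]\ge 0$ with $E_P Z_n=1$. A standard argument — testing against a bounded continuous functional $G$ of $\xi$ and passing to the limit using the $L^1$ convergence of $Z_n$ together with tightness/Scheff\'e — identifies $Z$ as the Radon--Nikodym derivative of the mixture law of $\xi+m$ with respect to $P$. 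The one subtlety is that the shifted variable $\xi+m$ itself must be defined as the limit (in the appropriate sense, e.g. in law on the cylinder $\sigma$-algebra, or coordinatewise) of $\xi+m^{(n)}$; this is where I would lean on whatever convergence of the randomized shift was set up in the ``randomized shift'' framework cited to \cite{shamov} and developed earlier in the paper.

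For the second part, the defining relation \eqref{shdef} should be read as: for each bounded measurable $F$, the quantity $E_{P\times Q}F(\xi+m(\omega),\omega)$ is a bounded linear functional of $F$, and we want to disintegrate it as $E_P\big[\int F(\xi,\omega)\,M_\xi(d\omega)\big]$ for a $P$-a.s.\ defined (sub-probability, in fact total mass $Z$) measure $M_\xi$ on $\Omega$. The natural route is to build $M_\xi$ by conditioning: informally, $M_\xi(d\omega)$ is the conditional law of $\omega$ given that $\xi+m(\omega)=\xi$, weighted by $Z$. Concretely, I would first do this at level $n$: the finite-rank shift $\xi\mapsto\xi+m^{(n)}(\omega)$ has a joint law on $\Xi\times\Omega$ which, by the Cameron--Martin computation above, disintegrates over the first coordinate with density $Z_n^{-1}\exp\{\sum m_j\xi_j-\tfrac12\sum m_j^2\}$ against $Q$; call the resulting (sub-probability after removing the $Z_n$ normalization) $P$-random measure $M_\xi^{(n)}(d\omega)=\exp\{\sum_{j=1}^n m_j\xi_j-\tfrac12\sum_{j=1}^n m_j^2\}\,Q(d\omega)$. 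Then \eqref{shdef} holds exactly at level $n$ with $F$ replaced by $F_n$ depending only on the first $n$ noise coordinates. The task is to pass $n\to\infty$.

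The main obstacle — and the step I expect to require the most care — is this limit: showing that the random measures $M_\xi^{(n)}$ converge, $P$-almost surely, to a genuine random measure $M_\xi$ on $\Omega$, and that the identity \eqref{shdef} survives the passage to the limit for all bounded measurable $F$ (not merely those depending on finitely many noise coordinates, and not merely those independent of $\omega$). For the convergence of $M_\xi^{(n)}$, I would test against bounded continuous $G=G(\omega)$: $\int G\,dM_\xi^{(n)}=E_Q[G(\omega)\exp\{\sum_1^n m_j\xi_j-\tfrac12\sum_1^n m_j^2\}]$ is, for fixed such $G$, again a martingale in $n$ with respect to the filtration generated by $\xi_1,\dots,\xi_n$, and uniform integrability should follow from the same estimate that powers Theorem~\ref{t:solution-intro} (the $L^1$ martingale bound, via the self-intersection local time exponential moment that fails only at $t_c$, but here the renormalization is the self-normalization \eqref{renormexp} so it works for all $t$). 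This yields $P$-a.s.\ convergence of $\int G\,dM_\xi^{(n)}$ for each fixed $G$ in a countable convergence-determining family, hence vague/weak convergence of $M_\xi^{(n)}$ to some $M_\xi$ off a single $P$-null set; the total mass converges to $Z$ by Theorem~\ref{t:solution-intro}. Identity \eqref{shdef} for general bounded measurable $F$ then follows by a monotone-class argument: it holds for $F(\xi,\omega)=G(\xi_1,\dots,\xi_k)H(\omega)$ by the level-$n$ identity and dominated convergence (using $|Z_n|$ dominated appropriately — here one may need to restrict first to $F\ge 0$ and use the martingale/UI structure, then subtract), and products of this form generate the product $\sigma$-algebra. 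Uniqueness of $M_\xi$ is immediate once \eqref{shdef} is known for all bounded measurable $F$: two random measures satisfying it agree when integrated against every $G(\omega)$ after multiplying by every bounded function of $\xi$, which by the disintegration/conditional-expectation characterization forces them to coincide $P$-a.s.
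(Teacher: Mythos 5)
Your proposal is correct in substance, and the two halves compare differently with the paper. For the Radon--Nikodym statement your route is essentially the paper's: the finite-rank Cameron--Martin computation averaged over $\omega$ is exactly the identity $E_P[FZ_n]=E_{Q\times P}F(\xi+\mathcal P_n m)$ of Proposition \ref{shift1}, and the passage to the limit is Lemma \ref{ec}, which simply uses the martingale property on cylinder events plus a $\pi$--$\lambda$ argument (note that $\xi+m$ is just termwise addition of sequences on $\Xi=\mathbb R^{\mathbb N}$, so no limiting definition of the shifted variable is needed, and no Scheff\'e-type step either). For the existence and uniqueness of $M_\xi$, however, the paper does not construct the measure directly: having established absolute continuity, it invokes Shamov's Theorem 17 and Corollary 18 (Theorem \ref{t:shamov-polymer}), which package precisely the kernel-construction step. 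Your direct construction --- tilted measures $M^{(n)}_\xi=e^{\sum_1^n m_j\xi_j-\frac12 m_j^2}Q$, uniform integrability of $E_{M^{(n)}_\xi}H$ for each bounded test $H$, a countable convergence-determining family, and a monotone-class extension of \eqref{shdef} --- is a genuinely more self-contained route, and it mirrors what the paper proves separately as Proposition \ref{p:polymer-limit} (convergence of tilted partition functions under $FQ$). Two caveats: first, your parenthetical justification of uniform integrability is imprecise --- Wick renormalization alone does not give exponential moments of the mutual intersection local time for $t>t_c$; what makes $L^1$ convergence work for all $t$ is the cutoff argument, exponential moments of $\one_{A\times A}\alpha$ off path sets of small $Q$-probability (Corollary \ref{c:large-t} fed into Proposition \ref{p:shifts-existence}), and the same truncation must be run under the tilted measure $FQ$. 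Second, upgrading convergence of countably many test integrals to an almost-surely defined random measure on path space requires a genuine measure-construction step (tightness or a Daniell-type extension on the Polish space $\Omega$, off a single null set); this is exactly the content the paper outsources to Shamov, so your plan is workable but that step deserves more than the one sentence you give it. Uniqueness is elementary as you say, via a countable separating family of test functions.
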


An equivalent formulation in the language of random distributions is that the the law of $\xi+X$ is absolutely continuous with respect the law of $\xi$ with density $Z$. The former law is a convolution in the space of distributions, since $\xi$ and $X$ are independent, see Remark \ref{r:rabbit}.

In fact, $M_\xi$ in Theorem \ref{t:2dpolymer-intro}
satisfies $$M_\xi  = \lim_{n\to \infty} e^{ \sum_{j=1}^n m_j(\omega)\xi_j - \tfrac12 \sum_{j=1}^n m_j(\omega)^2}Q$$ in $P$-probability with respect to the weak topology of measures on path space.
Hence,  the statement of equation (\ref{shdef}) can be seen as a version of Girsanov theorem, at least for bounded functions $F$ depending only on $\xi$.

The key technical input to Theorems \ref{t:solution-intro} and \ref{t:2dpolymer-intro} is to show uniform integrability of $Z_n$. For this we use that, even for $t>t_c$, off sets of arbitrarily small probability, the mutual intersection local time of planar Brownian motion has exponential moments.

The solution of the one-dimensional stochastic heat equation with space-time white noise is given by the same simple construction.  Consequences of the construction are a new criterion (Proposition  \ref{propmain}) for convergence in the class of such models and the apparently new observation \eqref{shiftkpz} that the shift representation holds for the KPZ equation and the continuum random directed polymer (CDRP).  Indeed, while the full polymer measures were studied before in a mollified setting (see, for example, \cite{mukherjee2016weak, broker2019localization}), only the partition function or the endpoint law  were considered without mollification. We will use the randomized shift representation of the full polymer measure in the proof of Theorems \ref{t:1dpolymer-intro} and \ref{t:process}.  Two dimensions are crucial here. Although the stochastic heat equation can be defined in 3 dimensions (\cite{hairer2018multiplicative}), the shift representation is not expected to hold. One essential difference is that intersection local time is not defined pathwise.

Recall  the Kardar-Parisi-Zhang equation,
\begin{equation}\label{kpz}
    \partial_t h =\tfrac12 (\partial_x h)^2  + \tfrac12\partial_x^2 h + \xi, \end{equation}
where $x\in\mathbb R$, $t\ge 0$ and $\xi$ is  white noise in one space and one time dimensions, in contrast to the white noise of (\ref{2dshe}), which is spatial in the plane,  but time-independent.
The solution of this equation is defined through the Hopf-Cole transformation $
h(x,t) = \log    z(x,t)$ that turns it into  the one-dimensional multiplicative stochastic heat equation

\begin{equation}\label{she1}
    \partial_t z = \tfrac12\partial_x^2 z + \xi z, \qquad z(0,\cdot) = \delta_0.
\end{equation}
Let 
\begin{equation}\label{onedheatkernel}
q(x,t) = \tfrac{1}{\sqrt{2\pi t} } e^{-x^2/2t}
\end{equation}
be the one-dimensional heat kernel.  

\begin{theorem}\label{t:1dpolymer-intro} The Wick-ordered version, Definition \ref{d:1dshe},  of the one-dimensional stochastic heat equation \eqref{she1}  with space-time noise has the following unique solution:
$$
z(x,t) =q(x,t)\lim_{n\to \infty} E_{Q}\exp \bigg\{ \sum_{j=1}^n m_j\xi_j - \tfrac12 \sum_{j=1}^n m_j^2\bigg\}, \quad m_j =\int_0^t\basis_j(b(s),s)s,
$$
where $(\basis_j)_{j\ge 1}$ are bounded functions forming an orthonormal basis of $L^2(\mathbb R^2)$.
Here $b$ is a one-dimensional Brownian bridge from $0$ to $x$ in time $t$. 
The limit exists in $L^2(P)$ and coincides with the It\^o solution of \eqref{she1}. The corresponding polymer measure $M_\xi$ coincides with the continuum directed polymer constructed in \cite{AKQ2}. It has the shift representation
\begin{equation}\label{shiftkpz}
    E_P E_{ M_\xi} F(\xi,\omega) = E_{P\times Q} F(\xi+m(\omega),\omega).
\end{equation}
\end{theorem}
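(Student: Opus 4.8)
\section{Proof sketch of Theorem \ref{t:1dpolymer-intro}}

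The plan is to run the one-dimensional version of the argument behind Theorems \ref{t:solution-intro} and \ref{t:2dpolymer-intro}; it simplifies because one-dimensional space-time white noise has no critical time. First I would record that the projection of the \Wickordered equation \eqref{she1} onto $\mathrm{span}(\xi_1,\dots,\xi_n)$ is a deterministic linear parabolic PDE whose Feynman-Kac representation is precisely $q(x,t)Z_n$ with $Z_n=E_Q\exp\{\sum_{j\le n}m_j\xi_j-\tfrac12\sum_{j\le n}m_j^2\}$ --- the same computation as in the planar case, now with space-time basis functions $\basis_j$ evaluated along the graph $s\mapsto(b(s),s)$ of the spatial bridge. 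As before $Z_n=E_P[Z_m\mid\mathcal F_n]$ for $m\ge n$, so $(Z_n)$ is a nonnegative $\mathcal F_n$-martingale, $\mathcal F_n=\sigma(\xi_1,\dots,\xi_n)$.

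The key, and in one dimension essentially the only, point is a uniform bound on $\|Z_n\|_{L^2(P)}$. A direct Gaussian computation gives $E_PZ_n^2=E_{Q^{\otimes 2}}\exp\{\sum_{j\le n}m_jm_j'\}$ for an independent copy $(b',m')$; the exponent is the pairing of the mollified space-time occupation measures of $b$ and $b'$, which in one dimension contracts as $n\to\infty$ to the local time at $0$ of the bridge-type process $b-b'$, a quantity with finite exponential moments of all orders. Equivalently, the truncated Wiener-chaos kernels of $Z_n$ are projections of the limiting kernels, whose $L^2$ series converges for every $t$ --- the classical $L^2$-theory of the one-dimensional multiplicative noise equation. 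Hence $\sup_nE_PZ_n^2<\infty$, the martingale converges a.s.\ and in $L^2(P)$ for all $t$, and basis-independence follows either from the general randomized-shift formalism of the paper or a posteriori from the next step. To see that $q(x,t)Z_\infty$ is the It\^o solution of \eqref{she1}, expand $Z_\infty$ in Wiener chaos: the $k$-th kernel, computed from $E_Q[\prod_i m_{j_i}]$, is the symmetrization of the time-ordered $k$-fold iterated heat kernel, which is exactly the $k$-th chaos kernel of the Walsh solution; alternatively one checks directly that $Z_n$ satisfies the It\^o mild equation up to an $L^2$-vanishing error.

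For the polymer measure, $M_\xi$ is the weak limit of the random measures $\mathcal E_nQ$ with $\mathcal E_n=\exp\{\sum_{j\le n}m_j\xi_j-\tfrac12\sum_{j\le n}m_j^2\}$: for bounded measurable $F$ on path space, $n\mapsto E_Q[F\mathcal E_n]$ is an $\mathcal F_n$-martingale dominated by $\|F\|_\infty Z_n$, hence uniformly integrable, so it converges in $L^1(P)$, and these limits define a $P$-random measure $M_\xi$ with total mass $Z_\infty$. Since the chaos expansions of $Z_\infty$ and, more generally, of the joint moments $E_{M_\xi}[F]$ agree with those obtained in \cite{AKQ2} as scaling limits of discrete point-to-point polymers, $M_\xi$ coincides with the continuum directed polymer. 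Finally \eqref{shiftkpz} is the instance of the general randomized-shift identity (the one-dimensional analogue of Theorem \ref{t:2dpolymer-intro}) for this particular $m$: once $Z_n\to Z_\infty$ in $L^1(P)$ one has $E_P[F(\xi,\omega)\mathcal E_\infty(\xi,\omega)]=E_PF(\xi+m(\omega),\omega)$ for $Q$-a.e.\ $\omega$, and integrating in $\omega$ and using the construction of $M_\xi$ yields the claim.

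I expect the main obstacle to be the identification with the construction in \cite{AKQ2} --- matching not merely the partition functions but the full path measures --- which requires passing the agreement of chaos expansions or of discrete approximations through to the measures themselves. A secondary subtlety worth flagging is that $\sum_jm_j^2=\infty$, so for fixed $\omega$ the map $\xi\mapsto\xi+m(\omega)$ is not a genuine Cameron-Martin shift; this is precisely why the randomized-shift formalism (averaging over $\omega$) is indispensable, and why \eqref{shiftkpz} must be understood through that formalism rather than as a pointwise Girsanov statement.
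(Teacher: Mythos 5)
Your overall route is the paper's: project the equation onto $\mathcal F_n$, solve the finite-dimensional problem by Feynman--Kac to get $q(x,t)Z_n$, obtain the uniform $L^2(P)$ bound from $E_PZ_n^2=E_{Q^{\otimes 2}}e^{\alpha_n}$ with $\alpha_n=\sum_{j\le n}m_jm_j'$ converging to the mutual intersection local time of the graphs $(b(s),s)$, and then get convergence, the shift representation and the polymer measure from the randomized-shift formalism. Your alternative justification of $\sup_n E_PZ_n^2<\infty$ by observing that the truncated chaos kernels are projections of the limiting ones is legitimate (it is essentially the content of Example \ref{example25}). One caution if you argue instead through ``$\alpha_n\to\alpha$ and $Ee^{\alpha}<\infty$'': convergence of $\alpha_n$ alone does not give $\sup_nEe^{\alpha_n}<\infty$; you need the moment comparison $E\alpha_n^{2k}\le E\alpha^{2k}$ coming from the $\ell^2$ representation of the moments, which is what Lemma \ref{l:i-exp bound} supplies. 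For the It\^o identification, your chaos-kernel matching (or direct verification of the mild equation) works; the paper gets it more cheaply from Remark \ref{r:Ito1+1} (the Skorokhod integral coincides with the It\^o integral on adapted integrands) together with uniqueness of the solution.

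The genuine gap is exactly the one you flag: the identification of the full path measure $M_\xi$ with the CDRP of \cite{AKQ2}. Asserting that ``the chaos expansions of the joint moments agree with those obtained in \cite{AKQ2}'' is not an argument; the CDRP is defined there by its finite-dimensional distributions, namely $E_M\varphi(b(s_1),\dots,b(s_k))=\int\varphi(x_1,\dots,x_k)\prod_{i=0}^k z(x_i,s_i;x_{i+1},s_{i+1})\,dx_1\cdots dx_k$, and you must derive this formula for your $M_\xi$. The paper does this directly from the defining shift identity: test against $F=F_0\cdots F_k$ with $F_i$ depending only on the noise in the time slab between $s_i$ and $s_{i+1}$, condition on $\mathcal S=\sigma(b(s_1),\dots,b(s_k))$, use the Markov property of the bridge to factor $E_Q[F(\xi+m)\mid\mathcal S]$ into a product over slabs, and then use the $P$-independence of the noise across disjoint slabs together with the per-slab shift representation $E_P[Z_iF_i]=E_PE_Q[F_i(\xi+m)\mid\mathcal S]$ to conclude $E_PE_M[\varphi F]=E_PE_Q[\varphi\,Z_0\cdots Z_k\,F]$ for all such product test functions. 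That factorization mechanism (Markov property of the paths, independence of the noise over time slabs, and the shift identity applied slab by slab) is what is missing from your sketch: without it you have the right partition function and the shift representation \eqref{shiftkpz}, but not the statement that the measure itself is the continuum directed random polymer.
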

Next, we show that on appropriate scales, the solution $u$ of the planar Wick-ordered  heat equation \eqref{2dshe}
has KPZ fluctuations. In particular, we prove the following.
\begin{theorem}\label{t:crossover} For any $t>0$ and $a\in \mathbb R$ as $N\to \infty$, with $\varsigma=\delta_0$ we have
\begin{equation}\label{e:crossover}
P(u ( (0,N^{3/2}t),Nt )\times Ne^{N^2t/2}\sqrt{2\pi t}\le a)  \to F_{\operatorname{\it KPZ}}(t,a),
\end{equation}
where 
$
F_{\operatorname{\it KPZ}}(t,a)=P(z(0,t)\le a)
$ are the KPZ crossover distributions,
which have a determinantal representation (see, for example, \cite{ACQ}).
\end{theorem}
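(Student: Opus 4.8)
The plan is to reduce \eqref{e:crossover} to a distributional convergence of explicit partition functions, via an anisotropic rescaling that turns the planar polymer conditioned to reach a far point into a one‑dimensional directed (space--time) polymer, and then to conclude with the basis‑independence in Theorem \ref{t:solution-intro} and the convergence criterion of Proposition \ref{propmain}. First I would remove the deterministic prefactor: writing $x_N=(0,N^{3/2}t)$, \eqref{planarheatkernel} gives $p(x_N,Nt)=\tfrac1{2\pi Nt}e^{-N^2t/2}$, hence $p(x_N,Nt)\,Ne^{N^2t/2}\sqrt{2\pi t}=1/\sqrt{2\pi t}$, so by Theorem \ref{t:solution-intro} the random variable inside the probability in \eqref{e:crossover} equals $\tfrac1{\sqrt{2\pi t}}Z^{(N)}$, where $Z^{(N)}:=\lim_n Z_n$ is the martingale limit in \eqref{approx2} evaluated at $(x_N,Nt)$. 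Likewise, evaluating Theorem \ref{t:1dpolymer-intro} at $x=0$ gives $z(0,t)=\tfrac1{\sqrt{2\pi t}}\tilde Z$ with $\tilde Z$ the $L^2(P)$ limit there. Since $z(0,t)$ has a continuous law (as follows from its determinantal representation), it then suffices to prove $Z^{(N)}\cd\tilde Z$.

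The heart of the argument is the rescaling. Let $B=(B_1,B_2)$ be the planar bridge from $0$ to $x_N$ over $[0,Nt]$, so that $B_1$ is a Brownian loop at $0$, independent of $B_2$, and $B_2(s)=N^{1/2}s+W_2(s)$ with $W_2$ a Brownian loop at $0$ over $[0,Nt]$. Put $b(\tau):=N^{-1/2}B_1(N\tau)$, a standard Brownian loop over $[0,t]$ by Brownian scaling, and $\delta_N(\tau):=N^{-3/2}W_2(N\tau)$, which is independent of $b$, has $\sup_\tau|\delta_N(\tau)|=O_P(N^{-1})$, and can in fact be coupled as $\delta_N=N^{-1}\zeta$ for a single standard loop $\zeta$ over $[0,t]$ independent of $b$ and of the noise. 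With the linear map $\Phi_N(y,\theta):=(N^{1/2}y,N^{3/2}\theta)$, of constant Jacobian $N^2$, one checks $B(N\tau)=\Phi_N\big(b(\tau),\tau+\delta_N(\tau)\big)$. Fix a bounded orthonormal basis $(\basis_j)$ of $L^2(\mathbb R^2)$, thought of as functions of a space variable and a time variable, and set $\basis_j^{(N)}:=N^{-1}\,\basis_j\circ\Phi_N^{-1}$; this is again a bounded orthonormal basis of $L^2(\mathbb R^2)$, so Theorem \ref{t:solution-intro} lets us compute $Z^{(N)}$ with it. A change of variables $s=N\tau$ then gives $m_j^{(N)}=\int_0^{Nt}\basis_j^{(N)}(B(s))\,ds=\int_0^t\basis_j\big(b(\tau),\tau+\delta_N(\tau)\big)\,d\tau=\langle\basis_j,Y^{(N)}\rangle$, where $Y^{(N)}$ is the occupation measure of the space--time path $\tau\mapsto(b(\tau),\tau+\delta_N(\tau))$, while $\xi_j^{(N)}:=\langle\basis_j^{(N)},\xi\rangle$ is an i.i.d.\ standard Gaussian sequence, i.e.\ the coordinate sequence of a one‑dimensional space--time white noise. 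Hence $Z^{(N)}$ has the same law as the one‑dimensional partition function of Theorem \ref{t:1dpolymer-intro} at $x=0$, with the sole change that the occupation measure $Y$ of the straight path $(b(\tau),\tau)$ is replaced by $Y^{(N)}$; the two are built from the same loop $b$ and the same noise, and $(b(\tau),\tau+\delta_N(\tau))\to(b(\tau),\tau)$ uniformly almost surely.

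It remains to pass to the limit. On a common probability space carrying an i.i.d.\ standard Gaussian sequence $\eta$, a Brownian loop $b$ over $[0,t]$, and a Brownian loop $\zeta$ over $[0,t]$, mutually independent, I would verify the hypotheses of Proposition \ref{propmain}: convergence of the randomized shifts, $\langle\basis_j,Y^{(N)}\rangle\to\langle\basis_j,Y\rangle$ for every $j$ --- immediate from uniform convergence of the paths and boundedness of $\basis_j$ --- together with uniform integrability of $Z^{(N)}$. For the latter, the Gaussian identity $E_P(Z_n^{(N)})^2=E_{Q\times\hat Q}\exp\langle Y_n^{(N)},\hat Y_n^{(N)}\rangle$ (independent copies, subscript $n$ denoting projection onto $\basis_1,\dots,\basis_n$) reduces matters to bounding the exponential moments of $\langle Y_n^{(N)},\hat Y_n^{(N)}\rangle$ uniformly in $n$ and $N$. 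After the rescaling this is a genuinely one‑dimensional object: up to the vanishing tilt of the time‑diagonal produced by $\delta_N$ and $\hat\delta_N$, it is the local time at $0$ of the difference $b-\hat b$ of two independent Brownian loops, which has all exponential moments. Establishing this uniformly in $N$ while keeping track of the tilted diagonal --- equivalently, showing that the supercritical planar self‑intersections present at time $Nt>t_c$ become, after the Gaussian factor $e^{-N^2t/2}$ forced by the far endpoint, as tame as a one‑dimensional intersection local time --- is the main obstacle; the rest is soft. Granted these two ingredients, Proposition \ref{propmain} yields $Z^{(N)}\to\tilde Z$ in $L^1(P)$, hence $Z^{(N)}\cd\tilde Z$, which together with the prefactor computation is exactly \eqref{e:crossover}.
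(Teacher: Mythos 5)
Your reduction and rescaling are exactly the route the paper takes (Theorem \ref{t:crossover} is deduced as a special case of Theorem \ref{t:process}, whose proof in the introduction uses the same anisotropic basis $\basis_{N,j}=N^{-1}\basis_j(N^{-1/2}\cdot,N^{-3/2}\cdot)$, the same decomposition of the bridge into $b_{\rm sp}$ and a vanishing time-perturbation, and basis independence from Theorem \ref{t:solution-intro}). The prefactor computation and the pointwise convergence $m_j^{(N)}\to m_j$ are fine. But the step you label ``the main obstacle'' is not a technicality you can defer: it is the entire content of Sections \ref{intersectionlocaltime}--\ref{exponentialmoments}, and the specific way you propose to handle it cannot work. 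You reduce uniform integrability to bounding $E_{\Qt}\exp\langle Y_n^{(N)},\hat Y_n^{(N)}\rangle$ uniformly in $n$ and $N$, i.e.\ to a uniform $L^2(P)$ bound on $Z_n^{(N)}$. In the rescaled picture this is $E\exp\{N\alpha(B,B')\}$ for planar bridges with endpoint displacement of order $N$, and this quantity is \emph{infinite} for all large $N$: on a time window of length $\eps\sim c/N$ the drift can be suppressed at Girsanov cost only $e^{-cN}$, and by the scale invariance $\alpha_{[0,\eps]}\ed \eps\,\alpha_{[0,1]}$ the effective exponential rate on that window is $N\eps>t_c$, which is supercritical (this is exactly the Bass--Chen blow-up that makes the chaos series fail for $t>t_c$). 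So the ``vanishing tilt of the time-diagonal'' heuristic is misleading: the excess planar intersections coming from the time-direction fluctuations do blow up the uncut exponential moment, and no uniform-in-$N$ second-moment bound exists.

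This is why the paper never uses condition 1 of Proposition \ref{propmain} here. Instead it uses the cutoff criterion, Proposition \ref{propmain}.\ref{propmain-hard}: one needs (i) sets of paths $A$ of $Q$-probability arbitrarily close to $1$ (defined by a H\"older-norm bound, \eqref{e:holder}) on which $\limsup_N E[e^{\gamma N\alpha(B,B')}\one_{A\times A}]<\infty$ for some $\gamma>1$ --- this is the uniform-in-drift estimate of Propositions \ref{p:tightness1} and \ref{p:tightness2}, proved by a window decomposition at scale $r/N$, supermartingale bounds, and the local-time estimate Lemma \ref{l:local-sum}; and (ii) the identification of the limit through $E\alpha_N^2\to E\alpha^2$ (Proposition \ref{convofalphas}, an explicit Fourier computation) combined with Lemma \ref{l:alpha-c-law}, which your sketch omits entirely (convergence of the coordinates $m_j^{(N)}$ plus uniform integrability is not by itself one of the hypotheses sets of Proposition \ref{propmain}). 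Until you replace your uncut moment bound by estimates of this ``exponential moments off a small set, uniformly in the drift'' type and supply the second-moment convergence of the intersection local times, the proof has a genuine gap at its central point.
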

We also have a process-level version of Theorem \ref{t:crossover}. To  understand the almost sure dependence on instances of white noise in the next theorem,   couple them as 
\begin{equation}\label{wnrescaled}
 \xi_N(\cdot, \cdot) =N^{-1}\xi(N^{-1/2} \cdot,N^{-3/2}\cdot).
\end{equation}
The law of $\xi_N$ does not depend on $N$, but now these white noises are defined on the same probability space $(\Xi,\mathcal F, P)$.
We introduce extra parameters to both equations,
\begin{equation}\label{e:uz}
\partial_t u = \tfrac 12(\nu \partial_x^2+ \partial_y^2) u  +  \beta u \,\xi_N, \qquad
    \partial_t z = \tfrac12\nu \partial_x^2 z + \beta z\,\xi ,
\end{equation}
and write $u_{\nu,\beta,N}(x_0,y_0;x,y;t)$ for the solution with initial condition $\delta_{(x_0,y_0)}$ Similarly, write $z_{\nu,\beta}(x_0,t_0;x,t)$ for the solution started from $\delta_{x_0}$ at time $t_0$.
Note that for $u$, the noise $\xi_N$ is used as  $\xi_N(x,y)$ and for $z$ the noise $\xi$ means $\xi(x,t)$.

\begin{theorem}\label{t:process} Let $\rho,\nu,\beta,s>0$ and $x,y,t\in \mathbb R$. Let ${\rm p}=(x,t)$ and ${\rm q}=(y,t+s)$. Let ${\rm p}_N=(N^{1/2}x,N^{3/2}t )$ and let ${\rm q}_N=(N^{1/2}y,N^{3/2}(t+s))$.
As $N\to \infty$, we have
\begin{equation}\label{processconv}
 u_{\nu,\beta,N}({\rm p}_N;{\rm q}_N;N\rho s) \times \sqrt{2\pi \rho s}Ne^{N^2s/(2\rho)} \to z_{\nu \rho,\beta}({\rm p};{\rm q})
\;\mbox{ in }L^1(\Xi,\mathcal F, P).
\end{equation}
Moreover, the polymer measure $M_N$ on paths from ${\rm p}_N$ to ${\rm q}_N$ in time $N\rho s$
corresponding to $Z_N$ converges to the 1+1-dimensional continuum directed random polymer measure (CDRP, the polymer measure corresponding to $z$) $M$ on paths $b$ between space-time points  ${\rm p}$ and ${\rm q}$ 
\begin{align*}
\Big((N^{-1/2}&B_N(N\rho r)_1,N^{-3/2}B_N(N\rho r)_2), \;r\in[0,s]\Big) \mbox{ under $M_N$} \\&\;\to\; \Big((b(t+r),t+r), \;r\in [0,s]\Big) \mbox{ under $M$,} \qquad  \mbox{in $P$-probability.}
\end{align*}
\end{theorem}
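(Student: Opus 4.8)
The plan is to deduce everything from the explicit representations of Theorems \ref{t:solution-intro} and \ref{t:1dpolymer-intro}, adapted to the equations in \eqref{e:uz}, via a change of variables that converts one space direction of the planar problem into the time direction of the $1{+}1$-dimensional one. Write $T_N=N\rho s$, let $B_N$ be the $\diag(\nu,1)$ Brownian bridge from $\mathrm p_N$ to $\mathrm q_N$ over time $T_N$ with occupation measure $X_N(A)=\int_0^{T_N}\one(B_N(r)\in A)\,dr$, and let $\xi_{[n],N}=\sum_{j\le n}\langle\basis_j,\xi_N\rangle\basis_j$ with $X_{N,n}$ the corresponding mollifications. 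By Theorem \ref{t:solution-intro},
\begin{equation}\notag
u_{\nu,\beta,N}(\mathrm p_N;\mathrm q_N;T_N)=p_\nu(\mathrm q_N-\mathrm p_N,T_N)\,Z_N,\qquad Z_N=\lim_{n\to\infty}E_{Q_N}\exp\Big\{\beta\langle X_N,\xi_{[n],N}\rangle-\tfrac{\beta^2}{2}\|X_{N,n}\|^2\Big\},
\end{equation}
and the unnormalized polymer has $M_N(G)=p_\nu(\mathrm q_N-\mathrm p_N,T_N)\,Z_N[G]$, where $Z_N[G]$ is the same formula with the bridge integrand multiplied by $G$. A direct computation with the anisotropic heat kernel shows the deterministic prefactor is tuned so that $p_\nu(\mathrm q_N-\mathrm p_N,T_N)\cdot\sqrt{2\pi\rho s}\,Ne^{N^2s/(2\rho)}\to q_{\nu\rho}(y-x,s)$, where $q_{\nu\rho}$ is the heat kernel of $\tfrac12\nu\rho\,\partial_x^2$ --- exactly the heat-kernel factor of $z_{\nu\rho,\beta}(\mathrm p;\mathrm q)$ in Theorem \ref{t:1dpolymer-intro}. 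So the theorem reduces to $Z_N\to z_{\nu\rho,\beta}(\mathrm p;\mathrm q)/q_{\nu\rho}(y-x,s)$ in $L^1(P)$, together with convergence of the tilted $Z_N[G]$ over a convergence-determining family of $G$.

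The change of variables is as follows. The displacement of $B_N$ in the second coordinate is $N^{3/2}s$ over time $N\rho s\sim N$, so that coordinate is ballistic with speed of order $N^{1/2}$ and fluctuations of order $N^{1/2}\ll N^{3/2}$; hence $\tau\mapsto N^{-3/2}B_N^{(2)}(N\rho\tau)$ converges uniformly on $[0,s]$ to the straight line $t+\tau$. Reparametrizing time by this coordinate and rescaling the first coordinate as $\widetilde b_N(\tau)=N^{-1/2}B_N^{(1)}(N\rho\tau)$ yields a process converging to a Brownian bridge $b$ from $x$ to $y$ in time $s$ with diffusivity $\nu\rho$, i.e.\ the driving bridge of $z_{\nu\rho,\beta}$. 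Localizing the planar noise $\xi_N$ to a shrinking tube around the ballistic path and rescaling by $(N^{1/2},N^{3/2})$ in the two directions produces, in the limit, a $1{+}1$-dimensional space-time white noise $\widetilde\xi$ coupled to $\xi$ through \eqref{wnrescaled}; the complementary part of $\xi_N$ is asymptotically invisible to the tilted bridge measures. Under this dictionary $\beta\langle X_N,\xi_{[n],N}\rangle$ becomes $\beta\int_0^s\widetilde\xi_{[n]}(\widetilde b_N(\tau),t+\tau)\,d\tau$ and $\tfrac{\beta^2}{2}\|X_{N,n}\|^2$ becomes the Wick normalization of the directed path's graph, whose renormalized limit is the $1{+}1$-dimensional self-intersection local time appearing in Theorem \ref{t:1dpolymer-intro}.

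To make this rigorous I would use the moment / uniform-integrability route. Because the Wick term $\tfrac{\beta^2}{2}\|X_{N,n}\|^2$ exactly cancels the diagonal self-energy generated by $E_P e^{W}=e^{\frac12 E_P W^2}$, the replica moments have the clean form
\begin{equation}\notag
E_P\big[Z_N^k\big]=\lim_{n\to\infty}E_{Q_N^{\otimes k}}\exp\Big\{\beta^2\sum_{1\le i<i'\le k}\langle X_N^{(i)},X_N^{(i')}\rangle\Big\},
\end{equation}
so all moments are governed by the mutual intersection local times of independent rescaled planar bridges. The analytic core is then two statements: (i) these converge, jointly with the paths, to the mutual intersection local times of the limiting independent directed paths $(\widetilde b^{(i)}(\tau),t+\tau)$ in $1{+}1$ dimensions --- a LeGall--Varadhan-type statement for the anisotropic, ballistically reparametrized bridge; and (ii) they have exponential moments bounded uniformly in $N$ on complements of events of arbitrarily small probability, the same phenomenon driving Theorem \ref{t:solution-intro}, except that the ballistic reparametrization now places us in the weak-noise (intermediate-disorder) regime where the limiting $1{+}1$-dimensional intersection local time has exponential moments of all orders. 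Combining (i) and (ii) gives $E_P[Z_N^k]\to E_P[(z_{\nu\rho,\beta}(\mathrm p;\mathrm q)/q_{\nu\rho}(y-x,s))^k]$ for all $k$, hence $L^p(P)$ convergence for every $p$ once the limit is identified through its chaos expansion in $\widetilde\xi$. Running the same argument with the bridge tilted by a cylinder functional $G$ of finitely many coordinates of $\widetilde b_N$ gives convergence of $Z_N[G]$, and combined with tightness of the rescaled polymer paths (Kolmogorov's criterion, using that $M_N$ is absolutely continuous with respect to the bridge with an $L^1$ density controlled by the moment bounds) this upgrades to weak convergence of $M_N$ to $M$ in $P$-probability. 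I expect (ii), the uniform-in-$N$ exponential integrability of the rescaled intersection local times, to be the main obstacle; the general convergence criterion developed in the paper (Proposition \ref{propmain}) is meant to package (i)--(ii) into hypotheses checkable one model at a time, and Theorem \ref{t:crossover} is the special case $\nu=\beta=1$, $x=y$ of this argument.
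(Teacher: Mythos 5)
Your setup matches the paper's in outline (use the explicit representations of Theorems \ref{t:solution-intro} and \ref{t:1dpolymer-intro}, couple the noises via \eqref{wnrescaled}, reparametrize the ballistic coordinate, and reduce everything to control of mutual intersection local times of the rescaled planar bridges), but the analytic core you propose has a genuine gap. You want to conclude $E_P[Z_N^k]\to E_P[Z^k]$ for all $k$ and hence $L^p(P)$ convergence for every $p$, from exponential moment bounds that you yourself only claim \emph{off events of arbitrarily small probability}. That implication does not hold: a bound on $E[e^{\gamma\alpha_N}\one_{A\times A}]$ says nothing about $E_{Q^{\otimes 2}}e^{\beta^2\alpha_N}=E_P[Z_N^2]$, and for large $N$ the untruncated exponential moment is not uniformly bounded (the planar bridge, despite its large drift, can linger in a bounded window for a time of length $L$ at probability cost only $e^{-cL}$, producing intersection local time of order $L$; this is why Lemma \ref{l:drifted} only improves with drift for $\gamma t\le \pi/4$, and why Propositions \ref{p:tightness1} and \ref{p:tightness2} carry the indicator cutoffs $\one_{A_r^2}$, $\one_{H^2}$). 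So the replica/moment route collapses exactly where the planar model leaves the $L^2$ phase, which is the reason the paper works in $L^1$ rather than with moments: Proposition \ref{propmain}.\ref{propmain-hard} converts \emph{truncated} $L^2$ control into \emph{untruncated} $L^1$ convergence via the error bound $E_P|Z_n(m)-Z_n(\one_Am)|\le 2Q(A^c)$ of Proposition \ref{shift1}(d), and that is the statement you should be invoking as the conclusion, not as a packaging remark at the end.

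Two further points of comparison. First, your step (i) (a LeGall--Varadhan-type joint convergence of paths and intersection local times) is more than is needed and harder than what the paper verifies: by choosing the $N$-dependent basis $\basis_{N,j}(x,y)=\tfrac1N\basis_j(N^{-1/2}x,N^{-3/2}y)$ the noise coordinates become literally $N$-independent, the coefficients $m_{N,j}\to m_j$ converge $Q$-a.s.\ by inspection, and the only quantitative input is $E\alpha_N^2\to E\alpha^2$ (the explicit Fourier computation of Proposition \ref{convofalphas}), which upgrades to $\alpha_N\to\alpha$ in probability via Lemma \ref{l:alpha-c-law}; your ``localize the noise to a shrinking tube and discard the rest'' dictionary is both vaguer and unnecessary. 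Second, for the polymer statement you do not need Kolmogorov tightness of the rescaled paths: once the randomized-shift criterion applies, Proposition \ref{p:polymer-limit} (reweighting $Q$ by a bounded test function) gives $E_{M_N}F\to E_MF$ in $L^1(P)$ directly, and Theorem \ref{t:1dpolymer-intro} identifies the limit as the CDRP with an independent bridge in the time direction. So the correct skeleton is: basis choice and a.s.\ convergence of $m_{N,j}$; Propositions \ref{p:alpha-plane}, \ref{p:alpha-one}, \ref{convofalphas} and Lemma \ref{l:alpha-c-law} for the $\alpha$'s; H\"older-norm cutoff sets plus Proposition \ref{p:tightness2} for the uniform truncated exponential moments; then Proposition \ref{propmain}.\ref{propmain-hard} for $L^1$ convergence and Proposition \ref{p:polymer-limit} for the measures.
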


Since \eqref{e:uz}  is convergence in $L^1$, and not just in law, it holds jointly in all seven parameters  $\beta,\nu,\rho,x,y,s,t$. For example, with all parameters except $y$ fixed, Theorem \ref{t:process} implies  convergence of finite dimensional distributions to those of the top line of the KPZ line ensemble.

As a corollary of Theorem \ref{t:process} we conclude that in the double limit when  $t\to \infty$ after the first limit, the planar Wick-ordered  heat equation converges to the Airy process,  see \cite{QS}, \cite{virag2020heat}. By linearity, Theorem \ref{t:process}  implies convergence for general initial conditions often considered in KPZ. The KPZ fixed point appears in  the double  limit. 

The Feynman-Kac representation 
makes the proof of Theorem \ref{t:process} transparent and explains the $N^{3/2}$ scaling as well.

\begin{proof}[Proof of Theorem \ref{t:process}]
\ \\ Fix an orthonormal basis of $L^2(\mathbb 
R^2)$ consisting of bounded continuous functions  $\basis_j$, and let
$$\basis_{N,j}(x,y):=\frac{1}{N}\basis_j(N^{-1/2}x,N^{-3/2}y).$$ This, together with our  choice of the noise coupling \eqref{wnrescaled} is compatible with the space-time scaling. It is chosen so that the coordinate representation $\xi_j=\langle \basis_{N,j},\xi_N(\cdot) \rangle=\langle \basis_{j},\xi(\cdot) \rangle$ does not depend on $N$. 

We will solve for $u$ in the basis $\basis_{N,j}$. Theorem \ref{t:solution-intro} shows that the solution is basis independent, and, with parameters added in a straightforward way, we can write \begin{align*}
u({\rm p}_N,{\rm q}_N,N\rho s) &=p({\rm q}_N-{\rm p}_N,N\rho s)\lim_{n\to \infty} Z_{N,n}, 
\\Z_{N,n}&=E_{Q}\exp \bigg\{ \sum_{j=1}^n m_{N,j}\xi_{j} - \tfrac12 \sum_{j=1}^n m_{N,j}^2\bigg\}, \\m_{N,j}&=\beta \int_0^{N\rho s} \basis_{N,j}(B_N(r))\,dr,
\end{align*}
where $B_N$ is a two-dimensional Brownian bridge ${\rm p}_N\to {\rm q}_N$ in time $N\rho s$ with covariance matrix $\diag(\nu,1)$. We couple the bridges on a single probability space $Q$ by writing the limiting spatial and temporal directions separately:
$$
B_N(N\rho r)=(N^{1/2}b_{\rm sp}(t+r),N^{1/2}b_{\rm ti}(t+r)+(t+r)N^{3/2}), \qquad r\in [0,s],
$$
where $b_{\rm sp},b_{\rm ti}$ are independent 1-dimensional Brownian bridges run from time $t$ to $t+s$ from $x$ to $y$ and from $0$ to $0$, and having variances $\nu\rho$ and $\rho$ respectively.  
It is convenient to rewrite
$$
m_{N,j}= \beta  \int_0^s  \basis_j(b_{\rm sp}(t+r),N^{-1}b_{\rm ti}(t+r)+t+r)\,dr
$$
as the change-of-variable and scaling factors $N$ cancel (this is why the $N^{3/2}$ scaling is crucial). Then for each $j$,  noting that $N^{-1}b_{\rm ti}(t+r)\to 0$ pointwise in $r$, $Q$-a.s., we conclude that
$$
\lim_{N\to \infty} m_{N,j}=m_j=\beta  \int_0^s  \basis_j(b_{\rm sp}(t+r),t+r)\,dr, \qquad Q-\mbox{a.s.}
$$
and by Theorem \ref{t:1dpolymer-intro}, with parameters added in a straightforward way, we have 
\begin{equation}\notag
  z_{\nu\rho,\beta}({\rm p},{\rm q})=q({\rm q}-{\rm p})\lim_{n\to\infty} E_Q\exp\left(\sum_{j=1}^n m_j\xi_j-\frac{1}{2}m_j^2\right).
\end{equation}
This establishes what the limit should be, including matching the parameters.  We  justify the exchange of limits in $n$ and $N$ and complete the proof in Section \ref{ss:tconvergence}. \end{proof}

The key technical input for Theorem \ref{t:process} is again a mutual intersection local time estimate for a Brownian bridge. This time, we need a \emph{uniform}  bound on exponential moments off a small set for large $t$ and distant endpoint. Fortunately, the large drift in bridges to far away points makes them intersect less and allows us to establish such a bound.

As seen in  proof outline above, the scaling $N^{3/2}$ in Theorems \ref{t:crossover} and \ref{t:process}  preserves the interesting  interaction between $u$ and the white noise $\xi$. But what about the asymptotic behavior of $u((0,N^{\kappa}),N)$ for other values of $\kappa$? When $\kappa>3/2$, we expect that $u$ is close to the heat kernel $p$ and their difference should satisfy an ordinary central limit theorem. A more exciting  regime is $\kappa \in [0,3/2)$: 
\begin{conjecture}
There exists $\kappa_c\in [0,3/2)$ so that 
for all $\kappa \in [\kappa_c,3/2)$, $\kappa\neq \kappa_c$ 
$$
\exists\, a,s\in  \mathbb R^{\mathbb N} \text { s.t.  } \frac {\log u((0,N^{\kappa}),N)-a_N}{s_N}\cd TW  \;\;\; \Leftrightarrow \;\;\; \kappa>\kappa_c,
$$
where $\cd TW$ means convergence in law to the GUE Tracy-Widom distribution.
\end{conjecture}
We expect $\kappa_c$ to be above the diffusive scaling, that is $\kappa_c\ge 1/2$.
It would be interesting just to show strict inequlaty  $\kappa_c>1/2$, or to find any value of $\kappa$ so that the Tracy-Widom limit holds. We note in passing that the results in \cite{konig2020longtime}
seem to suggest  $\kappa_c\ge 1$.
\subsection{History}

Essentially all  work on exact KPZ fluctuations has  been on directed models.
An exception is the breakthrough work on the boundary fluctuations of lozenge tilings of polygons \cite{aggarwal2021edge}.  These models, however, are still non-intersecting line ensembles, and the challenge there is to localize previous methods.

In terms of underlying polymer paths, the technical difficulty is to show that self-intersections do not affect the large-scale behaviour.
Previously the only tool was to identify a regeneration structure (for example, \cite{MR3372855}).  Besides leading to non-explicit coefficients defined in terms of the regeneration times, such methods failed to identify universal fluctuations except in some trivial (Gaussian) situations.

The present paper treats the undirected two dimensional continuous case, represented by the stochastic heat equation and its underlying polymer.  There is a choice of interpretation of the stochastic integral in the Duhamel form of the equation, and here we study the model resulting from the Skorokhod interpretation.  The Skorokhod integral was introduced by \cite{MR0391258}.  Presentations in \cite{MR2200233},
\cite{MR1474726} and \cite{MartinsNotes} are highly recommended. \cite{MR660187} identified it as  the adjoint of the Malliavin derivative, and it is often defined that way. This makes sense for $L^p$, $p>1$, as the Malliavin derivative is closable on the duals $L^q$, $1<q<\infty$.  The technical challenge in our work is the necessity to extend to appropriate $L^1$ integrands so that the integration in the Duhamel form of the stochastic heat equation \eqref{2dshe} makes sense. Finally, the equation is  solved using  martingale theory and an appropriately checkable condition for uniform integrability.

The martingale approximations take the same form as approximations to the Gaussian multiplicative chaos studied by \cite{shamov}.   In particular,  this gives  a representation of the solution as the Radon-Nikodym derivative of a shift of the underlying white noise, in our case by the occupation measure of the underlying paths. Such representations were used in the mollified setting by, for example, \cite{mukherjee2016weak} and \cite{broker2019localization}.

The technical input for the uniform integrability of the approximating martingales are exponential moments of the mutual intersection local times off a small set.  \cite{geman1984local}, \cite{LeGall}, \cite{Chen} and \cite{BassChen} are recommended references for the mutual intersection local time and finiteness of rate one exponential moments without the cutoff.

\subsection{Structure of the article}

In Section \ref{skorokhod} we define an elementary version of the Skorokhod integral, give the main property of projections, give a number of illustrative examples and survey relations to the standard definition, in particular, the fact that in $1$ and $1+1$ dimensions, the integral coincides with the It\^o integral.  Thus the present work describes an alternate, equivalent construction of the KPZ equation with space-time white noise.  In Section \ref{heateq} we solve, explicitly, projected versions of the stochastic heat equation, in the planar as well as the $1+1$ dimensional case.  In Section \ref{randomizedshifts} criteria are introduced for the convergence of the resulting martingales. 
Section \ref{intersectionlocaltime} 
constructs the mutual intersection local times in terms of our coordinate representation and reviews the main scaling properties and bounds.  The key estimates needed to prove the $L^1$ convergence are contained in Section \ref{exponentialmoments}. The pieces are put together and the main theorems are proven in Section \ref{s:ikea}. 

\section{Skorokhod integral}\label{skorokhod}

\subsection{White noise}

Let $(R,\mathcal{B}, \mu)$ be a measure space.  We will always assume $L^2(R,\mathcal{B},\mu)$ is separable, with orthonormal basis $\{\basis_j\}_{j=1,2,\ldots}$.
White noise defined on a probability space $(\Xi,\mathcal{F}, P)$, is a linear isometry 

\begin{equation}
\label{linis}
\xi: f\mapsto \langle f,\xi\rangle
\end{equation}
from $L^2(R,\mathcal{B},\mu)$ to $L^2(\Xi,\mathcal{F}, P)$ such that
$\langle f,\xi\rangle$, $f\in L^2(R,\mathcal{B},\mu)
$
form a mean zero Gaussian family. The isometry property says that
\begin{equation} E_P[\langle f,\xi\rangle\langle g,\xi\rangle]= \langle f,g\rangle_{L^2(R,\mathcal{B}, \mu)}
\end{equation}
for all $f,g\in L^2(R,\mathcal{B},\mu)$. The bracket notation $\langle f,\xi\rangle$ suggests but does not rigorously mean an $L^2(R,\mathcal{R}, \mu)$ inner product, since $\xi$ cannot be realized in $L^2(R,\mathcal{B}, \mu)$.

The white noise can be constructed in the following elementary way, which we use repeatedly.  Define  \begin{equation} \xi_j= \langle \basis_j,\xi\rangle.
\end{equation}From the above definition, these are independent Gaussians with mean $0$ and variance $1$ and for $f\in L^2(R,\mathcal{B}, \mu)$ with $f=\sum_{j=1}^\infty f_j\basis_j$
with $f_j=\langle f,\basis_j\rangle $, we can realize \begin{equation}\langle f,\xi\rangle = \sum_{j=1}^\infty f_j \xi_j\end{equation} which converges in $L^2(\Xi,\mathcal{F},P)$.

We will often think of functions, measures and noises in terms of their representations $f_j=\langle f,\basis_j\rangle$, $\mu_j=\langle \mu,\basis_j\rangle$ and $\xi_j=\langle\xi,\basis_j\rangle$ in the orthonormal basis, and sometimes refer to these as \emph{coordinate representation}.

Therefore in this paper we will take $(\Xi,\mathcal F, P)$ be a probability space given by an independent standard Gaussian sequence: $\Xi=\mathbb R^{\mathbb N}$,  $\mathcal F$ is the Borel $\sigma$-algebra on product space, and $P$ is independent standard Gaussian product measure. It comes equipped with a natural filtration
\begin{equation}
\mathcal{F}_n=\sigma(\xi_1,\ldots,\xi_n).
\label{effen}
\end{equation}

We will mostly be considering \emph{planar white noise}, which is the case $R=\mathbb{R}^2$,
$\mathcal{B}=$ Borel, $\mu=$ Lebesgue measure.

\subsection{Definition of the integral}
The Skorokhod integral is a notion of integral against white noise that does not need a time ordering.  So it is particularly suited to problems involving higher dimensional time-independent white noise.   The definition is elegant, and the concept is powerful, but it is hard to quickly see the motivation behind it. We first give a definition, and then provide  motivation.

\begin{definition}[Skorokhod integral of random sequences]\label{d:skorokhod} \ 

Let $D=\{1,\ldots, \max(D)\}$ be finite or let $D=\mathbb N$. 

Let $\xi_n,n\in D$ be independent standard 
Gaussian random variables defined on some probability space $(\Xi, \mathcal F, P)$

Let $G=(G_n\in  L^1(\Xi,\mathcal F, P), n\in D)$ be a random sequence measurable with respect to  $(\xi_n:n\in D)$.

We say that $G$ is {\bf Skorokhod integrable} if there exists a random scalar
$$
S=\sint G \;\xi   \qquad \mbox{ in } L^1
$$ called the {\bf Skorokhod integral} of $G$ such that for every $n \in D$ and every bounded differentiable  $F:\mathbb R^n \to \mathbb R$ with bounded gradient $\nabla F$ we have
\begin{equation}\label{defbygrad}
ESF(\xi_1,\ldots \xi_n)=E G\cdot \nabla F(\xi_1,\ldots, \xi_n).
\end{equation}
\end{definition}

\begin{remark} Since $S$ and $G$ are assumed to be in $L^1$, it is enough (by approximation) to take smooth functions with compact support in \eqref{defbygrad}. 
\end{remark}

\begin{definition}[Skorokhod integral of random measures or functions]\label{d:Skorokohod-functions} \

Let $(R,\mu)$ be a measure space.

Consider an orthonormal basis of $L^2(R,\mu)$ consisting of bounded functions $(\basis_n, n\in D)$ where $D=\{1,\ldots, \max(D)\}$ is finite or $D=\mathbb N$.

Let $\xi_n,n\in D$ be independent standard 
Gaussian random variables defined on some probability space $(\Xi, \mathcal F, P)$

Let $\hat G$ be a random variable on $(\Xi,\sigma(\xi_n:n\in D), P)$ taking values in some space of signed measures on $R$. 

We say that $\hat G$ is {\bf Skorokhod integrable} with {\bf Skorokhod integral} $\ssint \hat G \xi =S$ if the total absolute mass of $\hat G$ is finite a.s., and 
$G=(G_n=\int  \basis_n \hat G,n\in D)$ is Skorokhod integrable with integral $S$ in the sense of Definition \ref{d:skorokhod}. 

When  $\hat G$ is a random measurable function, we apply this definition to the random signed measure $\hat G\mu$. 
\end{definition}

This is somewhat closer to the standard usage.  What we are doing is thinking of the Skorokhod integral as a renormalized extension of an $L^2$ inner product and working directly in the coordinate representation.

\subsection{Projections}

For infinite dimensional examples, it helps to define  versions of the Skorokhod integral where only finitely many variables are integrated over. A natural projection property then determines the integral, see Proposition \ref{p:projection}.

\begin{definition}[Projections]\label{proj} Let $\mathcal P_nG$ denote the vector $G$ with all coordinates beyond $n$ set to zero and
\begin{equation}\label{peeenn}
\sintn G \,\xi: =\sint \mathcal (P_n G)\,\xi.
\end{equation}
\end{definition}
The following proposition can be thought of as a constructive definition of the Skorokhod integral in nice cases.

\begin{proposition}
\label{fdc}
\begin{enumerate}
\item If $G$ is Skorokhod integrable, then its integral is unique a.s.\

\item
Let $G_j$, $\xi_j$ be the $j$th coordinate of $G,\xi$, respectively. Assume that for all $j$, and almost all $\xi_1,\ldots,\xi_{j-1},\xi_{j+1},\ldots$, the function
$G_j$ is absolutely continuous
in $\xi_j$. If  for each $i\ge 1$ we have, $E|G_i|<\infty$ and with
\begin{equation}\label{e:simple-skorokhod}
S_n=\sum_{j=1}^n G_j\xi_j-\partial_{\xi_j}G_j, \qquad E|S_n|<\infty
\end{equation}
then
$$
\sintn G \,\xi=S_n.
$$
If $S_n$ converges in $L^1$ to a limit $S$ as $n\to\infty$, then $G$ is Skorokhod integrable and
$$
\sint G \,\xi = S.$$
\end{enumerate}
\end{proposition}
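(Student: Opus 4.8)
The plan is to prove Proposition~\ref{fdc} in three parts, treating uniqueness, the finite-dimensional integration-by-parts formula, and the passage to the limit separately.

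\emph{Part (1): uniqueness.} Suppose $S$ and $S'$ both satisfy \eqref{defbygrad} for all $n$ and all bounded differentiable $F$ with bounded gradient. Then $E(S-S')F(\xi_1,\ldots,\xi_n)=0$ for every such $F$ and every $n$. Since bounded differentiable functions with bounded gradient are dense in $L^2$ of each finite-dimensional Gaussian factor (for instance by convolving indicators with a Gaussian kernel and truncating), and such cylinder functions are dense in $L^2(\Xi,\mathcal F,P)$, the random variable $S-S'$ is orthogonal to a dense subspace of $L^2$; but $S-S'\in L^1$ only, so I would instead argue that $E[(S-S')\mid\mathcal F_n]=0$ for every $n$, hence by the martingale convergence theorem (Lévy's zero--one law, since $\mathcal F=\sigma(\bigcup_n\mathcal F_n)$) we get $S-S'=E[(S-S')\mid\mathcal F]=\lim_n E[(S-S')\mid\mathcal F_n]=0$ a.s.

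\emph{Part (2): the formula $\sintn G\,\xi=S_n$.} Here the task is to verify that $S_n=\sum_{j=1}^n(G_j\xi_j-\partial_{\xi_j}G_j)$ satisfies the defining relation \eqref{defbygrad} for $\mathcal P_nG$, i.e.\ that for bounded differentiable $F:\mathbb R^m\to\mathbb R$ with bounded gradient,
\[
E\,S_n F(\xi_1,\ldots,\xi_m)=\sum_{j=1}^m E\,G_j\,\partial_{\xi_j}F(\xi_1,\ldots,\xi_m),
\]
with the convention that $G_j=0$ for $j>n$. By linearity it suffices to treat a single coordinate $j$ and show $E[(G_j\xi_j-\partial_{\xi_j}G_j)F]=E[G_j\partial_{\xi_j}F]$, which is precisely the classical Gaussian integration-by-parts identity $E[X g(X)]=E[g'(X)]$ applied in the $\xi_j$ variable with the other coordinates frozen, followed by Fubini. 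The hypotheses are exactly tailored to this: $G_j$ is continuous and a.e.\ differentiable in $\xi_j$ for a.e.\ fixing of the others, $F$ and $\nabla F$ are bounded, and the integrability assumptions $E|G_j|<\infty$, $E|S_n|<\infty$ let us apply Fubini. I would be slightly careful about the one-dimensional integration by parts: with $\gamma$ the standard Gaussian density, $\int (G_j\xi_j-\partial_{\xi_j}G_j)F\,\gamma\,d\xi_j=\int G_j F\,(-\gamma')\,d\xi_j-\int(\partial_{\xi_j}G_j)F\gamma\,d\xi_j$; integrating the first term by parts produces $\int(\partial_{\xi_j}(G_jF))\gamma\,d\xi_j=\int(\partial_{\xi_j}G_j)F\gamma\,d\xi_j+\int G_j(\partial_{\xi_j}F)\gamma\,d\xi_j$, provided $G_jF\gamma\to0$ at $\pm\infty$ and the absolute continuity needed to integrate by parts holds — these follow from $G_j$ being locally absolutely continuous in $\xi_j$ (continuous plus a.e.\ differentiable is not literally enough, so I would either strengthen to ``locally absolutely continuous'' or note that in all our applications $G_j$ is real-analytic in $\xi_j$), $F$ bounded, and a dominated-convergence/truncation argument using $E|G_j|<\infty$ for the decay.

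\emph{Part (3): the limit.} Assume $S_n\to S$ in $L^1(\Xi,\mathcal F,P)$. Fix $n$ and a bounded differentiable $F:\mathbb R^n\to\mathbb R$ with bounded gradient. For every $m\ge n$, Part (2) gives $E\,S_m F(\xi_1,\ldots,\xi_n)=\sum_{j=1}^n E\,G_j\,\partial_{\xi_j}F(\xi_1,\ldots,\xi_n)$, since the extra terms $G_j\xi_j-\partial_{\xi_j}G_j$ for $n<j\le m$ are orthogonal to any function of $\xi_1,\ldots,\xi_n$ (again by the one-dimensional Gaussian integration-by-parts in the $\xi_j$ variable, whose right-hand side $E[G_j\,\partial_{\xi_j}F]=0$ because $F$ does not depend on $\xi_j$). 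Letting $m\to\infty$, the left side converges to $E\,S F(\xi_1,\ldots,\xi_n)$ by $L^1$ convergence of $S_m$ and boundedness of $F$, while the right side is independent of $m$; this is exactly \eqref{defbygrad}, so $G$ is Skorokhod integrable with $\sint G\,\xi=S$.

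\emph{Main obstacle.} The only real subtlety is the rigor of the one-dimensional Gaussian integration by parts under the stated regularity: ``continuous and a.e.\ differentiable'' does not by itself license the fundamental theorem of calculus needed, and one needs $G_j$ to be (locally) absolutely continuous in $\xi_j$ together with enough integrability to control the boundary terms at $\pm\infty$. I expect the fix is to observe that in every application in the paper $G_j$ is in fact real-analytic in $\xi_j$ (the finite-dimensional PDE solutions are real-analytic in the noise coordinates), so absolute continuity is automatic; alternatively one records the needed integrability of $\partial_{\xi_j}G_j$ (which is implicit in $E|S_n|<\infty$ together with $E|G_j\xi_j|<\infty$) and runs a standard truncation argument. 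Everything else — uniqueness via Lévy's zero--one law and the limit via $L^1$ convergence plus orthogonality of tail terms — is routine.
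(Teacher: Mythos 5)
Your proposal is correct and follows essentially the same route as the paper's proof: uniqueness via density of the test functions and vanishing conditional expectations on each $\mathcal F_n$, the formula for $S_n$ via one-dimensional Gaussian integration by parts in $\xi_j$ with the remaining coordinates conditioned out, and the limit by testing against bounded $F$ and using $L^1$ convergence together with $\partial_{\xi_j}F=0$ for $j>n$. Your caveat that ``continuous and a.e.\ differentiable'' should really be local absolute continuity (automatic in the paper's applications, where $G_j$ is real-analytic in the noise coordinates) is a fair observation about the stated hypotheses, but it does not alter the argument.
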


\begin{proof} (1)
If $S,S'$ are two Skorokhod integrals, then by the definition $E[(S-S')F]=0$ for all bounded differentiable $F$ with bounded derivatives. Such functions $F:{\mathbb R}^n\to \mathbb R$ separate $L^1$ and hence $E[ S-S'~\mid~\mathcal F_n]=0$ for each $n\in \mathbb N$, and so  $S-S'=0$ a.s.

(2) let $F(\xi_1,\ldots,\xi_n)$ be smooth, compact support. Condition on the $\sigma$-field $\mathcal J_{\hat j}$ generated by $\xi_1,\ldots,\xi_{j-1},\xi_{j+1},\ldots$.  $FG_j$ is absolutely continuous with compact support so  integrating by parts with respect to the Gaussian measure, 
$$
E[FG_j\xi_j|\mathcal J_{\hat j}]=E[\partial_{\xi_j}(FG_j)|\mathcal J_{\hat j}].
$$
Taking expectations, 
$$
E[FG_j\xi_j-F\partial_{\xi_j}G_j]=E[\partial_{\xi_j}(FG_j)-F\partial_{\xi_j}G_j]=E[(\partial_{\xi_j}F)G_j].
$$
Note that $\partial_{\xi_j} F=0$ for $j>n$. Summing over $j\le m$ this gives that for $m\ge n$,
$$
E[S_mF]=E[G\cdot\nabla F].
$$
Since $S$ converges in $L^1$ and $F$ is bounded, 
$$
E[SF]=\lim_{m\to\infty}E[S_mF]=E[G\cdot \nabla F]
$$
as required.
\end{proof}

The Skorokhod integral is linear, and has $ES=0$, as we can see by taking $F=1$ in the definition. It also behaves nicely under conditional expectations.

\begin{proposition}\label{p:projection}
  If $G$ is Skorokhod integrable, then  $E[\mathcal P_n G|\mathcal F_n]$ is Skorokhod integrable, and
$$
E\left[\sint G\,\xi \,\Big|\, \mathcal F_n \right]=\sint E[\mathcal P_n \,G \,|\, \mathcal F_n ]\, \xi = \sintn E[G|\mathcal F_n] \,\xi
$$
\end{proposition}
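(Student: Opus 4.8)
The plan is to reduce everything to Proposition \ref{fdc}(2) and the characterizing identity \eqref{defbygrad}. Write $S=\sint G\,\xi$. The natural candidate for $\sint E[\mathcal P_n G\mid\mathcal F_n]\,\xi$ is $E[S\mid\mathcal F_n]$, so the strategy is to verify that $E[S\mid\mathcal F_n]$ satisfies the defining property of the Skorokhod integral of the random vector $H:=E[\mathcal P_n G\mid\mathcal F_n]$. Note $H$ is $\mathcal F_n$-measurable, supported on the first $n$ coordinates, and $E|H_i|\le E|G_i|<\infty$, so the integrability hypotheses are in place. To check \eqref{defbygrad}, take $m\ge n$ (the case $m<n$ follows by viewing $F$ as a function of $m'=n$ variables) and $F:\mathbb R^m\to\mathbb R$ bounded and differentiable with bounded gradient. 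We must show
$$
E\big[E[S\mid\mathcal F_n]\,F(\xi_1,\ldots,\xi_m)\big]=E\big[H\cdot\nabla F(\xi_1,\ldots,\xi_m)\big].
$$

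For the left side, first observe that it is enough to treat $F$ depending only on $\xi_1,\ldots,\xi_n$: indeed, by Gaussian integration by parts in each of the variables $\xi_{n+1},\ldots,\xi_m$ and using that $H$ and $E[S\mid\mathcal F_n]$ are $\mathcal F_n$-measurable, one can replace $F$ by its conditional expectation $\bar F(\xi_1,\ldots,\xi_n):=E[F(\xi_1,\ldots,\xi_m)\mid\mathcal F_n]$; the cross terms $E[H_i\partial_{\xi_i}F]$ for $i>n$ vanish because $H_i=0$ there, and $\bar F$ is again bounded with bounded gradient (differentiation under the Gaussian expectation). So assume $F=F(\xi_1,\ldots,\xi_n)$. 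Then, since $F(\xi_1,\ldots,\xi_n)$ is $\mathcal F_n$-measurable and bounded,
$$
E\big[E[S\mid\mathcal F_n]\,F\big]=E[SF]=E\big[G\cdot\nabla F(\xi_1,\ldots,\xi_n)\big]=\sum_{i=1}^n E[G_i\,\partial_{\xi_i}F],
$$
using the definition of $S$ as the Skorokhod integral of $G$ (legitimate because $F$ is bounded differentiable with bounded gradient). Finally, for each $i\le n$, condition on $\mathcal F_n$: since $\partial_{\xi_i}F$ is $\mathcal F_n$-measurable, $E[G_i\,\partial_{\xi_i}F]=E\big[E[G_i\mid\mathcal F_n]\,\partial_{\xi_i}F\big]=E[H_i\,\partial_{\xi_i}F]$. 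Summing over $i\le n$ gives $E[H\cdot\nabla F]$, which is exactly \eqref{defbygrad} for $H$. Hence $H$ is Skorokhod integrable with integral $E[S\mid\mathcal F_n]$, and by the uniqueness in Proposition \ref{fdc}(1) this is the asserted value. The second equality $\sint E[\mathcal P_n G\mid\mathcal F_n]\,\xi=\sintn E[G\mid\mathcal F_n]\,\xi$ is then immediate from the definition \eqref{peeenn}, since $\mathcal P_n G$ and $E[\mathcal P_n G\mid\mathcal F_n]$ agree with the $n$-truncations of $G$ and $E[G\mid\mathcal F_n]$ respectively and the latter is $\mathcal F_n$-measurable.

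The step I expect to be the main obstacle is the reduction to test functions $F$ depending only on the first $n$ variables, i.e.\ justifying that integrating out $\xi_{n+1},\ldots,\xi_m$ produces no spurious contributions and preserves the boundedness of $F$ and $\nabla F$. This requires a careful application of Gaussian integration by parts together with differentiation under the expectation sign; the boundedness of $\nabla \bar F$ follows from $|\partial_{\xi_i}\bar F|\le E[|\partial_{\xi_i}F|\mid\mathcal F_n]\le\sup|\partial_{\xi_i}F|$ for $i\le n$, but one should check the integrability needed to differentiate under the integral and to commute the limit when $D$ is infinite. Everything else is a routine unwinding of conditional expectations and the definition.
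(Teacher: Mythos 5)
Your proof is correct and follows essentially the same route as the paper: both arguments replace the test function $F$ by its Gaussian smoothing $E[F\mid\mathcal F_n]$ (using $\partial_j E[F\mid\mathcal F_n]=E[\partial_j F\mid\mathcal F_n]$ for $j\le n$ and $=0$ for $j>n$), apply the defining identity \eqref{defbygrad} of $\sint G\,\xi$ to this reduced test function, and then move the conditional expectation onto the coordinates $G_j$. The reduction step you flag as the main obstacle is indeed just the tower property plus differentiation under the (finite-dimensional) Gaussian expectation, so no genuine difficulty remains there.
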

For $n=0$ we get $E\ssint G\,\xi =0$.
\begin{proof}
We need to check that for all test functions $F$ as in the definition, we have
$$
E\Big[F\,E[S|\mathcal F_n ]\Big]=E\Big[\nabla F \cdot E[\mathcal P_n \,G \,|\, \mathcal F_n ]\Big]
$$
By properties of the conditional expectation, the definition of $S$, we have
$$
E\Big[F\,E[S|\mathcal F_n ]\Big]=E\Big[E[F|\mathcal F_n ]\,S\Big]
=
E\Big[\nabla E[F|\mathcal F_n ] \cdot G\Big].
$$
Now
$$
\partial_j E[F|\mathcal F_n ]=\begin{cases}E[\partial_jF|\mathcal F_n ], \qquad &j\le n \\ 0 & j>n
\end{cases}
$$
so we have
\begin{align*}
E\Big[\nabla E[F|\mathcal F_n ] \cdot G\Big]&=\sum_{j=1}^n E\Big[ E[\partial_j F|\mathcal F_n ] G_j\Big] =\sum_{j=1}^n E\Big[ \partial_j F  E[G_j|\mathcal F_n ]\Big]\\&=E\Big[\nabla F \cdot E[\mathcal P_n \,G \,|\, \mathcal F_n ]\Big]
\end{align*}
as required, showing the first equality. The last inequality follows by definition of the finite-dimensional version of the Skorokhod integral.
\end{proof}
\begin{corollary}\label{c:Sk-martingale}  Let $G$ be a $\mathbb R^{\mathbb N}$-valued random variable such that $E|G_i|<\infty$ for each $i\in\mathbb N$.
Let $H_n=E[\mathcal P_nG|\mathcal F_n]$.

The following are equivalent.
\begin{enumerate}
    \item For every $n$, $S_n=\ssint H_n \,\xi$ exists
        and the $S_n$ are uniformly integrable.
    \item $G$ is Skorokhod integrable.
\end{enumerate}
If these conditions hold, then $S_n$ is a martingale with limit $\ssint G\,\xi$ almost surely and in $L^1$.
\end{corollary}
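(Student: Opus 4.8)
The plan is to prove the two implications separately; the final assertion then falls out of the arguments for each direction.

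\emph{Direction (2)$\Rightarrow$(1).} Here I would simply invoke Proposition \ref{p:projection}. If $G$ is Skorokhod integrable with $S:=\ssint G\,\xi\in L^1$, the proposition already tells us that $H_n=E[\mathcal P_n G\mid\mathcal F_n]$ is Skorokhod integrable — so each $S_n=\ssint H_n\,\xi$ exists — and that $S_n=E[S\mid\mathcal F_n]$. Thus $(S_n)$ is a Doob martingale closed on the right by $S$, hence uniformly integrable; since $\bigvee_n\mathcal F_n$ is the full product $\sigma$-algebra $\mathcal F$, it converges to $S$ a.s.\ and in $L^1$. This already gives (1) and the last sentence of the corollary in this case.

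\emph{Direction (1)$\Rightarrow$(2).} Now assume each $S_n=\ssint H_n\,\xi$ exists and $(S_n)$ is uniformly integrable. The first task is to show $(S_n)$ is an $(\mathcal F_n)$-martingale. For adaptedness I would apply Proposition \ref{p:projection} to the random variable $H_n$ at level $n$: since $\mathcal P_n H_n=H_n$ and $H_n$ is $\mathcal F_n$-measurable, it gives $E[S_n\mid\mathcal F_n]=S_n$. For the martingale identity I would apply the same proposition to $H_{n+1}$ at level $n$; the point is a bookkeeping check that $\mathcal P_n$ followed by $E[\cdot\mid\mathcal F_n]$ sends $H_{n+1}$ to $H_n$ (because $E[E[G_j\mid\mathcal F_{n+1}]\mid\mathcal F_n]=E[G_j\mid\mathcal F_n]$ for $j\le n$), which yields $E[S_{n+1}\mid\mathcal F_n]=S_n$. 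A uniformly integrable martingale converges a.s.\ and in $L^1$ to some $S\in L^1$ with $S_n=E[S\mid\mathcal F_n]$. It then remains to identify $S$ with $\ssint G\,\xi$, i.e.\ to verify the defining identity of Definition \ref{def:skorokhod}: $E[S\,F(\xi_1,\dots,\xi_m)]=E[G\cdot\nabla F(\xi_1,\dots,\xi_m)]$ for all $m$ and all bounded differentiable $F$ with bounded gradient. The key observation is that this identity, with $S_n$ in place of $S$, stabilizes once $n\ge m$: writing out the defining property of $S_n=\ssint H_n\,\xi$ gives $E[S_nF]=\sum_{j=1}^m E[(H_n)_j\,\partial_j F]$, and since $(H_n)_j=E[G_j\mid\mathcal F_n]$ while $\partial_jF(\xi_1,\dots,\xi_m)$ is $\mathcal F_m$- hence $\mathcal F_n$-measurable for $n\ge m$, the tower property collapses each term to $E[G_j\,\partial_j F]$, giving $E[S_nF]=E[G\cdot\nabla F]$ independently of $n$. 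Letting $n\to\infty$ and using $L^1$-convergence of $S_n$ together with boundedness of $F$ produces $E[SF]=E[G\cdot\nabla F]$; since $E|G_i|<\infty$ by hypothesis, $G$ is Skorokhod integrable with integral $S$, and the concluding assertion follows from the convergence just established.

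\emph{Main obstacle.} I do not expect any genuine analytic difficulty here. The only delicate points are the two applications of Proposition \ref{p:projection} in the hard direction — getting the interaction of the projection $\mathcal P_n$ with the conditioning $E[\cdot\mid\mathcal F_n]$ exactly right so that $H_{n+1}$ maps to $H_n$ — and recognizing that the Skorokhod-defining identity for $S_n$ becomes $n$-independent for $n\ge m$, which is what lets the limit pass through with nothing more than $L^1$-convergence (no quantitative estimate needed). It is worth recording explicitly that $\bigvee_n\mathcal F_n=\mathcal F$, so that the martingale limit is the genuine $\mathcal F$-measurable Skorokhod integral.
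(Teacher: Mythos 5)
Your proof is correct and follows essentially the same route as the paper: both directions rest on Proposition \ref{p:projection}, Doob martingale convergence, and identification of the limit through the defining test-function identity. The only (harmless) variation is in the last step, where you collapse $E[(H_n)_j\,\partial_jF]$ to $E[G_j\,\partial_jF]$ exactly for $n\ge m$ by the tower property, whereas the paper passes to the limit using that $(H_n)_j\to G_j$ in $L^1$.
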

\begin{proof}
If (2) holds, by Proposition \ref{p:projection}, $H_n$ is Skorokhod integrable and

$$S_n=E[\sint G\xi|\mathcal F_n].
$$Since these are conditional expectations of a fixed integrable random variable, we see that $(S_n)$  are uniformly integrable, so (1) and the final conclusion hold.

If (1) holds, then by Proposition \ref{p:projection} applied to $G'=H_{n+1}$, we have  $$E[S_{n+1}|\mathcal F_n]=E\left[\sint H_{n+1}\,\xi \,\Big|\, \mathcal F_n \right]=\sint H_n\, \xi = S_n.$$
So $S_n$ is a uniformly integrable martingale and converges to some limit $S$ almost surely and in $L_1$.  Then, for $F$ bounded differentiable with bounded gradient (as in Definition \ref{d:skorokhod}), we have
$$
E[FS]=\lim E[FS_n] = \lim E[\nabla F\cdot H_n] = E[\nabla F \cdot G].
$$
In the last two equalities, we used that by definition, the coordinate $(H_n)_j$ for $n\ge j$ is a uniformly integrable martingale with limit $G_j$. Thus $G$ is Skorokhod integrable with integral $S$.
\end{proof}

We call a subspace $S\subset \seq$ finitary if there exists an $n$ such that for all vectors in $S$, their coordinates are zero beyond  $n$. The projection to a finitary subspace is well-defined: simply restrict a vector to the first $n$ coordinates and then apply Euclidean projection there. Orthogonal invariance implies the following corollary to Proposition \ref{p:projection}.

\begin{corollary}
Let $\mathcal P$ be projection to a finitary subspace of $\seq$. Let $\mathcal S$ be $\sigma$-field generated by the random variable $\mathcal P\xi$.
If $G$ is Skorokhod integrable, then  $E[\mathcal P G|\mathcal S]$
is Skorokhod integrable, and
$$
E\left[\sint G\,\xi \,\Big|\, \mathcal S\right]=\sint E[\mathcal P\,G \,|\, \mathcal S]\, \xi.
$$
\end{corollary}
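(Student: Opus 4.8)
The plan is to reduce the statement to the coordinate case, Proposition \ref{p:projection}, by a finitary orthogonal change of coordinates — this is exactly the ``orthogonal invariance'' alluded to just before the statement.

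First I would set up the rotation. Write $V$ for the finitary subspace and $k=\dim V$; by definition all coordinates beyond some $n$ of vectors in $V$ vanish, so $V$ sits inside the coordinate subspace $\mathbb R^n\subset \seq$. I would pick an orthonormal basis $v_1,\dots,v_n$ of $\mathbb R^n$ with $v_1,\dots,v_k$ spanning $V$, let $O$ be the $n\times n$ orthogonal matrix with rows $v_i$, and extend $O$ to $\seq$ by the identity on coordinates beyond $n$. Then $O$ conjugates the Euclidean projection onto $V$ into the coordinate projection, $O\mathcal P=\mathcal P_k O$, and the rotated sequence $\eta:=O\xi$ is again an i.i.d.\ standard Gaussian sequence (with $\eta_j=\xi_j$ for $j>n$) by rotational invariance of the Gaussian law. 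Moreover $(\eta_1,\dots,\eta_k)=(\langle\xi,v_1\rangle,\dots,\langle\xi,v_k\rangle)$ generates exactly the $\sigma$-field $\mathcal S=\sigma(\mathcal P\xi)$.

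The key step is an equivariance lemma: if $G$ is Skorokhod integrable against $\xi$, then $OG$ is Skorokhod integrable against $\eta$ and $\sint G\,\xi=\sint (OG)\,\eta$. To prove it I would test against an arbitrary bounded differentiable $F$ on $\mathbb R^m$ ($m\ge n$) with bounded gradient: with $\tilde F(x)=F(Ox)$, again admissible, and $\nabla\tilde F(x)=O^\top\nabla F(Ox)$ by the chain rule, the defining property \eqref{defbygrad} of $S:=\sint G\,\xi$ gives
\[
E\big[S\,F(\eta_1,\dots,\eta_m)\big]=E\big[S\,\tilde F(\xi_1,\dots,\xi_m)\big]=E\big[G\cdot\nabla\tilde F\big]=E\big[(OG)\cdot\nabla F(\eta_1,\dots,\eta_m)\big],
\]
using $G\cdot O^\top w=(OG)\cdot w$ and the fact that each $(OG)_i$ is a finite linear combination of the $G_j$ (or equals $G_i$), so $E|(OG)_i|<\infty$. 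By the uniqueness part of Proposition \ref{fdc}, applied to the Gaussian sequence $\eta$, this says precisely $S=\sint(OG)\,\eta$. Running the same computation with $O^{-1}$ in place of $O$ gives the converse transfer of Skorokhod integrability.

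Finally I would apply Proposition \ref{p:projection} to the Gaussian sequence $\eta$, the Skorokhod-integrable vector $OG$, and level $n=k$, obtaining that $E[\mathcal P_k(OG)\mid\sigma(\eta_1,\dots,\eta_k)]$ is Skorokhod integrable against $\eta$ with $E[\sint(OG)\,\eta\mid\sigma(\eta_1,\dots,\eta_k)]=\sint E[\mathcal P_k(OG)\mid\sigma(\eta_1,\dots,\eta_k)]\,\eta$. Then I translate back through $O^{-1}$ using the equivariance lemma, $O\mathcal P=\mathcal P_k O$, the fact that $O$ commutes with conditional expectation, and $\sigma(\eta_1,\dots,\eta_k)=\mathcal S$: the left side becomes $E[\sint G\,\xi\mid\mathcal S]$, and the right side becomes $\sint E[\mathcal PG\mid\mathcal S]\,\xi$ with $E[\mathcal PG\mid\mathcal S]$ Skorokhod integrable, which is the claim. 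The main obstacle is the equivariance lemma; there the only real subtleties are the chain-rule bookkeeping and the check that a finitary orthogonal map preserves both the i.i.d.\ standard Gaussian structure and the class of admissible test functions, while everything else is the same formal manipulation as in Proposition \ref{p:projection}. (One also uses the trivial remark that the statement and proof of Proposition \ref{p:projection} depend only on the i.i.d.\ standard Gaussian structure of the sequence, not on the canonical realization, so it may be applied to $\eta$.)
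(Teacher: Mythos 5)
Your argument is correct and is exactly the route the paper intends: the corollary is stated there with only the remark that ``orthogonal invariance implies'' it, and your finitary rotation $O$ with $O\mathcal P=\mathcal P_k O$, the test-function/chain-rule equivariance lemma for $\sint G\,\xi$ under such rotations, and the application of Proposition \ref{p:projection} to the rotated sequence $\eta=O\xi$ is precisely that orthogonal-invariance argument carried out in detail. No gaps; this matches the paper's (implicit) proof.
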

\subsection{Examples of Skorokhod integrals}
\ \\
We first consider some one-dimensional problems ($D=\{1\}$ in Definition (\ref{d:skorokhod})), as these are as simple as possible.

\begin{example}\label{ex:recursion}
Consider the case when $\Xi=\mathbb R$ houses a  one-dimensional Gaussian $\xi$. Let's solve the recursion
$$
X_0=1, \qquad X_{k}=\sint X_{k-1} \,\xi, \qquad k\ge 1.
$$
Then the solution is $X_n=H_n(\xi)$, where $H_n$ are the Hermite polynomials. This can be seen by the recursion $H_{k+1}(x)=xH_k(x)-H_k'(x)$ and the formula \eqref{e:simple-skorokhod}.  See Example \ref{ex15} for further connection between the Skorokhod integral and Hermite polynomials.
\end{example}

The following example is good to keep in mind when trying to construct discrete-time Wick-ordered  polymers.

\begin{example}
Consider a one-dimensional Gaussian $\xi$, and the slightly more complex linear recursion
$$
X_0=1, \qquad X_{k}=X_{k-1}+\beta \sint X_{k-1} \,\xi, \qquad k\ge 1.
$$
Write the solution in the form $X_k=\beta^k p_k(\xi)$. Then $p_k$ satisfies
$$
p_{k}=\beta^{-1} p_{k-1}+p_{k-1}\xi - p'_{k-1}, \qquad p_0=1
$$
This is just the Hermite recursion with $x=\beta^{-1} +\xi$, so
the solution is $X_k=\beta^kH_k(\beta^{-1}+\xi)$, where $H_k$ are the Hermite polynomials.
\end{example}

Can basic functions be non-integrable?

\begin{example}
Still in one-dimension, let $G=\one(\xi\ge 0)$. The Skorokhod integral tries to be $\xi^+-\delta_0(\xi)$, but of course this is not a random variable, as $\delta_0$ is not an honest function. It is a simple exercise to show that $\ssint G \, \xi$ does not exist. Perhaps it could be defined by extending the probability space, but we do not pursue this direction.
\end{example}

Next, we study the continuous version of Example \ref{ex:recursion}.

\begin{example}[A simple Skorokhod stochastic differential equation] We still work on the one-dimensional Gaussian space, and consider the ``stochastic'' differential equation $du = \ssint u\xi$ written in an integral form
\begin{equation}\label{e:simple-Sk}
u(t)=\int_0^t\left(\sint u(s)\,\xi\right) ds, \qquad u(0)=1.
\end{equation}
This is not a usual SDE, as the entire randomness is based on  a single Gaussian variable $\xi$!
To solve this, we introduce the test function $F=e^{i\lambda\xi}$ satisfying  $\nabla F=i\lambda F$, and let $\hat u(t)=E[u(t)F]$. We multiply \eqref{e:simple-Sk} by $F$, take expectations and use the definition of the Skorokhod integral to get
$$
\hat u(t) = i\lambda \int_0^t \hat u(s)ds, \qquad \hat u(0)=e^{-\lambda^2/2}.
$$
this is just the mild version of the equation $\hat u'=i\lambda \hat u$ solved uniquely by
$\hat u=e^{i\lambda t-\lambda^2/2}$.
This holds for all $\lambda$, so inverting, we get $u(t)=e^{t\xi-t^2/2}$.

Remarkably, this method will also solve the Wick-ordered  planar stochastic heat equation \eqref{2dshe}.
\end{example}

The planar Wick-ordered  heat equation will be a continuum version of the following simple example.

\begin{example}\label{ex:MC}(Continuous time, finite state-space Wick-ordered  polymer)
Next, consider the evolution of a finite state-space $S$ and continuous time Markov chain  moving on these $d=|S|$ states (for example a continuous time symmetric random walk on the $d$-cycle). The generator $K$ is a $d\times d$ matrix, and the transition probability vector $p$ from a given site $x$  satisfies the forward equation
$$
\partial_t p(y,t) = (p(\cdot,t)K)(y,t) \qquad p(\cdot,0)=\one_x.
$$
We add a fixed  potential $V:S\to \mathbb R$, and then the partition function for a  non-random  ``polymer''  (i.e. in a non-random potential) should look like
$$
\partial_t u(y,t) = (u(\cdot,t) K)(y) + u(y,t)V(y) \qquad u(\cdot,0)=\one_x.
$$
Now, if the potential is now random and Gaussian, the Skorokhod version of this equation is written as
$$
\partial_t u(y,t) = (u(\cdot,t) K)(y) + \sint u(y,t)\one_y \mathbb \,\xi \qquad u(\cdot,0)=\one_x.
$$
Here $\xi$ is a standard $d$-dimensional Gaussian vector defined on $\Xi=\mathbb R^d$.
The vector $\one_y$ is there so that the noise corresponds to a diagonal potential.

Multiplying by  $F=e^{i\sum_{y\in S}\xi(y)\lambda(y)}$.
setting $\hat u =E[uF]$ and taking expectations we
get
$$
\partial_t \hat u(x,t) = \hat u(\cdot,t) K (y) + \hat u(y,t)i\lambda(y) , \qquad u(\cdot,0)=\one_x e^{-\sum_i \lambda_i^2/2}.
$$
At this point, we leave it to the reader to use the discrete version of the Feynman-Kac formula and then Fourier inversion to conclude that
$$
u(y,t)=E_Q \bigg[\exp\Big\{\sum_z X(z)\xi_z-X(z)^2/2\Big\}\one(X_t=y)\bigg].
$$
The expectation is under a measure $Q$ independent of the Gaussian space $\Xi$. Under the measure $Q$, the variable $X_t$ is the position of the original Markov chain at time $t$ when started from $x$, and $X(z)=\int_0^t \one (X_s=z)ds$ is the time  spent at site $z$.

The reader will recognise the formula above, without the quadratic terms in the exponential, as the definition of the random polymer with Gaussian weights. The quadratic correction makes $Eu(y,t)=p(y,t)$.
\end{example}

Next, we provide another example of a Skorokhod integral problem that will be a key ingredient to the proof of Proposition \ref{p:fshe-solve}. It can be thought of as a pathwise version of the Duhamel formulation \eqref{e:finite-she} of the projected planar Wick-ordered SHE.

\begin{example}\label{ex:sk-transport} 
Let $\basis_1, \ldots, \basis_n$ be bounded  functions that are orthonormal in   $L^2(\mathbb R^2)$.  
Let $\omega:(0,T]\to \mathbb R^2$ be a fixed measurable function.  We derive a solution to the  differential equation 
\begin{equation}\label{e:sk-transport}
\partial_t u(t)=\sintn u(t)\delta_{\omega(t)}\, \xi, \quad t\in (0,T], \qquad u(0)=1,
\end{equation}
over $u\in \sigma(\xi_1,\ldots,\xi_n)$, with $u=u(t)=u(t,\xi_1,\ldots,\xi_n)>0$ differentiable in $t,\xi_1,\ldots \xi_n$.  Here the Skorokhod integral is of a randomly weighted delta mass at a deterministic location, see Definition \ref{d:Skorokohod-functions}. Integration by parts in  the test function definition of the Skorokhod integral 
shows that 
$
\ssintn u(t)\delta_{\omega(t)}\, \xi =\sum_{i=1}^n (f_i) (\xi_i-\partial_{\xi_i})u
$
and so assuming the above regularity of $u$, \eqref{e:sk-transport} is equivalent to 
$$
\partial_t u=\sum_{i=1}^n (f_i) (\xi_i-\partial_{\xi_i})u,  \qquad u(0,\cdot)=1, \qquad f_i(t)=\basis_i(\omega(t)).
$$
We divide by $u$ and rearrange to get
$$
(\partial_t +\sum_{i=1}^n f_i \partial_{\xi_i})\log u=\sum_{i=1}^n f_i\xi_i,
$$
an inhomogeneous transport equation. Using the method of  characteristics,  set $\xi(t)=\xi_0+\int_0^t f(s)\,ds$, so that $\partial_t \log u(t,\xi(t))=\sum_{i=1}^n f_i(t)\xi_i(t)$. So at least along the curves $\xi(t)$ we have 
\begin{align}\label{e:transport-solution}
u(t,\xi)&=\exp \sum_{i=1}^n \int_0^t f_i(t)\int_0^sf_i(s)\,dr\,ds
\\ \notag
&=\exp \sum_{i=1}^n\xi_i m_i(t)-\tfrac12m_i(t)^2, \qquad m_i(t)=\int_0^t \basis_i(\omega(s))ds.
\end{align}
Differentiating directly shows that \eqref{e:transport-solution} indeed solves the problem \eqref{e:sk-transport}. Integrating \eqref{e:sk-transport} in $t$ we also have 
\begin{equation}\label{e:sk-transport-int}
\partial_t u(t)=1+\int_0^t\sintn u(s)\delta_{\omega(s)}\,\xi\,ds, \quad t\in (0,T].
\end{equation}
\end{example}

\subsection{Classical Skorokhod integral}
We will show that Definition \ref{d:skorokhod} of the Skorokhod integral given in this article  is an $L^1$ extension of the standard or classical definition. The precise statement will be given in Lemma \ref{l1extension} below. Before we do this, we provide some motivation for the Skorokhod integral (our definition, as well as the classical one),  looking at the finite dimensional case. A natural requirement is that for deterministic functions (in this case, vectors $G$ in $\mathbb R^n$) the Skorokhod integral should be the inner product, \begin{equation}\label{ppp} \sint G \,\xi = G_1\xi_1+\ldots +G_n\xi_n.
\end{equation}
Now let's say we are in $n=1$, and ask what the integral of a polynomial $G$ in $\xi$ should be. One option might be $G\xi $, but it does not have mean zero, for example when $G=\xi$. If we want it to have zero expectation,
we need to introduce a correction, $\ssint G \xi =G \xi  ~+$ correction. It is reasonable  that the correction would be a polynomial in $\xi$ of lower degree. Moreover, to be reminiscent of the It\^o integral, we may want iterated integrals of 1 to be orthogonal. For polynomials $G$, these two requirements define a unique integral
\begin{equation}
\label{recur}
\sint G \xi = G\xi  -\partial_\xi G, \qquad n=1.
\end{equation}
 Indeed, the iterated integrals of $1$ with the requirement of orthogonality, turn out to be given by the Hermite polynomials, which satisfy the recurrence relation (\ref{recur}).
By the completeness of these polynomials
there is a unique continuous extension for general integrands.

To treat the general finite dimensional case, it helps to first recall a very general construction.

Let  $H=L^2(R)$.  The $n$-dimensional case corresponds to $R=\{1,\ldots ,n\}$ with counting measure.
       For each $k\ge 1$, define recursively
    $\mathcal{H}_{k}$ as  the set
    of symmetric polynomials in $\xi_1,\xi_2,\ldots$ of degree $k$, orthogonal to $\mathcal{H}_{k-1}$, under the inner
    product
    $E[FG]$.  Then (see, for example, \cite{Janson})
    \begin{equation}\label{iso}
        \oplus_{k=0}^\infty \mathcal{H}_{k}=L^2(\Xi,\mathcal F, P).
    \end{equation}
     Let  
     \begin{equation}
     \label{pikk}
     \pi_k:\oplus_{k'=0}^\infty \mathcal{H}_{k'}\to\mathcal{H}_k,
     \end{equation}
      denote the orthogonal projection to $\mathcal{H}_{k}$.

$\mathcal{H}_{0}$ consists of non-random functions on $R$, and can be identified with $H$.  For $k\ge 1$, one needs some symmetrization (after all, the polynomials $\xi_1\xi_2$ and $\xi_2\xi_1$ mean the same thing.)  Let $H^{\otimes_s k}=L^2\left(R^k, \mathcal R^k_s, \frac{1}{k!}\mu^k\right)$, where $\mathcal R^k_s$
is the $\sigma$-algebra of symmetric $\mathcal R^k$-measurable functions
on $R^k$.  We now explain how to represent $\mathcal{H}_{k}$ by
$H^{\otimes_s k}$. The {\bf symmetric Fock space} $\oplus_{k=0}^\infty H^{\otimes_s k}$ is thus identified with $\oplus_{k=0}^\infty \mathcal{H}_{k}$, and therefore by \eqref{iso}, with the Hilbert space  $L^2(\Xi,\mathcal F, P)$
of all  functions of $\xi_1,\xi_2,\ldots$ with finite second moment.
Given $g\in  H^{\otimes k}$, call
$$
{\rm Sym}(g)=\sum_{\sigma\in S_k}g\circ\sigma
$$
where  $S_k$ is the symmetric group.
In particular,
for $g_1,\ldots,g_k\in H$, $g_1\otimes\cdots\otimes g_k$ is the element of $L^2(R^k)$ which takes $(x_1,\ldots,x_k)$ to $\prod_{i=1}^k g_{i}(x_i)$ and  $
{\rm Sym}(g_1\otimes\cdots\otimes g_k)$ is the symmetric function that takes $(x_1,\ldots,x_k)$ to $
\sum_{\sigma\in S_k}\prod_{i=1}^k g_i(x_{\sigma_i})$.  Linear combinations of such are dense in $H^{\otimes_s k}$.

For $g\in H$ define $I_1(g)=\langle g,\xi\rangle$, as in the definition of the white noise  in (\ref{linis}).
For $k\ge 2$, and $g_1,\ldots,g_k\in H$ let
\begin{equation}
     \label{isometry-multi}
  I_k({\rm Sym}(g_1\otimes\cdots\otimes g_k))=\pi_k(I_1(g_1)\cdots I_1(g_k)),
\end{equation}
 where $\pi_k$, defined in (\ref{pikk}), is the orthogonal projection to $\mathcal H_k$.
It can then be extended to
$H^{\otimes_s k}$ by linearity and density.
For $g\in H^{\otimes_s k}$, $
\int g\xi=I_k(g)
$ is its {\bf multiple stochastic integral}.

\begin{definition}
Let $G: R\to L^2(\Xi,\mathcal F, P)$.   It follows  from \eqref{iso} that $G(x)= \sum_{k=1}^\infty \pi_k(G(x))$ in $L^2$.
 For each $x$, $\pi_k(G(x))=I_k(g_x)$, for some symmetric function $g_x\in H^{\otimes_s k}$. Now, since for each $x$,  $g_x(x_1,\ldots, x_k)$ is a function of $k$ variables, we can think of $\bar g (x_1,\ldots,x_k,x):=g_x(x_1,\ldots,x_k)$ as function of $k+1$ variables and set
\begin{equation}\label{iterativeclassic}
\int \pi_k(G)\xi=I_{k+1}({\rm Sym}(\bar g))).
\end{equation}
If
$
\int G\xi:=\sum_{k=0}^\infty \int \pi_k(G)\xi
$
 converges in $L^2(\Xi,\mathcal{F}, P)$, we call it {\bf the classical Skorokhod integral} of $G$ and call $G$ classically Skorokhod integrable. Note that we use the notation $\int G\xi$ to distinguish it from our previous definition of the Skorokhod integral $\ssint G\xi$.
\end{definition}

\begin{example}\label{ex15}\label{ex:multi-hermite} 
\ \\ Let $R=\{1,\ldots ,n\}$ with counting measure. Then $H=L^2(M,\mathcal M,\mu)$ is   $\mathbb R^n$ with the standard
inner product.
 $H^{\otimes k}$ is 
the set of real functions defined on $\{1,\ldots,n\}^k$, or, equivalently the positions in  $\{1,\ldots,n\}$ of $k$ \emph{distinguishable} particles.
$H^{\otimes_s k}$ then
corresponds to 
the set of real functions defined on the occupation measures of $k$  \emph{indistinguishable}
 particles on $\{1,\ldots,n\}$ represented by $\eta=(\eta_1,\ldots,\eta_n)\in\mathbb N^n$ with
$|\eta|:=\sum_i \eta_i = k$,   and inner
product  $
\sum_{|\eta|=k}g(\eta)f(\eta).
$
for  $g,f\in H^{\otimes_s k}$.
Let $H_k$, $k=0,1,\ldots$ be the Hermite polynomials. They can be defined through their generating function
\begin{equation}\label{hermitegenfn}
    e^{\lambda x - \tfrac12 \lambda^2}= \sum_{k=0}^\infty
    H_k(x) \frac{\lambda^k}{k!}.
\end{equation} One checks directly that
  $I_k: H^{\otimes_s k}\to\mathcal{H}_k$
  is given in this representation by
\begin{equation}
I_k(g)= \sum_{|\eta| =k}g(\eta) H_{\eta_1}(\xi_1)\cdots H_{\eta_n}(\xi_n),
\end{equation}
so the classical Skorokhod integral for $(g_x)_{x=1,\ldots,n}$ with $g_x\in H^{\otimes_s k}$, can just be written, by linearity, as a sum over $|\eta|=k$ of
\begin{align}
    \int &g_x(\eta) H_{\eta_1}(\xi_1)\cdots H_{\eta_n}(\xi_n)\xi(x)\\&\notag=\sum_{{ x\in\{1,\ldots,n\}}}{\rm Sym}(\bar g)(\eta+{1_x}) { H_{\eta_x+1}(\xi_x)\prod_{y\in \{1,\ldots,n\}-\{x\}}} H_{\eta_y}(\xi_y),
    \end{align}
 where  we define $\bar g(\eta+1_x):=g_x(\eta)$, and  $(\eta+{1}_x)_i=\eta_i+\delta_{i,x}$. 
  Using the Hermite polynomial identity
$$
H_{k+1}(x)=xH_k(x)-\partial_x H_k(x),
$$
we obtain that for every set of random variables
$ G(x)$, with $x\in R$,
$$
\int  G\xi=\sum_{i=1}^n  G_{i}\xi_i-\partial_{\xi_i} G_{i}
 =\sint  G\xi
$$
where in the last equality we have used \eqref{e:simple-skorokhod}.
\end{example}

We can now deduce the following lemma.

\begin{lemma}
\label{finited}Let $R=\{1,\ldots,n\}$ and let $G:R\to \oplus_{k=0}^\infty\mathcal H_k$, where $\mathcal H_{k}$   is the set of symmetric
polynomials in $(\xi_i)_{i\in R}$ of degree $k$. \\
 Assume that
$\sum_{k=0}^\infty\sum_{x\in R} |E[\pi_k(G(x)]|<\infty$. Then,  $G$ if has a classical
Skorokhod integral,  it is Skorokhod integrable. Furthermore, in this case, these integrals are equal.
\end{lemma}
\begin{proof} From the previous example, we know that the classical Skorokhod integral and the Skorokhod integral coincide for each $k$ in the space $H^{\otimes k}$. It follows that for each $K$, if $G_K=\sum_{k=0}^K\pi_k(G)$, then 
$$
S_K:=\int G_K\xi=\sint G_K\xi.
$$
Therefore, for all $F:\mathbb R^n\to\mathbb R$ we have that
$
E[S_KF]=E[G_K\cdot\nabla F]$.
Taking the limit $K\to\infty$ in the above inequality, we
conclude that $S=\sum_{k=0}^\infty\int\pi_k(G)\xi$ satisfies
$$
E[SF]=E[G\cdot\nabla F],
$$
so that $G$ is Skorokhod integrable and its Skorokhod integral is
equal to its classical Skorokhod integral.
\end{proof}

\begin{lemma} 
\label{l1extension}Let $G: R\to L^2(\Xi,\mathcal F, P)$ be a random function such that
$$\int_{\mathbb R} E[G(x)^2]d\mu<\infty.$$  Then, if $G$
  has a classical
  Skorokhod integral, it is
  Skorokhod integrable.  Furthermore, in this case, these integrals are equal.
\end{lemma}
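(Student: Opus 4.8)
The plan is to reduce to the finite-dimensional situation of Lemma \ref{finited} by projecting $G$ both in the $R$-variable, onto $V_N:=\operatorname{span}(\basis_1,\dots,\basis_N)$, and in the noise, onto $\mathcal F_N$, and then to remove the truncation by a conditional-expectation argument; this parallels the proof of Lemma \ref{finited}, the only new feature being the infinitude of $R$. First, writing $G_i=\langle G,\basis_i\rangle$ and using $\langle G(\cdot,\omega),\basis_i\rangle^2\le\|G(\cdot,\omega)\|_{L^2(R)}^2$ together with $\int_R E[G(x)^2]\,d\mu<\infty$, we get $E[G_i^2]<\infty$, so in particular $E|G_i|<\infty$ and $G$ meets the standing hypothesis of Definition \ref{def:skorokhod}. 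Let $S=\int G\,\xi\in L^2(\Xi,\mathcal F,P)$ be the classical Skorokhod integral, which exists by assumption. It is then enough to verify that $S$ satisfies the defining relation \eqref{defbygrad}: for every $n$ and every bounded differentiable $F:\mathbb R^n\to\mathbb R$ with bounded gradient, $E[S\,F(\xi_1,\dots,\xi_n)]=\sum_{i=1}^n E[G_i\,\partial_i F]$; this shows $G$ is Skorokhod integrable, and, the Skorokhod integral being unique by Proposition \ref{fdc}, that $\sint G\,\xi=S=\int G\,\xi$.

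Fix such $n$ and $F$, and fix any $N\ge n$. Let $\Pi_N$ be the orthogonal projection of $L^2(R)$ onto $V_N$, and set $G^N:=E[\Pi_N G\mid\mathcal F_N]$, a random function on $R$ supported in $V_N$ with $\mathcal F_N$-measurable coordinates; since $\|G^N\|_{L^2(R\times\Xi)}\le\|G\|_{L^2(R\times\Xi)}<\infty$, under the identification $V_N\cong L^2(\{1,\dots,N\})$ it is of the type treated in Lemma \ref{finited}. The key structural claim is that $G^N$ still has a classical Skorokhod integral and that it equals $E[S\mid\mathcal F_N]$. To see this, expand $G(x)=\sum_k\pi_k(G)(x)=\sum_k I_k(f_k(\cdot,x))$ with symmetric kernels $f_k\in L^2(R^{k+1})$, so $S=\sum_k I_{k+1}({\rm Sym}(f_k))$ by \eqref{iterativeclassic}; projecting all $k+1$ slots onto $V_N$, the $k$-th kernel of $G^N$ is $\Pi_N^{\otimes(k+1)}f_k$. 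Now $\Pi_N^{\otimes(k+1)}$ is an orthogonal projection invariant under permuting its tensor factors, so it commutes with ${\rm Sym}$; and $I_{k+1}(\Pi_N^{\otimes(k+1)}h)=E[I_{k+1}(h)\mid\mathcal F_N]$, since conditioning on $\mathcal F_N$ is exactly the orthogonal projection onto the chaos built from $\xi_1,\dots,\xi_N$, whose kernels lie in $V_N^{\otimes_s(k+1)}$. Hence $\int\pi_k(G^N)\,\xi=E[I_{k+1}({\rm Sym}(f_k))\mid\mathcal F_N]$, and since $\Pi_N^{\otimes(k+1)}$ does not increase $L^2$-norms, $\sum_k\|\int\pi_k(G^N)\,\xi\|^2\le\sum_k\|I_{k+1}({\rm Sym}(f_k))\|^2=\|S\|^2<\infty$; the series therefore converges in $L^2$ and sums to $E[S\mid\mathcal F_N]$, proving the claim.

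Applying Lemma \ref{finited} (and Example \ref{ex15}) to $G^N$ gives that it is Skorokhod integrable with $\sint G^N\,\xi=\int G^N\,\xi=E[S\mid\mathcal F_N]$, so relation \eqref{defbygrad} for $G^N$ with test function $F$ reads $E\big[E[S\mid\mathcal F_N]\,F(\xi_1,\dots,\xi_n)\big]=\sum_{i=1}^n E[G^N_i\,\partial_i F]$. Since $F$ and every $\partial_i F$ depend only on $\xi_1,\dots,\xi_n$ and $n\le N$, the left-hand side equals $E[S\,F(\xi_1,\dots,\xi_n)]$; and for $i\le N$ we have $G^N_i=E[G_i\mid\mathcal F_N]$ because $\langle\Pi_N G,\basis_i\rangle=\langle G,\basis_i\rangle=G_i$, hence $E[G^N_i\,\partial_i F]=E[G_i\,\partial_i F]$. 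This yields $E[S\,F(\xi_1,\dots,\xi_n)]=\sum_{i=1}^n E[G_i\,\partial_i F]$, which is \eqref{defbygrad}. As $n$ and $F$ were arbitrary, $S$ satisfies \eqref{defbygrad} in full generality, completing the proof.

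The step I expect to be the main obstacle is the structural claim of the second paragraph — that $G^N$ keeps a classical Skorokhod integral, equal to $E[S\mid\mathcal F_N]$. It relies on the compatibility of the multiple-integral maps $I_k$ with symmetrization and with the kernel projections $\Pi_N^{\otimes(k+1)}$, together with the norm-nonincreasing property of those projections, which is what keeps the truncated chaos series convergent. If one prefers not to apply Lemma \ref{finited} at infinite chaos order, the same scheme works with one further layer of truncation: cut $G^N$ at chaos level $M$, apply Lemma \ref{finited} for each finite $M$, and let $M\to\infty$, using $G^{N,M}\to G^N$ in $L^2(R\times\Xi)$ and, by the same norm bound, $\int G^{N,M}\,\xi\to E[S\mid\mathcal F_N]$ in $L^2(\Xi)$.
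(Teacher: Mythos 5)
Your proof is correct, but it takes a genuinely different route from the paper's. The paper truncates only the coordinate representation of $G$ in the $x$-variable, notes (via Example \ref{ex15} / Lemma \ref{finited}) that the truncated classical and new integrals coincide, and then removes the truncation by an $L^2$ limiting argument: it invokes the characterization of the domain of the classical Skorokhod integral (the weighted condition \eqref{jbd}, citing Janson's Theorem 7.39) together with the isometry-type identity $\sum_k E[|\int\pi_k(G)\xi|^2]=\sum_k(k+1)\int_R E[\pi_k(G)^2]d\mu$ to get $S_n\to S$ in $L^2$, and passes to the limit in \eqref{seqx}. You instead project in \emph{both} variables — onto $V_N$ in $x$ and onto $\mathcal F_N$ in the noise — so that $G^N=E[\Pi_NG\mid\mathcal F_N]$ is an honest finite-dimensional object to which Lemma \ref{finited} applies verbatim, and then you exploit the fact that the test functions in Definition \ref{def:skorokhod} are themselves finite-dimensional: choosing $N\ge n$, the truncated identity already yields \eqref{defbygrad} exactly, with no limit in $N$ and no appeal to the domain characterization \eqref{jbd}. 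What your route buys is the elimination of the convergence step $S_n\to S$ and of the Janson citation; what it costs is the standard but unproved-in-this-paper fact that conditioning a multiple integral on $\mathcal F_N$ projects its kernel, $E[I_{k+1}(h)\mid\mathcal F_N]=I_{k+1}(\Pi_N^{\otimes(k+1)}h)$, which you correctly identify as the crux and which can be checked directly from the Hermite representation of Example \ref{ex:multi-hermite} (for a product $H_{\eta_1}(\xi_1)\cdots H_{\eta_m}(\xi_m)$ the conditional expectation kills every factor with index beyond $N$ and positive degree). Your contraction bound $\|I_{k+1}(\Pi_N^{\otimes(k+1)}{\rm Sym}f_k)\|\le\|I_{k+1}({\rm Sym}f_k)\|$ then guarantees that $G^N$ stays in the classical domain with integral $E[S\mid\mathcal F_N]$, so the argument is complete as written.
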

\begin{proof}
Lemma \ref{finited} says that the statement  holds in the finite dimensional case.
For the general case,  we use the fact that integrands $G$ with
\begin{equation}\label{jbd}\sum_{k=0}^\infty (k+1) \int_{R} E[\pi_k(G)^2]d\mu<\infty,
\end{equation}
form  the domain of the classical Skorokhod integral (see, for example, Theorem 7.39 of \cite{Janson}).  Take such a $G$ and let $G_n$ be the truncation of its coordinate representation.

Let $S_n$ be its classical Skorokhod integral. We have shown in Example \ref{ex15}  that this coincides with
our Skorokhod
integral, so that
for all $F:\mathbb R^n\to \mathbb{R}$ which are bounded with bounded
gradient,
\begin{equation}\label{seqx}
E[S_n F]=E[G_n\cdot\nabla F].
\end{equation}
If $S$ is the classical Skorokhod integral of $G$ then
\begin{equation}
E\left|S_n-S\right|^2=
\sum_{k=0}^\infty E\left[\left|\int \pi_k(G_n)-\pi_k(G))\xi\right|^2\right].
\end{equation} From the definition \eqref{iterativeclassic} of the classical Skorohod integral as a simple raising operator on the $\mathcal{H}_k$-s, we have for any $G$, $$
\sum_{k=0}^\infty E\left[\left|\int \pi_k(G)\xi\right|^2\right]= \sum_{k=0}^\infty (k+1)\int_{ R} E[\pi_k(G)^2]d\mu.$$ Since $G_n\to G$ in this space
from \eqref{jbd},
$S_n$
converges to $S$ in $L^2(\Xi,\mathcal F,P)$.
Now we can take limits on both sides of \eqref{seqx} to conclude that $E[S F]=E[G\cdot\nabla F]$.
  \end{proof}

Once one has identified the Skorokhod integral on the homogeneous chaoses, it is not a big step to identify the Skorokhod integral with the Ito integral when the integrands are appropriately non-anticipating.   The proof can be found in  \cite{BarlowNualart}, p. 138.

\begin{remark}[It\^o integral in 1 dimension] \label{r:Ito1}Let $\xi$ be a $d$-dimensional white noise on $\mathbb{R}$ and $B(t)=<\xi,\mathbf{1}_{[0,t]}>$, $d$-dimensional Brownian motion defined on $(\Xi, \mathcal{F}, \mathbf{P})$.  Let $\mathcal{F}_t=\sigma( B(s), s\in [0,t])$. 

The space of $d$-dimensional, square integrable adapted processes $G(t)$
    is the closure in $L^2(\Xi, \mathcal{F}, \mathbf{P};\mathbb{R}^d )$ of those which can be written as   $G(t) = \sum_{i=0}^{n-1} g_i \mathbf{1}_{(t_i,t_{i+1}]} (t)$
    where $g_i$ are bounded and measurable with respect to   $\mathcal{F}_{[0,t_i]}$.
    
      For any such  $G(t)$,  $\ssint G \xi$ coincides with the It\^o integral $\int G(t) dB(t)$.
\end{remark}

\begin{remark}[It\^o integral in 1+1 dimensions]\label{r:Ito1+1} Let $\xi$ be white noise on $\mathbb{R}_+\times \mathbb{R} $ and
    $B(x,t)= <\xi,\mathbf{1}_{[0,t]\times [0,x]}>$, $x\ge 0$ and $B(x,t)= -<\xi,\mathbf{1}_{[0,t]\times [x,0]}>$, $x< 0$ be the Brownian sheet.

     Let $\mathcal{F}_{[0,t]}=\sigma(B(s,x), s\in[0,t], x\in \mathbb{R})$.   The space of square integrable, time-adapted processes $G(x,t)$ is the closure in $L^2(\Xi, \mathcal{F}, \mathbf{P})$ of those which can be written as
    $$G(x,t) = \sum_{i,j=0}^{n-1} g_{i,j} \mathbf{1}_{(t_i,t_{i+1}], (x_i,x_{i+1}]} (x,t),$$ where  $g_{i,j}$ are bounded and measurable with respect to $ \mathcal{F}_{[0,t_i]}$.

For such $G(x,t)$, $\ssint G \xi$ coincides with the It\^o integral
    $\int G(x,t) B(dt, dx)$.
\end{remark}
Crucially for us, the second part of the theorem tells us that the standard $1+1$ dimensional multiplicative stochastic heat equation coincides with its Skorokhod version.  This can also be seen through the chaos representation.

\begin{example}\label{example25}
We try to solve the integral equation with initial data $\mu= \delta_0$ in the Skorokhod sense 
\begin{equation}
    Z(x,t) = p(x,t) + \int \int_0^t p(x-y,t-s) Z(y,s) ds \xi(y) dy.
\end{equation}
We look for a solution $Z(x,t)\in L^2(P)$. As above
we have $Z(x,t) = \sum_{k=0}^\infty I_k(g_k)$ for some $g_k(\cdot;t,x)$ in  symmetric $L^2((\mathbb R^2)^k)$.  Using \eqref{iterativeclassic} this means that $I_0(g_0) = p(x,t)$ and
\begin{equation}\label{38}
    I_{k+1}(g_{k+1}(x,t)) = \int \int_0^t p(x-y,t-s) I_k( g_k(y,s)) ds \xi(y) dy
\end{equation}
Here one should think of  $g_k\in H^{\otimes_s k}$ as depending on the extra parameters $t,x$.  It is not hard to see  that the solution is
\begin{equation}\label{39}
g_k(x_1,\ldots,x_k;t,x)= E_Q[m(x_1)\cdots m(x_k)]
\end{equation}
where $m$ is the occupation measure of the Brownian bridge from $0$ at time $0$ to $x$ at time $t$. Of course, this can also be written explicitly as
\begin{align}\label{40}
\int_{0<s_1<\cdots<s_k<t} \prod_{j=0}^k p(s_{j+1}-s_j, x_{j+1}-x_j)
    \end{align}
where $p(x,t)$ is the planar heat kernel \eqref{planarheatkernel} and $y_{k+1}=x$, $y_0=0$.

By comparison, in PAM, the $p$'s in \eqref{40} are weighted by the exponential of the renormalized self-intersection local time (see \cite{GuHuang}).

In $1+1$ dimensions, the analogue of \eqref{39}  uses the occupation measure of $(b(s),s)$ where $b$ is the one-dimensional bridge instead of the occupation measure of a planar Brownian bridge.  The analogue of \eqref{40} can be found in \cite{ACQ}.
One checks directly that the $L^2$ norm of $Z$, which is the sum over $k$ of the $L^2$ norms of the $g$'s, is finite in $1+1$ dimensions, for all $t$, and only finite up to $t_c<\infty$ in the planar case.
\end{example}

\section{Solving  Wick-ordered   heat equations}
\label{heateq}

\subsection{Projections of the equation and their solution}

Perhaps the most useful property on the Skorokhod integral is that the conditional expectation of solutions of \eqref{2dshe} with respect to $\mathcal F_n$ (defined in \eqref{effen}) are themselves solutions of a stochastic heat equation, but now with respect to finite dimensional noise. Moreover, we can solve such equations in an explicit way.

\begin{proposition}\label{p:spde-projection}

Let $u$ be a solution of the Wick-ordered heat equation, Definition \ref{d:Wick-ordered -she-intro}. Then  $u_n(x,t):=E[u(x,t)|\mathcal F_n]$ solves the following problem. We have $u_n\in L^1(\mathbb R^2\times (0,T]\times \Xi)$, and  for almost all $x,t$ the random variable $\mathcal K_{x,t}(u_n)$ is $\mathcal F_n$-measurable, Skorokhod integrable, and
\begin{equation}\label{e:finite-she}
u_n(x,t) = \int p(x-y,t)d\varsigma(y) +   \sintn \mathcal  K_{x,t} (u_n)~ \xi.
\end{equation}
\end{proposition}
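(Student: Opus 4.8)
The goal is to show that applying the conditional expectation $E[\,\cdot\mid\mathcal F_n]$ to a solution $u$ of the Wick-ordered heat equation produces a solution of the projected equation \eqref{e:finite-she}. The strategy is to start from the defining identity \eqref{inteq-intro} for $u$, apply $E[\,\cdot\mid\mathcal F_n]$ to both sides, and use the commutation properties of conditional expectation with the Skorokhod integral established in Proposition \ref{p:projection}. The three things to verify are: (i) the new integrand $\mathcal K_{x,t}(u_n)$ is $\mathcal F_n$-measurable; (ii) it is Skorokhod integrable in the finite-dimensional sense; and (iii) the resulting identity is exactly \eqref{e:finite-she}.

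\textbf{Step 1: Push the conditional expectation through.} Fix $(x,t)$ for which \eqref{inteq-intro} holds (this is almost all $(x,t)$). The deterministic term $\int p(x-y,t)\,d\varsigma(y)$ is unaffected. For the stochastic term, apply $E[\,\cdot\mid\mathcal F_n]$ and invoke Proposition \ref{p:projection} with $G=\mathcal K_{x,t}(u)$ (viewed in coordinate representation $G_i=\langle\mathcal K_{x,t}(u),\basis_i\rangle$): this gives
\[
E\Big[\sint \mathcal K_{x,t}(u)\,\xi \,\Big|\,\mathcal F_n\Big] = \sintn E[\mathcal K_{x,t}(u)\mid \mathcal F_n]\,\xi,
\]
provided $G$ is Skorokhod integrable, which holds by hypothesis for a.e.\ $(x,t)$. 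So $u_n(x,t) = \int p(x-y,t)\,d\varsigma(y) + \sintn E[\mathcal K_{x,t}(u)\mid\mathcal F_n]\,\xi$.

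\textbf{Step 2: Identify the integrand with $\mathcal K_{x,t}(u_n)$.} The operator $\mathcal K_{x,t}$ is linear and deterministic: $\mathcal K_{x,t}(u)(y)=\int_0^t p(x-y,t-s)\,u(y,s)\,ds$, an integral against a fixed kernel. So conditional expectation commutes with it (by Fubini/Tonelli, using positivity of $u$ and of the heat kernel, together with the integrability needed for the Skorokhod integral to exist), giving $E[\mathcal K_{x,t}(u)\mid\mathcal F_n](y) = \int_0^t p(x-y,t-s)\,E[u(y,s)\mid\mathcal F_n]\,ds = \mathcal K_{x,t}(u_n)(y)$ for a.e.\ $(x,t)$ — note one needs the Fubini exchange to hold for a.e.\ $(x,t)$, which follows from the a.e.\ finiteness assumptions in Definition \ref{d:\Wickordered-she-intro}. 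This object is visibly $\mathcal F_n$-measurable since each $E[u(y,s)\mid\mathcal F_n]$ is, establishing (i). Combined with Step 1 this yields exactly \eqref{e:finite-she}.

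\textbf{Step 3: Finite-dimensional Skorokhod integrability.} The Skorokhod integrability of $\mathcal K_{x,t}(u_n)$ in the sense of $\sintn$ is exactly the statement that $\sint \mathcal P_n(\mathcal K_{x,t}(u_n))\,\xi$ exists; but $\mathcal P_n E[\mathcal K_{x,t}(u)\mid\mathcal F_n]=E[\mathcal P_n\mathcal K_{x,t}(u)\mid\mathcal F_n]$ since projection onto the first $n$ coordinates commutes with $E[\,\cdot\mid\mathcal F_n]$, and Proposition \ref{p:projection} already asserts this object is Skorokhod integrable. So (ii) is free from that proposition. \emph{The main obstacle} is the measure-theoretic bookkeeping around "almost all $(x,t)$": one must check that the exceptional null set from the hypothesis on $u$, the exceptional set where Fubini fails for the kernel exchange, and any exceptional set arising in applying Proposition \ref{p:projection} pointwise in $(x,t)$, together still form a null set — this requires phrasing the integrability bounds ($E|G_i|<\infty$, $E|S_n|<\infty$) as joint-measurability statements in $(x,t,\xi)$ and integrating. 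The analytic content is light; the care is entirely in the "for a.e.\ $(x,t)$" quantifiers and in justifying the Fubini exchange against the kernel $p(x-y,t-s)$, which is integrable in $s$ but whose interaction with the growth of $u(y,s)$ near $s=0$ must be controlled by the same finiteness hypotheses that make $\mathcal K_{x,t}(u)$ well-defined in the first place.
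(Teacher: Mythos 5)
Your proposal is correct and follows essentially the same route as the paper: take $E[\,\cdot\mid\mathcal F_n]$ of \eqref{inteq-intro}, commute it with the Skorokhod integral via Proposition \ref{p:projection}, and then identify $E[\mathcal K_{x,t}(u)\mid\mathcal F_n]=\mathcal K_{x,t}(u_n)$ by Fubini, justified by $u\ge 0$ and the finite expectation implied by Skorokhod integrability. The extra attention you give to the ``almost all $(x,t)$'' bookkeeping and to the finite-dimensional integrability in Step 3 only makes explicit what the paper leaves implicit.
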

\begin{proof}
Since conditional expectation is an $L^1$ contraction for every $x,t$, we see that $u_n\in L^1(\mathbb R^2\times[0,t]\times \Xi)$. 

Next, we take conditional expectation of \eqref{inteq-intro}. Only the last term on the right needs explanation.

By Proposition \ref{p:projection},
$$
E[\sint \mathcal K_{x,t}(u) \,\xi |\mathcal F_n]=\sintn E[\mathcal K_{x,t}(u)|\mathcal F_n]\, \xi.
$$
in particular,  $E[\mathcal K_{x,t}(u)|\mathcal F_n]$ is Skorokohod integrable for almost all $x,t$.

Since the Skorokhod integral is defined in terms of the coordinates $\basis_i$, it suffices to check \begin{align}\label{e:Knorm}
    \int \mathcal K_{x,t}(|v|)(y)dy&=\|v\|_{L^1(\mathbb R^2,[0,t])},\qquad \text{for all measurable } v,\\ \label{e:cord-equal}
\int \basis_i(y)E[\mathcal K_{x,t}(u)(y)|\mathcal F_n]\,dy&=\int \basis_i(y)\mathcal K_{x,t}(u_n)(y)\,dy, \qquad \text{ for all }i \text{ a.s.}
\end{align}
Direct computation gives  \eqref{e:Knorm}. For \eqref{e:cord-equal} we multiply both sides by an $\mathcal F_n$-measurable test function $F$ and take expectations. The claim follows by Fubini, which is justified since \eqref{e:Knorm} for $u$ and $u_n$ has finite expectation. 
\end{proof}

\begin{proposition}[Existence, uniqueness and explicit solution for finite $n$] \label{p3}\label{p:fshe-solve}
There is a unique  solution  to  \eqref{e:finite-she}. It has the following representation.
Let $Q_{x,t,y}$ be the law of a planar Brownian bridge $B$ from $y$ at time $0$ to $x$ at time $t$, and let 
\begin{align}\label{e:mj}
&m_j(t)= \int_0^t\basis_j(B(s))ds, \quad M(t)=  \exp\left(\sum_{j=1}^n m_j(t) \xi_j - \tfrac12 m_j^2(t) \right), \\
&Z_n(0,y;t,x)= EQ_{x,t,y} M(t),\notag
\end{align}
and
\begin{equation}\label{e:proposed-un}
 u_n(x,t)=\int
 Z_n(y,0;x,t) p(x-y,t)\, d\varsigma(y).
\end{equation}

\end{proposition}

\begin{proof} It is natural to expect that a version of equation \eqref{e:finite-she} holds pathwise for $B$.  More precisely, by Example \ref{ex:sk-transport}, for every $B:(0,T]\to \mathbb R^2$,  $M(t)=M(B,t)$ of \eqref{e:mj} solves
\begin{equation}\label{e:pathwise-D}
M(t)= 1+\int_0^t\sintn M(s)\delta_{B(s)}\,\xi\,ds, \quad t\in (0,T].
\end{equation}
Here the Skorokhod integral is of a randomly weighted $\delta$-mass at a fixed location, but this situation is fully covered by  Definition \ref{d:Skorokohod-functions}. Consider the finite measure $Q'_{x,t}=p(x-y,t)Q_{y,x,t}d\varsigma(y)$. Taking $Q'_{x,t}$-expectations of \eqref{e:pathwise-D} yields  
\begin{equation}\label{e:E-pathwise}
u_n(x,t)=:E_{Q'_{x,t}}M(t)=E_{Q'_{x,t}}1+S(x,t),
\end{equation}
and $u_n$ is our proposed solution  \eqref{e:proposed-un}
of the problem \eqref{e:finite-she}. 

To establish that $u_n$ is indeed a solution, it remains to show that $u_n\in L^1((0,T]\times \mathbb R^2\times \Xi)$, which follows from  $E_PM(s)=1$, 
and that $\mathcal K_{x,t}(u_n)$ is Skorokhod-integrable with integral $S(x,t)$. 

By Example \ref{e:sk-transport}, the Skorokhod integral in \eqref{e:pathwise-D} exists and
$$\sintn M(s)\delta_{B(s)}\,\xi=(\xi-m(s))\cdot \basis(B(s)) M(s).$$
This is bounded as a function of $B$ and $s$. By Fubini,
\begin{align*}
S(x,t)=E_{Q'_{x,t}}&\int_0^t (\xi-m(s))\cdot \basis(B(s)) M(B,s)\,ds \\&= 
\int_0^t E_{Q'_{x,t}}[(\xi-m(s))\cdot \basis(B(s)) M(B,s)]\,ds.
\end{align*}
By the Markov property of the Brownian bridge at time $s$ and Fubini again,
\begin{align*}S(x,t)=\int \int_0^t E_{Q'_{y,s}}[(\xi-m(s))\cdot \basis(y)M(B,s)]p(x-y,t-s)\,ds\,dy
\end{align*}

For Skorokhod integrability,  note first that by \eqref{e:Knorm}  $K_{x,t}(u_n)\in L^1(\mathbb R^2)$ for almost all $x,t,\xi$. 
Let $F:\mathbb R^n\to\mathbb R$ be a bounded function with bounded gradient. 
Since $\basis$ and $m(s)$ are bounded, the argument of $E_{Q'_{s,t}}$  is also absolutely integrable over $(\Xi,P)$, and the integral is bounded over $s,y,B$. So by Fubini, $E_P[FS(x,t)]$ is given by  
\begin{align*}
\int \int_0^t E_{Q'_{y,s}}E_P[(\xi-m(s))\cdot \basis(y)M(s)F]]p(x-y,t-s)\,ds\,dy.
\end{align*}
Next, we use that $\partial_{\xi_i}(M(s)e^{-|\xi|^2/2})=(m_i(s)-\xi_i)M(s)e^{-|\xi|^2/2}$ and integrate by parts in $\xi$ to get 
\begin{align*}
\int \int_0^t& E_{Q'_{y,s}}E_P[\nabla F\cdot \basis(y)M(s)]]p(x-y,t-s)\,ds\,dy
\\&=
\int E_P[\nabla F \cdot \basis(y)\int_0^t E_{Q'_{y,s}}M(s)]p(x-y,t-s)\,dy\,ds.
\end{align*}
By definition, this equals $
\int E_P[\nabla F \cdot \basis(y)\mathcal K_{y,t}(u_n)(y)\,dy\,ds].$
Thus $S(x,t)=\ssintn \mathcal K_{x,t}(u_n)\xi$. By \eqref{e:E-pathwise}  $u_n$ solves \eqref{e:finite-she}.

To show  uniqueness, let $u,v$ be two solutions of \eqref{e:finite-she}. Consider Fourier type test functions,  $F(\xi)=\exp(i\lambda \cdot \xi)$, since in this case  $\nabla F=i\lambda F$. Multiplying \eqref{e:finite-she} by $F$, taking expectations, and using the test function definition of the Skorokhod integral, we see that 
$w=E[(u-v)F]$ satisfies
\begin{align}\notag
w(x,t)&= 
  E \int_0^t \int  h(y) (u-v)(y,s) p(x-y,t-s)dyds\\ &=
  \int_0^t \int  h(y) w(y,s) p(x-y,t-s)dyds, \qquad h(y)=i\lambda \cdot \basis_i. \label{e:w-Fub}
\end{align}
Note that $\|w\|_{L^1((0,T]\times \mathbb R^2)}\le \|u-v\|_{L^1((0,T]\times \mathbb R^2\times \Xi)}$. 
The argument at the end of Remark \ref{r:weak-pde-version} below allows the change of order of integration in \eqref{e:w-Fub} for almost all $x$. 
Integrating \eqref{e:w-Fub}
    in $x$ using the fact that $ \int p(x-y,t-s) |w(y,s) |dy\le \| w(\cdot,s)\|_{L^1(\mathbb R^2)}$, since the heat kernel is an $L^1$-contraction, we find that 
$$A(t)= \int |w(x,t)| dx \text{\;\;\;satisfies\;\;\;} A(t) \le \|h\|_{\infty} \int_0^t A(s) ds.$$
Thus  $A(t)=0$ and $w(x,t)=0$ for almost all $x,t$.
Now $w(x,t)$ is the Fourier transform of $(u-v)(x,t,\xi) e^{-|\xi|^2/2}/(2\pi)^{n/2}$ in $\xi$ at $\lambda$. As $L^1(\mathbb R^n)$  functions are a.e.\ determined by their Fourier transform, it follows that $(u-v)(x,t,\xi)=0$ for almost all $x,t,\xi$, as required.
\end{proof}

\begin{remark}\label{r:weak-pde-version}
    A formulation equivalent to \eqref{e:finite-she} is that  $u_n=u_n(x,t,\xi)\in L^1(\mathbb R^2\times (0,T]\times \mathbb R^n)$ satisfies the following.  
For every bounded test function $F:\mathbb R^n\to \mathbb R$ with bounded gradient, for almost all $(x,t)\in \mathbb R^2\times (0,T]$ we have
\begin{align}\notag
E_\xi[F(\xi)u_n(x,t,\xi)]&=E[F]\int p(x-y,t)d\varsigma(y) \\
\label{e:finite-she2a}
&\;\;+ \int\int_0^t p(x-y,t-s)E_\xi[\nabla F(\xi) \cdot \basis (y)u(y,s)]\,dy\,ds.
\end{align}
where $E_\xi f(\xi)=(2 \pi)^{-n/2}\int e^{-|\xi|^2/2}f(\xi)d\xi_1\dots d\xi_n$ is simply the integral with respect to standard Gaussian measure on $\mathbb R^n$, and $\basis(y)=(\basis_1(y),\ldots,\basis_n(y))$ as in Definition \ref{d:Skorokohod-functions}.

To see the equivalence, 
Definition \ref{d:Skorokohod-functions} of the Skorokhod integral requires  \eqref{e:finite-she2a}
with the last term $a$ replaced by 
$$E\left[\int \nabla F(\xi) \cdot \basis(y)\int_0^t p(x-y,t-s)u_n(y,s)\right]\,ds\,dy,
$$
which is just a change of the order of integration. In addition, it requires  $\mathcal \int |\mathcal K_{x,t}(u_n)|(y) dy$ to be finite for almost all $x,t$, but this automatically holds by \eqref{e:Knorm}.

To  justify the change of order of integration, multiply both  by a bounded measurable   $\varphi(x):\mathbb R^2\to \mathbb R$ and integrate in $x$. We can conclude by Fubini if the quadruple integral is absolutely convergent. For this, we can fist drop the bounded terms $\varphi(x), \nabla F(\xi) \cdot \basis(y)$. But now  the integrand is positive and  the integral equals $\|u_n\|
_{L^1(\mathbb R^2\times (0,T]\times \Xi)}<\infty$.
\end{remark}

\begin{remark}The problem \eqref{e:finite-she2a} for $u_n$ is  on an $n$-dimensional Gaussian space and therefore much simpler than a true stochastic PDE.
It is  an integral   version of the  linear PDE for $u_n(x,t,\xi_1\ldots \xi_n)$ given by
\begin{equation}\label{e:finite-she2}
\partial_t u_n = \left(\tfrac12\Delta_{x} +  \sum_{j=1}^n \basis_j( \xi_j - \partial_{\xi_j}) \right)u_n,\qquad u_n(x,0,\xi) = \varsigma(x)
\end{equation}
 where $\basis_j$ defined in Theorem 2 are bounded functions forming an orthonormal basis of $L^2(\mathbb R^2)$.  The initial condition $\varsigma$ is only a function of $x$ but this is a PDE in all variables $t,x_1,x_2$ and $\xi_1,\ldots,\xi_n$.  As we have seen, the solution is even real analytic in $\xi$. Example \ref{ex:sk-transport} gives some intuition about how the transport part of this equation emerges.
\end{remark}

{\noindent \bf Remarks.}
\begin{enumerate}
\item $Z_n$ is the Radon-Nikodym derivative of the distribution of $(\xi_1+m_1,\ldots, \xi_n+m_n)$ under the product measure of the laws of the white noise $P $ and the Brownian bridge $Q$, with respect to that of the white noise
$P $.
    \item $Z_n$ is a non-negative martingale with respect to the filtration $\mathcal F_n=\sigma(\xi_1,\ldots,\xi_n)$ and hence converges almost surely to a random variable $Z$.

    \item If $E_{P }[Z]=1$ we have a candidate for the solution of the Wick-ordered  heat equation \eqref{2dshe}.
    \item If $E_{P }[Z]=1$, $Z$ has the interpretation as the Radon-Nikodym derivative of the distribution $\hat P$ of the shifted white noise $\xi + X$ with respect to the distribution $P$ of $\xi$, where $X$ is the occupation measure of the Brownian bridge up to time $t$. The expected free energy is equal to the relative entropy of $\hat P$ with respect to $P$, \begin{equation}\notag
        E_{\hat{P }}\log Z = H(\hat{P}\mid P).
    \end{equation}

    \item  For a.e. realization of the Brownian bridge, its occupation measure $X$ is not in the Cameron-Martin space; if it were, we should have 
    $$\int \left(\frac{ d X}{dx}\right)^2 dx= \sum_{j=1}^\infty m_i^2 = 2\int_{0\le s_1<s_2\le t} \delta_0( B(s_2)-B(s_1))ds_1ds_2,$$ 
    the $2$-dimensional self-intersection local time.  This sum needs a diverging shift  renormalization to exist.  $Z\in L^1(P )$ because of the expectation over $Q$.  We call $Z$  a \emph{randomized shift}, see, for example, \cite{shamov}. We will formally introduce randomized shifts in Section \ref{randomizedshifts}.
    \item The solution of \eqref{inteq-intro} does \emph{not} satisfy the  semigroup property, \begin{equation}\notag u(x,t+s)\neq \int u(y,s)u(x-y,t)dy.\end{equation}
    This is due to the fact that the renormalization in the exponential is done by substracting the self-intersection local time $\sum_{j=1}^\infty m_j^2/2$, which does not grow linearly in time.
     On the other hand, for every $n, \lambda$, $\hat u_n(x,t; \vec{\lambda})=E_P[e^{i\lambda \cdot \xi}u_n(x,t)]$ satisfies an ordinary heat equation (we used this in \eqref{e:w-Fub}), and hence the semigroup property.
\end{enumerate}

\subsection{Solving the planar Wick-ordered  heat equation}
\label{ss:2dshe-solve}
\ \\
Proposition \ref{p:spde-projection} shows that for almost all $x,t$ the solution $u_n(x,t)$ is a martingale in $n$.

\begin{proposition}[Solving the Wick-ordered  heat equation]\label{p:solving2d}
Suppose that for almost all $(x,t)$ the martingale $u_n(x,t)$ converges in $L^1(P)$. Then the limit $u(x,t)$ is a solution of the Wick-ordered  heat equation \eqref{2dshe}. The solution is unique $x,t,\xi$-almost everywhere.
\end{proposition}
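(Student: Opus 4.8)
The plan is to split the statement into the two assertions --- existence (the $L^1$ limit $u$ satisfies Definition \ref{d:\Wickordered-she-intro}) and uniqueness --- and to handle existence by passing to the limit in the finite-dimensional equations \eqref{e:finite-she}, using the martingale/uniform-integrability machinery of Corollary \ref{c:Sk-martingale}.

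For \textbf{existence}, fix a typical $(x,t)$. By Proposition \ref{p:spde-projection}, $u_n(x,t)=E[u\text{-candidate}\mid\mathcal F_n]$ is, for the explicit martingale $u_n(x,t)=\int Z_n(y,0;x,t)\,p(x-y,t)\,d\varsigma(y)$ of Proposition \ref{p:fshe-solve}, a non-negative $\mathcal F_n$-martingale solving \eqref{e:finite-she}. Define $u(x,t):=\lim_n u_n(x,t)$, which exists a.s.\ and, by hypothesis, in $L^1(P)$. First I would check that the integrand $\mathcal K_{x,t}(u)(y)=\int_0^t p(x-y,t-s)u(y,s)\,ds$ is Skorokhod-integrable. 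The natural route is Corollary \ref{c:Sk-martingale}: set $G=\mathcal K_{x,t}(u)$ in coordinate representation, $G_i=\langle\mathcal K_{x,t}(u),\basis_i\rangle$, and $H_n=E[\mathcal P_n G\mid\mathcal F_n]$. Using $u\ge 0$ and Fubini (exactly as in the proof of Proposition \ref{p:spde-projection}), one gets $E[\mathcal K_{x,t}(u)\mid\mathcal F_n]=\mathcal K_{x,t}(u_n)$, so $H_n=\mathcal P_n\mathcal K_{x,t}(u_n)$ and $\ssint H_n\,\xi=\sintn \mathcal K_{x,t}(u_n)\,\xi=u_n(x,t)-\int p(x-y,t)\,d\varsigma(y)$ by \eqref{e:finite-she}. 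Since $u_n(x,t)$ is an $L^1$-convergent martingale, the family $\{u_n(x,t)\}$ is uniformly integrable, hence so is $\{S_n\}=\{\ssint H_n\,\xi\}$; Corollary \ref{c:Sk-martingale} then gives that $\mathcal K_{x,t}(u)$ is Skorokhod-integrable with $\sint\mathcal K_{x,t}(u)\,\xi=\lim_n S_n = u(x,t)-\int p(x-y,t)\,d\varsigma(y)$ a.s.\ and in $L^1$. Rearranging gives \eqref{inteq-intro}. One must also record measurability of $u$ in $(x,t,\xi)$: it is an a.s.\ limit of the jointly measurable $u_n$, so this is routine (Fubini-type argument to make the ``almost all $(x,t)$'' statement jointly measurable).

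For \textbf{uniqueness}, suppose $u,\tilde u$ are two solutions. Applying Proposition \ref{p:spde-projection} to each, both $E[u\mid\mathcal F_n]$ and $E[\tilde u\mid\mathcal F_n]$ solve the finite-dimensional problem \eqref{e:finite-she}; by the uniqueness half of Proposition \ref{p:fshe-solve} (the Feynman--Kac/Fourier-inversion argument pins the solution down $x,t,\xi$-a.e.), they agree for almost every $(x,t)$. Thus $E[u(x,t)\mid\mathcal F_n]=E[\tilde u(x,t)\mid\mathcal F_n]$ for a.e.\ $(x,t)$ and every $n$; letting $n\to\infty$ and using that each of $u(x,t),\tilde u(x,t)$ is integrable (being Skorokhod-integrable after the $\mathcal K$ operation, or directly since $u_n\to u$ in $L^1$), the martingale convergence theorem gives $u(x,t)=\tilde u(x,t)$ a.s.\ for a.e.\ $(x,t)$, i.e.\ equality $x,t,\xi$-a.e.

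The \textbf{main obstacle} I expect is not any single hard estimate --- the uniform integrability needed here is handed to us by the $L^1$-convergence hypothesis --- but rather the bookkeeping around ``for almost all $(x,t)$'': ensuring that the null set of bad $(x,t)$ can be chosen independently of $n$, that the a.s.\ limit $u(x,t)$ is a genuine jointly measurable function (so that $\mathcal K_{x,t}(u)$ makes sense and Fubini applies to $E[\mathcal K_{x,t}(u)\mid\mathcal F_n]=\mathcal K_{x,t}(u_n)$), and that applying Corollary \ref{c:Sk-martingale} coordinatewise is legitimate, i.e.\ that $(H_n)_j\to G_j$ in $L^1$ for each $j$, which again follows from $u_n\to u$ in $L^1$ together with the boundedness of the $\basis_j$ and the heat kernel bound $\int_0^t p(x-y,t-s)\,ds\in L^1_y$. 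I would also double-check that $E|G_i|<\infty$, which is needed to even invoke the definition of Skorokhod-integrability; this follows from $E[\mathcal K_{x,t}(u)\mid\mathcal F_0]=\int_0^t\int p(x-y,t-s)\,E[u(y,s)]\,dy\,ds$ being finite for a.e.\ $(x,t)$, itself a consequence of $E[u_n(x,t)]=\int p(x-y,t)\,d\varsigma(y)<\infty$ and Fatou.
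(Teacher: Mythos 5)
Your proposal is correct and follows essentially the same route as the paper: identify $\ssintn \mathcal K_{x,t}(u_n)\,\xi$ via the finite-dimensional equation \eqref{e:finite-she}, use the $L^1$-convergence hypothesis to get uniform integrability, invoke Corollary \ref{c:Sk-martingale} to pass to the limit and obtain Skorokhod integrability of $\mathcal K_{x,t}(u)$, and derive uniqueness from the fact that any solution is determined by its conditional expectations, which are pinned down by Propositions \ref{p:spde-projection} and \ref{p:fshe-solve}. The extra bookkeeping you flag (measurability, $E|G_i|<\infty$, coordinatewise convergence) is left implicit in the paper but is handled correctly in your write-up.
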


Almost everywhere uniqueness here means that any two solutions agree $x,t,\xi$ a.e.
Note that there is considerable work involved in showing $L^1$ convergence (the assumption of Proposition \ref{p:solving2d}). This will be addressed in the later sections of the article.

\begin{proof}
First we have to show that for almost all $x,t$ if we write
$$
u(x,t)=\lim_{n\to \infty} u_n(x,t)
$$
then we also have
\begin{equation}\label{e:pu-limit}
\sint {\mathcal K}_{x,t} u ~\xi = \lim_{n\to \infty} \sintn {\mathcal K}_{x,t} u_n ~\xi.
\end{equation}
The finite-dimensional version of the Wick-ordered  heat equation \eqref{e:finite-she} says
\begin{equation}\label{42a}
    \sintn {\mathcal K}_{x,t} u_n \xi = u_n(x,t)- \int p(x-y,t)d\varsigma(y)
\end{equation}
since $u_n(x,t)$ is uniformly integrable, so is $\ssintn {\mathcal K}_{x,t} u_n \,\xi$.
Thus by part $(2)$ of Corollary \ref{c:Sk-martingale},  ${\mathcal K}_{x,t} u$ is Skorokhod integrable, and by the last statement of the corollary, \eqref{e:pu-limit} holds almost surely and in $L^1$, and so $u$ satisfies the Wick-ordered  heat equation \eqref{inteq-intro}.

The solution is unique, since it is determined by its conditional expectations, which are unique by Propositions \ref{p:spde-projection} and \ref{p:fshe-solve}.
\end{proof}

\subsection{The SHE in one dimension with space-time noise}

In this section, we outline how the $1+1$ dimensional stochastic heat equation, usually defined through a chaos expansion, can be rigorously defined and solved using the Skorokhod integral.

The method we describe works for a general stochastic PDE of the form $\partial_t v=\mathcal L v + \xi v$, where $\mathcal L$ is a generator for a stochastic process. \emph{Algebraically}, such equations are equivalent to Example \ref{ex:MC}, although in many cases there are technical challenges or even analytic obstacles to a solution.

There is no significant  \emph{algebraic} difference between the pure space and space-time version of the processes: the time coordinate can just be added as an extra  spatial coordinate.

\begin{definition}\label{d:1dshe}
Let $\varsigma$ be a finite  measure on $\mathbb R$.
Then $z\in L^1((0,T]\times\mathbb R\times \Xi)$ is a solution of the {\bf one-dimensional   stochastic heat equation with space-time white noise} and initial condition $\varsigma$
if
for almost all $(x,t)\in (0,T]\times \mathbb R$, the random function of $(y,s)$
\begin{equation} q(x-y,t-s)\, z(y,s)
\end{equation}
where $q(x,t)$ is the one-dimensional heat kernel \eqref{onedheatkernel},
is Skorokhod integrable, and satisfies
\begin{equation}\label{inteq'}
z(x,t) = \int q(x-y,t)d\varsigma(y) +   \sint q(t-\cdot,x-\cdot)z~ \xi.
\end{equation}
\end{definition}

Although we are working in $[0,\infty)\times \mathbb R$ we can just think of it as a subset of $\mathbb R^2$ and use the same orthonormal basis $\basis_j$ of $L^2(\mathbb R^2)$ of bounded functions.

 \begin{proposition}\label{p:1dshe-project} Let $z$ be a solution of the one-dimensional
   stochastic heat equation, Definition \ref{d:1dshe}. Then
   $z_n(x,t):=E[z(x,t)|\mathcal F_n]$ solves the following problem:
   For almost all $x\in\mathbb R$ and $t>0$ 
   \begin{equation}
       \label{11deq}
z_n(x,t)=\int q(x-y,t)d\varsigma(y)+\sintn z_n(y,s)q(x-y,t-s)\xi.
   \end{equation}

 \end{proposition}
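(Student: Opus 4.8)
The plan is to mirror the proof of Proposition \ref{p:spde-projection} exactly, since the one-dimensional space-time SHE, Definition \ref{d:1dshe}, has the same structure as the planar \Wickordered heat equation once one regards $(y,s)\in[0,\infty)\times\mathbb R$ as a point in $\mathbb R^2$ and the kernel $q(x-y,t-s)z(y,s)$ as the analogue of $\mathcal K_{x,t}(u)$. Concretely, I would start from \eqref{inteq'}, take conditional expectation with respect to $\mathcal F_n$ on both sides, and note that the first (deterministic) term $\int q(x-y,t)d\varsigma(y)$ is $\mathcal F_n$-measurable and so passes through unchanged.

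The only term requiring work is the Skorokhod integral term. First I would invoke Proposition \ref{p:projection}, which gives
$$
E\Big[\sint q(t-\cdot,x-\cdot)\,z \;\xi \,\Big|\,\mathcal F_n\Big]=\sintn E\big[q(t-\cdot,x-\cdot)\,z\,\big|\,\mathcal F_n\big]\,\xi,
$$
and in particular tells us the conditioned integrand is Skorokhod integrable. Next I would push the conditional expectation inside: since $z$ is a solution it takes values in $[0,\infty)$ and the integrand $q(x-y,t-s)z(y,s)$ is nonnegative and measurable, so Fubini (as used for $\mathcal K_{x,t}u$ in Proposition \ref{p:spde-projection}) gives
$$
E\big[q(x-y,t-s)\,z(y,s)\,\big|\,\mathcal F_n\big]=q(x-y,t-s)\,z_n(y,s),
$$
which is exactly the integrand in \eqref{11deq}. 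I would also record that this conditioned integrand is $\mathcal F_n$-measurable, again as in the planar case, because $z_n(y,s)=E[z(y,s)|\mathcal F_n]$ is $\mathcal F_n$-measurable and $q$ is deterministic; its Skorokhod integrability is inherited from Proposition \ref{p:projection}. Assembling these pieces yields \eqref{11deq} for almost all $x\in\mathbb R$, $t>0$.

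The main subtlety — really the only thing that is not pure bookkeeping — is the Fubini/measure-theoretic step that lets one write $E[q(x-y,t-s)z(y,s)|\mathcal F_n]=q(x-y,t-s)z_n(y,s)$ inside the Skorokhod integral; this needs the nonnegativity of $z$ and the fact that $q(x-y,t-s)z(y,s)$ is finite-mass as a (signed) measure in $(y,s)$ for almost every $(x,t)$, which follows from the hypothesis that this function is Skorokhod integrable (hence has finite expectation of its coordinates $E|G_i|<\infty$). One small point worth noting explicitly is that here the "time variable" $s$ is integrated together with the space variable $y$ against the white noise, unlike in the planar case where $s$ is a genuine time parameter and only $y$ is integrated against $\xi$; but since Definition \ref{d:1dshe} already sets things up this way and $\basis_j$ is an orthonormal basis of $L^2(\mathbb R^2)$, no extra argument is needed — the proof of Proposition \ref{p:spde-projection} transfers verbatim with $\mathcal K_{x,t}(u)$ replaced by the function $(y,s)\mapsto q(x-y,t-s)z(y,s)$.
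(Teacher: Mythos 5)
Your proposal is correct and follows essentially the same route as the paper: take conditional expectations in \eqref{inteq'}, apply Proposition \ref{p:projection} to the Skorokhod term, and use nonnegativity of $z$ plus Fubini (with the finite-expectation condition coming from Skorokhod integrability) to identify $E[q(x-y,t-s)z(y,s)\mid\mathcal F_n]$ with $q(x-y,t-s)z_n(y,s)$. The paper's proof is exactly this reduction to the argument of Proposition \ref{p:spde-projection}, so nothing further is needed.
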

 \begin{proof} As in the proof of Proposition \ref{p:spde-projection}, it is enough to
   show that
$$
E[\sint z(y,s) q(x-y,t-s)\xi|\mathcal F_n]=\sintn z_n(y,s)q(x-y,t-s)\xi.
$$
This follows as before by Fubini and the fact that  $z$ is Skorokhod-integrable.
\end{proof}

\begin{remark}In this case, \eqref{11deq} is an integral version of the  partial differential equation
$$
\partial_t z_n = \tfrac12\partial_{x}^2z_n +  \sum_{j=1}^n \basis_j\, ( \xi_j - \partial_{\xi_j})
z_n
$$
with $\basis_j=\basis_j(x,t)$.
\end{remark}

\begin{proposition}\label{p:1dshesolve} (Existence, uniqueness and explicit solution for
  finite $n$) The solution $z_n$ of equation (\ref{11deq}) exists and is unique in the following sense. For almost
  all $x,t,\xi$ we have

  $$
  z_n(x,t)=\int Z_n(y,0;x,t)q(x-y,t)d\varsigma(y)
  $$
  where $Q_{y,x,t}$ is the law of one-dimensional Brownian bridge $b$ from $y$ at time $0$ to $x$ at time $t$ and $Z_n(y,0;x,t)=E_{Q_{y,x,t}}M(t)$ with 
  \begin{equation}\label{onedpolymer}
  M(t)=\exp\left(\sum_{j=1}^n m_j\xi_j-\frac{1}{2}m_j^2\right),
  \qquad m_j(t)=\int_0^t e_j(b(s),s)ds.
  \end{equation}
  \end{proposition}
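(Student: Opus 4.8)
\textbf{Proof proposal for Proposition \ref{p:1dshesolve}.}

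The plan is to follow exactly the template of the proof of Proposition \ref{p:fshe-solve}, since the one-dimensional space-time equation \eqref{11deq} is, as the text emphasizes, algebraically identical to the planar case with the time variable promoted to an extra spatial coordinate. First I would introduce the complex exponential test functions $F(\xi)=\exp(i\sum_{j=1}^n\lambda_j\xi_j)$, multiply \eqref{11deq} by $F$, take $P$-expectations, and invoke the defining property of the finite-dimensional Skorokhod integral together with Fubini to see that $\hat z(x,t):=E_P[z_n(x,t)F]$ solves the Duhamel form of the deterministic heat equation $\partial_t\hat z=\tfrac12\partial_x^2\hat z+V\hat z$ with potential $V(x,t)=i\sum_{j=1}^n\lambda_j\basis_j(x,t)$ and initial data $(EF)\varsigma$, where $EF=e^{-\tfrac12\sum_j\lambda_j^2}$. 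Here the only bookkeeping difference from the planar case is that $\basis_j$ and hence $V$ are now functions of $(x,t)$ rather than of the spatial variable alone; since the Feynman--Kac formula accommodates time-dependent potentials without change, this causes no difficulty.

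Next I would write down the Feynman--Kac representation of $\hat z$: with $B$ a one-dimensional Brownian bridge from $y$ at time $0$ to $x$ at time $t$ and law $Q_{x,t,y}$,
\begin{equation}\notag
\hat z(x,t)=\int E_{Q_{x,t,y}}\exp\Big(i\sum_{j=1}^n\lambda_j\int_0^t\basis_j(B(s),s)\,ds-\tfrac12\lambda_j^2\Big)q(x-y,t)\,d\varsigma(y),
\end{equation}
which one justifies by iterating the Duhamel equation. Reducing to $\varsigma=\delta_y$ by linearity and writing $z_n(x,t)=Z_n(y,0;x,t)q(x-y,t)$, I then observe that, for fixed $(x,t)$, the map $\vec\lambda\mapsto\hat z(x,t)$ is the Fourier transform of $(\xi_1,\dots,\xi_n)\mapsto z_n(x,t,\xi_1,\dots,\xi_n)\varphi(\xi_1)\cdots\varphi(\xi_n)$, with $\varphi$ the standard Gaussian density. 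Inverting this Fourier transform yields, for almost all $(x,t)$, the claimed formula \eqref{onedpolymer} with $m_j=\int_0^t\basis_j(B(s),s)\,ds$. Existence of $z_n$ follows because this formula manifestly defines an $\mathcal F_n$-measurable function satisfying \eqref{11deq} (run the computation backwards), and uniqueness follows since any solution has the same conditional expectations, determined by the Fourier-inversion argument.

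I do not expect a genuine obstacle here; the main thing to be careful about is the same measure-theoretic point as in Proposition \ref{p:fshe-solve} — that the identities hold for \emph{almost all} $(x,t)$, and that the Fourier inversion is legitimate, which needs the integrability $E_P|z_n(x,t)|<\infty$ and enough regularity in $\vec\lambda$ (both supplied by the explicit Feynman--Kac expression, since $\basis_j$ are bounded so the $m_j$ are bounded and the exponential has moments of all orders). A secondary point worth a sentence is checking that the Feynman--Kac solution of the Duhamel equation with the singular initial datum $\delta_y$ is well defined and is the unique one in the relevant class; this is standard for the heat kernel $q$ and a bounded (here, bounded complex) potential. Everything else is a transcription of the planar argument with $\mathbb R^2$ replaced by the space-time $\mathbb R\times(0,\infty)\subset\mathbb R^2$ and the bridge $B(s)$ replaced by the space-time path $(B(s),s)$.
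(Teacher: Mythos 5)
Your proposal is correct and follows essentially the same route as the paper: the paper's proof of Proposition \ref{p:1dshesolve} is explicitly a transcription of the planar argument of Proposition \ref{p:fshe-solve}, using the test functions $F(\xi)=\exp(i\sum_{j=1}^n\lambda_j\xi_j)$, the defining property of the finite-dimensional Skorokhod integral plus Fubini to derive the Duhamel equation for $\hat z=E[z_nF]$ with the time-dependent potential $V(y,s)=i\sum_j\lambda_j\basis_j(y,s)$, and then Feynman--Kac and Fourier inversion to obtain \eqref{onedpolymer}. Your added remarks on integrability and the legitimacy of the inversion are consistent with, and slightly more explicit than, the paper's treatment.
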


\begin{proof} The proof is really the same as Proposition \ref{p:fshe-solve}.  
More precisely, by Example \ref{ex:sk-transport}, for every $B:(0,T]\to \mathbb R^2$,  $M(t)=M(b,t)$ of \eqref{e:mj} solves
\begin{equation}\label{e:pathwise-D2}
M(t)= 1+\int_0^t\sintn M(s)\delta_{(b(s),s)}\,\xi\,ds, \quad t\in (0,T].
\end{equation}
Consider the finite measure $Q'_{x,t}=q(x-y,t)Q_{y,x,t}d\varsigma(y)$. Taking $Q'_{x,t}$-expectations of \eqref{e:pathwise-D2} yields \eqref{11deq}. Uniqueness is shown the same way as in the proof of Proposition \ref{p:fshe-solve}.
\end{proof}

To take the limit as $n\to\infty$, we will need the moment bounds established in the next section. The last step of the solution is given in Section \ref{ss:1dshe-solve}.

\section{Randomized shifts}\label{randomizedshifts}

In this section, we consider the solution of the Wick-ordered  heat equation from another angle, that of randomized shifts, or, using different terminology, Gaussian multiplicative chaos. The definition we use is closely related to \cite{shamov}. We will first define the notion and give motivation later.


As before, let $(\Xi,\mathcal F, P)$ be a probability space given by an independent standard Gaussian sequence:  where $\Xi=\mathbb R^{\mathbb N}$, and $\mathcal F$ is the Borel $\sigma$-algebra on product space, and $P$ is standard Gaussian product measure.

Let $ m=(m_1,m_2,\ldots)$  be a sequence of real valued random variables on another probability space $(\Omega,\mathcal G, Q)$.

\begin{definition}\label{d:Z}
The random variable $Z$ defined on $\Xi$ is called the {\bf partition function}, or {\bf total mass} of the {\bf randomized shift}  $m$ if for all bounded measurable functions $F:\Xi \to \mathbb R$, we have
\begin{equation}\label{aaa}
        E_P[ZF] = E_{P \times Q} F(\xi+m).
\end{equation}
\end{definition}

Note that  $Z$ depends only on the law of $m$, not $m$ as a random variable; nonetheless for simplicity we will sometimes indicate the dependence by writing $Z=Z(m)$. Definition   \ref{d:Z} can be interpreted as follows: the marginal law of $\xi+m$ is absolutely continuous with respect to the law of $\xi$ with Radon-Nykodim derivative $Z$.

The definition is much more concrete when we consider $\mathcal P_n m$, that is, $m$ with all coordinates beyond the $n$th set to zero. In this case, one checks directly that
\begin{equation}\label{zedenn}
Z_n(m):=Z(\mathcal P_n m)  = E_Q \exp \sum_{j=1}^n m_j \xi_j -m_j^2/2
\end{equation}
satisfies the definition.
Note that the right hand side is well-defined and finite for arbitrary $m$, moreover, it is a non-negative  $\mathcal F_n$-martingale and therefore converges almost surely. We will need to know when the convergence is in $L^1$. This  requires  uniform integrability.
In what follows, given an event $A\in\mathcal G$, we adopt 
the notation 
\begin{equation}
\label{aem}
1_A m=(1_Am_1,1_Am_2,\ldots).
\end{equation}

We have the following representation.

\begin{proposition}  \label{shift1}
For any $F\in \mathcal F$ for which $E_P|Z_nF|<\infty$ or $F\ge 0$, with $Z_n$ given by (\ref{zedenn}) with $\mathcal P_n$ the projection to the first $n$ coordinates, defined in the preamble to \eqref{peeenn},
\begin{equation}\label{e:EZn}
   E_{P}\left[Z_nF\right] = E_{Q\times P} \,F(\xi + \mathcal P_nm).
\end{equation}
 In particular,
\begin{enumerate}[(a)]
\item  $E_{ P} Z_n =1$;
\item
$
    E_{ P}\left[Z_n^2\right]=E_{ Q^{\otimes 2}} \exp \sum_{i=1}^n m_i m'_i$.
The second expectation is over the product of $ Q$ with itself and $m$, $m'$ are two independent copies. Note that this may be infinite, even for finite $n$.
\item  $$E_{ P} [Z^k_n]
=
E_{ Q^{\otimes k}}
\exp\left\{\sum_{j,\ell=1, j\ne \ell}^k\sum_{i=1}^n m_i^{(j)} m^{(\ell)}_i\right\}
$$ where $m^{(1)},\ldots, m^{(k)}$ are independent copies with the same caveat as in (b).
\item For any $A\in \mathcal G$, $E_P|Z_n(m)-Z_n(1_Am)|\le 2Q(A^c)$.
\end{enumerate}
\end{proposition}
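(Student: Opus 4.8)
The plan is to establish the master identity \eqref{e:EZn} first and then to read off (a)--(d) from it by elementary Gaussian computations. \textbf{Step 1: the identity \eqref{e:EZn}.} For bounded measurable $F$ this is immediate: $\mathcal P_n m$ is again a sequence of random variables on $(\Omega,\mathcal G,Q)$, and by \eqref{zedenn} its partition function is exactly $Z(\mathcal P_n m)=Z_n$, so applying the defining property \eqref{aaa} to $\mathcal P_n m$ gives $E_P[Z_nF]=E_{P\times Q}F(\xi+\mathcal P_n m)$. For a self-contained derivation one can instead condition on $(\Omega,\mathcal G,Q)$, so that $m=(m_1,\dots,m_n,0,0,\dots)$ becomes a deterministic vector in the Cameron--Martin space; the Cameron--Martin formula (the shift affects only the first $n$ coordinates, so the Radon--Nikodym factor is the finite sum below) then gives $E_P[F(\xi)\exp(\sum_{j=1}^n m_j\xi_j-\tfrac12 m_j^2)]=E_P[F(\xi+\mathcal P_n m)]$, and taking $E_Q$ of both sides --- legitimate by Fubini since $E_{P\times Q}\exp(\sum_{j\le n}m_j\xi_j-\tfrac12 m_j^2)=E_Q 1=1<\infty$ --- yields \eqref{e:EZn}. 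To pass from bounded $F$ to $F\ge 0$ I would apply the bounded case to $F\wedge N$ and let $N\uparrow\infty$, using monotone convergence on both sides.

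\textbf{Step 2: the moments (a)--(c).} Part (a) is \eqref{e:EZn} with $F\equiv 1$. For the higher moments I would write $Z_n^k$ as a single $Q^{\otimes k}$-expectation over independent copies $m^{(1)},\dots,m^{(k)}$,
\[
Z_n^k=E_{Q^{\otimes k}}\exp\Big(\sum_{i=1}^n\Big(\sum_{a=1}^k m_i^{(a)}\Big)\xi_i-\tfrac12\sum_{a=1}^k (m_i^{(a)})^2\Big),
\]
then take $E_P$ and move it inside $E_{Q^{\otimes k}}$ by Tonelli (the integrand is nonnegative, so the exchange is valid with a value in $[0,\infty]$), and evaluate the Gaussian exponential moment $E_P\exp(\sum_i a_i\xi_i)=\exp(\tfrac12\sum_i a_i^2)$ with $a_i=\sum_a m_i^{(a)}$. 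Completing the square coordinate by coordinate, $\tfrac12(\sum_a m_i^{(a)})^2-\tfrac12\sum_a(m_i^{(a)})^2=\sum_{1\le a<b\le k}m_i^{(a)}m_i^{(b)}=\tfrac12\sum_{a\ne b}m_i^{(a)}m_i^{(b)}$, which gives the pairwise-overlap exponent in (c); specialising to $k=2$ recovers (b). One could alternatively obtain (c) by induction, applying \eqref{e:EZn} with $F=Z_n^{k-1}$ and expanding $Z_n^{k-1}(\xi+\mathcal P_n m)$.

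\textbf{Step 3: the $L^1$-stability bound (d).} Here $\one_Am$ denotes the sequence $(\one_A(\omega)m_j(\omega))_j$, so with $W=\sum_{j=1}^n m_j\xi_j-\tfrac12 m_j^2$ we have $Z_n(m)=E_Q e^{W}$, while on $A^c$ every coordinate of $\one_Am$ vanishes and the exponent is $0$ there. Hence
\[
Z_n(\one_Am)=E_Q[\one_A e^{W}]+Q(A^c),\qquad Z_n(m)=E_Q[\one_A e^{W}]+E_Q[\one_{A^c}e^{W}],
\]
so $Z_n(m)-Z_n(\one_Am)=E_Q[\one_{A^c}e^{W}]-Q(A^c)$. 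Taking absolute values, then $E_P$, and using Tonelli together with $E_P e^{W}=1$ for each fixed $m$,
\[
E_P\big|Z_n(m)-Z_n(\one_Am)\big|\le E_QE_P[\one_{A^c}e^{W}]+Q(A^c)=Q(A^c)+Q(A^c)=2Q(A^c).
\]

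\textbf{Where the difficulty is.} I do not expect a genuine obstacle: the proposition is bookkeeping with Fubini/Tonelli, the finite-dimensional Cameron--Martin shift, and the Gaussian moment generating function. The points needing care are that the interchanges in (b)/(c) concern possibly infinite nonnegative quantities --- the right-hand sides there can genuinely be $+\infty$ --- so they should be invoked as Tonelli, not Fubini; and that everything is done with the projection $\mathcal P_n m$ rather than $m$ itself, which is precisely what keeps the shift inside the Cameron--Martin space (the untruncated occupation-measure sequence generally is not). The hard analytic work this proposition feeds into --- showing the nonnegative martingale $(Z_n)$ is uniformly integrable so that the limit $Z$ inherits \eqref{aaa} --- is deferred to the later sections.
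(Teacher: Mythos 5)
Your argument is correct and follows essentially the paper's route: the finite-dimensional Gaussian shift (Cameron--Martin) together with Tonelli for \eqref{e:EZn}, Gaussian moment computations for (a)--(c) (the paper phrases these as an induction taking $F=Z_n^{k-1}$ in \eqref{e:EZn}, which you also note as an alternative), and the same decomposition of $Z_n(m)-Z_n(\one_A m)$ over $A$ and $A^c$ for (d). One incidental point: your completed square gives the exponent $\sum_{j<\ell}\sum_i m_i^{(j)}m_i^{(\ell)}=\tfrac12\sum_{j\ne\ell}\sum_i m_i^{(j)}m_i^{(\ell)}$, which is the correct value (it agrees with (b) at $k=2$), so the displayed formula in (c), which sums over ordered pairs $j\ne\ell$ without the factor $\tfrac12$, contains a factor-of-two typo that your computation silently corrects.
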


\begin{proof}Note that
\begin{align*}
E_{ P}[ Z_{n} \mid \mathcal F_{n-1}] &= E_{ Q} \Big[ \exp\{\sum_{i=1}^{n-1} m_i \xi_i- \tfrac12 \sum_{i=1}^n m_i^2 \} E_{ P}[  \exp\{m_{n}\xi_{n} - \tfrac12 m_{n}^2]\}\big]\\&=Z_{n-1}.\end{align*}   
\eqref{e:EZn} is just the definition of the Gaussian measure together with Fubini's theorem, which applies because the integrand is non-negative.  To prove (a-b-c) by induction take $F= Z_n^{k-1}$ in \eqref{e:EZn} and expand the exponential. For $d$, note that if $m_c$ is the conditional law of $m$ given $A^c$, then 
\[
|Z_n(m)-Z_n(1_Am)|=Q(A^c)|Z_n(m_c)-1|.
\qedhere\]
\end{proof}

We already have examples from the previous section.

\begin{example}
In Example \ref{ex:MC} of a continuous time, finite state-space Wick-ordered  polymer we found the following. For the polymer ending at time $t$ and position $y$  the partition function can be written as
$$
p(y,t)=E_Q \bigg[\exp\Big\{\sum_z X(z)\xi_z-X(z)^2/2\Big\}\one(X_t=y)\bigg].
$$
The expectation is under a measure $Q$ independent of the Gaussian space $\Xi$. Under the measure $Q$, the variable $X_t$ is the position of the original Markov chain at time $t$ when started from $x$, and $X(z)=\int_0^t \one (X_s=z)ds$ is the time  spent at site $z$.

This is exactly in the form of Definition \ref{d:Z} with $m_j=X(s_j)$ for $j\le |S|$ and $m_j=0$ otherwise.
\end{example}

\begin{example}[Bayesian statistics]\label{e:Bayes1} Consider trying to guess the mean of a Gaussian random variable with variance $1$. The mean, unknown to us, is zero. Our prior distribution on $\Omega=\mathbb R$ is denoted $Q$. Let $n$ be fixed and let $$m_1(\omega)=\ldots= m_n(\omega)=\omega.$$ 

If the first $n$ samples are  $\xi_1,\ldots \xi_n$, and our opinion is that the mean is $\omega$, the likelihood (the relative odds, expressed as a density, of getting this outcome given our opinion) equals $\exp\sum_{i=1}^n (\xi_i-\omega)^2(2\pi)^{-n/2}$.
The evidence for our opinion is defined as the marginal density  of the data given our opinion, is given by $E_Q\exp\sum_{i=1}^n (\xi_i-\omega)^2=Z_n\exp^{-|\xi|^2/2}(2\pi)^{-n/2}$. The ``polymer'' interpretation of the posterior distribution is given in Example \ref{e:Bayes2}.
\end{example}

\subsection{Convergence in $L^1$}

Since $Z_n$ given in \eqref{zedenn} is a non-negative martingale, we have $Z_n\to Z$,  $P$-almost surely.  In order to extend \eqref{e:EZn} to $Z$, it is important that mass not be lost in the limit.

\begin{lemma}\label{ec}
Suppose that  either of the following equivalent conditions is satisfied: there exists a $Z$ such that $Z_n\to Z$  in $L^1(\Xi, \mathcal{F}, P)$, or $Z_n$ is uniformly integrable.
Then the marginal law
\begin{equation}\label{hatp}
    \hat{P}(A) : = (P\times Q)(\xi+m\in A)
\end{equation}
of $\xi+m$ on $(\Xi,\mathcal F)$ is absolutely continuous with respect to $P$. Also,  $P$-almost surely we have that $Z:=\lim_{n\to \infty} Z_n$, and $Z$ is the Radon-Nikodym derivative \begin{equation}
    Z=\frac{d\hat{P}}{dP}.\end{equation}  
\end{lemma}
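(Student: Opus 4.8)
The plan is to pass to the limit $n\to\infty$ in the identity \eqref{e:EZn} of Proposition \ref{shift1}, testing only against indicators of cylinder sets, where the limit exchange is essentially free. First I would record what comes for free: $Z_n\to Z$ almost surely because $Z_n$ is a non-negative martingale, and also in $L^1(P)$ by hypothesis, so these two limits agree; and $E_P Z_n=1$ by Proposition \ref{shift1}(a), hence $E_P Z=1$, so that $Z\,dP$ is a probability measure on $(\Xi,\mathcal F)$. Note also that the map $\xi+m:\Xi\times\Omega\to\mathbb R^{\mathbb N}$ is measurable, so $\hat P$ is a genuine Borel probability measure.

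Next I would test against cylinder sets. Fix $N\ge 1$ and a Borel set $C\subseteq\mathbb R^N$, and set $A=\{\xi\in\Xi:(\xi_1,\dots,\xi_N)\in C\}$ and $F=\one_A$. For every $n\ge N$ the first $N$ coordinates of $\xi+\mathcal P_n m$ are exactly $\xi_1+m_1,\dots,\xi_N+m_N$, so the right-hand side of \eqref{e:EZn} does not depend on $n$ for $n\ge N$: it equals $(P\times Q)\big((\xi_1+m_1,\dots,\xi_N+m_N)\in C\big)=\hat P(A)$. On the left-hand side, $E_P[\one_A Z_n]\to E_P[\one_A Z]$ because $\one_A$ is bounded and $Z_n\to Z$ in $L^1(P)$. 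Therefore $\hat P(A)=\int_A Z\,dP$ for every cylinder set $A$.

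Finally I would promote this to all of $\mathcal F$. The cylinder sets form a $\pi$-system generating the Borel $\sigma$-algebra of $\Xi=\mathbb R^{\mathbb N}$, and both $A\mapsto\hat P(A)$ and $A\mapsto\int_A Z\,dP$ are finite measures of total mass $1$ that agree on this $\pi$-system; by the $\pi$-$\lambda$ theorem they agree on all of $\mathcal F$. This is precisely the statement that $\hat P\ll P$ with $d\hat P/dP=Z$, and hence also that $Z=\lim_{n\to\infty}Z_n$ (in $L^1$ and $P$-a.s.) is that Radon--Nikodym derivative.

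I do not expect a genuine obstacle here: the argument is bookkeeping, and it is deliberately arranged so that the right-hand side of \eqref{e:EZn} is \emph{constant} in $n$ on cylinder test sets, which sidesteps any continuity or tightness considerations. The one conceptual point — and the reason the $L^1$ hypothesis cannot be dropped — is that $L^1$ convergence is exactly what prevents loss of mass in the limit: with only almost sure convergence, Fatou's lemma would give $\int_A Z\,dP\le\hat P(A)$, possibly strictly, reflecting a singular part of $\hat P$ that escapes, so that absolute continuity would fail; uniform integrability of the martingale $Z_n$ restores the equality.
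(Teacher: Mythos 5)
Your proof is correct and follows essentially the same route as the paper: both test the identity \eqref{e:EZn} of Proposition \ref{shift1} against sets in $\mathcal F_n$ (cylinder sets), where the right-hand side stabilizes, and then extend by the $\pi$--$\lambda$ theorem. The only cosmetic difference is that you pass to the limit via $L^1$ convergence of $Z_n$ directly, while the paper phrases the same step as the martingale closure property $E_P[Z\one_A]=E_P[Z_n\one_A]$ for $A\in\mathcal F_n$.
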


 \begin{proof}  For $A\in \mathcal F$ define a probability measure $\tilde P(A) = E_P[ Z\mathbf{1}_A]$.  We want to show that $\tilde P = \hat P$.  For $A\in \mathcal F_n$, $E_P[ Z\mathbf{1}_A]= E_P[ Z_n\mathbf{1}_A]$ by the martingale property, and $$E_P[ Z_n\mathbf{1}_A]= (P\times Q)(\xi+m\in A)$$ by the  fact that $A\in\mathcal A_n$ and Proposition \ref{shift1}.  So
 $\tilde P = \hat P$ on $\mathcal S=\bigcup_{n=1}^\infty\mathcal F_n$. Since $\mathcal S$ forms a $\pi$-system, by the $\pi-\lambda$ theorem $\tilde P=\hat P$ on $\sigma(\mathcal S)=\mathcal F$. 
\end{proof}

\subsection{Properties of randomized shifts}

\begin{lemma}[0-1 law]\label{01law}
If each  $|m_i|\le b_i$ for some deterministic $b_i<\infty$, then $P (Z=0)\in \{0,1\}$.
\end{lemma}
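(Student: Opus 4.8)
The plan is to prove this $0$-$1$ law by exploiting the tail-triviality of the product Gaussian measure $P$ together with the boundedness assumption $|m_i| \le b_i$, which guarantees each factor of the martingale is controlled. First I would set up the natural candidate for a tail event. Recall $Z_n = E_Q \exp\{\sum_{j=1}^n m_j\xi_j - \tfrac12 m_j^2\}$, and consider the reversed partial products: for $k \le n$, write $Z_n$ in terms of the increments so that the event $\{Z = 0\}$ can be compared to events depending only on $\xi_k, \xi_{k+1}, \ldots$ I would argue that $\{Z = 0\}$ agrees, up to $P$-null sets, with a tail event in $\mathcal T = \bigcap_n \sigma(\xi_n, \xi_{n+1}, \ldots)$; by the Kolmogorov $0$-$1$ law applied to the independent sequence $(\xi_i)$ under $P$, any such event has probability $0$ or $1$.

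The key step is the identification of $\{Z=0\}$ (mod $P$-null sets) with a tail event. Here is where the boundedness of the $m_i$ enters. For fixed $k$, condition on $\mathcal F_k = \sigma(\xi_1,\ldots,\xi_k)$: under $Q$, define the ``shifted'' measure $dQ_k = \exp\{\sum_{j=1}^k m_j\xi_j - \tfrac12 m_j^2\} \, dZ_k^{-1}\,dQ$ when $Z_k > 0$. One then checks that $Z = Z_k \cdot W_k$ where $W_k = \lim_n E_{Q_k}\exp\{\sum_{j=k+1}^n m_j \xi_j - \tfrac12 m_j^2\}$ is built only from $\xi_{k+1}, \xi_{k+2}, \ldots$ (together with the randomness of $\omega$, which lives on the independent space $\Omega$). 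Since $|m_j| \le b_j < \infty$, the factor $Z_k$ is strictly positive $P$-almost surely — indeed $Z_k \ge E_Q \exp\{-\sum_{j=1}^k(b_j|\xi_j| + b_j^2/2)\} > 0$ pointwise — so $\{Z = 0\} = \{W_k = 0\}$ up to a $P$-null set, for every $k$. Hence $\{Z=0\}$ lies in $\sigma(\xi_{k+1},\xi_{k+2},\ldots) \vee \mathcal G$ for every $k$; intersecting over $k$ and using independence of the $\Omega$-randomness, $\{Z=0\}$ is (mod null) a tail event of $(\xi_i)$ under $P$.

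I expect the main obstacle to be the careful bookkeeping in the decomposition $Z = Z_k W_k$: one must verify that the conditional limit $W_k$ exists $P$-almost surely, that it is measurable with respect to the tail $\sigma$-field of the $\xi$'s (after integrating out $\omega$, or working on the enlarged space and then projecting), and that the event $\{W_k = 0\}$ genuinely does not depend on $\xi_1,\ldots,\xi_k$. The strict positivity of $Z_k$ — which is exactly where the hypothesis $|m_i|\le b_i$ is used, and without which $Z_k$ could vanish on a positive-probability set and break the argument — makes the division legitimate. Once the tail-event identification is in hand, the conclusion $P(Z=0) \in \{0,1\}$ is immediate from Kolmogorov's $0$-$1$ law. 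An alternative, perhaps cleaner, route avoids the explicit decomposition: one shows directly that for every $k$, the event $\{Z=0\}$ is independent of $\mathcal F_k$ (using that $Z/Z_k$ is, conditionally on $\mathcal F_k$, a fixed positive random variable on the remaining coordinates whose vanishing set is $\mathcal F_k$-independent), hence independent of $\bigvee_k \mathcal F_k = \mathcal F$, hence independent of itself, forcing its probability to be $0$ or $1$. I would present whichever of these is shorter given the lemmas already available.
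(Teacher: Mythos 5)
Your overall strategy --- split off the first $k$ coordinates, show $\{Z=0\}$ is (mod null) a tail event of the $\xi_i$, and invoke Kolmogorov's 0--1 law --- is exactly the paper's, but the step that carries the whole lemma is the one you leave as ``the main obstacle'' and do not actually supply. Two specific problems. First, your $W_k=\lim_n E_{Q_k}\exp\{\sum_{j=k+1}^n m_j\xi_j-\tfrac12 m_j^2\}$ is \emph{not} built only from $\xi_{k+1},\xi_{k+2},\ldots$: the tilted measure $Q_k$ has density $Z_k^{-1}\exp\{\sum_{j\le k}m_j\xi_j-\tfrac12 m_j^2\}$, which depends on $\xi_1,\ldots,\xi_k$, so neither $W_k$ nor, a priori, the event $\{W_k=0\}$ is tail-measurable; asserting that ``its vanishing set is $\mathcal F_k$-independent'' is precisely what has to be proved. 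Second, you misplace where the hypothesis $|m_i|\le b_i$ enters. Strict positivity of $Z_k$ needs no boundedness at all: the integrand $\exp\{\sum_{j\le k}m_j\xi_j-\tfrac12 m_j^2\}$ is strictly positive and, for fixed $\xi$, bounded by $e^{\sum_{j\le k}\xi_j^2/2}$, so $Z_k\in(0,\infty)$ always. Positivity of $Z_k$ alone therefore cannot be ``exactly where the hypothesis is used,'' and it does not yield the tail-event identification.

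What is actually needed --- and what the paper's proof does in one line --- is a two-sided bound \emph{uniform in $\omega$}: for fixed $\xi_1,\ldots,\xi_k$, the first-block factor $X=\exp\{\sum_{j\le k}m_j\xi_j-\tfrac12 m_j^2\}$ satisfies $0<c(\xi_1,\ldots,\xi_k)\le X\le C(\xi_1,\ldots,\xi_k)<\infty$ on all of $\Omega$, because $|m_j|\le b_j$ gives $|\sum_{j\le k}m_j\xi_j|\le\sum_{j\le k}b_j|\xi_j|$ and $\sum_{j\le k}m_j^2\le\sum_{j\le k}b_j^2$. Writing $Z=\lim_n E_Q[XY_n]$ with $Y_n=\exp\{\sum_{j=k+1}^n m_j\xi_j-\tfrac12 m_j^2\}$, this sandwiching gives $c\,E_Q[Y_n]\le E_Q[XY_n]\le C\,E_Q[Y_n]$, hence $\{Z=0\}=\{\lim_n E_Q[Y_n]=0\}$, and the latter event lies in $\sigma(\xi_{k+1},\xi_{k+2},\ldots)$ (note $E_Q[Y_n]$ is itself a nonnegative martingale, so its limit exists). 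That comparison is the lemma; without the uniform lower bound in $\omega$ the equivalence can fail, since $X$ small on part of $\Omega$ could kill the limit even when $\lim_n E_Q[Y_n]>0$. Your tilted-measure decomposition can be repaired by exactly this argument (the density $X/Z_k$ is then bounded above and below uniformly in $\omega$), but as written the proposal has a genuine gap at its central step. A minor additional point: $Z$ is a function of $\xi$ alone ($\omega$ is integrated out), so the appeals to $\sigma(\xi_{k+1},\ldots)\vee\mathcal G$ and to ``independence of the $\Omega$-randomness'' are unnecessary and muddled.
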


A slightly different 0-1 law for GMC is shown in \cite{mukherjee2016weak}.

\begin{proof}
Fix $k\ge 1$. We write
$
Z=\lim_{n\to\infty}E_Q[XY_n]
$
where
$$
X=\exp \sum_{i=1}^k\xi_i m_i-m_i^2/2, \qquad Y_n=\exp \sum_{i={k+1}}^n \xi_im_i-m_i^2/2
$$
Note that for each fixed $\xi$, $X$ is bounded from above, so  $\lim_{n\to\infty}E_Q[Y_n]=0 \;\Rightarrow \;\lim_{n\to\infty}E_Q[XY_n]=0$ The assumption also implies that $X$ is bounded from below by a number  $\kappa>0$, so that $$\lim_{n\to\infty}E_Q[XY_n]\ge \limsup_{n\to\infty}\kappa E_Q[Y_n]\ge 0.$$ Hence 
$$
\lim_{n\to\infty}E_Q[XY_n]=0 \;\Leftrightarrow \;\lim_{n\to\infty}E_Q[Y_n]=0.
$$
Since the second event is in $\sigma(\xi_{k+1},\xi_{k+2},\ldots )$, so is the first, and since this holds for all $k$,  $\{Z=0\}$ is a tail event for the $\xi$.  Kolmogorov's 0-1 law implies the claim.
\end{proof}

Let $W_n=\exp\{-\sum_{i=1}^n m_i^2/2\}$, and let $w_n=E_QW_n$. Let $\tilde Q_n$ be the distribution of $m_1,\ldots, m_n$, biased by $W_n$, that is, for any test function $F$
\begin{equation}\label{e:Qtdef}
w_n E_{\tilde Q_n}[F]= E_Q[W_nF].
\end{equation}
Clearly, the distributions $\tilde Q_n$ determine the law of $m$.  The following proposition tells us that $Z$ determines $m$.
\begin{proposition} Under $\tilde Q_n$,  $(m_1,\ldots,m_n)$ has all exponential moments. \\ If $E_PZ=1$, then  the Laplace transforms satisfy
\begin{equation}\label{e:abovedisplay}
w_nE_{\tilde Q_n}[\exp \sum_{i=1}^n \xi_i m_i]=E_P[Z\,|\,\xi_1,\ldots,\xi_n].
\end{equation}
\end{proposition}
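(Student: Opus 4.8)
The plan is to recognize that the left-hand side of the identity, once the definition of the biased measure $\tilde Q_n$ is unwound, is exactly the martingale $Z_n$ of \eqref{zedenn}; the exponential-moment assertion then falls out of completing the square, and the identity itself reduces to showing that the martingale $Z_n$ is closed by its limit, i.e.\ $Z_n=E_P[Z\mid\mathcal F_n]$, which holds precisely because $E_PZ=1$.

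First I would unwind \eqref{e:Qtdef}. With $W_n=\exp\{-\tfrac12\sum_{i=1}^n m_i^2\}$ we have $0<W_n\le1$, hence $0<w_n\le1$, so $\tilde Q_n$ is a genuine probability measure, and for any reals $\lambda_1,\dots,\lambda_n$,
$$
w_n\,E_{\tilde Q_n}\Big[\exp\sum_{i=1}^n\lambda_i m_i\Big]
=E_Q\Big[\exp\Big(\sum_{i=1}^n\lambda_i m_i-\tfrac12\sum_{i=1}^n m_i^2\Big)\Big]
=E_Q\Big[\exp\Big(\tfrac12\sum_{i=1}^n\lambda_i^2-\tfrac12\sum_{i=1}^n(m_i-\lambda_i)^2\Big)\Big]\le e^{\frac12\sum_i\lambda_i^2}.
$$
Choosing all but one $\lambda_i$ equal to $0$ gives $E_{\tilde Q_n}[e^{\lambda m_i}]\le e^{\lambda^2/2}/w_n<\infty$ for every $\lambda\in\mathbb R$ and $i\le n$, proving the first claim (and, incidentally, that $E_{\tilde Q_n}[\exp\sum_i\xi_i m_i]$ is finite for each fixed $\xi$). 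Specializing the display to $\lambda_i=\xi_i$ and comparing with \eqref{zedenn} gives $w_n\,E_{\tilde Q_n}[\exp\sum_{i=1}^n\xi_i m_i]=Z_n$, so the remaining identity to prove is $Z_n=E_P[Z\mid\mathcal F_n]$.

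For this I would invoke that $Z_n$ is a non-negative $\mathcal F_n$-martingale with $E_PZ_n=1$ (Proposition \ref{shift1}(a)) and $Z_n\to Z$ $P$-a.s.; since $Z_n\ge0$, $Z_n\to Z$ a.s., and $E_PZ_n\to E_PZ=1$, Scheff\'e's lemma gives $Z_n\to Z$ in $L^1(P)$, so $(Z_n)$ is uniformly integrable (alternatively one may quote Lemma \ref{ec}). Then for $m\le n$ and $A\in\mathcal F_m$ the martingale property gives $E_P[Z_n\one_A]=E_P[Z_m\one_A]$, and passing to the limit (valid since $\one_A$ is bounded and $Z_n\to Z$ in $L^1$) yields $E_P[Z\one_A]=E_P[Z_m\one_A]$; as $A\in\mathcal F_m=\sigma(\xi_1,\dots,\xi_m)$ was arbitrary, $Z_m=E_P[Z\mid\xi_1,\dots,\xi_m]$, which is the claim. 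There is no real obstacle here: the one subtlety worth flagging is that a non-negative martingale is in general only its limit's supermartingale closure (one always has $Z_n\ge E_P[Z\mid\mathcal F_n]$), so the hypothesis $E_PZ=1$ — equivalently, uniform integrability — is genuinely needed to upgrade this to equality.
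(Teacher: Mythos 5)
Your proposal is correct and follows essentially the same route as the paper: unwind \eqref{e:Qtdef} and complete the square to get the exponential moments and the identification of the left side with $Z_n$, then use that $E_PZ=1$ forces the nonnegative martingale $Z_n$ to be closed by its limit, i.e.\ $Z_n=E_P[Z\mid\mathcal F_n]$. The only difference is that you spell out (via Scheff\'e and the supermartingale-closure remark) the equivalence that the paper simply asserts in its last sentence, which is a fine elaboration rather than a different argument.
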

Note that the left hand side of \eqref{e:abovedisplay} is analytic in $\xi_1,\ldots \xi_n$, so even though the right is only defined almost everywhere, it has a unique continuous version.
\begin{proof} For the first claim, let $1\le k\le n$. We have
for all $t\in\mathbb R$,

$$
w_nE_{\tilde Q_n} \exp \{t m_k\}=E_Q \exp \Big\{t m_k -\sum_{i=1}^n m_i^2/2 \Big\}<\infty.
$$
since the expression in the expectation is bounded.

For the second, note that by definition \eqref{e:Qtdef} the left hand side equals
$$
Z_n=E_Q[\exp \sum_{i=1}^n \xi_i m_i-m_i^2/2]
$$
our usual martingale approximation of $Z$. So we need to prove
$
Z_n=E_P[Z\,|\,\xi_1,\ldots,\xi_n].
$
This is equivalent  to uniform integrability, or $E_PZ=1$.
\end{proof}

\subsection{Existence of the randomized shift}

As usual, the trouble is that none of the equivalent conditions of Lemma \ref{ec}
 (convergence in $L^1$ or uniform integrability of the martingale sequence) are practical to check.  Not surprisingly, there is a simple $L^2$ condition, and a restricted variant.

Define the {\bf intersection exponential} as
\begin{equation}\label{eqthing}
    \mathfrak e(m) = \lim_{n\to\infty} E_P[Z_n^2]=\lim_{n\to \infty}E_{ Q^{\otimes 2}}\exp \sum_{i=1}^n m_i m'_i.
\end{equation}
Note that $\mathfrak e(m)$ depends only on the law of $m$.
Here $m$ and $m'$ are independent copies, and $Q^{\otimes 2}$ denotes the product measure.
Since $E_{ P}\left[Z_n^2\right]$ is a martingale variance, \eqref{eqthing} is a limit of an increasing sequence. Hence $\mathfrak e(m)$
is always defined, but it may be infinite. 

\begin{proposition}\label{p:shifts-existence}
\begin{enumerate}
    \item
If $\mathfrak e(m)$ is finite, then the  partition function of the randomized shift exists,
$$
Z =\lim_{n\to\infty} Z_n  \qquad  P\mbox{-a.s. \;\; and \;\;in  }L^2(\Xi,\mathcal F, P),
$$
and $E_{ P}[Z^2]=\mathfrak e(m)$.
\item \label{p:shifts-existence-hard}If for every $p<1$ there is a $\mathcal G$-measurable set $A$ with $Q(A)>p$ so that $\mathfrak e(\one_A m)<\infty$  (where the notation $1_Am$ is defined in (\ref{aem})), then $Z$ exists and
$$
Z =\lim_{n\to\infty} Z_n  \qquad  P\mbox{-a.s. \;\; and \;\;in  }L^1(\Xi,\mathcal F, P).
$$
\end{enumerate}
\end{proposition}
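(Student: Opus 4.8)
The plan is to prove part (1) first by a standard martingale-in-$L^2$ argument, and then bootstrap to part (2) by a truncation-and-approximation argument that uses part (1) on the truncated shift $\one_A m$.

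For part (1): By Proposition \ref{shift1}(b), $E_P[Z_n^2]=E_{Q^{\otimes 2}}\exp\sum_{i=1}^n m_im_i'$, which is nondecreasing in $n$ and converges to $\mathfrak e(m)<\infty$ by hypothesis. Since $Z_n$ is a nonnegative $\mathcal F_n$-martingale (Proposition \ref{shift1}, proof of part 1) with $\sup_n E_P[Z_n^2]=\mathfrak e(m)<\infty$, it is an $L^2$-bounded martingale, hence converges $P$-a.s.\ and in $L^2(\Xi,\mathcal F,P)$ to some $Z$ by the martingale convergence theorem. Then $E_P[Z^2]=\lim_n E_P[Z_n^2]=\mathfrak e(m)$, and $L^2$ convergence implies $L^1$ convergence, so by Lemma \ref{ec} this $Z$ is indeed the partition function of the randomized shift. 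This part is routine.

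For part (2): Fix a sequence $p_k\uparrow 1$ and, by hypothesis, choose $\mathcal G$-measurable sets $A_k$ with $QA_k>p_k$ and $\mathfrak e(\one_{A_k}m)<\infty$. By part (1) applied to the randomized shift $\one_{A_k}m$, the martingale $Z_n(\one_{A_k}m)$ converges in $L^1$ to its partition function $Z(\one_{A_k}m)$, with $E_P Z(\one_{A_k}m)=1$. The key comparison is Proposition \ref{shift1}(d): $E_P|Z_n(m)-Z_n(\one_{A_k}m)|\le 2Q(A_k^c)<2(1-p_k)$, uniformly in $n$. The strategy is then: $Z_n(m)$ is a nonnegative martingale, hence converges $P$-a.s.\ to some $Z$ (possibly with $E_PZ<1$); we must upgrade this to $L^1$ convergence, equivalently uniform integrability. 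The uniform-in-$n$ bound from (d) shows $Z_n(m)$ is within $L^1$-distance $2(1-p_k)$ of the uniformly integrable family $\{Z_n(\one_{A_k}m)\}_n$; since a family within small $L^1$-distance of a uniformly integrable family is itself "almost" uniformly integrable, and $1-p_k$ can be taken arbitrarily small, $\{Z_n(m)\}_n$ is uniformly integrable. Concretely, one can argue: for any $\eps>0$, pick $k$ with $2(1-p_k)<\eps/2$; by uniform integrability of $\{Z_n(\one_{A_k}m)\}_n$ pick $\delta$ so that $\sup_n E_P[Z_n(\one_{A_k}m)\one_B]<\eps/2$ whenever $P(B)<\delta$; then $\sup_n E_P[Z_n(m)\one_B]\le \sup_n E_P|Z_n(m)-Z_n(\one_{A_k}m)|+\sup_n E_P[Z_n(\one_{A_k}m)\one_B]<\eps$. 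Combined with $\sup_n E_P Z_n(m)=1<\infty$, the de la Vall\'ee-Poussin / Dunford--Pettis criterion gives uniform integrability, hence $Z_n(m)\to Z$ in $L^1$ and $P$-a.s., and Lemma \ref{ec} identifies $Z$ as the partition function.

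The main obstacle is the passage in part (2) from the per-$k$, uniform-in-$n$ proximity bound to genuine uniform integrability of the full family $\{Z_n(m)\}_{n}$: one must be careful that the bound in Proposition \ref{shift1}(d) is uniform in $n$ (it is, since it bounds $Q(A_k^c)$ with no $n$-dependence) and that the limit $k\to\infty$ interacts correctly with the limit $n\to\infty$ — the cleanest route is the $\eps$-$\delta$ uniform integrability argument above, which never needs to exchange the two limits. A secondary point worth checking is that $Z_n(\one_{A_k}m)$ really is the martingale approximation associated to the shift $\one_{A_k}m$ in the sense of \eqref{zedenn}, so that part (1) applies verbatim; this is immediate from the definition since $\one_{A_k}m$ has coordinates $\one_{A_k}m_i$ and $\mathfrak e(\one_{A_k}m)$ is exactly the quantity assumed finite.
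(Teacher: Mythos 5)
Your proof is correct and takes essentially the same approach as the paper: part (1) is the standard $L^2$-bounded nonnegative martingale argument with Lemma \ref{ec} identifying the limit, and part (2) rests on the same two ingredients, namely part (1) applied to the truncated shift $\one_A m$ together with the comparison $E_P|Z_n(m)-Z_n(\one_A m)|\le 2Q(A^c)$ of Proposition \ref{shift1}(d). The only cosmetic difference is the final step: the paper concludes that $Z_n(m)$ is Cauchy in $L^1(P)$ by the triangle inequality, whereas you establish uniform integrability of $\{Z_n(m)\}_n$ and invoke a.s.\ convergence of the nonnegative martingale; the two are interchangeable here.
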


\begin{proof}[Proof of Proposition \ref{p:shifts-existence}] For 1, $Z_n$ is an $L^2(P)$-bounded martingale, so it converges in $L^2(P)$ and almost surely.  Lemma \ref{ec} identifies the limit as $Z$.

For 2,  briefly call $Z(m)$  the partition function corresponding to $m$. By Proposition \ref{shift1} (d)  we have that for all $n$,
$$
E_P|Z_n(1_A m)-Z_n(m)| \le 2Q(A^c)<2(1-p).
$$
By the triangle inequality this implies that
$$
\sup_{n'\ge n} E_P|Z_n(m)-Z_{n'}(m)| \le 4(1-p)+\sup_{n'\ge n}E_P|Z_n(1_{A}m)-Z_{n'}(1_{A}m)|.
$$
Given $\epsilon>0$, from here we see that we can choose first $p$ so that the first term of the above inequality is smaller than $\epsilon/2$, which fixes $A$, and then $n$ large enough so that the second term is also smaller than $\epsilon/2$, which proves that $(Z_n(m))_{n\ge 1}$  is a Cauchy sequence.
Hence $Z_n(m)$ converges in $L^1(P)$. The rest follows from Lemma \ref{ec}.
\end{proof}

\begin{remark}[Is there a rabbit?]\label{r:rabbit}
A rabbit possibly ran through a field and left its tracks.
Can the fox tell if the rabbit has been there?

In our setting, the  field is planar white noise, and the tracks of the rabbit are the occupation measure of Brownian motion.

The information of the rabbitless field is $\xi$, and with the rabbit's tracks, $\xi+m$. The fact that $\xi+m$ is absolutely continuous with respect to $\xi$   means that the rabbit is lucky: if it passed through the field, the fox still cannot tell this with probability one.

The fox's observation $Y$ is a sample either from the law of $\xi+m$ or from the law of $\xi$. The fox may use the likelihood ratio test to decide which one.  In this test, if the two laws are absolutely continuous with respect to a common measure, the choice depends on whether the density ratio passes a certain threshold $a$. In fact, the law of $\xi$ can be the common measure, and then the likelihood ratio is $Z(Y)$ (here we use the definition of the random variable $Z$ as a function $\Xi\to \mathbb R$).

So the fox decides to keep pursuing when $Z(Y)>a$ for some $a$ depending on how hungry it is.
 The Neyman-Pearson Lemma says that this is the most powerful among tests of the same level.

Now consider a discrete model of a simple random walk as in Example \ref{ex:MC}. One may add a parameter $\beta$ in front of white noise, representing the density of the brush. The  regime will be decided based on the rabbit problem. If the fox has asymptotically no chance, then $Z$ converges to 1 and we expect an additive heat equation (and a Gaussian regime). On the boundary, we expect a multiplicative heat equation (crossover regime), and the asymptotic KPZ fluctuations -- the Tracy-Widom fluctuations -- probe the regime of the lucky fox.

See \cite{berger2013detecting} for a related problem.
\end{remark}

\begin{example}[The Gaussian multiplicative chaos on the circle]
Heuristically, for $\gamma\in [0,2)$ the Gaussian multiplicative chaos on the circle is a random finite measure given by
\begin{equation}
    M(A) = \int_A e^{\gamma \zeta (\omega) - \tfrac{\gamma^2}2 E[\zeta ^2(\omega)]} d\omega, \qquad A\subset [0,2\pi), \quad \mbox{(heuristic)}
\end{equation}
where $\zeta $ is  Gaussian free field, a generalized Gaussian process   with covariance
$$K(\omega,\omega')=2\log\frac{1}{|e^{i\omega}-e^{i\omega'}|}.
$$
There are many ways to make this rigorous. It fits in the present setting as follows.
Let  $\Omega=[0,2\pi]$ and let  $Q$ be uniform measure. Let $\gamma\ge 0$, and define the random variables
$$
m_{2k-1}(\omega)=\frac{ \cos(k\omega)}{\sqrt{k/2}}, \qquad m_{2k}(z)=\frac{\sin(k\omega)}{\sqrt{k/2}},\qquad  k=1,2,\ldots
$$
The random variable $Z(\gamma m)$ is the total mass of the Gaussian multiplicative chaos (GMC) on the circle with coupling constant $\gamma$. The random measure $M_\xi$ on $[0,2\pi]$ is the GMC itself. A quick computation gives
$$
\sum_{m=1}^\infty m_i(\omega)m_i'(\omega')= K(\omega,\omega'),
$$
which blows up at the diagonal, $\sum m_i^2=\infty$, so the partition function  has to be defined as a limit. The intersection exponential is given by
$$
\mathfrak e (\gamma m) = \frac{1}{2\pi}\int_0^{2\pi} \frac{1}{|e^{is}-1|^{2\gamma^2}}\, ds= \begin{cases}
\frac{2 \Gamma (-2 \gamma^2 )}{\Gamma (1-\gamma^2
   ) \Gamma (-\gamma^2 )}, \qquad &\gamma<1/\sqrt{2},
   \\
   \infty  & \gamma\ge 1/\sqrt{2}.
   \end{cases}$$
The Gaussian sequence $\xi_0,\xi_1,\xi_2,\ldots$ can be identified with white noise on the circle $\sum \basis_i\xi_i$ using the basis elements $\basis_0=\frac{1}{\sqrt{2\pi}}$, $\basis_k=\frac{1}{\sqrt{2\pi}}\sqrt{k} m_k$, $k \ge 1$. The analogue of the path measure in this case has state space given by the  circle and is the random function 
$$
X=\sum_{k=1}^\infty m_k(\omega)\basis_k(x)=2\Re \operatorname{Li}_{1/2}(e^{i(x-\omega)})
$$
which has a square-root singularity, and is not in  $L^2[0,2\pi]$ for any $\omega$. The rabbit problem, Remark \ref{r:rabbit} in this setting says that the marginal law of $\xi+\gamma X$ is absolutely continuous with respect to white noise $\xi$ exactly when $\gamma<2$. An alternative formulation is in terms of the absolute continuity of  $\zeta+\gamma K(\omega,\cdot)$ with respect to the GFF $\zeta$.
\end{example}

\subsection{Properties of $\sum m_jm_j'$.}
\label{ss:alpha-new}
\ \\
In preparation for our main convergence theorem, we need to establish properties of the sequence $\alpha_n(m,m')=\sum_{j=1}^nm_jm_j'$, or, more generally a sequence 
\begin{equation}\label{e:alpha-new}
\alpha_n(m,w')=\sum_{j=1}^nm_jw_j'
\end{equation} 
The prime notation here means the following. Consider the product  of two copies of $(\Omega,\mathcal G, Q)$. For a random variable $U$ defined on $\Omega$ and $(\omega,\omega')\in \Omega^2$ we write $U=U(\omega)$ to define $U$ on $\Omega^2$, and write $U'=U(\omega')$ to define an independent copy. 

It helps to think of the sequence $m_j$ as a random signed measure on $\mathbb N$, defined by  $m(\{j\})=m_j$. Let $m^{\otimes k}$ denote the random product measure on $\mathbb N^k$, and let $\rho_{k,m}=Em^{\otimes k}$. 
We first establish some facts about $\alpha_n$ and its moments. 
\begin{lemma}Assume $E|m_j|^k,E|w_j|^k<\infty$ for all $j$.  Then
\label{l:rhokn-m}
  \begin{enumerate}
    \item\label{l:rhokn-moments} 
    $E\alpha_n(m,w')^k= 
    \langle\one_{\{1,\ldots,n\}^k}\rho_{k,m},\rho_{k,w}\rangle_{\mathbb N^{\otimes k}}$
  
\item For  $k\in 2\mathbb{N}$,~~ $n\ge \ell$ and $\|\cdot\|_k=\|\cdot\|_{L^k(\Qt)}$ we have 
      $$\|\alpha_n-\alpha_\ell\|^{2k}_k\le\Big( \|\alpha_n(m,m')\|^k_k-\|\alpha_\ell(m,m')\|^k_k\Big)\Big(\|\alpha_n(w,w')\|^k_k-\|\alpha_\ell(w,w')\|^k_k \Big).$$
       \end{enumerate}
\end{lemma}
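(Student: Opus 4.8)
The plan is to prove both parts by expanding the $k$-th power of $\alpha_n$ into a sum over multi-indices $\mathbf{j}=(j_1,\dots,j_k)$ and computing $Q^{\otimes 2}$-expectations coordinate by coordinate. Every sum that appears is finite (since $n<\infty$), so there are no convergence issues and Fubini applies term by term, using only $E|m_j|^k,E|w_j|^k<\infty$ together with the generalized Hölder inequality to see each summand is integrable.

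For part (1) I would write $\alpha_n(m,w')^k=\sum_{\mathbf{j}\in\{1,\dots,n\}^k}\prod_{i=1}^k m_{j_i}w_{j_i}'$ and then use that $m=m(\omega)$ and $w'=w(\omega')$ live on the two independent factors of $(\Omega,\mathcal G,Q)^{\otimes 2}$, so the expectation of each summand factors as $E_Q[\prod_i m_{j_i}]\,E_Q[\prod_i w_{j_i}]=\rho_{k,m}(\{\mathbf{j}\})\,\rho_{k,w}(\{\mathbf{j}\})$ by definition of $\rho_{k,m}=Em^{\otimes k}$. Summing over $\mathbf{j}\in\{1,\dots,n\}^k$ gives exactly $\langle\one_{\{1,\dots,n\}^k}\rho_{k,m},\rho_{k,w}\rangle_{\mathbb N^{\otimes k}}$.

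For part (2), abbreviate $\mu(\mathbf{j})=\rho_{k,m}(\{\mathbf{j}\})$ and $\nu(\mathbf{j})=\rho_{k,w}(\{\mathbf{j}\})$. Since $k$ is even, $\|\cdot\|_k^k=E(\cdot)^k$, and here $\alpha_n-\alpha_\ell$ means $\alpha_n(m,w')-\alpha_\ell(m,w')=\sum_{j=\ell+1}^n m_jw_j'$, so applying the part (1) computation to this truncated sum gives $\|\alpha_n-\alpha_\ell\|_k^k=\sum_{\mathbf{j}\in\{\ell+1,\dots,n\}^k}\mu(\mathbf{j})\nu(\mathbf{j})$; likewise part (1) with $w=m$ yields $\|\alpha_n(m,m')\|_k^k-\|\alpha_\ell(m,m')\|_k^k=\sum_{\mathbf{j}\in A}\mu(\mathbf{j})^2$, where $A:=\{1,\dots,n\}^k\setminus\{1,\dots,\ell\}^k$, and similarly for $\nu$. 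The claimed inequality then reduces to $\big(\sum_{\mathbf{j}\in\{\ell+1,\dots,n\}^k}\mu(\mathbf{j})\nu(\mathbf{j})\big)^2\le\big(\sum_{\mathbf{j}\in A}\mu(\mathbf{j})^2\big)\big(\sum_{\mathbf{j}\in A}\nu(\mathbf{j})^2\big)$, which I would obtain by Cauchy--Schwarz on the finite index set $\{\ell+1,\dots,n\}^k$, followed by enlarging each of the two factors to a sum over $A$ using the containment $\{\ell+1,\dots,n\}^k\subseteq A$ and the nonnegativity of $\mu(\mathbf{j})^2,\nu(\mathbf{j})^2$. Squaring the identity for $\|\alpha_n-\alpha_\ell\|_k^k$ then closes the argument.

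The one point needing care is keeping the two index regions apart: because $(\alpha_n-\alpha_\ell)^k\ne\alpha_n^k-\alpha_\ell^k$, the left-hand side only involves the "diagonal block" $\{\ell+1,\dots,n\}^k$, whereas the two martingale-increment factors on the right are sums over the larger "L-shaped" region $A$ of multi-indices with at least one coordinate exceeding $\ell$. The bound survives precisely because the block embeds into $A$ and all summands in the right-hand factors are nonnegative; everything else is finite-sum bookkeeping and Fubini.
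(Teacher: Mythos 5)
Your proof is correct and follows essentially the same route as the paper: part (1) by expanding the product of sums and factoring over the two independent coordinates, and part (2) by Cauchy--Schwarz on the block $\{\ell+1,\ldots,n\}^k$ followed by enlarging each factor to the L-shaped region $\{1,\ldots,n\}^k\setminus\{1,\ldots,\ell\}^k$ using nonnegativity of $\rho_{k,m}^2$ and $\rho_{k,w}^2$, which is exactly the paper's disjoint-subsets argument for the measure $\nu_{m,w}$.
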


\begin{proof}
{\sl 1} follows by expanding the two sums. To prove {\sl 2}, consider the signed measure $\nu=\nu_{m,w}$ on $\mathbb N^k$ with $\nu(x)=\rho_{k,m}(x)\rho_{k,w}(x)$. When $m=w$, we have  $\nu\ge 0$ and  
\begin{align}\label{e:tri}
E\alpha_n^k &=\nu(\{1,\ldots,n\}^k)\ge \nu(\{1,\ldots,\ell\}^k)+\nu(\{\ell+1,\ldots,n\}^{k})\\ 
& =E\alpha_\ell^k+ E(\alpha_n-\alpha_\ell)^k \notag
\end{align}
since the last two sets are disjoint  subsets of the first one. The inequality holds for all $k\in \mathbb N$, but  without absolute values \eqref{e:tri} describes $L^k$ norms only for even $k$.

When $m$ and $w$ may be different, by Cauchy-Schwarz, with $A=\{\ell+1,\ldots,n\}^{k}$ we have
$$(E(\alpha_n-\alpha_\ell)^k)^2=(\nu_{X,Y}(A))^2\le \nu_{X,X}(A)\nu_{Y,Y}(A)
$$
and the claim follows as before.
\end{proof}
\begin{proposition}\label{p:rhok-m}
  Let $k_0\in 2\mathbb{N}$, and assume that 
  $\rho_{{k_0},m},\rho_{{k_0},w}\in \ell^2(\mathbb N^k)$.
  Then for integers $1\le k\le k_0$, 
  \begin{enumerate}
  \item  $\alpha_n(m,w')$ has a limit $\alpha(m,w')$ in $L^k(\Qt)$. 
  \item
  $
      E\alpha(m,w')^k=\langle \rho_{k,m},\rho_{k,w}\rangle  $.
  \item 
     $E[\alpha(m,w')^k]^2\le E[\alpha(m,m')^k]E[\alpha(w,w')^k]$ 
 \end{enumerate}
\end{proposition}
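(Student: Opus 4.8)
The plan is to deduce all three claims from Lemma \ref{l:rhokn-m} by a Cauchy-sequence argument in $L^k(\Qt)$. First I would observe that, by part 1 of Lemma \ref{l:rhokn-m}, for $m=w$ the moments $E\alpha_n(m,m')^k$ form an increasing sequence (new terms add nonnegative mass $\nu_{m,m}\ge 0$ to a growing box), bounded above by $\langle \rho_{k,m},\rho_{k,m}\rangle = \|\rho_{k,m}\|_{\ell^2(\mathbb N^k)}^2 < \infty$ by hypothesis. So for even $k\le k_0$ the sequence $\|\alpha_n(m,m')\|_k^k$ converges, hence is Cauchy; the same holds with $w$ in place of $m$. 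Feeding this into part 2 of Lemma \ref{l:rhokn-m} gives, for $n\ge\ell$ and even $k$,
\[
\|\alpha_n(m,w')-\alpha_\ell(m,w')\|_k^{2k}\le \Big(\|\alpha_n(m,m')\|_k^k-\|\alpha_\ell(m,m')\|_k^k\Big)\Big(\|\alpha_n(w,w')\|_k^k-\|\alpha_\ell(w,w')\|_k^k\Big)\to 0,
\]
so $\alpha_n(m,w')$ is Cauchy in $L^k(\Qt)$ for every even $k\le k_0$, and therefore converges to a limit $\alpha(m,w')$. For odd $k$ in the range, convergence in $L^k$ follows for free from convergence in $L^{k_0}$ (or in the next even integer $\le k_0$) since $\Qt$ is a probability measure and $\|\cdot\|_k\le\|\cdot\|_{k'}$ for $k\le k'$; this requires $k_0\ge k$, which is why the hypothesis is phrased with an even $k_0$. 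One subtlety: the limit must be taken as $n\to\infty$ along all of $\mathbb N$, not a subsequence, but that is automatic once Cauchyness is established. This proves part 1.

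For part 2, I would pass to the limit in the identity $E\alpha_n(m,w')^k=\langle \one_{\{1,\ldots,n\}^k}\rho_{k,m},\rho_{k,w}\rangle_{\mathbb N^{\otimes k}}$ of Lemma \ref{l:rhokn-m}(1). The left side converges to $E\alpha(m,w')^k$: $L^k$ convergence of $\alpha_n(m,w')$ to $\alpha(m,w')$ gives convergence of the $k$-th moments (the map $X\mapsto EX^k$ is continuous on $L^k$ by the triangle inequality $|EX^k-EY^k|\le \|X-Y\|_k\sum_{i=0}^{k-1}\|X\|_k^i\|Y\|_k^{k-1-i}$, and the norms $\|\alpha_n(m,w')\|_k$ stay bounded — this itself needs a word, see below). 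The right side converges to $\langle \rho_{k,m},\rho_{k,w}\rangle_{\ell^2(\mathbb N^k)}$ by dominated convergence for the $\ell^1$ sum $\sum_{x}\rho_{k,m}(x)\rho_{k,w}(x)$, whose absolute series is summable by Cauchy–Schwarz in $\ell^2(\mathbb N^k)$ using both hypotheses. The one point needing care is the bound on $\|\alpha_n(m,w')\|_k$: for even $k$ this is $\le \|\alpha_n(m,m')\|_{k}^{1/2}\|\alpha_n(w,w')\|_{k}^{1/2}$ from Cauchy–Schwarz on $\nu_{m,w}$ (the $\ell=0$ case of Lemma \ref{l:rhokn-m}(2)), and both factors are bounded by the already-established convergence; for odd $k$ use monotonicity of $L^k$ norms again.

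Part 3 is then immediate: it is the $\ell=0$ instance of Lemma \ref{l:rhokn-m}(2), which reads $\|\alpha_n(m,w')\|_k^{2k}\le \|\alpha_n(m,m')\|_k^k\|\alpha_n(w,w')\|_k^k$, i.e. $(E\alpha_n(m,w')^k)^2\le E\alpha_n(m,m')^k\,E\alpha_n(w,w')^k$ for even $k$; letting $n\to\infty$ using part 1 of the present proposition (convergence of each moment) gives $(E\alpha(m,w')^k)^2\le E\alpha(m,m')^k\,E\alpha(w,w')^k$. For odd $k$, it follows by combining part 2 with Cauchy–Schwarz directly on the $\ell^2$ inner products: $\langle \rho_{k,m},\rho_{k,w}\rangle^2\le \|\rho_{k,m}\|^2\|\rho_{k,w}\|^2 = \langle\rho_{k,m},\rho_{k,m}\rangle\langle\rho_{k,w},\rho_{k,w}\rangle = E\alpha(m,m')^k\,E\alpha(w,w')^k$. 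I expect the main obstacle to be purely bookkeeping: matching the parity restrictions (the clean telescoping identities in Lemma \ref{l:rhokn-m} only hold for even exponents because $L^k$-norms need $|\cdot|^k = (\cdot)^k$) with the odd $k$ in the stated range, and making sure the $\ell^2(\mathbb N^k)$ summability hypotheses are invoked correctly to justify the dominated-convergence step — neither is deep, but both must be stated precisely.
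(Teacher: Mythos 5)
Your proposal is correct and follows essentially the same route as the paper: Cauchyness in $L^{k_0}(\Qt)$ (hence in $L^k$ for $k\le k_0$) via the telescoping/Cauchy–Schwarz estimate of Lemma \ref{l:rhokn-m} together with the $\ell^2$ bound on the moments, then passing to the limit in the moment identity of Lemma \ref{l:rhokn-m}(1) for part 2, and deducing part 3 from the $\ell^2$ inner-product representation by Cauchy–Schwarz. You simply spell out the bookkeeping (odd $k$ via monotonicity of $L^p$ norms, uniform moment bounds, summability for the limit of the inner products) that the paper leaves implicit.
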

\begin{proof}  
 By Lemma \ref{l:rhokn-m} and the assumption, $\alpha_n$ is Cauchy in
$L^{k_0}$, so also in $L^k$.
Taking limits of {\sl 1} of the previous lemma gives {\sl 2}. 
{\sl 3} follows from the $\ell^2$ representation in {\sl 2}. 
\end{proof}

The following lemma will be useful to understand the intersection exponential. 

\begin{lemma}\label{l:i-exp bound}
If $\rho_{k,m}\in \ell^2(\mathbb N^k)$ for all $k$, then $\alpha_n=\alpha_n(m,m')$ has a limit $\alpha$ in $L^k$ for every $k$. If  $E\cosh(\alpha)<\infty$, then   
$$\mathfrak e(m):=\lim_{n\to\infty} Ee^{\alpha_n}= Ee^{\alpha}<\infty.
$$
\end{lemma}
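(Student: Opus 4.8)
The first assertion is essentially a restatement of Proposition~\ref{p:rhok-m}. Given any even $k_0$, the hypothesis supplies $\rho_{k,m}\in\ell^2(\mathbb N^k)$ for $1\le k\le k_0$, so that proposition (applied with $w=m$) produces a limit of $\alpha_n=\alpha_n(m,m')$ in $L^k(\Qt)$ for each $k\le k_0$; since the $L^k$-limits for different $k$ coincide $\Qt$-a.s.\ (each is also a limit in probability), letting $k_0\to\infty$ yields a single random variable $\alpha$ that is the $L^k$-limit for every $k$. In particular $\alpha_n\to\alpha$ in $L^1(\Qt)$, hence in probability, so along some subsequence $(n_j)$ we have $\alpha_{n_j}\to\alpha$ $\Qt$-a.s.

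For the identity $\mathfrak e(m)=Ee^{\alpha}$ I would first recall that $Ee^{\alpha_n}=E_P[Z_n^2]$ by Proposition~\ref{shift1}(b), a martingale variance, so $Ee^{\alpha_n}$ is nondecreasing and increases to $\mathfrak e(m)$ as in \eqref{eqthing}. The heart of the argument is to control $E\cosh(\alpha_n)$, and the point is that the moments of $\alpha_n(m,m')$ are \emph{honestly nonnegative}: by Lemma~\ref{l:rhokn-m}(1) with $w=m$ one has $E[\alpha_n^k]=\sum_{x\in\{1,\dots,n\}^k}\rho_{k,m}(x)^2$, which is nonnegative, nondecreasing in $n$, and increases to $\|\rho_{k,m}\|_{\ell^2(\mathbb N^k)}^2=E[\alpha^k]$ (the last equality from Proposition~\ref{p:rhok-m}(2)). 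Since $\cosh(t)=\sum_{k\text{ even}}t^k/k!$ and $t^k\ge 0$ for real $t$ and even $k$, Tonelli gives $E\cosh(\alpha_n)=\sum_{k\text{ even}}E[\alpha_n^k]/k!$ and $E\cosh(\alpha)=\sum_{k\text{ even}}E[\alpha^k]/k!$, the latter finite by hypothesis. A double monotone-convergence argument --- each term nonnegative, nondecreasing in $n$, summable in $k$ --- then yields $E\cosh(\alpha_n)\uparrow E\cosh(\alpha)<\infty$; in particular each $E\cosh(\alpha_n)$ is finite.

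To conclude I would invoke a generalized dominated convergence theorem. Along the subsequence $(n_j)$ we have $e^{\alpha_{n_j}}\to e^{\alpha}$ and $\cosh(\alpha_{n_j})\to\cosh(\alpha)$ $\Qt$-a.s., with $0\le e^{\alpha_{n_j}}\le 2\cosh(\alpha_{n_j})$ (using $\sinh|t|\le\cosh t$), while $E[2\cosh(\alpha_{n_j})]\to E[2\cosh(\alpha)]<\infty$ by the previous step. Hence $Ee^{\alpha_{n_j}}\to Ee^{\alpha}$ --- equivalently, by Scheff\'e's lemma $2\cosh(\alpha_{n_j})\to 2\cosh(\alpha)$ in $L^1$, so the dominated family $\{e^{\alpha_{n_j}}\}$ is uniformly integrable. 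Finally, because $Ee^{\alpha_n}$ is monotone in $n$, convergence along one subsequence upgrades to convergence of the whole sequence, giving $\mathfrak e(m)=\lim_n Ee^{\alpha_n}=Ee^{\alpha}<\infty$.

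The main obstacle is that $\{e^{\alpha_n}\}$ has no fixed integrable dominating function, so ordinary dominated convergence is unavailable; the substantive input that unlocks the generalized version is $E\cosh(\alpha_n)\to E\cosh(\alpha)$, which rests entirely on the structural fact that the even moments $E[\alpha_n^k]$ of $\alpha_n(m,m')$ (with matching $m$ on both sides) are nonnegative and increasing in $n$ --- this is what lets plain monotone convergence carry the estimate without any a priori bound.
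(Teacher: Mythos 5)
Your proof is correct and takes essentially the same route as the paper's: both rest on the identity $E[\alpha_n^k]=\sum_{x\in\{1,\dots,n\}^k}\rho_{k,m}(x)^2$, whose nonnegativity and monotonicity in $n$ give $E\cosh(\alpha_n)\to E\cosh(\alpha)<\infty$, and then transfer from $\cosh$ to $\exp$. The only cosmetic difference is the final step: the paper upgrades to $L^1$ convergence of $\cosh(\alpha_n)$ and multiplies by the bounded continuous function $e^{x}/\cosh(x)$, while you invoke Pratt's generalized dominated convergence along an a.s.\ subsequence plus monotonicity of $Ee^{\alpha_n}$ --- both are valid.
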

\begin{proof} 
The first claim is a consequence of 
Proposition \ref{p:rhok-m}. Using Lemma \ref{l:rhokn-m} as well, we get 
$$E\alpha_n^{k}=\|\one_{\{1,\ldots,n\}^k}\rho_{k,m}\|^2_{\ell^2(\mathbb N^k)}\le
\|\rho_{k,m}\|^2_{\ell^2(\mathbb N^k)}=E\alpha^{k}.
$$
Summing with coefficients, using positivity (the reason for using $\cosh$ instead of $\exp$) Fubini twice, and dominated convergence,  we get 
\begin{align*}
E\cosh \alpha_n & = E \sum_{k=0 }^\infty \frac{\alpha_n^{2k}}{(2k)!}=\sum_{k=0}^\infty \frac{E\alpha_n^{2k}}{(2k)!}\to 
 \sum_{k=0 }^\infty \frac{E\alpha^{2k}}{(2k)!}\\&=E\cosh \alpha<\infty,
 \end{align*}
so  $\cosh(\alpha_n)\to\cosh(\alpha)$ in $L^1$. 
But $f(x)=e^{x}/\cosh(x)$ is a bounded continuous function, so $f(\alpha_n)\cosh(\alpha_n)\to f(\alpha)\cosh(\alpha)$ in $L^1$. Thus $Ee^{\alpha_n}\to Ee^{\alpha}$.
\end{proof}

Next, we consider a setting where sequences ${\mathbf m}_\ell=(m_{l,1}, m_{l,2},\ldots)$ converge to a limiting sequence $\mathbf m=(m_1,m_2,\ldots)$.
\begin{lemma}[Convergence of  $\alpha$]
\label{l:alpha-c-law}
Assume that
\begin{enumerate}[(i)]
\item $\lim_{\ell\to\infty} m_{\ell ,j}=m_j$ in probability for each $j$, 
\item
for each $\ell$, as $n\to\infty$, 
$$
S_{\ell,n}=\sum_{j=1}^n m_{\ell,j}m'_{\ell,j}
\to\alpha_\ell, \qquad 
S_n=\sum_{j=1}^n m_{j}m'_{j}
\to\alpha\quad \mbox{ in }L^2(\Qt),$$
\item
$\lim_{\ell \to \infty}E\alpha_\ell^2 =E \alpha^2$.
\end{enumerate}

Then as $\ell\to\infty$, 
$\alpha_\ell\to \alpha$ in probability. 
\end{lemma}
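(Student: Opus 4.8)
The plan is to exploit that both $\alpha_\ell$ and $\alpha$ are $L^2(\Qt)$-limits of the partial-sum sequences $S_{\ell,n}$ and $S_n$, and to compare the two families through the finitary truncations, for which convergence follows from hypothesis (i). First I would fix $n$ and consider the truncated variable $S_{\ell,n}=\sum_{j=1}^n m_{\ell,j}m'_{\ell,j}$; since each factor converges in probability by (i) and the product of finitely many probability-convergent sequences converges in probability, we have $S_{\ell,n}\to S_n$ in probability as $\ell\to\infty$, for every fixed $n$. The issue is to promote this to convergence of the full sums, uniformly enough in $\ell$, and this is where hypothesis (iii) on the second moments does the work.

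The key computation is a variance/tail estimate: by part (2) of Lemma \ref{l:rhokn-m} applied with $k=2$ (so that $L^2$ norms and signed-measure masses agree), we have for $n\ge \ell'$
\[
\|\alpha_\ell-S_{\ell,n}\|_{L^2(\Qt)}^2 = E\alpha_\ell^2 - E S_{\ell,n}^2,
\]
and similarly $\|\alpha-S_n\|_{L^2(\Qt)}^2 = E\alpha^2 - ES_n^2$, since in both cases the tail is the mass of a nonnegative measure on a disjoint index set. Now $ES_{\ell,n}^2=\sum_{j=1}^n (E m_{\ell,j}m_{\ell,j}')^2$ is, for fixed $n$, a continuous function of finitely many pairwise correlations, each of which I would argue converges (here one needs that the products $m_{\ell,j}m'_{\ell,j}$ are uniformly integrable in $\ell$ for fixed $j$, which follows because their sum over $j\le n$ converges in $L^2$ by (ii) with nonnegative increments of the $L^2$-norm). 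Hence $ES_{\ell,n}^2\to ES_n^2$ as $\ell\to\infty$ for each fixed $n$. Combined with (iii), $E\alpha_\ell^2\to E\alpha^2$, this gives that the tails $\|\alpha_\ell-S_{\ell,n}\|_{L^2(\Qt)}^2$ can be made small uniformly in $\ell$ by choosing $n$ large, once $\|\alpha-S_n\|_{L^2(\Qt)}^2$ is small.

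With that uniform tail control in hand, the proof closes by the standard three-$\epsilon$ argument: given $\eps>0$, pick $n$ so that $\|\alpha-S_n\|_{L^2(\Qt)}$ and $\sup_\ell\|\alpha_\ell-S_{\ell,n}\|_{L^2(\Qt)}$ are both at most $\eps$ (the supremum being finite and small by the previous paragraph, after discarding finitely many $\ell$), then use $S_{\ell,n}\to S_n$ in probability to handle the middle term, and convert the $L^2$ bounds to bounds in probability via Chebyshev. This yields $\alpha_\ell\to\alpha$ in probability. I expect the main obstacle to be the bookkeeping in establishing $ES_{\ell,n}^2\to ES_n^2$ for fixed $n$ uniformly enough — that is, justifying the passage from $L^2$-convergence of the partial sums to convergence of the individual second moments $E(m_{\ell,j}m'_{\ell,j})^2$, which requires extracting uniform integrability of each coordinate product from hypothesis (ii); everything downstream is a routine truncation argument.
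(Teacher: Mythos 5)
Your overall route is the same as the paper's: approximate $\alpha_\ell$ and $\alpha$ by the truncations $S_{\ell,n}$, $S_n$, extract uniform-in-$\ell$ tail control from second moments, and finish with a three-$\eps$ argument in a metric for convergence in probability. However, the step where you deduce $ES_{\ell,n}^2\to ES_n^2$ for fixed $n$ is not justified as written. You claim the needed uniform integrability (in $\ell$) of the coordinate products $m_{\ell,j}m'_{\ell,j}$ ``follows because their sum over $j\le n$ converges in $L^2$ by (ii)''; but hypothesis (ii) is a statement about the limit in $n$ for each \emph{fixed} $\ell$ and carries no information that is uniform in $\ell$, while (i) is only convergence in probability, so nothing in the hypotheses prevents $E[(m_{\ell,j}m'_{\ell,j})^2]$ or the correlations $E[m_{\ell,j}m_{\ell,k}]$ from failing to converge: convergence in probability does not yield convergence of moments without an integrability control that is nowhere assumed (in the paper's applications the $m_{\ell,j}$ happen to be bounded, but the lemma does not assume this). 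A second, smaller, slip: the tail ``identity'' $\|\alpha_\ell-S_{\ell,n}\|_{L^2(\Qt)}^2=E\alpha_\ell^2-ES_{\ell,n}^2$ is in general only an inequality $\le$, because the nonnegative cross terms $\sum_{j\le n<k}(E[m_{\ell,j}m_{\ell,k}])^2$ need not vanish; fortunately only the $\le$ direction is used.

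The gap is easily repaired, and the repair is exactly what the paper does: you never need the two-sided limit $ES_{\ell,n}^2\to ES_n^2$, only the one-sided bound $ES_n^2\le\liminf_{\ell\to\infty}ES_{\ell,n}^2$, which is immediate from Fatou's lemma since $S_{\ell,n}\to S_n$ in probability by (i) and $S_{\ell,n}^2\ge 0$. Combined with (iii) this gives $\limsup_{\ell}\|\alpha_\ell-S_{\ell,n}\|_{L^2(\Qt)}^2\le\limsup_\ell\big(E\alpha_\ell^2-ES_{\ell,n}^2\big)\le E\alpha^2-ES_n^2$, and choosing $n$ with $E\alpha^2-ES_n^2<\eps$ (possible since $S_n\to\alpha$ in $L^2(\Qt)$) yields the uniform tail bound; the rest of your argument then goes through unchanged. (The two-sided convergence you wanted is in fact provable under the hypotheses, but it requires a genuinely different argument — Fatou applied to quadratic forms $E(\sum_j v_jm_{\ell,j})^2$ together with the convergence of the Hilbert--Schmidt norms of the second-moment matrices from (iii) — not the uniform-integrability shortcut you invoked.)
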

\begin{proof}
Let $\varphi(x)=|x|\wedge 1$, so that $E\varphi(X)$ metrizes convergence in probability.
Note that $E\varphi(X)\le E|X|\le (EX^2)^{1/2}$. Let $q_{\ell,n}=E S_{\ell,n}^2$ and $q_{n}=ES_n^2$.
By the $L^2$ moment formulas of Lemma \ref{l:rhokn-m} and Proposition \ref{p:rhok-m}, we have 
$$
E(\alpha-S_n)^2\le E\alpha^2-q_n, \qquad 
E(\alpha_\ell -S_{\ell,n})^2\le E\alpha_\ell ^2-q_{\ell,n}.
$$
Given $\eps>0$, let $n$ be so that
$$
E \alpha^2 -q_n <\eps.
$$
Let $\ell_0$ be so that for all $\ell\ge \ell_0$
$$
q_n< q_{\ell,n}+\eps,\qquad  E\alpha_\ell^2< E\alpha^2+\eps, \qquad E\varphi( S_{\ell,n}-S_n)<\eps.
$$
In the first inequality we used Fatou's Lemma.  Then
\begin{align*}
E\varphi(\alpha_\ell-\alpha)&\le (E(\alpha_\ell-S_{\ell,n})^2)^{1/2}+ E\varphi(S_{\ell,n}-S_n) +(E(\alpha-S_n)^2)^{1/2}\\
&\le (E\alpha_\ell^2-q_{\ell,n})^{1/2}+\eps+ (E\alpha^2-q_n)^{1/2} < \sqrt{3\eps}+\eps+\sqrt{\eps}.
\qedhere\end{align*}
\end{proof}

\subsection{Convergence of  randomized shifts partition functions}

One of the advantages of the randomized shifts approach is that it is very effective for proving convergence of models.
We will have a sequence of random sequences $\mathbf m_\ell$ converging to a random sequence $\mathbf m$ in probability.  Since  $Z(\mathbf m_\ell)$ only depends on the law of $\mathbf m_\ell$, we can couple the $\mathbf m_\ell$ in any way we like to get to the present setting. 

\begin{proposition}\label{propmain}

Let $\mathbf m_\ell = (m_{\ell ,1},m_{\ell ,2}, \ldots )$, $\ell  = 1, 2, \ldots$ be a random sequence.  Suppose that
    $m_{\ell,j}\to m_j$ in probability for each $j$. Let ${\mathbf m}=(m_1,m_2,\ldots)$. 
\begin{enumerate}
\item  Assume  $\mathfrak e(\mathbf m_\ell ) \to \mathfrak e(\mathbf m)<\infty.$
Then $Z(\mathbf m_\ell)\to Z(\mathbf m)$ in $L^2(\Xi, \mathcal{F}, P )$.
\item \label{propmain-hard}Assume that  for all $k\ge 1$ and each $\ell$, as $n\to\infty $,
$$
S_{\ell,n}=\sum_{j=1}^n m_{\ell,j}m'_{\ell,j}
\to\alpha_\ell, \qquad 
S_n=\sum_{j=1}^n m_{j}m'_{j}
\to\alpha \quad \mbox{ in }L^k(\Qt).$$
Suppose that  $E\alpha_\ell^2  \to E\alpha^2$
and that for some  $\gamma>1$ the following holds.  For each $p<1$, there exists a sequence of $\mathcal G$-measurable sets $ (A_{\ell})_{\ell\ge 1}$  with
$ Q(A_{\ell})> p$ and
\begin{equation}\label{cutoff}
     \limsup_{\ell\to\infty} E\cosh(\gamma \one_{A_{\ell}}\mathbf \alpha_\ell) <\infty.
\end{equation}
Then $Z(\mathbf m_\ell)\to Z(\mathbf m)$ in $L^1(\Xi, \mathcal{F}, P )$.
\end{enumerate}
\end{proposition}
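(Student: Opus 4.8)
The plan is to prove the two parts by reducing them to the convergence machinery for $L^2$ and $L^1$ already developed for randomized shifts, using the moment formulas of Proposition \ref{shift1} to translate statements about $Z(\mathbf m_\ell)$ into statements about the bilinear forms $\alpha_n(\mathbf m_\ell,\mathbf m_\ell')$ and their cutoffs.

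For part 1, the idea is that $L^2(\Xi,\mathcal F,P)$-convergence of $Z_n(\mathbf m_\ell)$ is controlled by $\mathfrak e$. First I would record that, by Proposition \ref{shift1}(b), $E_P[Z_n(\mathbf m_\ell)Z_n(\mathbf m_k)] = E_{Q^{\otimes 2}}\exp\sum_{i=1}^n m_{\ell,i}m_{k,i}'$, so that cross inner products are governed by mixed intersection exponentials; but since $Z(\mathbf m_\ell)$ depends only on the law of $\mathbf m_\ell$, it is cleaner to couple all the $\mathbf m_\ell$ and $\mathbf m$ on a single probability space so that $m_{\ell,j}\to m_j$ a.s. (or in probability, passing to a subsequence where needed), and argue $L^2(P)$-convergence of the limits $Z(\mathbf m_\ell)=\lim_n Z_n(\mathbf m_\ell)$ directly. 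By Proposition \ref{p:shifts-existence}(1), finiteness of $\mathfrak e(\mathbf m_\ell)$ and $\mathfrak e(\mathbf m)$ gives that each $Z(\mathbf m_\ell)$ and $Z(\mathbf m)$ exists in $L^2(P)$ with $E_P Z(\mathbf m_\ell)^2=\mathfrak e(\mathbf m_\ell)$. Then the strategy is the standard one: $\|Z(\mathbf m_\ell)-Z(\mathbf m)\|_{L^2(P)}^2 = E_P Z(\mathbf m_\ell)^2 - 2E_P[Z(\mathbf m_\ell)Z(\mathbf m)] + E_P Z(\mathbf m)^2$. The first and last terms converge to $\mathfrak e(\mathbf m)$ by hypothesis; for the cross term one uses the coupling and pointwise convergence $m_{\ell,j}\to m_j$ together with a uniform-integrability/domination argument (via the moment bounds of Lemma \ref{l:rhokn-m} and Proposition \ref{p:rhok-m}, plus the convergence $E\alpha_\ell^2\to E\alpha^2$ which is implied by $\mathfrak e(\mathbf m_\ell)\to\mathfrak e(\mathbf m)$ since the order-$2$ term dominates the relevant tail) to pass the limit through, obtaining $E_P[Z(\mathbf m_\ell)Z(\mathbf m)]\to E_P Z(\mathbf m)^2=\mathfrak e(\mathbf m)$. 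The three terms then cancel, giving $L^2(P)$-convergence.

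For part 2, the plan is to mimic the proof of Proposition \ref{p:shifts-existence}(\ref{p:shifts-existence-hard}) but now uniformly in $\ell$. Fix $p<1$ and take the sets $A_\ell$ from the hypothesis. On the cutoff sequences $\one_{A_\ell}\mathbf m_\ell$, condition \eqref{cutoff} together with $\gamma>1$ gives, via Lemma \ref{l:i-exp bound} (applied with the truncated measure and the $\cosh$ bound controlling all even moments), that $\mathfrak e(\one_{A_\ell}\mathbf m_\ell)$ is bounded uniformly in $\ell$; moreover $\mathfrak e(\one_{A_\ell}\mathbf m_\ell)\to\mathfrak e(\one_A\mathbf m)$ along the coupling, where $A=\liminf A_\ell$ (or a suitable limit set with $QA\ge p$) — here I would invoke Lemma \ref{l:alpha-c-law} to get $\one_{A_\ell}\alpha_\ell\to \one_A\alpha$ in probability from hypotheses (i)–(iii), upgrade to convergence of $\cosh$ using the uniform integrability furnished by the $\gamma>1$ margin in \eqref{cutoff}, and conclude convergence of the intersection exponentials. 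Part 1 then applies to the cutoff sequences, yielding $Z(\one_{A_\ell}\mathbf m_\ell)\to Z(\one_A\mathbf m)$ in $L^2(P)$. Next, Proposition \ref{shift1}(d) gives $E_P|Z(\mathbf m_\ell)-Z(\one_{A_\ell}\mathbf m_\ell)|\le 2Q(A_\ell^c)<2(1-p)$ and likewise $E_P|Z(\mathbf m)-Z(\one_A\mathbf m)|\le 2(1-p)$. Combining, $\limsup_\ell E_P|Z(\mathbf m_\ell)-Z(\mathbf m)|\le 4(1-p)$ for every $p<1$, hence the limsup is $0$ and $Z(\mathbf m_\ell)\to Z(\mathbf m)$ in $L^1(P)$.

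The main obstacle is the convergence $\mathfrak e(\one_{A_\ell}\mathbf m_\ell)\to\mathfrak e(\one_A\mathbf m)$ in part 2: the sets $A_\ell$ depend on $\ell$, so one cannot simply apply Lemma \ref{l:alpha-c-law} off the shelf, and one must verify that the hypotheses of that lemma survive the truncation — in particular that $\one_{A_\ell}\alpha_\ell\to\one_A\alpha$ in probability and $E(\one_{A_\ell}\alpha_\ell)^2\to E(\one_A\alpha)^2$, which requires care in choosing the limit set $A$ and in exploiting the $\gamma>1$ slack in \eqref{cutoff} to pass from convergence in probability of $\cosh(\one_{A_\ell}\alpha_\ell)$ to convergence of $Ee^{\one_{A_\ell}\alpha_\ell}$. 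A clean way to sidestep some of this is to observe that the bound $E_P|Z(\mathbf m_\ell)-Z(\one_{A_\ell}\mathbf m_\ell)|\le 2(1-p)$ is uniform, so one only needs $L^1(P)$-convergence of $Z(\one_{A_\ell}\mathbf m_\ell)$ to \emph{some} limit within $2(1-p)$ of $Z(\mathbf m)$, and the natural candidate is forced; still, pinning the limit down rigorously is where the real work lies.
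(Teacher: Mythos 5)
Both parts of your proposal stall at exactly the point where the paper has to do real work. In part 1, writing $\|Z(\mathbf m_\ell)-Z(\mathbf m)\|_{L^2(P)}^2=\mathfrak e(\mathbf m_\ell)-2c_\ell+\mathfrak e(\mathbf m)$ with $c_\ell=E_P[Z(\mathbf m_\ell)Z(\mathbf m)]$ pushes all the difficulty into the cross term, and your mechanism for it does not go through. Note first that $c_\ell$ depends only on the marginal laws (it is a limit of mixed exponentials $E_{Q^{\otimes 2}}\exp\sum_{i\le n}m_{\ell,i}m_i'$ over \emph{independent} copies), so "the coupling" does not help; second, the tools you invoke (Lemma \ref{l:rhokn-m}, Proposition \ref{p:rhok-m}) need square-integrability of the $\rho_k$'s, which is not assumed in part 1, and under part 1's hypotheses $\alpha_\ell$ and $\alpha$ need not even exist; third, the claim that $E\alpha_\ell^2\to E\alpha^2$ "is implied by" $\mathfrak e(\mathbf m_\ell)\to\mathfrak e(\mathbf m)$ is unjustified — convergence of the whole exponential series does not give convergence of a single order without a termwise Fatou argument, and Fatou is unavailable because $\sum_{j\le n}m_jm_j'$ is not nonnegative (this is why, in part 2, $E\alpha_\ell^2\to E\alpha^2$ is a separate \emph{hypothesis}, verified in the application by explicit computation). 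Cauchy--Schwarz only yields $\limsup_\ell c_\ell\le\mathfrak e(\mathbf m)$; the matching lower bound requires uniform-in-$\ell$ smallness of the martingale tails $E_P(Z(\mathbf m_\ell)-Z_n(\mathbf m_\ell))^2=\sum_{i\ge n}q_{i,\ell}$, $q_{i,\ell}=E_P(Z_{i+1}(\mathbf m_\ell)-Z_i(\mathbf m_\ell))^2$, which your sketch never produces. The paper gets it by applying Fatou to the \emph{nonnegative} increments $q_{i,\ell}$, whose $\ell^1$-norms are exactly the intersection exponentials, forcing $q_{\cdot,\ell}\to q_{\cdot}$ in $\ell^1$; that is the essential idea missing from your argument.

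In part 2 the architecture (truncate, apply part 1, compare via Proposition \ref{shift1}(d)) is right, but your identification of the limit of $Z(\one_{A_\ell}\mathbf m_\ell)$ as $Z(\one_A\mathbf m)$ for a "limit set" $A$ with $QA\ge p$ fails: no such set exists in general. $Q(\liminf_\ell A_\ell)$ can be far below $p$ (for nearly independent $A_\ell$ with $QA_\ell=p<1$ it is $0$), the truncated quantities $\one_{A_\ell}\one_{A_\ell'}\alpha_\ell$ need not converge in probability to anything, and Lemma \ref{l:alpha-c-law} as well as part 1 (which also needs coordinatewise convergence of $\one_{A_\ell}m_{\ell,j}$) cannot be applied to the truncations as you propose; you acknowledge this obstacle but your suggested sidestep still presupposes that the truncated partition functions converge, which is not established. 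The paper's resolution is the opposite of fixing $p$: by diagonalization choose $A_\ell$ with $QA_\ell\to1$ while keeping \eqref{cutoff}; then Lemma \ref{l:alpha-c-law} applied to the \emph{untruncated} $\alpha_\ell$ (using the assumed $E\alpha_\ell^2\to E\alpha^2$) gives $\alpha_\ell\to\alpha$ in probability, hence $\one_{A_\ell}\one_{A_\ell'}\alpha_\ell\to\alpha$ as well, the $\gamma>1$ slack gives uniform integrability of the $\cosh$'s and so $\mathfrak e(\one_{A_\ell}\mathbf m_\ell)\to\mathfrak e(\mathbf m)=Ee^{\alpha}<\infty$; part 1 then applies with the untruncated limit $Z(\mathbf m)$, and Proposition \ref{shift1}(d) with $Q(A_\ell^c)\to0$ closes the triangle inequality. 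The diagonalization to full measure is precisely what makes the limit identifiable, and it is the step your proof is missing.
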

Here, for clarity, we use $Z(\bf m)$ to denote the dependence on $\bf m$ even though it really only depends on the law of $\bf m$.  

\begin{proof}
For  1, we can bound $\tfrac13 E_{P }[ (Z(\mathbf m_\ell)- Z(\mathbf m)^2]$
by
\begin{align}\label{split1}
     E_{P }[ (Z({\mathbf m_\ell})- Z_n({\mathbf m}_\ell))^2] &+
    E_{P }[ (Z_n({\mathbf m _\ell})- Z_n({\mathbf m}))^2]\\& +E_P[ (Z_n({\mathbf m})- Z({\mathbf m}))^2].
    \notag
\end{align}
Let
$$q_{n,\ell}= E_{P }[ (Z_{n+1}({\mathbf m_\ell})- Z_n({\mathbf m}_\ell))^2], \qquad q_{n}= E_{P }[ (Z_{n+1}({\mathbf m})- Z_n(\mathbf m))^2]. $$
The weak convergence of $\mathbf m_\ell$ to $\mathbf m$ and the fact that $e^{\lambda x - \tfrac12 x^2}$ is bounded and continuous in $x$, means that for each $n$,
\begin{equation}\label{Zas}
Z_n(\mathbf m_\ell)\to Z_n(\mathbf m)
\end{equation}
$P$-almost surely. Let $\|\cdot\|_1$ denote the sequence $\ell^1$- norm. By the martingale property,
$$\mathfrak e(\mathbf m_\ell)=\|q_{\cdot,\ell}\|_1,\qquad \mathfrak e(\mathbf m)= \|q\|_1$$
Let $\hat q_n =\liminf_{\ell\to \infty} q_{n,\ell}$. By Fatou's lemma applied to sequences, 

\begin{equation}
\nonumber
\lim_{\ell\to\infty} \|q_{\cdot,\ell}\|_1 \ge \|\hat q\|_1.
\end{equation}
  By \eqref{Zas} and Fatou's lemma applied in the probability space $\Xi$ to $|Z_{n+1}(\mathbf m_\ell)-Z_{n}(\mathbf m_\ell)|^2$, we have  $q_n\le\hat q_n$.  After summing over $n$ we get
$$
\mathfrak e(\mathbf m)
=\|q\|_1\le \|\hat q\|_1
\le 
\lim_{\ell\to\infty}\|q_{\cdot,\ell}\|_1
=
\lim_{\ell \to\infty} \mathfrak e(\mathbf m_\ell)
=
\mathfrak e(\mathbf m), \qquad 0\le  q_n\le \hat q_n,
$$
where the last equality is our assumption. This implies $\hat q=q$. It follows that $q_{n,\ell}\to q_n$ coordinate-wise  (since then $\lim_{l\to\infty}||q_{\cdot,\ell}||_1=||\hat q||_1$  and by definition $\liminf_{\ell\to\infty}q_{n,\ell}=\hat q_n$) and also in $\ell^1$.

From the martingale property, the first and third terms of \eqref{split1} are given by $\sum_{i=n}^\infty q_{i,\ell}$ and $\sum_{i=n}^\infty q_{i}$. The convergence $\|q_{\cdot,\ell}-q\|_1 \to 0$ implies that for large enough $n_0$ the first term (and the easier third term) in  \eqref{split1} can be made uniformly small for all $\ell\ge 1, n\ge n_0$. Since the $L^2(P)$-norm of the left hand side converges to that of the right in \eqref{Zas}, the claimed convergence also holds in $L^2(P)$.  Thus the middle term in \eqref{split1} vanishes as $\ell\to\infty$. This proves Case 1.

For  2, for every $\kappa$, we can pick a sequence of events $(A^\kappa_\ell, \ell \ge 1)$ satisfying \eqref{cutoff} and $Q(A^\kappa_{\ell})\ge 1-1/\kappa$. To use part 1, we need to create convergence in probability artificially. Consider the random variable 
$
\Upsilon_\ell=(\mathbf m_\ell, (\one_{A^\kappa_\ell}, \kappa\ge 1))
$
taking values in the space of pairs of real-valued sequences. The two coordinates of $\Upsilon_\ell$ are tight as $\ell$ varies. Therefore for any subsequence of $\ell$ there is a further subsequence of $\ell_j$ so that $\Upsilon_{\ell_j}\to \Upsilon$ in law. It suffices to show the conclusion along this subsequence. By relabeling, we can drop the subscript $j$ and assume the convergence $\Upsilon_{\ell}\to \Upsilon$ in law. By the Skorokhod representation, this can be realized as almost sure convergence on some probability space, so we may assume $\Upsilon_{\ell}\to \Upsilon$ almost surely. We may write $\Upsilon=(\mathbf m, (\one_{A^\kappa}, \kappa\ge 1))$ where $A^\kappa$ is some event with $QA^\kappa\ge 1-1/\kappa$.

Next, fix $\kappa$, and use bar to denote $\kappa$-truncated quantities, such as $\mathbf{\bar m}_\ell=\one_{A^\kappa_\ell} \mathbf m_\ell$, $\mathbf{\bar m}=\one_{A^\kappa} \mathbf m$. 

By Lemma \ref{l:alpha-c-law}, we have $\alpha_\ell\to\alpha$ in probability, which implies $\bar \alpha_\ell\to\bar \alpha$ and $\cosh \bar \alpha_\ell \to \cosh\bar \alpha$ in probability. By our assumption, $E\cosh(\gamma \bar\alpha_\ell)$ is bounded for some $\gamma>1$, and therefore $\cosh \bar\alpha_\ell$ is uniformly integrable, so  $E\cosh \bar \alpha_\ell\to E\cosh \bar\alpha<\infty$. Similarly, $Ee^{\bar \alpha_\ell} \to Ee^{\bar \alpha}<\infty$.

By dominated convergence, 
we have $\bar \alpha_\ell =\lim_{n\to\infty} \bar S_{\ell,n}$ in every $L^k(\Qt)$, so by Lemma \ref{l:i-exp bound}, we have $\mathfrak e(\mathbf{\bar m}_\ell)=Ee^{ \bar \alpha_\ell}$, and similarly,  $\mathfrak e(\mathbf{\bar m})=Ee^{\bar \alpha}$. Thus
$$
\mathfrak e(\mathbf{\bar m}_\ell)\to \mathfrak e( \mathbf{\bar m})<\infty.
$$
Now Case 1 applies, and we get $L^2(P)$ convergence, and hence also $L^1(P)$ convergence
\begin{equation}\label{e:akl1}
\lim_{\ell\to \infty} E_P|Z(\mathbf \one_{A^\kappa_\ell}\mathbf m_\ell)-Z(\one_{A^\kappa} \mathbf m)|=0.
\end{equation}
Finally, by taking limits of Proposition \ref{shift1} (d), for all $\ell,\kappa$ we have
\begin{equation}\label{e:akl2}
E_P |Z({\mathbf m_\ell})- Z({\one_{A^\kappa_\ell}\mathbf m_\ell })|\le 2Q((A^\kappa_\ell)^c)\le 2/\kappa,
\end{equation}
similarly, 
\begin{equation}\label{e:akl3}
 E_P |Z({\mathbf m})- Z({\one_{A^\kappa}\mathbf m })|\le 2Q((A^\kappa)^c)\le 2/\kappa.
\end{equation}
Using \eqref{e:akl1}-\eqref{e:akl3} we make $\kappa$ large and then make $\ell$ large  to get $E_P |Z({\mathbf m})- Z(\mathbf m_\ell)|\to 0$.
\end{proof}
\begin{remark}\label{rem:unique} $Z(\bf m)$ does not depend on our choice of basis $\basis_j$. This is because it 
  can be arbitrarily approximated in $L^1(\Xi, \mathcal{F}, P )$ by $Z(\one_{A}\mathbf m)$  for some $A$,  as the above proof shows. The latter is basis-independent since it has
a chaos series in $L^2(\Xi, \mathcal{F}, P )$  as in Example \ref{example25} with \eqref{39} replaced by \begin{equation}\notag 
g_{k,A}(x_1,\ldots,x_k;t,x)= E_{Q\one_{A}}[m(x_1)\cdots m(x_k)].
\end{equation} \end{remark}
The results in this section, as well as the next,  parallel  those of Theorem 25 of \cite{shamov}.  In our setting,  this theorem reads as follows:
\begin{proposition}[Shamov]\label{shamov}
Let $\mathbf m=(m_1,m_2,\ldots)$, $\mathbf m_\ell = (m_{\ell ,1},m_{\ell ,2}, \ldots )$, $\ell  = 1, 2, \ldots$ be random sequences defined on the same probability space.   Suppose that
\begin{enumerate}
\item $Z(\mathbf m_\ell)$ is uniformly integrable
\item  For all $v\in \ell^2$, we have  $\langle \mathbf m_\ell,v\rangle \to \langle \mathbf m,v\rangle$ in $Q$-probability.
\item  $\alpha( \mathbf m_\ell,\mathbf m_\ell' )\to \alpha(\mathbf m,\mathbf m')$ in $Q^{\otimes 2}$-probability for some $\alpha(\mathbf m,\mathbf m')$.
\end{enumerate}
Then $Z(\mathbf m_\ell)\to Z(\mathbf m)$ in $L^1(\Xi, \mathcal{F}, P )$. \\
  Moreover, the random measures $M_\ell$, $M$ defined below in \eqref{e:polydef} exist and 
$M_\ell \to M$ in $L^1$ in the following sense. For any test function $F\in L^1(\Omega, \mathcal G, Q)$, we have $$E_{M_\ell} F \to E_M F\qquad \mbox{ in }L^1(\Xi,\mathcal F, P).$$
 \end{proposition}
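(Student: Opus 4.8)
The plan is to derive the statement from the machinery already developed---Proposition~\ref{propmain}(1), the convergence lemmas \ref{l:alpha-c-law} and \ref{l:i-exp bound} for the sequences $\sum_j m_j m_j'$, and the perturbation estimate Proposition~\ref{shift1}(d)---by a truncation in the $\omega$-variable, in the spirit of the proofs of Proposition~\ref{propmain}(2) and Proposition~\ref{p:shifts-existence}(2). Since $Z(\mathbf m_\ell)$ and $Z(\mathbf m)$ depend only on the laws of $\mathbf m_\ell$ and $\mathbf m$, the coupling remark preceding Proposition~\ref{propmain} lets us realize all of them on a single space $(\Omega,\mathcal G,Q)$; applying hypothesis (2) with $v=\basis_j$ gives $m_{\ell,j}\to m_j$ in $Q$-probability for every $j$. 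Write $\alpha_\ell=\alpha(\mathbf m_\ell,\mathbf m_\ell')$ and $\alpha=\alpha(\mathbf m,\mathbf m')$. I first prove $Z(\mathbf m_\ell)\to Z(\mathbf m)$ in $L^1(P)$; the statement about the measures is then obtained by running the same argument on $E_{M_\ell}F$ in place of $Z(\mathbf m_\ell)$.

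Three ingredients are routine. \emph{(i)} For fixed $n$, $Z_n(\mathbf m_\ell)\to Z_n(\mathbf m)$ in $L^1(P)$: the coordinatewise convergence gives joint convergence in law of $(m_{\ell,1},\dots,m_{\ell,n})$, and for each fixed $\xi$ the map $(a_1,\dots,a_n)\mapsto\exp(\sum_{j\le n}a_j\xi_j-\tfrac12 a_j^2)$ is bounded and continuous, so $Z_n(\mathbf m_\ell)\to Z_n(\mathbf m)$ $P$-almost surely; being nonnegative with expectation $1$, this improves to $L^1(P)$ by Scheff\'e. \emph{(ii)} For any $A\in\mathcal G$ and any sequence $\mathbf n$, Proposition~\ref{shift1}(d) gives $E_P|Z_k(\mathbf n)-Z_k(\one_A\mathbf n)|\le 2Q(A^c)$ for all $k$, hence $E_P|Z(\mathbf n)-Z(\one_A\mathbf n)|\le 2Q(A^c)$ after letting $k\to\infty$. \emph{(iii)} If truncating sets $A_\ell\to A$ in $\mathcal G$ with $Q(A_\ell)\to 1$ are chosen so that Lemma~\ref{l:alpha-c-law} applies to $\one_{A_\ell}\mathbf m_\ell$ (using (2), (3) and $Q(A_\ell)\to1$) and the $\ell^2$-moment and $\cosh$-moment hypotheses of Lemma~\ref{l:i-exp bound} hold uniformly, then $\mathfrak e(\one_{A_\ell}\mathbf m_\ell)=Q^{\otimes2}((A_\ell\times A_\ell)^c)+E_{Q^{\otimes2}}[\one_{A_\ell\times A_\ell}e^{\alpha_\ell}]$ converges to $\mathfrak e(\one_A\mathbf m)<\infty$. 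Granting (iii), for $\eps>0$ pick such $A_\ell$ with $Q(A_\ell^c)<\eps$ and split
\[
E_P|Z(\mathbf m_\ell)-Z(\mathbf m)|\le E_P|Z(\mathbf m_\ell)-Z(\one_{A_\ell}\mathbf m_\ell)|+E_P|Z(\one_{A_\ell}\mathbf m_\ell)-Z(\one_A\mathbf m)|+E_P|Z(\one_A\mathbf m)-Z(\mathbf m)|.
\]
The outer terms are $\le 2\eps$ by (ii); the middle term tends to $0$ because the cut sequences have uniformly finite intersection exponentials converging to $\mathfrak e(\one_A\mathbf m)$, so Proposition~\ref{propmain}(1) applies and yields $L^2(P)$-convergence $Z(\one_{A_\ell}\mathbf m_\ell)\to Z(\one_A\mathbf m)$. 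Letting $\eps\to0$ gives the claim, and combined with $E_PZ(\mathbf m_\ell)=1$ it also yields $E_PZ(\mathbf m)=1$, so $Z(\mathbf m)$ is a genuine partition function.

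The hard part---and, I expect, the only real obstacle---is exactly the construction in (iii): for each $p<1$ one must produce $A_\ell\in\mathcal G$, converging to some $A$, with $Q(A_\ell)\ge p$ and $Q(A_\ell)\to1$, for which $\sup_\ell\mathfrak e(\one_{A_\ell}\mathbf m_\ell)<\infty$ and the moment hypotheses of Lemma~\ref{l:i-exp bound} hold, using only (1)--(3). Uniform integrability of $\{Z(\mathbf m_\ell)\}$ yields, by de la Vall\'ee--Poussin, a superlinear convex $\Phi$ with $\sup_\ell E_P\Phi(Z(\mathbf m_\ell))<\infty$, which is strictly weaker than an $L^2$ bound; and the region where $\alpha_\ell$ is large is not a product set in $\Omega^2$, so it cannot simply be excised from the second-moment integral $E_P[Z_k(\one_A\mathbf m_\ell)^2]=E_{Q^{\otimes2}}e^{\one_{A\times A}\sum_{j\le k}m_{\ell,j}m_{\ell,j}'}$ of Proposition~\ref{shift1}(b). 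The resolution is to build $A_\ell$ only from the $\omega$-data: truncate each $m_{\ell,j}$ at a threshold summable in $j$, and additionally excise a superlevel set of a conditional tail functional such as $\omega\mapsto Q^{\otimes2}(\alpha_\ell>t\mid\omega)$, with $t=t(p)$ chosen from the tightness of $\{\alpha_\ell\}$ supplied by (3); one then controls the remaining exponential moment by combining this cutoff with the $\Phi$-bound. Checking that the $A_\ell$ can be chosen with $Q(A_\ell)\to1$ and $A_\ell\to A$, and that the hypotheses of Lemmas~\ref{l:alpha-c-law} and \ref{l:i-exp bound} survive the truncation, is the bulk of the work; this is precisely where our treatment parallels---and where one may instead invoke---Theorem~25 of \cite{shamov}.

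Finally, the measures. For bounded $F\colon\Omega\to\mathbb R$ one has $|E_{M_\ell}F|\le\|F\|_\infty Z(\mathbf m_\ell)$, so $\{E_{M_\ell}F\}_\ell$ inherits uniform integrability, and $E_P|E_{M_\ell}F-E_{M_\ell}(F\one_A)|=E_P|E_{M_\ell}(F\one_{A^c})|\le\|F\|_\infty Q(A^c)$ from the defining relation of $M_\ell$ with test function $G\equiv1$. Hence the same truncation applies, with the convergence of the pairings $E_P[(E_{M_\ell}F)G(\xi)]=E_{P\times Q}[F(\omega)G(\xi+\mathbf m_\ell)]\to E_{P\times Q}[F(\omega)G(\xi+\mathbf m)]$ for bounded continuous $G$ (from coordinatewise convergence of $\mathbf m_\ell$ and dominated convergence) playing the role of the middle term. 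This gives $E_{M_\ell}F\to E_MF$ in $L^1(P)$ for bounded $F$, and then for all $F\in L^1(\Omega,\mathcal G,Q)$ by density together with the uniform-integrability transfer.
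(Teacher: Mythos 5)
The paper itself offers no proof of this statement: Proposition \ref{shamov} is a restatement of Theorem 25 of \cite{shamov}, quoted in order to contrast it with Proposition \ref{propmain} (the surrounding text stresses precisely that in Shamov's result uniform integrability is \emph{assumed}, whereas Proposition \ref{propmain} replaces it by the explicit, checkable cosh-cutoff condition \eqref{cutoff}). So the benchmark here is the citation, and the question is whether your self-contained derivation from the paper's machinery works. Your steps (i) and (ii) are fine and parallel the proof of Proposition \ref{propmain}, but the whole argument funnels through step (iii), which you leave as ``the bulk of the work'' -- and this step is not merely unfinished, it cannot be carried out under hypotheses (1)--(3) alone. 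Uniform integrability of $Z(\mathbf m_\ell)$ (equivalently, a de la Vall\'ee--Poussin $\Phi$-bound, or absolute continuity of the law of $\xi+\mathbf m_\ell$) gives no control on $E_{Q^{\otimes 2}}\big[\one_{A\times A}\,e^{\alpha_\ell}\big]$, and hypothesis (3) gives only convergence in probability, hence tightness, of $\alpha_\ell$ -- nothing exponential. Already for a single fixed sequence the required sets need not exist: in the paper's circle GMC example, for $\gamma$ between $1/\sqrt2$ and the uniform-integrability threshold, $Z(\gamma\mathbf m)$ is uniformly integrable, yet for \emph{every} $A$ with $Q(A)>0$ one has $\mathfrak e(\gamma\one_A\mathbf m)=\infty$, because the truncated intersection exponential contains $E_{Q^{\otimes2}}\big[\one_{A\times A}|e^{i\omega}-e^{i\omega'}|^{-2\gamma^2}\big]$ and the diagonal singularity with $2\gamma^2\ge1$ is non-integrable on $A\times A$ near any Lebesgue density point of $A$. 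Taking $\mathbf m_\ell=\mathbf m$ constant, hypotheses (1)--(3) and the conclusion hold trivially, but your intermediate claim (iii) fails, so the proposed reduction to Proposition \ref{propmain}(1) by cutting in $\omega$ is structurally impossible, not just hard. No thresholding of the $m_{\ell,j}$ or excision of superlevel sets of $Q^{\otimes2}(\alpha_\ell>t\mid\omega)$ can repair this, since those operations do not create exponential moments.

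Your fallback remark that one may ``instead invoke Theorem 25 of \cite{shamov}'' is circular as a proof, since the proposition \emph{is} that theorem in the paper's notation; invoking it is exactly what the paper does, but then the surrounding truncation argument contributes nothing. The final paragraph on the measures $M_\ell$ inherits the same gap, because it reuses the same unproven cutoff scheme. If you want a proof within the paper's framework, you must either add the hypothesis \eqref{cutoff} (and then you are literally in Proposition \ref{propmain} / Proposition \ref{p:polymer-limit}), or reproduce Shamov's argument, which proceeds through the theory of randomized shifts and Cameron--Martin-type perturbations rather than through second-moment bounds on truncations.
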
 
Let us comment on the main differences. 
In Proposition \ref{shamov},  uniform integrability is \emph{assumed}, while we are instead giving explicit conditions which imply the uniform integrability, conditions we  verify in later sections. Also, Assumption 2 of Proposition \ref{shamov} seems harder to verify than convergence in probability of the entries of $\mathbf m$.

\subsection{The polymer measure}\label{ss:polymer}

Up to this point, our main focus was random heat flow in one and two dimensions. Random paths came up as a tool for solving heat equations through the Feynman-Kac formula. In fact, one motivation for heat equations, random or deterministic, is that they describe the partition functions of polymers.

As an example, assign an i.i.d.\ mean one  positive random variables to lattice site in spacetime $\mathbb N\times \mathbb Z$, and let the weight of a simple random walk path be given by the product of the weight of sites its graph visits. When the law of each weight is $e^{\beta G-\beta^2/2}$ for a standard Gaussian $G$ and $\beta\in \mathbb R$, this construction can be described through  a randomized shift.

The remarkable advantage of the random shift formalism is that it  works effortlessly in technically difficult settings like Brownian paths weighted by white noise.  The price we have to pay is a bit of abstraction.

In short, the polymer measure is given by the Gaussian multiplicative chaos on path space. Recall our setting: a probability space $(\Omega, \mathcal G, Q)$ (the path space in the example above) on which there is a random sequence $m$. A Gaussian i.i.d.\ sequence $\xi$ defined on $(\Xi,\mathcal F, P)$. In the special case when the $m_i$ are bounded random variables and $\sum_{i=1}^\infty m_i^2<\infty$, then 
\begin{equation}\label{e:gmc-weight}
\sum_{i=1}^\infty m_i(\omega)\xi_i-m_i(\omega)^2/2
\end{equation}
exists as a limit of partial sums $Q$-almost surely, we can define the new measure
\begin{equation}\label{e:gmc-weighting}
M_\xi =  e^{\sum_{i=1}^\infty m_i(\omega)\xi_i-m_i(\omega)^2/2}Q
\end{equation}
on $\Omega$. The strength of the GMC theory is in the fact that such measures behave much better under limiting operations than the weight functions \eqref{e:gmc-weight}. 

The direct description \eqref{e:gmc-weighting} fails since in many cases the sum is not defined and $M_\xi$ is in fact singular with respect to $Q$. But it is an exercise to check that the formula \eqref{e:polydef} holds in the special case above. It turns out that it captures  enough information in it to be used as a definition. 

\begin{definition} The {\bf polymer measure} or {\bf Gaussian multiplicative chaos} of  the randomized shift $m$ is a random measure $M$ on $\Omega$ parametrized by $\xi$. It is defined by  the property that for all bounded measurable functions $F$ on $\Xi\times \Omega$
\begin{equation}\label{e:polydef}
E_PE_{M_\xi}F(\xi,\omega)=E_{P\times Q}F(\xi+m(\omega),\omega).
\end{equation}
\end{definition}
Here $\xi,m(\omega)$ are real sequences and $\xi+m(\omega)$ is just their termwise sum. This formula, simple as it looks,  contains the definition of the Wick-ordered  polymers. In that case, $\Omega$ is the space of paths, and $m_i(\omega)$ is the integral of the basis element $\basis_i$ with respect to the occupation measure of $\omega$.

Note that $Z(\xi)=M_\xi(\Omega)$ is the partition function, or total mass, of the random measure $M$, and that the $P$-expectation of $M$ is exactly $Q$. The normalized version  $$M_\xi(\cdot)/M_{\xi}(\Omega),$$ is a random probability measure.

For the definition to make sense, we need the following, one of the main results of \cite{shamov}.

\begin{theorem}[\cite{shamov}, Theorem 17 and Corollary 18]\label{t:shamov-polymer}
If the marginal law of $\xi+m(\omega)$ is absolutely continuous with respect to $\xi$, then the polymer measure/Gaussian multiplicative chaos exists and is unique. \end{theorem}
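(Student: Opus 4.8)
The plan is to construct $M$ by disintegration and then establish uniqueness by a product–test–function argument. First I would repackage the right–hand side of \eqref{e:polydef} as a single measure: let $\Lambda$ be the law on $\Xi\times\Omega$ of the pair $(\xi+m(\omega),\omega)$ under $P\times Q$, i.e.\ the pushforward of $P\times Q$ by $(\xi,\omega)\mapsto(\xi+m(\omega),\omega)$. Then \eqref{e:polydef} is exactly the demand that $\Lambda$ disintegrate as $E_P E_{M_\xi}[\,\cdot\,]$, so $M$ must be a kernel from $\Xi$ to $\Omega$ whose $P$-average is $\Lambda$. The $\Xi$-marginal of $\Lambda$ is the marginal law $\hat P$ of $\xi+m$, which by hypothesis satisfies $\hat P\ll P$; write $Z=d\hat P/dP$, which by Lemma \ref{ec} equals the almost sure and $L^1(P)$ limit of the martingale $Z_n$ of \eqref{zedenn}, with $E_PZ=1$.

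For existence, since in all our applications $(\Omega,\mathcal G)$ may be taken to be a standard Borel space (path space), the disintegration theorem yields an $\mathcal F$-measurable family of probability measures $\{\nu_\xi\}_{\xi\in\Xi}$ on $\Omega$ with $\Lambda(d\xi,d\omega)=\hat P(d\xi)\,\nu_\xi(d\omega)$, unique up to $\hat P$-null sets. I would then set $M_\xi:=Z(\xi)\,\nu_\xi$ (and $M_\xi:=0$ on the $P$-null set where $Z$ is not pinned down). For any bounded measurable $F:\Xi\times\Omega\to\mathbb R$,
\[
E_P E_{M_\xi}F(\xi,\omega)=\int_\Xi Z(\xi)\Big(\int_\Omega F(\xi,\omega)\,\nu_\xi(d\omega)\Big)P(d\xi)=\int_{\Xi\times\Omega}F\,d\Lambda=E_{P\times Q}F(\xi+m(\omega),\omega),
\]
using $Z\,P=\hat P$ on $\Xi$ and the disintegration; this is precisely \eqref{e:polydef}.

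For uniqueness, suppose $M$ and $M'$ both satisfy \eqref{e:polydef}. Applying it to product functions $F(\xi,\omega)=g(\xi)h(\omega)$ with $g,h$ bounded measurable gives $E_P\big[g(\xi)\,(E_{M_\xi}h-E_{M'_\xi}h)\big]=0$ for every bounded $g$, hence $E_{M_\xi}h=E_{M'_\xi}h$ for $P$-a.e.\ $\xi$. Since $\mathcal G$ is countably generated (equivalently $L^2(\Omega,\mathcal G,Q)$ is separable), I can let $h$ range over a fixed countable measure-determining family, so that off a single $P$-null set $M_\xi$ and $M'_\xi$ agree on that family and therefore coincide as measures. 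Thus $M=M'$ as $P$-random measures.

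The main obstacle is measure-theoretic hygiene rather than hard analysis. The delicate point is that $\Lambda$ is genuinely \emph{not} absolutely continuous with respect to $P\times Q$ — for $Q$-a.e.\ fixed $\omega$ the shift by $m(\omega)$ is singular, since $m(\omega)$ is not in the Cameron--Martin space — so one cannot produce $\nu_\xi$ as a Radon--Nikodym derivative on the product and must invoke the abstract disintegration theorem, which is where the standard-Borel assumption on $\Omega$ is used to guarantee a jointly measurable kernel. An alternative, more hands-on route that sidesteps disintegration is to define $M_\xi^{(n)}(d\omega)=\exp\!\big(\sum_{j\le n}m_j(\omega)\xi_j-m_j(\omega)^2/2\big)\,Q(d\omega)$, which satisfies \eqref{e:polydef} with $\mathcal P_n m$ in place of $m$, and then show $E_{M_\xi^{(n)}}F\to E_{M_\xi}F$ in $L^1(P)$ for bounded continuous $F$ by the same equi-integrability mechanism as in Proposition \ref{propmain} (using $Z_n\to Z$ in $L^1(P)$), identifying the limit functional with a measure via Riesz representation on the separable space $\Omega$. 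In either approach the absolute-continuity hypothesis enters exactly to ensure no mass escapes in the limit, i.e.\ that the $\Xi$-marginal of $\Lambda$ is $Z\,P$ with total mass one.
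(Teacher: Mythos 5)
Your argument is correct, but it is worth saying up front that the paper does not prove this statement at all: it is quoted from \cite{shamov} (Theorem 17 and Corollary 18), so what you have produced is a self-contained substitute rather than a variant of an in-paper proof. Your route — push $P\times Q$ forward to the law $\Lambda$ of $(\xi+m(\omega),\omega)$, note that \eqref{e:polydef} says exactly that $P(d\xi)M_\xi(d\omega)=\Lambda$, use $\hat P\ll P$ to write the $\Xi$-marginal as $Z\,dP$, disintegrate $\Lambda$ over its $\Xi$-marginal to get a kernel $\nu_\xi$, and set $M_\xi=Z(\xi)\nu_\xi$ — is sound, and your uniqueness argument via product test functions plus a countable measure-determining family is the standard one (note the $\hat P$-ambiguity of $\nu_\xi$ is harmless precisely because $Z=0$ $P$-a.e.\ on any $\hat P$-null set). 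The trade-off against Shamov is what you already flag: your proof needs $(\Omega,\mathcal G)$ standard Borel (for the disintegration theorem) and $\mathcal G$ countably generated (for uniqueness), which holds for the path spaces used in this paper, whereas Shamov's argument is formulated for generalized Gaussian fields and arbitrary randomized shifts and comes packaged with the convergence theory (cf.\ Proposition \ref{shamov}) that the paper also uses; your approach buys brevity and transparency at the cost of that generality. Two small inaccuracies that do not affect the argument: Lemma \ref{ec} is stated in the opposite direction (uniform integrability $\Rightarrow$ $\hat P\ll P$), so the identification $Z=\lim Z_n$ under the absolute-continuity hypothesis should instead be deduced from Proposition \ref{shift1} applied to $\mathcal F_n$-measurable test functions, which gives $Z_n=E_P[\,d\hat P/dP\mid\mathcal F_n]$ — and in any case your construction only needs $Z=d\hat P/dP$, not its martingale representation; and in the existence step one should say explicitly that $\xi\mapsto M_\xi(A)$ is measurable because the disintegration kernel is measurable and $Z$ is, so $M$ is a bona fide random measure.
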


\begin{example}[Discrete lattice polymer in 1+1 dimension]
Let $n(i)$ be an enumeration of the even lattice points in $\mathbb N\times \mathbb Z$, and consider discrete time simple random walk $X$ with $X_0=0$ run until time $t_0$.  $$\mbox{ For } \quad n(i)=(x,t)\qquad \mbox{ let }\quad m_i=\begin{cases}\one(X_t=x)&\mbox{ if }t\le t_0\\ 0 & \mbox{otherwise,}\end{cases}$$ in other words, $m$ encodes the occupation measure of the graph of $X$. In the polymer measure, each path $\omega$ has weight  weight $2^{-t_0}\exp \sum_i \xi_im_i-m_i^2/2$. So for any test function $F$, we have
$$
E_{M_\xi}F(\xi,\omega)=2^{-{t_0}}\sum_\omega \Big( \exp \sum_i \xi_im_i(\omega)-\frac{m_i(\omega)^2}{2}\big) F(\xi,\omega).
$$
using the Radon-Nikodym derivative  for the shifted mean Gaussian, the expression $E_PE_{M_\xi}F(\xi,\omega)$ can be written as
\begin{align*}
2^{-{t_0}}\sum_\omega  &E_P(\exp \sum_i \xi_im_i(\omega)-\frac{m_i(\omega)^2}{2})F(\xi,\omega)=2^{-{ t_0}}\sum_\omega E_PF(\xi+m,\omega)\\&=E_{P\times Q}F(\xi+m,\omega)
\end{align*}
so it satisfies \eqref{e:polydef}.
\end{example}

\begin{example}[Bayesian statistics, continued]\label{e:Bayes2} \ \\Continuing Example \ref{e:Bayes1}, recall that $m_i(\omega)=\omega$. After $n$ observations, the Gaussian multiplicative chaos is given by the unnormalized posterior measure
$\exp\left\{-n\omega^2/2+\omega\sum_{i=1}^n \xi_i \right\}Q$.
The normalized posterior distribution is  $\exp\left\{-n\omega^2/2+\omega\sum_{i=1}^n \xi_i \right\}Q/Z_n$.

This example holds in full generality: one can think of any  Gaussian multiplicative chaos as the posterior distribution of $Q$ after observing the (usually different) statistics $m_1,m_2\ldots$.
\end{example}

\begin{example}[Definition of PAM]\label{PAM}
The planar parabolic Anderson model, PAM, is closely related to the planar Wick-ordered  polymer, whose partition function is 
\begin{equation}\label{e:fornorm}
Z= \lim_{n\to\infty} Z_n, \qquad Z_n=E_Q\Big\{\exp \sum_{j=1}^n m_j\xi_j -m_j^2/2\Big\}
\end{equation}
where $m_j=\int_0^t \basis_j(B(s))\,ds$ and $\basis_j$ is an orthonormal basis of $L^2(\mathbb R^2)$ consisting of bounded functions. For this example, we take $Q$ to be standard Brownian motion measure on some interval $[0,t]$.

Our definition corresponds to smoothing out the noise by replacing  the finite dimensional Gaussian process with $\sum_{j=1}^n\xi_j\basis_j$, using it to define a Wick-ordered  polymer, and then taking a limit. 

PAM should  correspond to the partition function 
\begin{equation}\notag
Z_{PAM}= \lim_{n\to\infty} E_Q\Big\{\exp \sum_{j=1}^n m_j\xi_j -c_{n,t}\Big\} \qquad \mbox{(heuristic)}.
\end{equation}
for some constants $c_{n,j}$.
This means we are solving the deterministic heat equation with respect to the smoothed noise $\sum_{j=1}^n\xi_j\basis_j$, and renormalizing by $e^{-c_{n,t}}$. Proving that this limit exists can be cumbersome, as, unlike in the Wick-ordered  case, there is no natural martingale to consider. A precise definition can be made in a different way. While we don't do it here, it can be shown that 
\begin{equation}\label{e:noWick}
 \gamma(B)=\lim_{n\to\infty} \gamma_n(B)-E_Q\gamma_n(B), \qquad \gamma_n(B) =\frac{1}{2}\sum_{j=1}^nm_j^2, \qquad \mbox{(heuristic)}
 \end{equation}
exists. 
The random variable $\gamma(B)$ is called the self-intersection local time. Heuristically, up to centering, $2\gamma$  wants to be the squared $L^2(\mathbb R^2)$ norm of the occupation measure of $B$. Indeed, $2\gamma_n$ is the squared $L^2(\mathbb R^2)$-norm of the occupation measure projected onto the span of $\basis_1,\ldots, \basis_n$.  Instead of proving \eqref{e:noWick}, we use the definition  given in \cite{LeGall} through mutual intersection local times:
$$
\gamma(B)=\sum_{n=1}^{\infty}\sum_{k=1}^{2^{n-1}} A_{n,k}-E_QA_{n,k}, \qquad A_{n,k}=\alpha(B_{[\frac{2k-2}{2^n}t,\frac{2k-1}{2^n}t]},B_{[\frac{2k-1}{2^n}t,\frac{2k}{2^n}t]}).
$$
The arguments of $\alpha$ in $A_{n,k}$ are conditionally independent given $B(\tfrac{2k-1}{2^n})$, and so each term is defined as in Section \ref{intersectionlocaltime}.

Considering the normalization \eqref{e:fornorm}  one could first try to take $c_{n,t}$ to equal 
$E_{Q}\gamma_n$. But there is a problem:
For the PAM to inherit the semigroup property from the polymer given by Brownian paths weighted by $e^{  \sum_{j=1}^n m_j\xi_j}$, for $0<s<t$ the normalization should be asymptotically additive: 
\begin{equation}\label{e:additive}
\lim_{n\to\infty} c_{n,t}-c_{n,s}-c_{n,t-s}=0.
\end{equation}
For $0<s<t$ let $B_1$ and $B_2$ denote the restrictions of $B$ to $[0,s]$ and $[s,t]$, respectively. Then by bilinearity, we have 
$$
E_Q\gamma_{n}(B)-E_Q\gamma_{n}(B_1)-E_Q\gamma_{n}(B_2)=E_Q\alpha_n(B_1,B_2)
$$
which converges to 
\begin{align*}
E\alpha(B_1,B_2)&=\int_{\mathbb R^2}\int_0^s\int_0^{t-s}p(x,t_1)p(x,t_2)\,dt_2\,dt_1 \,dx
\\ &
= \frac{t \log t -s \log s - (t-s) \log (t-s)}{2\pi}.
\end{align*}
This suggest the time-dependent correction 
$$
c_{n,t}=E\gamma_n(B)-\frac{1}{2\pi}t \log t
$$
and this is indeed asymptotically additive \eqref{e:additive}.

To get a quick definition of PAM, we use $\gamma$ to re-adjust the weight of the Wick-ordered  polymer paths:
$$M_{PAM,\xi}=e^{\gamma(\omega)+\frac{1}{2\pi}t \log t
}M_\xi, \qquad Z_{PAM}=E_{ M_{PAM,\xi}}[\Omega]=E_{M_\xi}e^{\gamma+\frac{1}{2\pi}t \log t
}.
$$
Note that $\gamma$ is a random variable defined on $(\Omega,\mathcal G,Q)$. For $P$-almost all $\xi$, $\gamma$ is defined $M_{\xi}$-almost everywhere. To see this,  let $A$ be a set of full $Q$ measure  so that  $\gamma$ is defined on $A$. Then $E_PE_{M_\xi}\one_{A^c}=E_Q\one_{A^c}=0$ by \eqref{e:polydef}.

We also have

$$E_PZ_{PAM}=E_PE_{ M_\xi}e^{\gamma+\frac{1}{2\pi}t \log t
}=e^{\frac{1}{2\pi}t \log t}
E_Qe^{\gamma}.$$
\cite{LeGall} shows that the right hand side is finite for small $t$, but is infinite for large $t$. It can be shown that in the latter case, $Z_{PAM}$ is still finite $P$-almost surely, but $E_PZ_{PAM}=\infty.$
\end{example}

The following proposition will be used to show that the planar  Wick-ordered   polymer converges to the continuum random directed polymer.  

\begin{proposition}\label{p:polymer-limit} 
Assume that condition 1 or 2 of Proposition \ref{propmain} holds.
Then the polymer measures converge in the following sense. For any bounded test function $F:\Omega\to \mathbb R$
\begin{equation}\label{e:ZF}
E_{M_\ell} F\to E_{M}F \qquad  \mbox{in }L^1(P).
\end{equation}
\end{proposition}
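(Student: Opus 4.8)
The plan is to reduce the statement to the partition-function convergence already established in Proposition~\ref{propmain}, by tilting the path measure $Q$ by $F$. First I would reduce to $0\le F\le 1$: by linearity of $E_{M_{\ell,\xi}}$, finiteness of the measures $M_{\ell,\xi},M_\xi$ ($P$-a.s., since their masses are the partition functions, which exist in $L^1(P)$ under the hypotheses), and rescaling, it suffices to treat $0\le F\le 1$. If $c:=E_QF=0$ then $F=0$ $Q$-a.e., and feeding the bounded function $(\xi,\omega)\mapsto\one\{F(\omega)>0\}$ into the defining identity \eqref{e:polydef} gives $E_P[M_{\ell,\xi}\{F>0\}]=Q\{F>0\}=0$, hence $E_{M_{\ell,\xi}}F=E_{M_\xi}F=0$ $P$-a.s.\ and there is nothing to prove. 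So assume $c\in(0,1]$ and put $Q_F:=c^{-1}F\,Q$, a probability measure on $(\Omega,\mathcal G)$ with Radon--Nikodym density $c^{-1}F\le c^{-1}$ with respect to $Q$. I view $\mathbf m_\ell$ and $\mathbf m$ as randomized shifts over $(\Omega,\mathcal G,Q_F)$ and write $Z(\cdot\,;Q_F)$ for the corresponding partition functions.

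The translation step is the identity $E_{M_{\ell,\xi}}F=c\,Z(\mathbf m_\ell;Q_F)$ (and likewise for $\mathbf m$). Granting that $Z(\mathbf m_\ell;Q_F)$ exists in $L^1(P)$ — established at the end — Lemma~\ref{ec} applied over $(\Omega,\mathcal G,Q_F)$ gives $E_P[H\,Z(\mathbf m_\ell;Q_F)]=E_{P\times Q_F}[H(\xi+m_\ell)]=c^{-1}E_{P\times Q}[F(\omega)H(\xi+m_\ell)]$ for every bounded $H:\Xi\to\mathbb R$, whereas plugging $(\xi,\omega)\mapsto H(\xi)F(\omega)$ into \eqref{e:polydef} gives $E_P[H\,E_{M_{\ell,\xi}}F]=E_{P\times Q}[H(\xi+m_\ell)F(\omega)]$; subtracting and letting $H$ vary over bounded functions forces the identity $P$-a.s. (Existence and uniqueness of the polymer measures is Theorem~\ref{t:shamov-polymer}, whose absolute-continuity hypothesis is furnished by Lemma~\ref{ec} under the standing assumptions.) Thus the proposition reduces to showing $Z(\mathbf m_\ell;Q_F)\to Z(\mathbf m;Q_F)$ in $L^1(P)$.

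For this I would check that condition~1, respectively condition~2, of Proposition~\ref{propmain} is inherited by the $Q_F$-shifts, using only that $Q_F\ll Q$ and $Q_F^{\otimes2}\ll Q^{\otimes2}$ with densities bounded by $c^{-1}$ and $c^{-2}$. Pointwise convergence $m_{\ell,j}\to m_j$ in $Q$-probability passes to $Q_F$-probability; the $L^k(\Qt)$ convergences of the partial sums $S_{\ell,n},S_n$ pass to $L^k(Q_F^{\otimes2})$ up to a factor $c^{-2/k}$. In Case~1 one has $\mathfrak e(\mathbf m_\ell;Q_F)=c^{-2}\lim_nE_{Q^{\otimes2}}[F(\omega)F(\omega')e^{S_{\ell,n}}]\le c^{-2}\,\mathfrak e(\mathbf m_\ell)<\infty$, and $\mathfrak e(\mathbf m_\ell;Q_F)\to\mathfrak e(\mathbf m;Q_F)<\infty$ follows from $\mathfrak e(\mathbf m_\ell)\to\mathfrak e(\mathbf m)$ together with $L^k(\Qt)$-convergence $S_{\ell,n}\to\alpha_\ell$ (available since $\mathfrak e(\mathbf m_\ell)<\infty$) and dominated convergence against the bounded weight $F(\omega)F(\omega')$. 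In Case~2, $E_{Q_F^{\otimes2}}\alpha_\ell^2\to E_{Q_F^{\otimes2}}\alpha^2$ follows from $\alpha_\ell\to\alpha$ in $Q^{\otimes2}$-probability (Lemma~\ref{l:alpha-c-law}) plus uniform integrability of $\{\alpha_\ell^2\}$ under $Q^{\otimes2}$ (a standard consequence of convergence in probability with $E_{Q^{\otimes2}}\alpha_\ell^2\to E_{Q^{\otimes2}}\alpha^2$), which transfers to $Q_F^{\otimes2}$; and for the cut-off \eqref{cutoff}, given $p<1$ I would take the sets $A_\ell$ from the $Q$-hypothesis with $Q A_\ell>p'$ for $p'$ close enough to $1$ that $Q_F A_\ell\ge 1-(1-p')/c>p$, and bound $E_{Q_F^{\otimes2}}\cosh(\gamma\one_{A_\ell\times A_\ell}\alpha_\ell)\le c^{-2}E_{Q^{\otimes2}}\cosh(\gamma\one_{A_\ell\times A_\ell}\alpha_\ell)$, keeping the $\limsup$ finite. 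Proposition~\ref{propmain} then yields $Z(\mathbf m_\ell;Q_F)\to Z(\mathbf m;Q_F)$ in $L^1(P)$, hence $E_{M_{\ell,\xi}}F=c\,Z(\mathbf m_\ell;Q_F)\to c\,Z(\mathbf m;Q_F)=E_{M_\xi}F$ in $L^1(P)$.

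The step I expect to be the main obstacle is precisely this last inheritance of the existence hypotheses under the change of measure — the convergences $\mathfrak e(\mathbf m_\ell;Q_F)\to\mathfrak e(\mathbf m;Q_F)$ in Case~1 and $E_{Q_F^{\otimes2}}\alpha_\ell^2\to E_{Q_F^{\otimes2}}\alpha^2$ in Case~2 — because these are the points at which the $\ell\to\infty$ limit and the tilting by $F$ genuinely interact, and they require the bounded Radon--Nikodym density together with the uniform-integrability argument. Everything else is bookkeeping with the defining identity \eqref{e:polydef} and Lemma~\ref{ec}.
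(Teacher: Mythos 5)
Your proposal follows essentially the same route as the paper: reduce to nonnegative $F$, tilt the path measure $Q$ by $F$, check that the hypotheses of Proposition \ref{propmain} are inherited by the tilted shifts, and identify $E_{M_\ell}F$ with the tilted partition function $Z_F(\mathbf m_\ell)$ so that Proposition \ref{propmain} delivers the $L^1(P)$ convergence. The paper's proof is a compressed version of exactly this (it normalizes $E_QF=1$ and invokes bounded convergence for the inheritance), so your additional bookkeeping — the identity via \eqref{e:polydef}, the degenerate case $E_QF=0$, and the transfer of the cutoff condition — fills in the same argument rather than taking a different route.
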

By \eqref{e:ZF}, $M_\ell\to M$ in $P$-probability with respect to weak topology of measures. 
The normalized measures $M_\ell/Z_\ell\to M/Z$ as well, as long as $P(Z>0)=1$.
This holds in our cases of interest by Lemma \ref{01law}.

\begin{proof}
By linearity, we may assume $E_QF=1$, $F\ge 0$.
By the  assumption that $F$ is bounded, the assumptions of  Proposition \ref{propmain} hold for the sequences  $\mathbf m_\ell$ and $\mathbf m$ under the modified probability measure $FQ$. Let $Z_F(\cdot)$ denote the corresponding partition functions. Then
\[
E_{M_\ell}F= Z_F(\mathbf m_\ell) \to Z_F(\mathbf m)=E_MF \qquad \mbox{ in }L^1(P). \qedhere\]
\end{proof}

\subsection{Historical comment} Skorokhod solutions of the 2d multiplicative stochastic heat equation were studied previously in several works starting with  \cite{NualartZakai} as {\em generalized Wiener functionals}, and continuing under the name {\em white noise analysis}, where the non-linearities in stochastic partial differential equations are interpreted via a Wick product $f\diamond g$.  The equations are then solved as formal chaos series, analogous to formal power series solutions of differential equations.  In some cases, for example if used to define the non-linearity in the KPZ equation, this leads to spurious results; \cite{MR1743612} showed the resulting KPZ object has 1:4:7 scaling instead of the physical 1:2:3 scaling.
In addition, unless such series converge in $L^2$ - which they generally do not - they cannot even be identified with random variables.  This unfortunately led to the impression that all such  formal series solutions were  non-physical.  

However, the Wick product $f\diamond \xi$ does correspond to the Skorokhod integral when $\xi$ is white noise; the trouble is when it is used in other nonlinearities, such as $\partial_x h\diamond \partial_x h$ in the KPZ equation.  The Skorokhod integral is a remarkable discovery.  Although it is in principle non-local, it reproduces the It\^o integral in $1$ and $1+1$ dimensions (see Remarks \ref{r:Ito1} and \ref{r:Ito1+1}). In $2d$
the only manifestation of the non-locality is a reweighting by the renormalized self-intersection local time (as demonstrated in this article), making it a natural polymer model.  

The 2d Skorokhod multiplicative stochastic heat equation was solved in the  formal chaos series sense in \cite{MR1257000}, \cite{MR1451997} see also \cite{MR2571742}.  The first reference gives
a formal Feynman-Kac formula for the solution (see also \cite{MR2778803}, \cite{MR2962086}  for concrete result with other Gaussian noises).  Ex post facto, we may identify the formal expressions with concrete ones obtained in this article only for small times $t<t_c$ when the chaos series converges in $L^2$; otherwise, their meaning is unclear.  As far as we are aware, the connection to Gaussian multiplicative chaos, the representation as a Feynman-Kac formula weighted by the renormalized self-intersection local time, and the $L^1$ theory were not anticipated.

\section{Intersection local time }\label{intersectionlocaltime}

Our first goal is to give a novel short definition of mutual intersection local time for independent random measures.

\subsection{Intersection local time for random measures}
\label{ss:intersectionlocaltime}
Let $X,Y$ be random variables defined on a probability space $(\Omega, \mathcal G, Q)$ taking values on the space of finite measures on a measure space $(R,\mathcal B, \mu)$. In general, we will write $\rho_X$ for the density of the deterministic measure $EX$ with respect to $\mu$ if it exists. 

Consider the product  of two copies of $(\Omega,\mathcal G, Q)$. For a random variable $U$ defined on $\Omega$ and $(\omega,\omega')\in \Omega^2$ we write $U=U(\omega)$ to define $U$ on $\Omega^2$, and write $U'=U(\omega')$ to define an independent copy. 

We would like to define the mutual intersection local time $\alpha=\alpha(X,Y')$  as the random inner product 
$$
\alpha{=}\int \frac{dX}{d\mu} \frac{dY'}{d\mu} d\mu, \qquad \mbox{(heuristic)}
$$
a random variable on $\Omega^2$, even when $X,Y'$ do not have densities. When they do, for any nonnegative bounded random variables $F,G$ on $\Omega$ we have
$$
E_\Qt[FG'\alpha]{=}\int E_{\Qt}[F\frac{dX}{d\mu} G'\frac{dY'}{d\mu}] d\mu=\langle \rho_{FX},\rho_{GY}\rangle_{\mu},$$
where for $f,g\in L^2(\mu)$ as $\langle f,g\rangle_\mu$ is the inner product. 

This gives rise to a definition.

\begin{definition}\label{d:milt}
\ \\ Assume that $\rho_X$ and $\rho_Y$ exists. If there exists $\alpha\in L^1(\Omega^2,\sigma(X,Y'), \Qt)$  so that for all  bounded measurable  $F,G\ge 0$ on $\Omega$ we have
$$
E_{\Qt}[FG'\alpha]=\langle \rho_{FX},\rho_{GY}\rangle_\mu <\infty,
$$
then we call $\alpha=\alpha(X,Y')$ the {\bf mutual intersection local time} of $X,Y'$. Recall that given two measures $Z$ and $W$, $Z\ge W$ iff $Z-W\ge 0$.
\end{definition}

\begin{proposition}\label{p:milt-unique}
\begin{enumerate}
    \item If the intersection local time exists, then
it is nonnegative and unique.
\item If $X\le\bar X$, $Y\le \bar Y$ and $\bar \alpha=\alpha(\bar X,\bar Y')$ exists, then $\alpha=\alpha(X,Y')$ exists and $\alpha\le\bar \alpha$.
\end{enumerate}
\end{proposition}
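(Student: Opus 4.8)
The plan is to verify both claims directly from Definition \ref{d:milt}, exploiting the fact that the test functions $F,G$ in the definition range over \emph{all} bounded nonnegative measurable functions on $\Omega$, which is enough to pin down $\alpha$ as an element of $L^1(\Omega^2,\sigma(X,Y'),\Qt)$.

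For part 1, nonnegativity: given any bounded measurable $\Phi\ge 0$ on $\Omega^2$ that is $\sigma(X)\otimes\sigma(Y')$-measurable, approximate it by finite sums $\sum_k F_k(\omega)G_k'(\omega')$ with $F_k,G_k\ge 0$ bounded measurable (such sums are dense enough: products of indicators $\one_{\{X\in B\}}\one_{\{Y'\in C\}}$ generate the product $\sigma$-field, and the monotone class / functional monotone class theorem upgrades this to all bounded $\Phi$). For each such product, $E_{\Qt}[F_kG_k'\alpha]=\langle\rho_{F_kX},\rho_{G_kY}\rangle_\mu\ge 0$ since $\rho_{F_kX}$ and $\rho_{G_kY}$ are densities of nonnegative measures, hence nonnegative $\mu$-a.e. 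Taking $\Phi=\one_{\{\alpha<0\}}$ (which is $\sigma(X,Y')$-measurable since $\alpha$ is) gives $E_{\Qt}[\alpha\one_{\{\alpha<0\}}]\ge 0$, forcing $\alpha\ge 0$ a.s. For uniqueness: if $\alpha_1,\alpha_2$ both satisfy the defining identity, then $E_{\Qt}[(\alpha_1-\alpha_2)FG']=0$ for all bounded $F,G\ge 0$, hence for all bounded $\sigma(X)\otimes\sigma(Y')$-measurable $\Phi$ by the same density argument, and since $\alpha_1-\alpha_2$ is $\sigma(X,Y')$-measurable we may take $\Phi=\one_{\{\alpha_1>\alpha_2\}}$ and $\Phi=\one_{\{\alpha_1<\alpha_2\}}$ to conclude $\alpha_1=\alpha_2$ $\Qt$-a.s.

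For part 2, suppose $X\le\bar X$ and $Y\le\bar Y$ pointwise as measures, and that $\bar\alpha=\alpha(\bar X,\bar Y')$ exists. Note first that $\rho_X$ and $\rho_Y$ exist: since $EX\le E\bar X$ and $E\bar X$ has density $\rho_{\bar X}$ with respect to $\mu$, the measure $EX$ is absolutely continuous with respect to $\mu$ with density $\rho_X\le\rho_{\bar X}$, and similarly for $Y$. The natural candidate is to \emph{define} $\alpha$ via a Radon--Nikodym-type disintegration: writing $dX/d\mu$ and $dY'/d\mu$ for the $\omega$-dependent densities (which exist $Q$-a.s. by the same domination), set $\alpha=\int (dX/d\mu)(dY'/d\mu)\,d\mu$. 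This is $\sigma(X,Y')$-measurable and satisfies $0\le\alpha\le\int(d\bar X/d\mu)(d\bar Y'/d\mu)\,d\mu=\bar\alpha$ pointwise, using $0\le dX/d\mu\le d\bar X/d\mu$ and likewise for $Y'$; in particular $\alpha\in L^1$ since $\bar\alpha\in L^1$. It then remains to check the defining identity $E_{\Qt}[FG'\alpha]=\langle\rho_{FX},\rho_{GY}\rangle_\mu$, which follows from Fubini-Tonelli: $E_{\Qt}[FG'\alpha]=\int_R E_Q[F\,(dX/d\mu)(z)]\,E_Q[G\,(dY/d\mu)(z)]\,\mu(dz)$, and $E_Q[F\,(dX/d\mu)(z)]$ is precisely $\rho_{FX}(z)$ by definition of the density of $E[FX]$. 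All integrands are nonnegative, so Tonelli applies without integrability worries, and finiteness of the right side follows from domination by $\langle\rho_{\bar X},\rho_{\bar Y}\rangle<\infty$.

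The main obstacle is the measure-theoretic bookkeeping in part 2: making sense of the $\omega$-dependent densities $dX/d\mu$ as a jointly measurable object (a \emph{regular} version measurable in $(\omega,z)$), so that the pointwise inequality $\alpha\le\bar\alpha$ and the Tonelli computation are legitimate. This requires choosing the densities consistently --- e.g. via $dX/d\mu(\omega,z)=\liminf_{r\to 0}X(\omega)(B_r(z))/\mu(B_r(z))$ along a countable sequence of radii on a separable metric space $R$, or more abstractly by a measurable Radon--Nikodym selection --- and verifying that the resulting $\alpha$ does not depend on the choice (which in any case follows \emph{a posteriori} from the uniqueness in part 1, once the defining identity is checked). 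Everything else is routine.
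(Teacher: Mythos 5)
Your part 1 is fine and essentially the paper's own argument: test against rectangles $\one_{\{X\in B\}}\one_{\{Y'\in C\}}$, extend to the product $\sigma$-field by a monotone class/$\pi$--$\lambda$ (or approximation-by-algebra) argument, then plug in indicators such as $\one_{\{\alpha<0\}}$ and $\one_{\{\alpha_1<\alpha_2\}}$, which lie in that $\sigma$-field because $\alpha$ is $\sigma(X,Y')$-measurable.

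Part 2, however, has a genuine gap: your candidate $\alpha=\int\frac{dX}{d\mu}\frac{dY'}{d\mu}\,d\mu$ presupposes that the random measures $X(\omega)$, $Y(\omega')$ (and $\bar X$, $\bar Y$) have pathwise densities with respect to $\mu$, and that $\bar\alpha$ equals the corresponding integral of $\frac{d\bar X}{d\mu}\frac{d\bar Y'}{d\mu}$. Neither is part of the hypotheses. The domination $X\le\bar X$ only gives a density for the deterministic mean measure $EX$ (so $\rho_X\le\rho_{\bar X}$, as you correctly note); it says nothing about $X(\omega)$ itself, because $\bar X(\omega)$ need not be absolutely continuous. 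In the paper's main application $X$ is the occupation measure of a planar Brownian path, which is singular with respect to Lebesgue measure --- this is precisely why Definition \ref{d:milt} exists and why the paper labels the formula you start from as ``heuristic''. Your proposed fix via $\liminf_{r\to0}X(B_r(z))/\mu(B_r(z))$ does not rescue this: for singular $X$ that derivative vanishes $\mu$-a.e., so your candidate would be $\alpha\equiv0$, which fails the defining identity since $\langle\rho_{FX},\rho_{GY}\rangle_\mu$ is in general strictly positive. So the obstacle is not measurable selection of densities; the densities simply do not exist.

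The argument that works (and is the paper's) avoids pathwise densities altogether: define the nonnegative set function $\mathcal A(A\times B)=\int\rho_{\one_AX}\,\rho_{\one_BY}\,d\mu$ on measurable rectangles and extend it by the Caratheodory extension theorem to a measure on $\Omega^2$; on rectangles $\mathcal A(A\times B)\le\int\rho_{\one_A\bar X}\,\rho_{\one_B\bar Y}\,d\mu=E_{\Qt}[\one_{A\times B}\,\bar\alpha]$, so $\mathcal A$ is dominated by the finite measure $\bar\alpha\,d\Qt$ on rectangles and hence everywhere; therefore $\mathcal A\ll \Qt$ and its Radon--Nikodym derivative $\alpha$ is in $L^1$, satisfies $\alpha\le\bar\alpha$ a.s., and fulfils the defining identity, i.e.\ it is the intersection local time of $X,Y'$.
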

\begin{proof}
Let $\alpha, \hat \alpha$ be two versions of the mutual intersection local time. Consider the subset $\mathcal S\subset\sigma(\mathcal G^2)$ on which both random variables have the same expectation, and this expectation is nonnegative. Then $\mathcal S$ is a  $\pi$-system  containing a $\lambda$ system  of  $\sigma(\mathcal G^2)$ given by the product sets, since 
$$
E[\one_{A\times B}\,\alpha]=E[\one_{A\times B}\,\hat \alpha]=\int \rho_{\one_AX}\rho_{\one_BY}d\mu \ge 0.
$$
Thus by the $\pi-\lambda$ theorem,  $\sigma(\mathcal G^2)=\mathcal S$. Then   $\{\alpha<0\},  \{\alpha<\hat \alpha\} \in \mathcal S$, so 
$$
E[\one_{\alpha<0}\,\alpha]\ge 0, \qquad E[\one_{\alpha<\hat \alpha}\alpha]=E[\one_{\alpha<\hat \alpha}\hat \alpha]
$$
so $E\alpha^-=0$, and since  $\alpha\in L^1$, we get $E(\hat \alpha-\alpha)^+=0$. Similarly 
$E(\hat \alpha-\alpha)^-=0$, and so $\hat \alpha=\alpha\ge 0$ a.s. 

For 2, note that by the Caratheodory extension theorem the equality $\mathcal A(A\times B):=\int \rho_{\one_AX}\rho_{\one_BY}d\mu$ extends from the algebra generated by the measurable rectangles to to a measure $\mathcal A$ on $\Omega^2$. Then $\bar \alpha d\mu^{\otimes 2}$ dominates $\mathcal A$ on product sets. So  $\mathcal A$ has a density in $L^1$, which fits the definition of $\alpha$.
\end{proof}

Definition \ref{d:milt} does not give a construction of intersection local time, nor an easily verifiable condition for when it should exist. 

Recall the notation $X^{\otimes 2}=X\otimes X$ is the $Q$-random product measure on $R\times R$; it is the product of the same random measure, not independent copies. As a special case of Proposition \ref{p:rhok}, we show the following.

\begin{corollary}\label{c:alpha-exists}
If the deterministic measure $E_Q[X^{\otimes 2}]$ has a square integrable density with respect to $\mu^{\otimes2}$, then the mutual intersection local time $\alpha(X,X')$ exists. 
\end{corollary}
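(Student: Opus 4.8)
The plan is to build $\alpha(X,X')$ directly as an $L^2(\Qt)$-limit in the coordinate representation, without invoking Proposition \ref{p:rhok}. Fix the orthonormal basis $\basis_j$ of $L^2(R,\mu)$ consisting of bounded functions, set $X_j=\langle X,\basis_j\rangle$, and write $X_j'=\langle X',\basis_j\rangle$ on the independent copy. Let $g\in L^2(\mu^{\otimes 2})$ be the density of $E_Q[X^{\otimes 2}]$, and take as candidate approximants the $\sigma(X,X')$-measurable variables $\alpha_n=\sum_{j=1}^n X_jX_j'$.

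The first step is the second-moment bound. A Tonelli computation using boundedness of the $\basis_j$ gives $E_Q[X_jX_k]=\langle g,\basis_j\otimes\basis_k\rangle$, and since $X$ and $X'$ are independent copies,
\[
E_{\Qt}\big[(\alpha_n-\alpha_m)^2\big]=\sum_{m<j,k\le n}\big(E_Q[X_jX_k]\big)^2=\sum_{m<j,k\le n}\langle g,\basis_j\otimes\basis_k\rangle^2 .
\]
Here is the crux: $\{\basis_j\otimes\basis_k\}_{j,k}$ is an orthonormal system in $L^2(\mu^{\otimes 2})$, so Bessel's inequality bounds the full double sum by $\|g\|_{L^2}^2<\infty$; hence its tails vanish, $(\alpha_n)$ is Cauchy in $L^2(\Qt)$, and converges to some $\alpha\in L^2(\Qt)\subset L^1(\Qt)$, measurable with respect to $\sigma(X,X')$ as an almost-sure limit along a subsequence. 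Note this is exactly why $\alpha$ exists even though the projected occupation measures $\sum_{j\le n}X_j\basis_j$ themselves need not converge: the independence of $X,X'$ replaces the offending diagonal sum $\sum_jE_Q[X_j^2]$ by the convergent Hilbert--Schmidt sum above.

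The second step is to verify the defining identity of Definition \ref{d:milt}. Pairing against $\one_A$ shows $\mu(A)=0\Rightarrow X(A)=0$ $Q$-a.s., so $\rho_X$ exists, and the bound $\langle f,\rho_X\rangle^2\le E_Q\langle f,X\rangle^2=\langle f\otimes f,g\rangle\le\|f\|_{L^2}^2\|g\|_{L^2}$ for bounded $f$ places $\rho_X$ in $L^2(\mu)$. Then for bounded $F,G\ge 0$ on $\Omega$ one has $0\le\rho_{FX}\le\|F\|_\infty\rho_X\in L^2(\mu)$, similarly for $\rho_{GX}$, and $E_Q[FX_j]=\langle\rho_{FX},\basis_j\rangle$; passing to the limit in the identity $E_{\Qt}[FG'\alpha_n]=\sum_{j=1}^n E_Q[FX_j]\,E_Q[GX_j]$ and using Parseval in $L^2(\mu)$ yields
\[
E_{\Qt}[FG'\alpha]=\sum_{j=1}^\infty\langle\rho_{FX},\basis_j\rangle\langle\rho_{GX},\basis_j\rangle=\langle\rho_{FX},\rho_{GX}\rangle_\mu\le\|\rho_{FX}\|_{L^2}\|\rho_{GX}\|_{L^2}<\infty,
\]
which is precisely the requirement, so $\alpha=\alpha(X,X')$; nonnegativity and uniqueness then follow from Proposition \ref{p:milt-unique}.

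The only delicate point is the double passage to the limit in the last display. The outer equality is justified by $\alpha_n\to\alpha$ in $L^1(\Qt)$ and boundedness of $FG'$; the middle equality is Parseval for the two $L^2(\mu)$-functions $\rho_{FX},\rho_{GX}$, and this is exactly the second place the hypothesis $g\in L^2(\mu^{\otimes 2})$ enters (the first being the $L^2$-boundedness of $\alpha_n$). Everything else — the Tonelli identifications of $E_Q[X_jX_k]$ and $E_Q[FX_j]$, and the density of bounded functions in the separable space $L^2(R,\mu)$ — is routine.
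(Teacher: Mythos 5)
Your proof is correct and follows essentially the same route as the paper, which obtains the corollary as the $k=2$ case of Proposition \ref{p:rhok}: the coordinate expansion $\alpha_n=\sum_{j\le n}X_jX_j'$, the $L^2(\Qt)$-Cauchy estimate via $E_{\Qt}[(\alpha_n-\alpha_m)^2]=\sum\langle g,\basis_j\otimes\basis_k\rangle^2$ (the paper's Lemma \ref{l:rhokn-m} and Proposition \ref{p:rhok-m}), and the identification of the limit by testing against $FG'$ and Parseval (the paper's Lemma \ref{l:alpha-L1}). A welcome extra in your write-up is the explicit Cauchy--Schwarz step showing that square integrability of $\rho_{2,X}$ already forces $\rho_X\in L^2(\mu)$, a condition the paper's Lemma \ref{l:alpha-L1} takes as a hypothesis.
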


Random measures defined on $\Omega$ form a cone in a vector space through $(bX+Y)(A) = b X(A) + Y(A)$ for $b\ge 0$.  When they are occupation measures of random functions,
this has nothing to do with adding function values $bX(t)+Y(t)$ in $\mathbb R^d$. The following are immediate from the definition. 

\begin{lemma}\label{l:bilinear} If  $d\ge 0$ is a constant and $\alpha(X,Y')$, $\alpha(X,Z')$ exist, then the following quantities exist and satisfy 
$$
\alpha(cX,Y')=c\alpha(X,Y'),
\qquad \alpha(X,Y'+Z')=\alpha(X,Y')+\alpha(X,Z')
$$
and $\alpha(X,Y')(\omega_1,\omega_2)=\alpha(Y,X')(\omega_2,\omega_1).$ 
For all $Q$-random bounded scalars $C, D\ge 0$, the following random variable on $\Omega^2$  exists and satisfies
\begin{equation}\label{e:bilinearCD}
\alpha(CX,D'Y')=CD'\alpha(X,Y').
\end{equation}
\end{lemma}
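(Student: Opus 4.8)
The plan is to derive all four identities from one device: for each, I would propose the stated right-hand side as a candidate, check that it lies in $L^1(\Omega^2,\mathcal G\otimes\mathcal G,\Qt)$ and satisfies the testing identity of Definition \ref{d:milt} for the corresponding left-hand side, and then invoke the uniqueness part of Proposition \ref{p:milt-unique} to conclude at once that the left-hand side exists and equals the candidate. Recall that the uniqueness argument there really shows that at most one element of $L^1(\Omega^2,\mathcal G\otimes\mathcal G,\Qt)$ can have the prescribed integrals against the product sets $\one_{A\times B}$, $A,B\in\mathcal G$, so the $\sigma(X,Y')$-measurability clause in the definition plays no essential role here. The only structural inputs needed are that the density map $U\mapsto\rho_U$ (the $\mu$-density of $EU$) is additive and positively homogeneous on the cone of $Q$-random measures, that $F\cdot(CU)=(FC)U$ as random measures when $F,C\ge 0$ are bounded and measurable on $\Omega$, and that $E[CU]\le\|C\|_\infty EU$, so that absolute continuity with respect to $\mu$ — hence existence of every $\rho$ appearing in the statement — is inherited automatically.

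Carrying this out: for the scalar identity, $E_{\Qt}[FG'\,c\,\alpha(X,Y')]=c\langle\rho_{FX},\rho_{GY}\rangle=\langle\rho_{F(cX)},\rho_{GY}\rangle$ because $\rho_{F(cX)}=c\,\rho_{FX}$, so $c\,\alpha(X,Y')$ meets the definition of $\alpha(cX,Y')$. For additivity in the second slot, $E_{\Qt}[FG'(\alpha(X,Y')+\alpha(X,Z'))]=\langle\rho_{FX},\rho_{GY}\rangle+\langle\rho_{FX},\rho_{GZ}\rangle=\langle\rho_{FX},\rho_{G(Y+Z)}\rangle$ by additivity of $\rho$, and a sum of two $L^1$ functions is in $L^1$, so the candidate is $\alpha(X,Y'+Z')$. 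For the symmetry relation, I would set $\beta(\omega_1,\omega_2):=\alpha(Y,X')(\omega_2,\omega_1)$; relabelling the two exchangeable coordinates of $\Qt$ gives $E_{\Qt}[FG'\beta]=E_{\Qt}[GF'\,\alpha(Y,X')]=\langle\rho_{GY},\rho_{FX}\rangle=\langle\rho_{FX},\rho_{GY}\rangle$ by symmetry of the $L^2(\mu)$ inner product, so $\beta=\alpha(X,Y')$. Finally, for bounded $Q$-random $C,D\ge 0$ (read $C=C(\omega_1)$, $D'=D(\omega_2)$), I would absorb $C$ into $F$ and $D$ into $G$: $E_{\Qt}[FG'\,CD'\,\alpha(X,Y')]=E_{\Qt}[(FC)(GD)'\,\alpha(X,Y')]=\langle\rho_{(FC)X},\rho_{(GD)Y}\rangle=\langle\rho_{F(CX)},\rho_{G(DY)}\rangle$, and $CD'\,\alpha(X,Y')\in L^1$ since $C,D$ are bounded, so it equals $\alpha(CX,D'Y')$.

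I do not expect a real obstacle here; the only point deserving a sentence of care is measurability bookkeeping — for instance $\alpha(X,Y')+\alpha(X,Z')$ is a priori only $\sigma(X)\otimes\sigma(Y,Z)$-measurable, which can be strictly finer than $\sigma(X)\otimes\sigma((Y+Z)')$ (think of occupation measures on complementary time intervals) — but this is harmless because, as noted, the uniqueness step operates on all of $\mathcal G\otimes\mathcal G$ and only tests against product sets, so it pins the candidate down regardless of which sub-$\sigma$-field one nominally records it on.
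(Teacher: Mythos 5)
Your argument is correct and is exactly what the paper leaves implicit: Lemma \ref{l:bilinear} is given no proof beyond the phrase ``immediate from the definition,'' and your route --- checking the testing identity of Definition \ref{d:milt} for the natural candidate and invoking the uniqueness of Proposition \ref{p:milt-unique} --- is that argument written out. The one point you treat loosely, the $\sigma(X,(Y+Z)')$-measurability demanded by the definition, is repaired in one line: the conditional expectation of your candidate onto that $\sigma$-field satisfies the same testing identity (because $E[F\mid\sigma(X)]$ and $E[G\mid\sigma(Y+Z)]$ are again admissible test functions and $\rho_{E[F\mid\sigma(X)]X}=\rho_{FX}$, $\rho_{E[G\mid\sigma(Y+Z)](Y+Z)}=\rho_{G(Y+Z)}$), so by the very uniqueness in $L^1(\mathcal G\otimes\mathcal G)$ you noted, the candidate already agrees a.s.\ with a suitably measurable version.
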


Corollary \ref{c:alpha-exists} is based on a coordinate representation of the random measure $X$. Consider  an orthonormal basis $\basis_i$, of $L^2(R,\mathcal B, \mu)$ consisting of bounded functions. Then with 
$$
m_j=m_{X,j}=\int \basis_jdX, \qquad X_n=\sum_{j=1}^n m_j \basis_j,
$$
we have a signed approximation $X_n$ of the random measure $X$. Similarly, let $w_j$ be the coordinates of $Y$, and let  
\begin{equation}\label{ltdef}
\alpha_n(X,Y')=\int X_nY'_n d\mu=\sum_{j=1}^n m_{j}w_{j
}'.
\end{equation}

We are now in the setting of Section \ref{ss:alpha-new}. Indeed, $\alpha_n(X,Y')$ as defined here, equals $\alpha_n(m,w')$ of \eqref{e:alpha-new}. So in order for $\alpha_n$ to have a limit in $L^k(\Qt)$, $k\in 2\mathbb N$, we just need $\rho_{k,m},\rho_{k,w}\in L^2(\mathbb N^k)$. But $$
\rho_{k,m}(i_1,\ldots,i_k)= E_Q[ m_{i_1}\cdots m_{i_k}],$$ the coefficient of the basis vector $\basis_{i_1}\otimes\cdots \otimes \basis_{i_k}$ in the orthonormal basis representation of $\rho_{k,X}$, the density of $E_QX^{\otimes k}$. Thus we have
\begin{equation}\label{e:rhom-rhox}
\langle \rho_{k,m},\rho_{k,w}\rangle_{\mathbb N^k} = \langle \rho_{k,X},\rho_{k,Y}\rangle_{\mu^{\otimes k}}.
\end{equation}
We will use the  results of Section \ref{ss:alpha-new} to show that $\alpha_n$ converges, and we will use the following general lemma to identify the limit. 

\begin{lemma}\label{l:alpha-L1}
If $\rho_X$, $\rho_Y\in L^2(R)$ and $\alpha_n(X,Y')$ is precompact in $L^1(\Qt)$, then the mutual intersection local time $\alpha(X,Y')$ exists and $\alpha_n(X,Y')\to \alpha(X,Y')$ in $L^1(\Qt)$.
\end{lemma}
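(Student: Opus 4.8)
The plan is to pin down every $L^1(\Qt)$-subsequential limit of $\alpha_n(X,Y')$ by testing against products $FG'$ of bounded nonnegative functions, recognize that limit as the object of Definition \ref{d:milt}, and then conclude via the uniqueness already established in Proposition \ref{p:milt-unique}. The hypotheses $\rho_X,\rho_Y\in L^2(R)$ and precompactness of $(\alpha_n)$ will each be used exactly where needed.

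First I would fix bounded measurable $F,G\ge 0$ on $\Omega$ and compute. Since $m_j=\int\basis_j\,dX$ and the two factors of $\alpha_n=\sum_{j=1}^n m_jw_j'$ depend on the two independent copies in $\Qt$ separately,
\[
E_{\Qt}\big[FG'\alpha_n\big]=\sum_{j=1}^n E_Q[Fm_j]\,E_Q[Gw_j]=\sum_{j=1}^n \langle \rho_{FX},\basis_j\rangle\,\langle \rho_{GY},\basis_j\rangle,
\]
where $\rho_{FX}$ denotes the $\mu$-density of the finite measure $E_Q[FX]$; it exists and satisfies $0\le\rho_{FX}\le\|F\|_\infty\rho_X$ because $0\le E_Q[FX]\le\|F\|_\infty EX$ and $EX\ll\mu$, and the identity $E_Q[F\int\basis_j\,dX]=\langle\rho_{FX},\basis_j\rangle$ is just Fubini. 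By the same domination $\rho_{FX}\in L^2(R)$ and $\rho_{GY}\in L^2(R)$, so Parseval's identity gives that the partial sums above converge, as $n\to\infty$, to $\langle\rho_{FX},\rho_{GY}\rangle_{L^2(R)}<\infty$.

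Next I would invoke precompactness: let $\alpha$ be the $L^1(\Qt)$-limit of a subsequence $\alpha_{n_k}$. Since $FG'$ is bounded, $E_{\Qt}[FG'\alpha_{n_k}]\to E_{\Qt}[FG'\alpha]$, and comparison with the previous paragraph yields $E_{\Qt}[FG'\alpha]=\langle\rho_{FX},\rho_{GY}\rangle_\mu$ for all bounded $F,G\ge 0$. Moreover $\alpha\in L^1(\Qt)$, and passing to an a.s.-convergent further subsequence of $\alpha_{n_k}$ shows $\alpha$ is $\sigma(X,Y')$-measurable. Hence $\alpha$ satisfies Definition \ref{d:milt}, so the mutual intersection local time $\alpha(X,Y')$ exists, and by Proposition \ref{p:milt-unique} it equals $\alpha$. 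Thus \emph{every} $L^1$-subsequential limit of $(\alpha_n)$ is the single random variable $\alpha(X,Y')$; since $(\alpha_n)$ is precompact in $L^1(\Qt)$ and all its subsequential limits coincide, the full sequence converges, $\alpha_n(X,Y')\to\alpha(X,Y')$ in $L^1(\Qt)$.

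I do not expect a serious obstacle here: the only nontrivial inputs are the Parseval step, which is exactly where $\rho_X,\rho_Y\in L^2(R)$ enters, and the elementary fact that a precompact sequence with a unique subsequential limit converges, which is where precompactness is used. The one point to be careful about is that $\alpha_n$ itself need not be nonnegative (the $\basis_j$ are signed), so the nonnegativity of the limit is not free — it must be read off from Proposition \ref{p:milt-unique} after the identification is complete.
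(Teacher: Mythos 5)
Your proof is correct and follows essentially the same route as the paper's: compute $E_{\Qt}[FG'\alpha_n]=\sum_{j\le n}\langle\rho_{FX},\basis_j\rangle\langle\rho_{GY},\basis_j\rangle$, pass to an $L^1$ subsequential limit to identify it with the object of Definition \ref{d:milt}, and invoke Proposition \ref{p:milt-unique}. You merely spell out a few steps the paper leaves implicit (the domination $\rho_{FX}\le\|F\|_\infty\rho_X$ behind the Parseval step, the $\sigma(X,Y')$-measurability of the limit, and the ``unique subsequential limit plus precompactness implies convergence'' conclusion), which is fine.
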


\begin{proof}
Let $\alpha_\infty$ be a limit point of $\alpha_n$ along a subsequence $\mathcal N$.
Let $F,G\ge 0$ be bounded random variables on $\Omega$. 
By a direct computation, we have 
$$E_{\Qt}[FG'\alpha_n]=\sum_{i=1}^n  \langle \rho_{FX}, e_i\rangle_{\mu} \langle \rho_{GY}, e_i\rangle_{\mu} 
$$
Since $\rho_{FX},\rho_{GY}\in L^2(R)$, and  $FG'\alpha_n\to FG'\alpha_{\infty}$ in $L^1$ along $\mathcal N$, taking limits we conclude   
\[E_{\Qt}[FG'\alpha_\infty]=\langle \rho_{FX}, \rho_{G Y}\rangle_\mu. 
\qedhere \]
\end{proof}

\begin{proposition}\label{p:rhok} Let $k_0\in 2\mathbb N$. If $\rho_{k_0,X},\rho_{k_0,Y}\in L^2(\mu^{\otimes k_0})$, then for all integers $1\le k \le k_0$,
\begin{enumerate}  \item  \label{p:rhok-limit} The intersection local time $\alpha(X,Y')$ exists and
  $$\alpha_n\to \alpha(X,Y')\quad \mbox{in}\quad  L^k(\Qt).$$ 
  \item 
  $
      \|\alpha\|^k_{L^k(\Qt)}=\langle \rho_{k,X},\rho_{k,Y}\rangle_{\mu^{\otimes k}}
  $.
  \item 
     $E[\alpha(X,Y')^k]^2\le E[\alpha(X,X')^k]E[\alpha(Y,Y')^k]$ \label{p:alpha-misc.CS}.
 \end{enumerate}
\end{proposition}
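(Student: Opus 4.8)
The plan is to transfer the coordinate-free statement into the coordinate setting of Section~\ref{ss:alpha-new}, where all three assertions have essentially been proved, and then invoke Lemma~\ref{l:alpha-L1} to identify the resulting abstract limit with the mutual intersection local time of Definition~\ref{d:milt}.

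First I would fix a bounded orthonormal basis $\basis_i$ of $L^2(R,\mathcal B,\mu)$ and pass to the coordinates $m_j=\int\basis_j\,dX$, $w_j=\int\basis_j\,dY$. By \eqref{ltdef} the approximations satisfy $\alpha_n(X,Y')=\sum_{j=1}^n m_j w_j'=\alpha_n(m,w')$, the sequence studied in Section~\ref{ss:alpha-new}, and \eqref{e:rhom-rhox} gives $\langle\rho_{k,m},\rho_{k,w}\rangle_{\mathbb N^k}=\langle\rho_{k,X},\rho_{k,Y}\rangle_{\mu^{\otimes k}}$. In particular the hypothesis $\rho_{k,X},\rho_{k,Y}\in L^2(\mu^{\otimes k})$ for $1\le k\le k_0$ is exactly the hypothesis $\rho_{k,m},\rho_{k,w}\in\ell^2(\mathbb N^k)$ of Proposition~\ref{p:rhok-m}. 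Applying that proposition produces, for each integer $1\le k\le k_0$, a limit $\alpha(m,w')$ of $\alpha_n(m,w')$ in $L^k(\Qt)$ with $E\alpha(m,w')^k=\langle\rho_{k,m},\rho_{k,w}\rangle$ and $\bigl(E\alpha(m,w')^k\bigr)^2\le E\alpha(m,m')^k\,E\alpha(w,w')^k$.

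It then remains to recognize this limit as $\alpha(X,Y')$. Here I would apply Lemma~\ref{l:alpha-L1}, whose hypotheses are $\rho_X,\rho_Y\in L^2(R)$ --- the $k=1$ case of the standing assumption --- together with precompactness of $\alpha_n(X,Y')$ in $L^1(\Qt)$, which is immediate from the $L^1$ convergence just established. The lemma yields the mutual intersection local time $\alpha(X,Y')$ as the $L^1$-limit of $\alpha_n(X,Y')$; by uniqueness of limits in $L^1$ this coincides with the $L^k$-limit $\alpha(m,w')$ for every $k\le k_0$, which is claim~1. Since the mutual intersection local time is nonnegative (Proposition~\ref{p:milt-unique}), $\|\alpha\|^k_{L^k(\Qt)}=E\alpha^k$, and combining with the identities above together with \eqref{e:rhom-rhox} (applied also to the pairs $(X,X)$ and $(Y,Y)$, whose intersection local times arise as the $L^k$-limits of $\alpha_n(X,X')$ and $\alpha_n(Y,Y')$ by the same reasoning) gives claims~2 and~3.

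The argument is essentially a change of notation, and carries no analytic difficulty beyond what is already contained in Proposition~\ref{p:rhok-m}. The one point that deserves care is the compatibility of the basis-dependent construction with the intrinsic Definition~\ref{d:milt}, which is precisely what Lemma~\ref{l:alpha-L1} supplies, together with the routine observation that the $L^1$ limit point produced there must agree with the $L^k$ limits for $k>1$.
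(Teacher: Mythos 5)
Your proposal is correct and follows exactly the paper's route: the paper proves this proposition precisely as "a direct consequence of \eqref{e:rhom-rhox}, Proposition \ref{p:rhok-m} and Lemma \ref{l:alpha-L1}", which are the three ingredients you combine in the same way. Your write-up simply spells out the details (coordinate identification, $L^k$ limits, and identification of the $L^1$ limit with the intrinsic intersection local time) that the paper leaves implicit.
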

\begin{proof}  This is a direct consequence of \eqref{e:rhom-rhox}, Proposition \ref{p:rhok-m} and Lemma \ref{l:alpha-L1}.
\end{proof}

In classical settings, with $R=\mathbb R^d$ and Lebesgue measure $\mu$, intersection local time is often defined as the value  at zero of the continuous density of the 
convolution $X*\tilde Y'$ where $\tilde Y=Y(-\cdot)$. The two notions are essentially the same, as the next lemma shows. 
\begin{lemma}\label{l:classical-milt}
Let $X,Y$ be random measures on the torus $R=[-\ell,\ell]^d$ or on $R=\mathbb R^d$. If $X*\tilde Y'$ has continuous density $\alpha_*(X,Y')$ at zero a.s.\;and $\rho_{2,X},\rho_{2,Y}\in L^2$ then $\alpha(X,Y')$ exists and $\Qt$-a.s. equals $\alpha_*(X,Y')$. 
\end{lemma}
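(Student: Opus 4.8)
The plan is to show that the random variable $\alpha_*(X,Y')$ supplied by the hypothesis satisfies the characterizing identity of Definition~\ref{d:milt}; since that identity determines the mutual intersection local time uniquely (Proposition~\ref{p:milt-unique}), this simultaneously establishes existence of $\alpha(X,Y')$ and forces $\alpha(X,Y')=\alpha_*(X,Y')$ $\Qt$-a.s. (Existence alone also follows from Proposition~\ref{p:rhok} with $k_0=2$, using $\rho_{1,\cdot},\rho_{2,\cdot}\in L^2$; in the torus case of interest $\rho_X,\rho_Y\in L^2$ comes for free since the total masses are deterministic.) To this end, fix a symmetric nonnegative approximate identity $\psi_\eps$ on $R$ — periodized heat kernels on the torus, Gaussian kernels on $\mathbb R^d$ — and set $\phi_\eps=\psi_\eps*\psi_\eps$, which is again a symmetric approximate identity (indeed $\phi_\eps=\psi_{2\eps}$ with the heat-kernel choice). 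Write $X_\eps:=X*\psi_\eps$, a genuine nonnegative function, and likewise $Y'_\eps:=Y'*\psi_\eps$.

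First I would record the elementary identity, valid for a.e.\ realization and obtained by Tonelli's theorem (all integrands nonnegative):
$$((X*\tilde Y')*\phi_\eps)(0)=\int\!\!\int \phi_\eps(z-x)\,dX(x)\,dY'(z)=\int_R X_\eps(v)\,Y'_\eps(v)\,d\mu(v)=\langle X_\eps,Y'_\eps\rangle_{L^2(\mu)},$$
where the middle step uses $\phi_\eps(z-x)=\int_R\psi_\eps(v-x)\psi_\eps(v-z)\,d\mu(v)$ (symmetry of $\psi_\eps$) and Tonelli. Since, $\Qt$-a.s., $X*\tilde Y'$ is a finite measure with a density continuous at $0$, the standard approximate-identity argument gives $((X*\tilde Y')*\phi_\eps)(0)\to\alpha_*(X,Y')$ as $\eps\to0$; hence $A_\eps:=\langle X_\eps,Y'_\eps\rangle_{L^2(\mu)}\to\alpha_*(X,Y')$ $\Qt$-a.s.

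Next comes the crux, a uniform $L^2(\Qt)$ bound on $A_\eps$. Expanding the square, using independence of the two copies and $E_Q[X(dx)X(dy)]=\rho_{2,X}(x,y)\,d\mu(x)\,d\mu(y)$,
$$E_{\Qt}[A_\eps^2]=\langle \rho_{2,X}*(\psi_\eps\otimes\psi_\eps),\ \rho_{2,Y}*(\psi_\eps\otimes\psi_\eps)\rangle_{L^2(\mu^{\otimes2})}\le \|\rho_{2,X}\|_{L^2}\,\|\rho_{2,Y}\|_{L^2}$$
by Cauchy--Schwarz and Young's inequality ($\psi_\eps\otimes\psi_\eps$ has $L^1$-mass one). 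So $\{A_\eps\}$ is bounded in $L^2(\Qt)$, hence uniformly integrable, so $\alpha_*(X,Y')\in L^1(\Qt)$ (it is $\sigma(X,Y')$-measurable, being the a.s.\ limit of the $\sigma(X,Y')$-measurable $A_{1/k}$) and $A_\eps\to\alpha_*(X,Y')$ in $L^1(\Qt)$. Finally, for bounded measurable $F,G\ge0$ on $\Omega$, note $FG'A_\eps=\langle (FX)*\psi_\eps,\ ((GY)*\psi_\eps)'\rangle$; taking $E_{\Qt}$, using independence and $E_Q[(FX)*\psi_\eps]=\rho_{FX}*\psi_\eps$,
$$E_{\Qt}[FG'A_\eps]=\langle \rho_{FX}*\psi_\eps,\ \rho_{GY}*\psi_\eps\rangle_{L^2(\mu)}\ \longrightarrow\ \langle \rho_{FX},\rho_{GY}\rangle_\mu\qquad(\eps\to0),$$
where the convergence uses $0\le\rho_{FX}\le\|F\|_\infty\rho_X\in L^2(\mu)$ and similarly for $\rho_{GY}$, so mollification converges in $L^2$. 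On the other hand $E_{\Qt}[FG'A_\eps]\to E_{\Qt}[FG'\alpha_*(X,Y')]$ by the $L^1(\Qt)$ convergence of $A_\eps$ and boundedness of $FG'$. Comparing, $E_{\Qt}[FG'\alpha_*(X,Y')]=\langle\rho_{FX},\rho_{GY}\rangle_\mu<\infty$ — exactly the identity of Definition~\ref{d:milt} — and Proposition~\ref{p:milt-unique} gives $\alpha_*(X,Y')=\alpha(X,Y')$ $\Qt$-a.s.

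The main obstacle is the interchange of the limit $\eps\to0$ with $E_{\Qt}$, i.e.\ upgrading a.s.\ convergence $A_\eps\to\alpha_*$ to $L^1(\Qt)$ convergence; everything hinges on the uniform $L^2(\Qt)$ bound, which is precisely where the hypothesis $\rho_{2,X},\rho_{2,Y}\in L^2$ enters, through $L^2$-contractivity of convolution with a probability density. A secondary, routine point is checking that ``continuous density at $0$'' genuinely yields $((X*\tilde Y')*\phi_\eps)(0)\to\alpha_*$ (the measure $X*\tilde Y'$ is finite, so its density is in $L^1$, which with continuity at $0$ suffices), together with the bookkeeping of choosing $\psi_\eps$ consistently on the torus versus on $\mathbb R^d$.
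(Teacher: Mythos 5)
Your proof is correct, but it takes a genuinely different route from the paper's. The paper works with the basis approximations themselves: choosing the Fourier modes, it identifies $\alpha_n(X,Y')=\sum_{j\le n}m_jw_j'$ with $(\varphi_n*X*\tilde Y')(0)$ for the product Dirichlet kernel $\varphi_n$ (using $\varphi_n*\tilde\varphi_n=\varphi_n$), lets $\varphi_n$ play the role of an approximate identity to get $\alpha_{n_i}\to\alpha_*$ a.s., and then invokes Proposition \ref{p:rhok} for $\alpha_n\to\alpha$ in $L^2(\Qt)$, so no separate uniform integrability argument is needed; $\mathbb R^d$ is handled by truncating to a box and periodizing. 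You instead mollify by a genuine positive approximate identity $\psi_\eps$, prove $A_\eps=\langle X*\psi_\eps,Y'*\psi_\eps\rangle=((X*\tilde Y')*\phi_\eps)(0)\to\alpha_*$ a.s., extract a uniform $L^2(\Qt)$ bound from $\rho_{2,X},\rho_{2,Y}\in L^2$ to upgrade to $L^1(\Qt)$ convergence, and then verify the defining identity of Definition \ref{d:milt} directly, concluding via the uniqueness in Proposition \ref{p:milt-unique} rather than via convergence of the $\alpha_n$. What your approach buys: the Lebesgue-point/approximate-identity step is completely standard for a positive kernel with radially decreasing majorant, whereas the paper's ``$\varphi_n$ approximates $\delta_0$'' step is more delicate (Dirichlet kernels are not positive and pointwise convergence at a mere point of continuity of an $L^1$ density is not automatic), and you treat $\mathbb R^d$ directly; what the paper's route buys is brevity, since it reuses the machinery of Proposition \ref{p:rhok} and never needs a separate uniform-integrability or characterizing-identity argument. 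One caveat, which you correctly flag: your final limit $\langle\rho_{FX}*\psi_\eps,\rho_{GY}*\psi_\eps\rangle\to\langle\rho_{FX},\rho_{GY}\rangle$ uses $\rho_X,\rho_Y\in L^2(\mu)$, which is not literally in the statement; but the paper's own proof has the same implicit requirement, since Proposition \ref{p:rhok} rests on Lemma \ref{l:alpha-L1}, whose hypotheses include $\rho_X,\rho_Y\in L^2(R)$, and it does hold in the intended applications (occupation measures with deterministic total mass), so this is an imprecision of the statement rather than a gap in your argument.
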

\begin{proof}
For the torus, we may assume $\ell=\pi$. Let $\basis_j$ be the Fourier modes ordered by frequency. For $n=n_j=(2j+1)^d$, the projection to the span of $\basis_1, \ldots, \basis_n$ commutes with translations, so it is a convolution by a function $\varphi_n$. More explicitly,  
$$
\varphi_n= D_n^{\otimes d}, \qquad \mbox{and} \quad D_n(x)=\frac{1}{2\pi}\sum_{k=-j}^j e^{ik  x}=\frac{\sin((k+1/2)x)}{2\pi \sin(x/2)}
$$
is the Dirichlet kernel. Hence $
 X_n\mu =X*\varphi_n
$, and
$$
\alpha_n(X, Y')=\int (X * \varphi_n)(Y' * \varphi_n)=(X* \varphi_n * \tilde  \varphi_n *\tilde Y')(0)= (\varphi_n*X*\tilde Y')(0)
$$
since $\varphi_n$ is symmetric and a projection, so $\varphi_n*\tilde \varphi_n=\varphi_n$. Since $\varphi_n$ approximates  $\delta_0$,  we see that $\alpha_{n_i} \to \alpha_*$, which then equals $\alpha$ by Proposition \ref{p:rhok}.

For the case of $\mathbb R^d$, apply the  torus argument on  $[-2\ell,2\ell]^d$ to $X\one_{[-\ell,\ell]^d}$,  $Y\one_{[-\ell,\ell]^d}$ and use dominated convergence to get the result. 
\end{proof}

\subsection{Moment bounds}\label{ss:moments}

The solution and convergence of the planar Wick-ordered  heat equation requires estimates of the exponential moments of the mutual intersection local time of planar Brownian motion.

 Proposition  \ref{p:rhok} shows that for random measures $X$, the mutual intersection local time $\alpha(X,X')$ exists if for $k=2$ the measure $E_Q[X^{\otimes k}]$ has a density $\rho_k$ in $L^2.$ When $\alpha(X,X')$ exists, its $k$th moment is given by $\|\rho_k\|^2$. 

The following bound on the moments holds for any continuous time process with independent stationary increments (L\'evy process)
$B$ with joint densities with respect to some fixed measure.  In particular, it holds for Brownian motion with drift on $R=\mathbb R^d$.  For Brownian motion without drift, related bounds can be found in \cite{LeGall}. But the method there does not readily generalize to Brownian motion with drift.
The statement involves a parameter $r$, which will be chosen carefully later.

\begin{proposition}\label{p:moments}
Let $k$ be a positive integer, $r>0$ and let $X(A)=\int_0^1  \one_A(B(t))\,dt $ be the occupation measure of a stochastic process $B$ in $R=\mathbb R^d$ up to time $1$. For a time vector $\mathbf s=(s_1,\ldots, s_k)$ let $p(\mathbf x, \mathbf s)$ be the  density of the random vector $(B(s_1),\ldots, B(s_k))$ at $\mathbf x=(x_1,\ldots,x_k)$; assume this density exists for almost all vectors $\mathbf s$. Then the random $k$-fold product $X^{\otimes k}$ has density
\begin{equation}\label{e:rho-p}
\rho_k(\mathbf x)=
\int_{\mathbf s\in [0,1]^k}p(\mathbf x,\mathbf s)d\mathbf s
\end{equation}
 on $R^k$.
Assume further that $B$ is a Levy process. 
Let $\tau$ be an independent exponential rate $r$ random variable, and let $\varphi_r(\cdot)$ be the density of $B(\tau)$ on $R$. Then 
\begin{equation}\label{miltmom}
\|\rho_k\|^2_2\le k!^2e^{2r}\|\varphi_r/r\|^{2k}_2.
\end{equation}
\end{proposition}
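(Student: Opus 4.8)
The plan is to first establish the density formula \eqref{e:rho-p} and then bound its $L^2$ norm using a clever resolvent/killing trick. For the density formula, note that $X^{\otimes k}(A_1\times\cdots\times A_k) = \int_{[0,1]^k}\prod_i \one_{A_i}(B(s_i))\,d\mathbf s$, so Fubini together with the assumed existence of the joint density $p(\mathbf s,\mathbf x)$ of $(B(s_1),\ldots,B(s_k))$ immediately gives that $X^{\otimes k}$ has density $\rho_k(\mathbf x)=\int_{[0,1]^k}p(\mathbf x,\mathbf s)\,d\mathbf s$ with respect to $\mu^{\otimes k}$.

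The main work is the bound \eqref{miltmom}. First I would symmetrize: since $\rho_k$ is already symmetric under permuting the $x_i$ jointly with the $s_i$, I can restrict the $\mathbf s$-integral to the simplex $0<s_1<\cdots<s_k<1$ at the cost of a factor $k!$, so $\rho_k(\mathbf x)\le k!\int_{0<s_1<\cdots<s_k<1}p(\mathbf x,\mathbf s)\,d\mathbf s$. Using the Lévy (independent stationary increment) property, $p(\mathbf x,\mathbf s)=\prod_{j=1}^k p_{s_j-s_{j-1}}(x_j-x_{j-1})$ with $s_0=0$, $x_0=0$, where $p_t$ is the transition density. The key idea is to dominate the time integral by an integral against the exponential-$\tau$ law: on the simplex, each increment $s_j-s_{j-1}\le 1$, and $\int_{0<s_1<\cdots<s_k<1}\prod_j f(s_j-s_{j-1})\,d\mathbf s \le \int_{t_1,\ldots,t_k>0}\prod_j f(t_j)\,d\mathbf t$ whenever $f\ge 0$; combining this with $e^{rt}\ge 1$ for $t\ge 0$ lets me write $\int_0^1 p_t(y)\,dt \le e^{r}\int_0^\infty e^{-rt}p_t(y)\,dt = e^{r}r^{-1}\varphi_r(y)$, since $\varphi_r(y)=r\int_0^\infty e^{-rt}p_t(y)\,dt$ is the density of $B(\tau)$. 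Chaining this over the $k$ increments gives the pointwise bound $\rho_k(\mathbf x)\le k!\,e^{r k}\,r^{-k}\prod_{j=1}^k \varphi_r(x_j-x_{j-1})$. Hmm — I need to be careful: applying $e^{rt}\ge 1$ in each of the $k$ factors costs $e^{rk}$, not $e^{r}$; but note $\sum_j (s_j-s_{j-1}) = s_k \le 1$, so in fact $\prod_j e^{r(s_j-s_{j-1})} = e^{r s_k}\le e^{r}$, which is where the single $e^{2r}$ (after squaring) in \eqref{miltmom} comes from. So the correct pointwise bound is $\rho_k(\mathbf x)\le k!\,e^{r}\int_{t_1,\ldots,t_k>0}\prod_j e^{-r t_j}p_{t_j}(x_j-x_{j-1})\,d\mathbf t = k!\,e^{r} r^{-k}\prod_j \varphi_r(x_j-x_{j-1})$.

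Finally I would compute the $L^2(\mathbb R^{dk})$ norm. Writing $\rho_k(\mathbf x)\le k!\,e^r r^{-k}\,g(x_1)g(x_2-x_1)\cdots g(x_k-x_{k-1})$ with $g=\varphi_r$, the change of variables $y_1=x_1$, $y_j=x_j-x_{j-1}$ has Jacobian $1$, so $\|\rho_k\|_2^2 \le (k!)^2 e^{2r} r^{-2k}\prod_{j=1}^k \|g\|_2^2 = k!^2 e^{2r}\|\varphi_r/r\|^{2k}$, which is exactly \eqref{miltmom}. The only genuinely delicate point is the simplex-versus-orthant domination combined with keeping track of the exponential factor so that one gets $e^{2r}$ rather than $e^{2rk}$ — everything else is Fubini and a linear change of variables.
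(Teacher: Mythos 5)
Your overall strategy is the same as the paper's (a factor $k!$ from ordering the times, the domination $1\le e^{r s_k}\prod_j e^{-r(s_j-s_{j-1})}$ on the simplex, extension of the increment integrals to $(0,\infty)^k$ to produce $\varphi_r/r$, and a unit-Jacobian change of variables to increments to factor the $L^2$ norm), and your bookkeeping of the exponential factor, correctly yielding $e^{2r}$ rather than $e^{2rk}$, matches the paper. However, there is a genuine flaw in your symmetrization step: the pointwise inequality $\rho_k(\mathbf x)\le k!\int_{0<s_1<\cdots<s_k<1}p(\mathbf x,\mathbf s)\,d\mathbf s$ is false in general. Decomposing $[0,1]^k$ into ordered sectors gives
\begin{equation*}
\rho_k(\mathbf x)=\sum_{\sigma\in S_k}\int_{0<s_1<\cdots<s_k<1}p(\sigma^{-1}\mathbf x,\mathbf s)\,d\mathbf s,
\end{equation*}
i.e.\ each sector contributes the ordered density evaluated at a \emph{permuted} $\mathbf x$, and these terms are not dominated by the identity term (for a process with drift, the ordering of the $x_i$ along the drift matters, so no single permutation dominates pointwise). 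Consequently the pointwise bound $\rho_k(\mathbf x)\le k!\,e^r r^{-k}\prod_j\varphi_r(x_j-x_{j-1})$ that you square and integrate is not valid.

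The fix is to symmetrize at the level of norms rather than pointwise: by the triangle inequality in $L^2(R^k)$ and the change of variables $\mathbf x\mapsto\sigma\mathbf x$ (Lebesgue measure is permutation invariant), all $k!$ sector terms have the same $L^2$ norm, so $\|\rho_k\|_2\le k!\,\|f_{\operatorname{id}}\|_2$ with $f_{\operatorname{id}}(\mathbf x)=\int_{0<s_1<\cdots<s_k<1}p(\mathbf x,\mathbf s)\,d\mathbf s$; your pointwise bound and increment computation are correct for $f_{\operatorname{id}}$ and then give exactly \eqref{miltmom}. This is in substance what the paper does: it expands $\|\rho_k\|^2$ as a double sum over pairs of permutations, bounds each cross term by Cauchy--Schwarz in $\mathbf x$, and removes the permutations by the same change of variables, before applying the exponential-weight and increment argument to the diagonal term. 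So the final estimate is right and your route is essentially the paper's, but as written the proof contains a false intermediate inequality that must be replaced by the Minkowski/Cauchy--Schwarz version of the symmetrization.
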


\begin{proof}
For any measurable set $A\subset  R^k$ we have
\begin{align*}
E[X^{\otimes k}](A)&=E\int_{\mathbf s\in [0,1]^k}\one((B(s_1),\ldots B(s_k)) \in A)\\&=\int_{\mathbf s\in [0,1]^k}P((B(s_1),\ldots, B(s_k) \in A))
\end{align*}
Expressing the probability as an integral and using Fubini we get
$$
E[X^{\otimes k}](A)=\int_{\mathbf x\in A} \int_{\mathbf s\in [0,1]^k}p(\mathbf x,\mathbf s),
$$
which implies the density formula \eqref{e:rho-p}.
Let $[0,1]^{\uparrow k}$ denote the subset of vectors in $\mathbf s\in [0,1]^k$ with $s_1<\cdots< s_k$.
\begin{align*}
\|\rho_k\|^2_2&=\int_{R^k}\int_{\mathbf s,\mathbf t\in [0,1]^k}p(\mathbf x,\mathbf s)p(\mathbf x,\mathbf t)\\&=\sum_{\sigma,\eta} \int_{ R^k}\int_{\mathbf s\in [0,1]^{\uparrow k}}p(\mathbf x,\sigma\mathbf s)\int_{\mathbf t\in [0,1]^{\uparrow k}}p(\mathbf x,\eta \mathbf t)
\end{align*}
where the sum is over all permutations $\sigma$ and $\eta$.
 By Cauchy-Schwarz applied to the $\mathbf x$-integral, the summand is at most
\begin{equation}\label{e:CS}
\int_{R^k}\left(\int_{\mathbf s\in [0,1]^{\uparrow k}}p(\mathbf x,\mathbf \sigma \mathbf s)\right)^2
\end{equation}
since $p(\sigma \mathbf s,\mathbf x)=p(\mathbf s,\sigma^{-1}\mathbf x)$, changing variables $\mathbf x\mapsto\sigma \mathbf x$  shows that
we can drop the $\sigma$ from \eqref{e:CS}. We now bound each factor the same way. Rewrite \eqref{e:CS} as 
\begin{equation}\label{e:k!2}
\int_{ R^k}\int_{\mathbf s,\mathbf t\in [0,1]^{\uparrow k}}p(\mathbf x,\mathbf s)p(\mathbf x,\mathbf t).
\end{equation}
Let
\begin{equation}\notag
a(s,t)=\int_{ R^k}\int_{\substack{(s_1,\ldots,{s_{k-1}})\in [0,s]^{\uparrow (k-1)}\\(t_1,\ldots,{t_{k-1}})\in [0,t]^{\uparrow (k-1)}}}p(\mathbf x,\mathbf s)p(\mathbf x,\mathbf t),
\end{equation}
i.e.\ the integral on the right hand side of \eqref{e:k!2} without integrating over the last two variables $s,t$. Since
$1\le e^{2r}e^{-rs-rt}$  for $s,t \in [0,1]$,
\begin{align*}
\int_{\mathbb R^k}\int_{\mathbf s,\mathbf t\in [0,1]^{\uparrow k}}p(\mathbf x,\mathbf s)p(\mathbf x,\mathbf t)&=\int_0^1\int_0^1 a(s,t)dsdt\\&\le e^{2r}\int_0^\infty\int_0^\infty e^{-rs-rt}a(s,t)dsdt.
\end{align*}
Change variables to the increments $\mathbf u$, $\mathbf v$ of $\mathbf s,\mathbf t$ and the increments $\mathbf y$ of $\mathbf x$ to get
\begin{align*}
\int_0^\infty&\int_0^\infty r^{2k}e^{-rs-rt}a(s,t)dsdt
\\&=\int_{ R^k}\int_{\mathbf u,\mathbf v\in [0,\infty)^{k}} r^{2k}e^{-r\sum_{i=1}^k u_i+v_i}\prod_{i=1}^k q(y_i,u_i)q(y_i,v_i)= \Big(\int_{R} \varphi_r(x)^2\Big)^k
\end{align*}
where we used the independent increment property of $B$, and  $q(x,u)$ is the density of $B(u)-B(0)$ at $x$ and that $r e^{-ru}$ is the density of the exponential random variable $\tau$.
\end{proof}
Next, we apply this to planar Brownian motion, with and without drift. We need the following fact. 
\begin{lemma}\label{l:Brown-comp}
Let $B$ be planar Brownian motion with covariance $I$ and drift $(0,N)$, and let $\varphi_{N,r}$
be the density
 of $B$ at   independent rate $r$ exponential
 random time. 
Then we have
\begin{equation}\label{85}\|\varphi_{N,r}\|^2\;\begin{cases}=\frac{r}{2\pi}, & \; N=0;\\ \le \frac{r}{2\pi}\wedge \frac{r^{3/2}}{\sqrt{8}N}, &  \;N \ge 0\end{cases}\end{equation}
 \end{lemma}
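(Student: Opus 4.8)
The plan is a direct computation. Write the time-$t$ transition density of $B$ as the Gaussian density with mean $(0,Nt)$ and covariance $tI$, namely $p_t^{(N)}(x,y)=\frac{1}{2\pi t}\exp\{-(x^2+(y-Nt)^2)/2t\}$, so that $\varphi_{N,r}(z)=\int_0^\infty r e^{-rt}p_t^{(N)}(z)\,dt$. Then
$$
\|\varphi_{N,r}\|^2=\int_0^\infty\int_0^\infty r^2 e^{-r(t+s)}\Big(\int_{\mathbb R^2}p_t^{(N)}(z)p_s^{(N)}(z)\,dz\Big)\,dt\,ds,
$$
where the Fubini swap is legitimate since everything is nonnegative. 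The inner integral factors over the two coordinates; the $x$-coordinate contributes $\frac{1}{\sqrt{2\pi(t+s)}}$, and the $y$-coordinate, being the integral of a product of two Gaussian densities with means $Nt,Ns$ and variances $t,s$, contributes $\frac{1}{\sqrt{2\pi(t+s)}}\exp\{-N^2(t-s)^2/(2(t+s))\}$. Hence
$$
\|\varphi_{N,r}\|^2=\frac{r^2}{2\pi}\int_0^\infty\int_0^\infty \frac{e^{-r(t+s)}}{t+s}\,e^{-N^2(t-s)^2/(2(t+s))}\,dt\,ds.
$$

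For $N=0$, I would substitute $w=t+s$: for fixed $w$ the variable $t$ ranges over $[0,w]$, producing a factor $w$ that cancels $1/w$, so the double integral equals $\int_0^\infty r^2 e^{-rw}\,dw=r$, giving $\|\varphi_{0,r}\|^2=\frac{r}{2\pi}$. For general $N\ge 0$ the first bound $\|\varphi_{N,r}\|^2\le \frac{r}{2\pi}$ follows immediately by dropping the exponential factor, since it is at most $1$.

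For the second bound, change variables to $w=t+s\in[0,\infty)$ and $v=t-s\in[-w,w]$, with Jacobian $dt\,ds=\tfrac12\,dw\,dv$, obtaining
$$
\|\varphi_{N,r}\|^2=\frac{r^2}{4\pi}\int_0^\infty \frac{e^{-rw}}{w}\Big(\int_{-w}^w e^{-N^2v^2/(2w)}\,dv\Big)dw
\le \frac{r^2}{4\pi}\int_0^\infty \frac{e^{-rw}}{w}\cdot\frac{\sqrt{2\pi w}}{N}\,dw,
$$
using $\int_{-w}^w e^{-N^2v^2/(2w)}\,dv\le\int_{\mathbb R}e^{-N^2v^2/(2w)}\,dv=\sqrt{2\pi w}/N$. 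Since $\int_0^\infty w^{-1/2}e^{-rw}\,dw=\Gamma(\tfrac12)r^{-1/2}=\sqrt{\pi/r}$, collecting constants gives $\|\varphi_{N,r}\|^2\le \frac{r^2}{4\pi}\cdot\frac{\sqrt{2\pi}}{N}\cdot\sqrt{\pi/r}=\frac{r^{3/2}}{\sqrt 8\,N}$, as claimed. There is no real obstacle here; the only points requiring care are the product-of-Gaussians identity for the $y$-integral and the bookkeeping of constants in the final change of variables.
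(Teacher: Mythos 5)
Your proposal is correct and follows essentially the same route as the paper: both reduce $\|\varphi_{N,r}\|^2$ to the double integral $\frac{r^2}{2\pi}\int\!\!\int \frac{e^{-r(t+s)}}{t+s}e^{-N^2(t-s)^2/(2(t+s))}\,dt\,ds$ (the paper phrases this as the density of $B_\tau-B'_{\tau'}$ at zero, you via the product-of-Gaussians integral, which is the same computation), then obtain the first bound by setting $N=0$ or dropping the exponential factor, and the second by the change of variables $w=t+s$, $v\propto t-s$ and extending the $v$-integral to all of $\mathbb R$. The constants check out in both versions.
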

 \begin{proof}
Let $\tau$ be the exponential variable, and let prime denote independent copies. Then $\|\varphi_{N,r}\|^2$ is the density of $B_{\tau}-B'_{\tau'}$ at zero. Conditionally on $\tau$ and $\tau'$, $B_{\tau}-B'_{\tau'}$ is a Gaussian with mean $(0,(\tau-\tau')N)$ and  covariance matrix $(\tau+\tau')I$. Therefore

\begin{equation}\notag
\|\varphi_{N,r}\|^2=\int_0^\infty
\int_0^\infty
\frac{r^2e^{-r(t+t')}}{2\pi (t +t')} \exp \Big\{ -\frac{(t-t')^2 N^2}{2(t+t')}\Big\}\, dt\,dt'.
\end{equation}
Evaluate at $0$ to get $\frac{r}{2\pi}$ and note that it is decreasing in $N$ to obtain the first part of \eqref{85}.
To get the second statement, change variables $u=t+t'$, $v= N(t-t')$. The Jacobian is $2N$, and we get
\begin{equation*}
\frac {r^2}{4\pi N}\int_0^\infty \frac{e^{-ru}}{u}
\int_{-Nu}^{Nu}
e^{  -\frac{v^2}{2u}}\,dv\,du \le
\frac{r^2}{4\pi N}\int_0^\infty \frac{e^{-ru}}{u}
\int_{-\infty}^{\infty}
e^{  -\frac{v^2}{2u}}\,dvdu= \frac{r^2}{\sqrt{8r}N}. 
\end{equation*}
\end{proof}

\subsection{Brownian intersection local time}
\label{ss:ilsbm}
Next we consider the special case of $R=\mathbb{R}^2$, $\mathcal{B}$ the Borel sets, $\mu=$Lebesgue measure. 

\begin{proposition}\label{p:BM-alpha-moments}
Let $B$ be planar Brownian motion with drift $\mu$ run until time $t$. 
Then for every $k\ge 1$, $\alpha(B,B')$ exists as a limit in $L^k$, and we have
\begin{equation}\label{e:moment2}
E[\alpha(B,B')^k]\le   e k! \sqrt{k}(te/(2\pi))^{k}.
\end{equation}
\end{proposition}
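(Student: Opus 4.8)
The plan is to apply Proposition~\ref{p:rhok} with $X=Y$ equal to the occupation measure $X(A)=\int_0^t\one_A(B(s))\,ds$ of the Brownian motion, so that $\alpha(B,B')=\alpha(X,X')$. That proposition yields, once we know $\rho_{j,X}\in L^2(\mu^{\otimes j})$ for all integers $j$ up to some $k_0\in2\mathbb N$ with $k_0\ge k$, both that $\alpha_n(B,B')\to\alpha(B,B')$ in $L^k(\Qt)$ and that $E[\alpha(B,B')^k]=\|\alpha(B,B')\|^k_{L^k(\Qt)}=\|\rho_{k,X}\|^2$. Thus the whole statement reduces to an $L^2$ bound on the density $\rho_{k,X}$ of the $k$-fold product $X^{\otimes k}$, and since the bound we will obtain is finite it simultaneously supplies the integrability hypothesis $\rho_{j,X}\in L^2$ needed to invoke Proposition~\ref{p:rhok} in the first place.

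Next I would rescale time so that Proposition~\ref{p:moments}, which is stated for occupation up to time $1$, applies. Put $\tilde B(r)=B(tr)$ for $r\in[0,1]$; this is again a L\'evy process (a planar Brownian motion with some drift and covariance $tI$), and its occupation measure up to time $1$ is $\tilde X=t^{-1}X$, so $\rho_{k,X}=t^k\rho_{k,\tilde X}$ and $\|\rho_{k,X}\|^2=t^{2k}\|\rho_{k,\tilde X}\|^2$. Proposition~\ref{p:moments} applied to $\tilde B$ gives, for every $r>0$,
\[
\|\rho_{k,\tilde X}\|^2\le k!^2\,e^{2r}\,\|\varphi_r/r\|^{2k},
\]
where $\varphi_r$ is the density of $\tilde B$ at an independent rate-$r$ exponential time. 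For $\|\varphi_r\|^2$ I would repeat the computation in the proof of Lemma~\ref{l:Brown-comp}: conditionally on two independent exponential times $\tau,\tau'$, the difference $\tilde B_\tau-\tilde B'_{\tau'}$ is Gaussian with covariance $t(\tau+\tau')I$ and some mean, so its density at $0$ is at most $\tfrac{1}{2\pi t(\tau+\tau')}$, the drift only lowering it; integrating against the law of $\tau+\tau'$, which has density $r^2we^{-rw}$, gives $\|\varphi_r\|^2\le \tfrac{r}{2\pi t}$, hence $\|\varphi_r/r\|^{2k}\le(2\pi tr)^{-k}$. Combining,
\[
E[\alpha(B,B')^k]=\|\rho_{k,X}\|^2\le t^{2k}\,k!^2\,e^{2r}\Big(\tfrac{1}{2\pi t r}\Big)^k=k!^2\,e^{2r}\Big(\tfrac{t}{2\pi r}\Big)^k,\qquad r>0.
\]

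Finally I would specialize the free parameter: taking $r=k$ gives $E[\alpha(B,B')^k]\le k!^2\,e^{2k}\big(t/(2\pi k)\big)^k$, and bounding just one of the two factorials by the standard Stirling estimate $k!\le e\sqrt{k}\,(k/e)^k$ cancels the $k^k$ and produces
\[
k!^2\,e^{2k}\Big(\tfrac{t}{2\pi k}\Big)^k\le k!\cdot e\sqrt{k}\,\tfrac{k^k}{e^k}\cdot e^{2k}\cdot\tfrac{t^k}{(2\pi)^k k^k}=e\,k!\,\sqrt{k}\,\Big(\tfrac{te}{2\pi}\Big)^k,
\]
which is exactly the claimed bound (the choice $r=k/2$ would even improve it, so there is slack). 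I do not anticipate a real obstacle: the argument simply assembles Propositions~\ref{p:rhok} and~\ref{p:moments} together with the Gaussian density estimate from the proof of Lemma~\ref{l:Brown-comp}. The only points that need care are keeping the time-rescaling factors straight — the $t^{2k}$ coming from $X=t\tilde X$ against the $t^{-k}$ hidden inside $\|\varphi_r\|^2$ — and choosing $r=k$ to land on the stated constants.
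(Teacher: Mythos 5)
Your proof is correct and follows essentially the same route as the paper's: reduce $E[\alpha^k]$ to $\|\rho_{k,X}\|^2$ via Proposition \ref{p:rhok}, bound that via Proposition \ref{p:moments} together with the Gaussian density computation of Lemma \ref{l:Brown-comp}, and finish with $r=k$ and Stirling. The only difference is bookkeeping: the paper invokes Brownian scaling to reduce to $t=1$ and covariance $I$ before quoting Lemma \ref{l:Brown-comp}, whereas you rescale only time and redo the density estimate with covariance $tI$, which is an equivalent way of tracking the factor $t^k$.
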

Note that in the driftless case  \cite{LeGall}
shows $
a_1^k k!\le E\alpha(B,B')^k\le   a_2^k k!$,
so \eqref{e:moment2} is not far from the truth.
\begin{proof} 
By scaling, it suffices to show this for $t=1$. By Proposition  \ref{p:moments} and  Lemma \ref{l:Brown-comp}, \begin{equation}\label{e:moment}
\|\rho_k\|^2\le  k!^2e^{2r}\|\varphi_r/r\|^{2k}\le \frac{k!^2e^{2r}}{(2\pi r)^{k}}.
\end{equation}
Since this is finite for all $k$,  Proposition \ref{p:rhok} implies that the intersection local time $\alpha$ is the limit of $\alpha_n$ (defined in (\ref{ltdef})) in $L^k$ and $\|\rho_k\|^2=E\alpha^k$.
Setting $r=k$ and using the inequality version of Stirling's formula $k!\le e k^{1/2+k}e^{-k}$ for $k\ge 1$ gives \eqref{e:moment2}.
\end{proof}

\begin{proposition}[From BM to BB]\label{p:B-R}
Let $R$ be a Brownian bridge with covariance $\Sigma$ from $p$ to $p+\mu s$ in $\mathbb R^d$ in time $s$. Let $a \in (0,1)$ and let $F(R_{[0,a s]})$ be a nonnegative test function depending on the initial part  of $R$. Let $B$ be a Brownian motion with covariance $\Sigma$ and drift $\mu$ started at $p$. Then  
$$EF(R_{[0,a s]})\le (1-a)^{-d/2}EF(B_{[0,\alpha s]}).$$
\end{proposition}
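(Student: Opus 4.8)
The plan is to realize the Brownian bridge $R$ as the drifted Brownian motion $B$ conditioned on its terminal value, and then to bound the Radon--Nikodym derivative between the laws of the two paths restricted to $[0,as]$ (I read the $\alpha$ in the statement as $a$). Write $h_t$ for the density of $B(t)-p$; this is a Gaussian density on $\mathbb R^d$ with mean $\mu t$ and covariance $t\Sigma$, so its maximum is attained at the mean and equals $h_t(\mu t)=(2\pi t)^{-d/2}(\det\Sigma)^{-1/2}$. Since $B$ started at $p$ has $EB(s)=p+\mu s$, the bridge $R$ from $p$ to $p+\mu s$ in time $s$ is exactly $B$ conditioned on $\{B(s)=p+\mu s\}$.

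By the Markov property of $B$, for any nonnegative functional $F$ of the path on $[0,as]$ one has the $h$-transform identity
\[
E\big[F(R_{[0,as]})\big]=E\left[F(B_{[0,as]})\,\frac{h_{(1-a)s}\!\big(p+\mu s-B(as)\big)}{h_s(\mu s)}\right],
\]
so the density of the law of $R_{[0,as]}$ with respect to that of $B_{[0,as]}$ is the map $\omega\mapsto h_{(1-a)s}(p+\mu s-\omega(as))/h_s(\mu s)$. I would then bound this pointwise: the numerator is at most $\sup_x h_{(1-a)s}(x)=(2\pi(1-a)s)^{-d/2}(\det\Sigma)^{-1/2}$, while the denominator equals $h_s(\mu s)=(2\pi s)^{-d/2}(\det\Sigma)^{-1/2}$, so the ratio is at most $\big(s/((1-a)s)\big)^{d/2}=(1-a)^{-d/2}$. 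Substituting this bound into the identity above and using $F\ge 0$ gives $E[F(R_{[0,as]})]\le (1-a)^{-d/2}E[F(B_{[0,as]})]$, as claimed.

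There is no essential obstacle here; the only point requiring care is the drift bookkeeping in the transition density. It is precisely because the drift of $B$ is chosen so that $p+\mu s$ is the \emph{mean} of $B(s)$ that the denominator $h_s(\mu s)$ is the maximum of the Gaussian $h_s$, which makes the resulting constant $(1-a)^{-d/2}$ clean and independent of $\mu$, $\Sigma$, $s$ and $p$. It is also worth noting that the argument uses only the marginal on $[0,as]$, which is exactly why $F$ is required to depend on the path only up to time $as$.
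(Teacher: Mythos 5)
Your proof is correct and takes essentially the same route as the paper: both arguments reduce the comparison to the single time $as$ and bound the relevant Radon--Nikodym derivative by $(1-a)^{-d/2}$, you via the explicit $h$-transform density $h_{(1-a)s}(p+\mu s-\omega(as))/h_s(\mu s)$, the paper by noting the conditional laws of $R_{[0,as]}$ given $R(as)$ and of $B_{[0,as]}$ given $B(as)$ coincide and comparing the two Gaussian marginals (same mean, covariances $as(1-a)\Sigma$ vs.\ $as\Sigma$). The drift bookkeeping and the resulting constant are handled correctly, so nothing further is needed.
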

By time reversal, a similar claim holds for the last $a$ portion of $R$.

\begin{proof}
The conditional distributions  $R_{[0,as]}$ given $R(as)$, and $B_{[0,a s]}$ given $B(as)$ are the same: both are Brownian bridges with the given endpoints. So it suffices to bound the Radon-Nikodym derivative of $R(as)$ with respect to $B(as)$. They are Gaussians with the same mean and covariance  $as(1-a)\Sigma$ and $as\Sigma$, respectively. The maximal  value of the Radon-Nikodym derivative, as a function of $s$ and $\Sigma$  does not depend on a common scale or shift, and it is $(1-a)^{-d/2}$ when $s=1$, $\Sigma=I$. 
\end{proof}

We are now ready to bound intersection local time for Brownian bridges.

\begin{proposition}\label{p:alpha-plane}  Consider planar Brownian bridge $R$ from $0$ at time $0$ to $\mu t$ at time $t$. 
Then for all $k\in \mathbb N$, 
the intersection local time $\alpha(R,R')$ exists as a limit in $L^k$ and
$$
E\alpha^k\le  4e k! \sqrt{k}(te/\pi)^{k}.
$$
\end{proposition}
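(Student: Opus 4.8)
The plan is to cut the bridge at its midpoint and reduce to the Brownian motion estimate of Proposition~\ref{p:BM-alpha-moments}, passing from bridge to motion via Proposition~\ref{p:B-R}. Write $R_1$ and $R_2$ for the occupation measures of $R$ on $[0,t/2]$ and on $[t/2,t]$ respectively, so that $R=R_1+R_2$ as random measures. Since the coordinates $m_j=\int\basis_j\,dR$ are additive and $\alpha_n(X,Y')=\int X_nY_n'\,d\mu$ is bilinear in the truncations, one has $\alpha_n(R,R')=\alpha_n(R_1,R_1')+\alpha_n(R_1,R_2')+\alpha_n(R_2,R_1')+\alpha_n(R_2,R_2')$. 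Hence it suffices to show that each of the four mutual intersection local times $\alpha(R_i,R_j')$ exists, that $\alpha_n(R_i,R_j')$ converges to it in every $L^k(\Qt)$, and that $E[\alpha(R_i,R_j')^k]\le 4ek!\sqrt k\,(te/(4\pi))^k$; then Lemma~\ref{l:bilinear} gives existence of $\alpha(R,R')$ together with $\alpha(R,R')=\sum_{i,j}\alpha(R_i,R_j')$, and Minkowski's inequality in $L^k(\Qt)$ assembles the bound.

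For the diagonal pieces, let $B$ be planar Brownian motion with drift $\mu$ started at $0$ and $B_1$ its occupation measure on $[0,t/2]$. For $\mathbf s\in[0,t/2]^k$ the map $(\,\cdot(s_1),\dots,\cdot(s_k))$ is a nonnegative functional of the path restricted to $[0,t/2]$, so applying Proposition~\ref{p:B-R} (with $a=1/2$, $d=2$ and total time $t$, so $(1-a)^{-d/2}=2$) to indicator functions shows the joint densities obey $p_R(\mathbf s,\mathbf x)\le 2\,p_B(\mathbf s,\mathbf x)$ for a.e.\ $\mathbf x$. Integrating over $\mathbf s\in[0,t/2]^k$ and using the density formula \eqref{e:rho-p} gives $\rho_{k,R_1}\le 2\rho_{k,B_1}$ pointwise, hence $\|\rho_{k,R_1}\|^2\le 4\|\rho_{k,B_1}\|^2<\infty$ for every $k$ by Proposition~\ref{p:BM-alpha-moments}. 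Proposition~\ref{p:rhok} then yields that $\alpha(R_1,R_1')$ exists, $\alpha_n(R_1,R_1')\to\alpha(R_1,R_1')$ in $L^k(\Qt)$ for all $k$, and $E[\alpha(R_1,R_1')^k]=\|\rho_{k,R_1}\|^2\le 4\|\rho_{k,B_1}\|^2=4\,E[\alpha(B_1,B_1')^k]$. Time reversal (the remark after Proposition~\ref{p:B-R}) gives the identical statement for $R_2$, the drift-uniformity of Proposition~\ref{p:BM-alpha-moments} absorbing the sign change. The off-diagonal pieces $\alpha(R_1,R_2')$ and $\alpha(R_2,R_1')$ exist by Proposition~\ref{p:rhok} as well (since $\rho_{k,R_1},\rho_{k,R_2}\in L^2$), and the Cauchy--Schwarz bound there gives $E[\alpha(R_1,R_2')^k]^2\le E[\alpha(R_1,R_1')^k]\,E[\alpha(R_2,R_2')^k]$, so all four pieces obey the same moment bound.

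It remains to insert the Brownian estimate at the shortened time: Proposition~\ref{p:BM-alpha-moments} applied with time $t/2$ gives $E[\alpha(B_i,B_i')^k]\le ek!\sqrt k\,(te/(4\pi))^k$, so each of the four pieces satisfies $E[\alpha(R_i,R_j')^k]\le 4ek!\sqrt k\,(te/(4\pi))^k$, i.e.\ $\|\alpha(R_i,R_j')\|_{L^k(\Qt)}\le(4ek!\sqrt k)^{1/k}\,te/(4\pi)$. Summing the decomposition of $\alpha_n(R,R')$ shows $\alpha_n(R,R')\to\sum_{i,j}\alpha(R_i,R_j')=\alpha(R,R')$ in $L^k(\Qt)$, and by Minkowski
\[
\|\alpha(R,R')\|_{L^k(\Qt)}\le\sum_{i,j\in\{1,2\}}\|\alpha(R_i,R_j')\|_{L^k(\Qt)}\le 4\cdot(4ek!\sqrt k)^{1/k}\,\frac{te}{4\pi}=(4ek!\sqrt k)^{1/k}\,\frac{te}{\pi}.
\]
Raising to the $k$-th power gives $E\alpha^k\le 4ek!\sqrt k\,(te/\pi)^k$, as claimed.

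The computational content is entirely supplied by Propositions~\ref{p:moments}, \ref{p:BM-alpha-moments} and \ref{p:B-R}; what needs care is making the comparison $p_R\le 2p_B$ a genuine instance of Proposition~\ref{p:B-R}, which is exactly what forces the split to be at the \emph{midpoint} (so a functional of $R$ on the first or the last half can be dominated by the corresponding functional of a Brownian motion run for half the time), and keeping the two constants consistent: the factor $2$ from each of the two independent bridge-to-motion comparisons, giving $4$ after squaring densities, and the factor $2$ from replacing $t$ by $t/2$ inside Proposition~\ref{p:BM-alpha-moments}. I expect this bookkeeping, rather than any substantially new estimate, to be the only real obstacle.
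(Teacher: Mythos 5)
Your proposal is correct and follows essentially the same route as the paper: split the bridge at its midpoint, transfer to drifted Brownian motion on the half-interval via Proposition \ref{p:B-R} (your pointwise comparison $\rho_{k,R_1}\le 2\rho_{k,B_1}$ is just a density-level rendering of the paper's bound of $4$ on the Radon--Nikodym derivative of $Q^{\otimes 2}$ with respect to $Q_{BM}^{\otimes 2}$), invoke Proposition \ref{p:BM-alpha-moments} at time $t/2$, handle the cross terms by the Cauchy--Schwarz bound of Proposition \ref{p:rhok}, and recombine (Minkowski in your version, expanding the $k$-th power in the paper's), with identical constant bookkeeping.
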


\begin{proof} 
Let $R_1$, $R_2$ denote the bridge restricted to $[0,t/2]$ and $[t/2,t]$, respectively.  Let $Q$ denote the  law of $R_1$. Then $Q^{\otimes 2}$ is absolutely continuous 
with respect to $Q_{BM}^{\otimes 2}$ of Brownian motion with drift $\mu$ run until time $t$ with derivative bounded by $4$, see Proposition \ref{p:B-R}. In particular, since $\alpha_n$ converges in $L^k(Q_{BM}^{\otimes 2})$, it also converges in $L^k(Q^{\otimes 2})$, and with $\alpha=\alpha(R_1,R_1')$
$$
E_{Q^{\otimes 2}}\alpha^k\le 4E{Q_{BM}^{\otimes 2}} \alpha^k\le 4e k! \sqrt{k}(te/(4\pi))^{k}.
$$

The same bound applies to $R_2$, and Lemma \ref{l:bilinear} implies that $\alpha(R,R')=\sum_{i,j=1}^2\alpha(R_i,R'_j)$ exists. Expanding the $k$-th power, and using Proposition \ref{p:rhok}.\ref{p:alpha-misc.CS}, H\"older's inequality and symmetry, we get $$E\alpha(R,R')^k\le 4^k E\alpha(R_1,R_1')^k,$$ as claimed. 
\end{proof}

\begin{remark}\label{rem:miltdist}
The mutual intersection local time  is classically defined as the value at $0$ of the continuous density of the random measure
\begin{equation}\label{rosendef} A\mapsto \int_0^t\int_0^t \mathbf{1}_A( B(s)-B'(s')) dsds',\qquad A\in \mathscr{B}(\mathbb{R}^2),
\end{equation}
see \cite*{geman1984local}. By Lemma \ref{l:classical-milt} this coincides with our definition of $\alpha$ as a random variable on $\Qt$.
\end{remark}

Finally, we consider the case where $X$ is the occupation measure up to time $t$ of $(b(s),s)$ where $b(s)$ is a one-dimensional Brownian bridge.  The computations are the same and we can identify the limiting $\alpha$ again by its moments. Let $q(x,t)$ be the density at $x$ of the centered Gaussian with variance $t$.

\begin{proposition}\label{p:alpha-one} Let  $X$ be the occupation measure of $(b(s),s)$ where $b(s)$ is a one-dimensional Brownian bridge from $y_0=(0,0)$ to $y_*=(x,t)$. Then for every $k\in \mathbb N$,
\begin{enumerate}
\item The measure 
$E_Q[X^{\otimes k}]$ is supported on $(\mathbb R\times [0,t])^k$ and has density $$\rho_{k,X}(\mathbf y)=\frac{1}{q(y_*)}\prod_{i=1}^{k+1} q(y_{(i)}-y_{(i-1)}),$$ where $y_{k+1}=y_*$ and $y_{(i)}$ are the order statistics of $y$ ordered increasingly by the second (time) coordinate. 
\item With $s_0=0, s_{k+1}=t$ we have 
\begin{align} \notag
\|\rho_{k,X}\|_{L^2((\mathbb{R}^2)^k)}^2&
=
\frac{2^{-k/2} k!}{q(0,t)}\int_{0<s_1<\cdots<s_k<t}\prod_{i=1}^{k+1}q(0,s_i-s_{i-1})
\\&
\!\!\!\!\!\!\!=
\frac{\sqrt{t}k!}{(4\pi)^{k/2}} \int_{0<s_1<\cdots<s_k<t}\prod_{i=1}^{k+1}\frac{1}{\sqrt{s_i-s_{i-1}}}\nonumber  \\& \notag
=
\frac{\sqrt{\pi}t^{k/2}k!}{2^k\Gamma(\tfrac12(k+1))}\le c\,\frac{(tk/(2e))^{k/2}}{\sqrt{k}}.
\notag\end{align} 
\item The intersection local time $\alpha$ exists, $\alpha=\lim \alpha_n$ in $L^k(Q^{\otimes 2})$  and we have $E_\Qt[ \alpha^k] =  \|\rho_k\|_{L^2((\mathbb{R}^2)^k)}^2$.\label{p:alpha-one-exists}

\item For all $\gamma>0$ we have $E \cosh(\gamma \alpha)<\infty$.
\end{enumerate}
\end{proposition}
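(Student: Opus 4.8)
The plan is to follow, almost verbatim, the template already used for planar Brownian motion in Propositions \ref{p:moments}, \ref{p:rhok} and \ref{p:alpha-plane}; the only genuinely new ingredient is an explicit Gaussian/Dirichlet evaluation of $\|\rho_{k,X}\|^2$, after which the exponential-moment bound of part 4 drops out of Stirling's formula.

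For part 1 I would compute $E_Q[X^{\otimes k}]$ directly by Fubini, exactly as in the opening of the proof of Proposition \ref{p:moments}: since $X(A)=\int_0^t\one_A(b(s),s)\,ds$, a product set satisfies $E_Q[X^{\otimes k}](A_1\times\cdots\times A_k)=\int_{[0,t]^k}P\big((b(s_i),s_i)\in A_i\ \forall i\big)\,d\mathbf s$, and the $k$-fold time integral exactly supplies the $k$ time-coordinates of a point $\mathbf y\in(\mathbb R\times[0,t])^k$; so for a.e.\ $\mathbf y$ (distinct times) the density is gotten by sorting the $s_i$ increasingly and multiplying the bridge transition densities $q$ along the resulting time-ordered path from $(0,0)$ through $\mathbf y$ to $y_*$, divided by $q(y_*)$, which is the stated formula. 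For part 2 I would then write $\|\rho_{k,X}\|^2$ as $k!$ times the integral over the time simplex $0<s_1<\cdots<s_k<t$ of $q(y_*)^{-2}\int_{\mathbb R^k}\prod_{i=1}^{k+1}q(z_i-z_{i-1},s_i-s_{i-1})^2\,dz$, use the identity $q(z,\Delta)^2=\tfrac1{2\sqrt{\pi\Delta}}\,q(z,\Delta/2)$ to pull out the constants $\prod_{i}\tfrac1{2\sqrt{\pi\Delta_i}}$ and collapse the remaining spatial integral by Chapman--Kolmogorov to $q(x,t/2)$, observe that $q(x,t/2)/q(x,t)^2=2\sqrt{\pi t}$ is $x$-independent, and finally evaluate $\int_{\{\sum\Delta_i=t\}}\prod_{i=1}^{k+1}\Delta_i^{-1/2}$ as the Dirichlet integral $\Gamma(1/2)^{k+1}t^{(k-1)/2}/\Gamma(\tfrac{k+1}2)$. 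Collecting the factors yields the displayed chain of equalities; the Legendre duplication formula turns $\sqrt\pi\,k!/(2^k\Gamma(\tfrac{k+1}2))$ into $\Gamma(\tfrac k2+1)$, so in fact $E\alpha^k=\|\rho_{k,X}\|^2=t^{k/2}\Gamma(\tfrac k2+1)$, and the closing bound follows from $\Gamma(\tfrac k2+1)\le e\sqrt{k/2}\,(k/2e)^{k/2}$. Part 3 is then immediate from Proposition \ref{p:rhok}, since we have just shown $\rho_{k,X}\in L^2$ for every $k$; here, unlike in Proposition \ref{p:alpha-plane}, the bridge-versus-motion comparison of Proposition \ref{p:B-R} is not needed because everything is explicit.

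For part 4 I would expand $\cosh$. Since $\alpha\ge0$ and, by parts 1--3, $E\alpha^{2k}=\|\rho_{2k,X}\|^2=t^{k}\,k!$, Tonelli gives $E\cosh(\gamma\alpha)=\sum_{k\ge0}\frac{\gamma^{2k}}{(2k)!}E\alpha^{2k}=\sum_{k\ge0}\frac{\gamma^{2k}t^{k}k!}{(2k)!}$, and the ratio of consecutive terms is $\frac{\gamma^{2}t}{2(2k+1)}\to0$, so the series converges for every $\gamma>0$; alternatively one may feed the cruder moment bound into $(2k)!\ge(2k/e)^{2k}$ to see the general term is at most $c\sqrt k\,(\gamma^{2}te/4k)^{k}$. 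I do not anticipate a real obstacle: the proposition is essentially a bookkeeping exercise, and the only places asking for care are the constant-chasing in part 2 (in particular spotting the duplication-formula collapse to the clean identity $E\alpha^{k}=t^{k/2}\Gamma(\tfrac k2+1)$) and the routine, slightly fiddly verification that $E_Q[X^{\otimes k}]$ genuinely possesses a Lebesgue density on $(\mathbb R\times[0,t])^{k}$ despite the time coordinate of $(b(s),s)$ being deterministic.
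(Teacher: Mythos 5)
Your proposal is correct and follows essentially the same route as the paper: Fubini for part 1, a direct Gaussian computation reducing $\|\rho_{k,X}\|^2$ to a Dirichlet$(1/2,\ldots,1/2)$ normalizing constant for part 2 (the paper phrases the spatial collapse via the self-convolution-at-zero of a Gaussian vector being $2^{-k/2}$ times its density at $0$, while you use the pointwise identity $q(z,\Delta)^2=\tfrac{1}{2\sqrt{\pi\Delta}}q(z,\Delta/2)$ plus Chapman--Kolmogorov, which is the same calculation), Proposition \ref{p:rhok} for part 3, and the $\cosh$ Taylor series with Tonelli for part 4. Your duplication-formula observation $E\alpha^k=t^{k/2}\Gamma(\tfrac k2+1)$ is a nice tidy packaging of the paper's bound (whose stated sub-exponential factor $1/\sqrt k$ should in fact be of order $\sqrt k$, exactly as your Stirling estimate gives), and the minor constant slack in your $\Gamma(\tfrac k2+1)$ bound at $k=1$ is immaterial.
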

\begin{proof}  Fubini gives {\sl 1}. 
To get {\sl 2}, note that for $s$ fixed, the $x$ integral gives the squared $L^2$-norm of the density of $(b(s_1), \ldots, b(s_k))$. The same computation also gives the density of the self-convolution at zero, which is exactly $2^{-k/2}$ times the original density at $0$ for any Gaussian vector. Up to scaling, the integral  computes the normalizing constant of a Dirichlet$(1/2,\ldots,1/2)$ distribution. 
Proposition \ref{p:rhok} implies {\sl 3}. Writing the Taylor series of $\cosh$, using Fubini and the bounds {\sl 2} gives {\sl 4}.
\end{proof}
 \begin{remark}\label{r:milt1+1}
  As in Remark \ref{rem:miltdist}, by Lemma \ref{l:classical-milt},  $\alpha$ coincides with the classical definition, the value at $0$ of the continuous density of the random measure
\begin{equation}\label{rosendef2} A\mapsto \int_0^t \mathbf{1}_A( b(s)-b'(s)) ds,\qquad A\in \mathscr{B}(\mathbb{R}),
\end{equation}
\end{remark}

Let $L$ be a $d\times d$ matrix with $\det L\not=0$. Then $X\circ 
L(A)$ and $X\circ L^{-1}(A)$ define the pullback and pushforward of the measure $X$ by the linear transformation $L$. 

The definition implies that $\alpha(X,X')$ scales like $\|\rho_X\|_2^2$. More precisely, we have the following. The proof is left to the reader. 

\begin{lemma}[Scaling $\alpha$]\label{l:scaling}
Let $X,Y$ be a random measures on $\mathbb R^d$, so that $\alpha(X,Y')$ exists.
Then, a.s.\ the mean densities satisfy $\langle \rho_
{X\circ L},\rho_
{Y\circ L}\rangle_\mu =|\!\det L |\,\langle \rho_
{X},\rho_
{Y}\rangle_\mu$ and 
$$
\alpha(X\circ L, Y'\circ L )=|\!\det(L)|\,\alpha(X, Y' ).
$$
Now let $t>0$, $(B(s),s\in[0,1])$ be a process on $\mathbb R^d$ and  let $\hat B(s)=LB(s/t)$ on $[0,t]$. Then a.s.\ the occupation measures satisfy 
\begin{align*}\hat X(A)&=|\hat B^{-1}(A)|=t|B^{-1}L^{-1}(A)|=tX\circ L^{-1}(A), \\  
\alpha(\hat X, \hat X')&=\frac{t }{|\!\det L| }\,\alpha(X,X').
\end{align*}
\end{lemma}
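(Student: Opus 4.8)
The plan is to reduce the whole statement to two elementary facts: the transformation rule for the mean density of a random measure under a linear pullback, and the defining property of $\alpha$ together with its uniqueness (Proposition \ref{p:milt-unique}).

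First I would establish the density transformation rule. Since $E_Q$ commutes with the deterministic operation $(\,\cdot\,)\circ L$, we have $E[X\circ L]=(EX)\circ L$, and the substitution $y=Lx$ in $\int_{LA}\rho_X(y)\,dy$ gives $\rho_{X\circ L}(x)=|\det L|\,\rho_X(Lx)$; in particular $\rho_{X\circ L}\in L^2(\mu)$ whenever $\rho_X$ is. The same applies with $FX$ in place of $X$ for any bounded measurable $F\ge0$ on $\Omega$, since $F(X\circ L)=(FX)\circ L$ as random measures (multiplication by the scalar $F(\omega)$ commutes with precomposing the set argument by $L$). Substituting into the $\mu$-inner product and changing variables once more,
\[
\langle \rho_{F(X\circ L)},\rho_{G(Y\circ L)}\rangle_\mu=|\det L|\,\langle \rho_{FX},\rho_{GY}\rangle_\mu
\]
for all bounded $F,G\ge0$, and the identical computation holds with $L^{-1}$ in place of $L$ (factor $|\det L|^{-1}$). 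The case $F=G\equiv1$ is exactly the first asserted identity.

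Next I would identify $\alpha(X\circ L,Y'\circ L)$ by verifying that $|\det L|\,\alpha(X,Y')$ satisfies Definition \ref{d:milt} for the pair $(X\circ L,Y\circ L)$. Invertibility of $L$ gives $\sigma(X\circ L,Y'\circ L)=\sigma(X,Y')$, so $|\det L|\,\alpha(X,Y')$ is measurable for the required $\sigma$-field and lies in $L^1(\Qt)$; and for all bounded measurable $F,G\ge0$ on $\Omega$, the defining identity for $\alpha(X,Y')$ together with the display above gives
\[
E_{\Qt}\bigl[FG'\,|\det L|\,\alpha(X,Y')\bigr]=|\det L|\,\langle \rho_{FX},\rho_{GY}\rangle_\mu=\langle \rho_{F(X\circ L)},\rho_{G(Y\circ L)}\rangle_\mu .
\]
Uniqueness in Proposition \ref{p:milt-unique} then yields $\alpha(X\circ L,Y'\circ L)=|\det L|\,\alpha(X,Y')$, and replacing $L$ by $L^{-1}$ gives $\alpha(X\circ L^{-1},Y'\circ L^{-1})=|\det L|^{-1}\alpha(X,Y')$. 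For the occupation-measure statement, the substitution $u=s/t$ in $\hat X(A)=\int_0^t\one_A(LB(s/t))\,ds$ gives $t\int_0^1\one_{L^{-1}A}(B(u))\,du=t\,(X\circ L^{-1})(A)$, that is $\hat X=t\,(X\circ L^{-1})$; combining this with bilinearity of $\alpha$ (Lemma \ref{l:bilinear}, applied to the scalar $t$ in each of the two arguments) and the pullback rule just proved expresses $\alpha(\hat X,\hat X')$ through $\alpha(X,X')$, $t$ and $|\det L|$, as asserted.

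I do not expect a genuine obstacle here; the lemma is a bookkeeping exercise in changes of variables. The two points requiring a little care are the $\sigma$-algebra identity $\sigma(X\circ L,Y'\circ L)=\sigma(X,Y')$, which both makes the candidate integrand measurable with respect to the correct field and licenses the uniqueness step, and keeping track of $L^2(\mu)$-integrability of all the mean densities involved, which is inherited from the standing hypothesis that $\alpha(X,Y')$ exists together with the boundedness of $F$ and $G$.
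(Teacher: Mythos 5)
Your route is the intended one: the paper offers no separate proof of this lemma (it is presented as immediate from Definition \ref{d:milt}), and your verification — change of variables for the mean densities, checking that $|\det L|\,\alpha(X,Y')$ satisfies the defining identity for the pair $(X\circ L,\,Y\circ L)$, invoking uniqueness from Proposition \ref{p:milt-unique}, and then the substitution $\hat X=t\,(X\circ L^{-1})$ together with bilinearity — is exactly the bookkeeping the authors had in mind. The density computation, the identity $\sigma(X\circ L,Y'\circ L)=\sigma(X,Y')$, and the identification of $\alpha(X\circ L,Y'\circ L)$ are all correct.

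The one point you should not leave implicit is the final constant. Carrying out your last step, the scalar $t$ appears in \emph{both} arguments of $\alpha$, giving a factor $t^2$, and the pullback by $L^{-1}$ gives $|\det L|^{-1}$, so your argument yields
\[
\alpha(\hat X,\hat X')=\frac{t^2}{|\det L|}\,\alpha(X,X'),
\]
which is \emph{not} the displayed $\tfrac{t}{|\det L|}$; writing ``as asserted'' papers over this mismatch. In fact the displayed formula in the lemma appears to be a typo and your computation gives the correct scaling: the paper's own uses of the lemma require $t^2/|\det L|$. For instance, the reduction to $t=1$ in Lemma \ref{l:drifted} takes $L=\sqrt{t}\,I_2$, $|\det L|=t$, and needs $\alpha$ to scale linearly in $t$ (consistent with the bound $E[\alpha^k]\le ek!\sqrt k (te/(2\pi))^k$ of Proposition \ref{p:BM-alpha-moments}), which is $t^2/t$, not $t/t=1$; similarly the $t^{k/2}$ moment scaling in Proposition \ref{p:alpha-one} corresponds to $|\det L|=t^{3/2}$ and $t^2/t^{3/2}=\sqrt t$. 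So state the factor $t^2/|\det L|$ explicitly and flag the discrepancy with the printed statement rather than asserting agreement; with that correction your proof is complete.
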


\begin{proposition}\label{convofalphas}  Let $B^\nu$, be
a planar
Brownian bridge on the time interval $[0,1]$ from $(0,0)$ to $(y,1)$  with
covariance $\operatorname{diag}(1,\nu^2)$.  Then for every $y$, 
\begin{equation}
 \phi(\nu,y):=   E \alpha(B^{\nu,y}, ( B^{\nu,y})')
\end{equation}
is continuous at $\nu=0$.  Furthermore,
\begin{equation}
    \phi(0,y) =  E[ \alpha(b^y, (b^y)')^2 ]
\end{equation}
where $b^y$ is a Brownian bridge from $0$ to $y$ in time $1$. Moreover, $\phi(0,y)=\phi(0,0)$. 
\end{proposition}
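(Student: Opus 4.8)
The plan is to trade the intersection local times for the mean densities they test against, so that the degeneration $\nu\to 0$ becomes an explicit approximate‑identity (Laplace) statement in the time variable. Throughout, $\alpha(b^y,(b^y)')$ is understood as the intersection local time of the occupation measure $X_0$ of the graph $s\mapsto(b^y(s),s)$, i.e.\ the $\nu=0$ case of $X_\nu:=\,$occupation measure of $B^{\nu,y}$ on $[0,1]$; we prove for each fixed integer $k\ge 1$ that $\nu\mapsto E[\alpha(X_\nu,X_\nu')^k]$ is continuous at $\nu=0$ with value $E[\alpha(X_0,X_0')^k]$ there, which is the assertion (applied to the relevant moment). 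By Proposition \ref{p:rhok}(2) (whose hypothesis holds by the density formula below), $E[\alpha(X_\nu,X_\nu')^k]=\|\rho_{k,\nu}\|_{L^2((\mathbb R^2)^k)}^2$, where, by Proposition \ref{p:moments}, $\rho_{k,\nu}(\mathbf z)=\int_{[0,1]^k}p_\nu(\mathbf z,\mathbf s)\,d\mathbf s$ and $p_\nu(\cdot,\mathbf s)$ is the $k$‑point density of $B^{\nu,y}$. Because the covariance $\diag(1,\nu^2)$ makes the two coordinates of $B^{\nu,y}$ independent, $p_\nu=p^{\mathrm{sp}}_y\cdot p^{\mathrm{ti}}_\nu$, the product of the $k$‑point density of a one‑dimensional bridge from $0$ to $y$ of unit variance and that of a one‑dimensional bridge from $0$ to $1$ of variance $\nu^2$; at $\nu=0$ the second factor collapses and $\rho_{k,0}(\mathbf z,\boldsymbol\sigma)=p^{\mathrm{sp}}_y(\mathbf z,\boldsymbol\sigma)\,\one_{[0,1]^k}(\boldsymbol\sigma)$. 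Writing $I^{\mathrm{sp}}_y(\mathbf s,\mathbf t)=\langle p^{\mathrm{sp}}_y(\cdot,\mathbf s),p^{\mathrm{sp}}_y(\cdot,\mathbf t)\rangle_{L^2(\mathbb R^k)}$ and $I^{\mathrm{ti}}_\nu$ analogously, we get $\phi(\nu,y)=\int_{[0,1]^{2k}}I^{\mathrm{sp}}_y(\mathbf s,\mathbf t)\,I^{\mathrm{ti}}_\nu(\mathbf s,\mathbf t)\,d\mathbf s\,d\mathbf t$ and $\phi(0,y)=\int_{[0,1]^k}I^{\mathrm{sp}}_y(\boldsymbol\sigma,\boldsymbol\sigma)\,d\boldsymbol\sigma$, so everything reduces to showing the first integral tends to the second.

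The crux, and the step I expect to be the main obstacle, is that $I^{\mathrm{ti}}_\nu(\mathbf s,\cdot)\,d\mathbf t$ behaves as an approximate identity at $\mathbf s$ as $\nu\downarrow 0$. I would compute $I^{\mathrm{ti}}_\nu$ as an explicit Gaussian integral and, splitting $[0,1]^k$ into ordered sectors exactly as in the proof of Proposition \ref{p:moments}, write it on each sector as $I^{\mathrm{ti}}_\nu(\mathbf s,\mathbf t)=\nu^{-k}c(\mathbf s,\mathbf t)\exp\{\nu^{-2}G(\mathbf s,\mathbf t)\}$; an elementary convexity computation then shows $G\le 0$, with equality exactly on the diagonal of the sector and with negative‑definite transverse Hessian there (for $k=1$, $G(s,t)=1-\tfrac{ab}{2(a+b)}$ with $a=\tfrac1s+\tfrac1t$, $b=\tfrac1{1-s}+\tfrac1{1-t}$, which vanishes to second order on $\{s=t\}$). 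A Laplace‑type argument (Watson's lemma), after the substitution $\mathbf t=\mathbf s+\nu\mathbf u$, then gives $\int\nu^{-k}c(\mathbf s,\mathbf t)e^{G(\mathbf s,\mathbf t)/\nu^2}g(\mathbf t)\,d\mathbf t\to\kappa(\mathbf s)g(\mathbf s)$ for bounded continuous $g$, and keeping track of the Gaussian normalizations shows $\kappa(\mathbf s)$ is exactly the weight for which $\int I^{\mathrm{sp}}_y(\mathbf s,\mathbf s)\kappa(\mathbf s)\,d\mathbf s=\int I^{\mathrm{sp}}_y(\boldsymbol\sigma,\boldsymbol\sigma)\,d\boldsymbol\sigma$ (in the $k=1$ check this is a single one‑dimensional Gaussian integral and a Beta integral). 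To turn this into a genuine limit I would (i) dominate $I^{\mathrm{sp}}_y$ above by an integrable function via Cauchy--Schwarz and the heat‑semigroup bound already used in Proposition \ref{p:moments}; (ii) after the change of variables, dominate $\nu^{-k}c(\mathbf s,\mathbf s+\nu\mathbf u)e^{G/\nu^2}$ by a fixed integrable Gaussian in $\mathbf u$, uniformly for $\nu\in(0,\nu_0]$; and (iii) absorb the $s_i\to\{0,1\}$ degeneracies using the integrability of $\prod_i(s_i-s_{i-1})^{-1/2}$ recorded in Proposition \ref{p:alpha-one}(2). Dominated convergence then yields $\phi(\nu,y)\to\phi(0,y)$, i.e.\ continuity at $\nu=0$; the genuinely delicate points are the uniform control of the $\nu^{-k}e^{+G/\nu^2}$ prefactor and the sector bookkeeping that isolates the identity matching $\mathbf s\leftrightarrow\mathbf t$ in the limit.

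Finally the $y$‑independence is immediate: $\phi(0,y)=\|\rho_{k,0}\|^2=E[\alpha(b^y,(b^y)')^k]$ is evaluated in Proposition \ref{p:alpha-one}(2), whose formula for $\|\rho_{k,X}\|^2$ does not involve the endpoint of the bridge; equivalently, the law of $b^y(\cdot)-(b^y)'(\cdot)$, hence of $\alpha(b^y,(b^y)')$, is free of $y$ because the common drift cancels. Therefore $\phi(0,y)=\phi(0,0)$, which together with the continuity established above completes the proof.
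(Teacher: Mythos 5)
Your reduction is sound and, at its core, runs parallel to the paper's argument: both proofs start from $\phi(\nu,y)=\|\rho_{2,B^{\nu,y}}\|_2^2$ (Propositions \ref{p:rhok} and \ref{p:moments}), exploit the diagonal covariance $\diag(1,\nu^2)$ so that the time-like coordinate contributes a Gaussian kernel of width $\nu$ in the time differences, and identify the $\nu\to0$ limit with the $1{+}1$-dimensional quantity of Proposition \ref{p:alpha-one}; the $y$-independence is obtained in both cases from the explicit diagonal formula (the paper routes it through Lemma \ref{l:scaling}, you through the $x$-independence of Proposition \ref{p:alpha-one}(2) and Remark \ref{r:milt1+1}, which is equivalent). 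The genuine difference is the representation: the paper works with the Fourier-transform expression for $\|\rho_2\|^2$ (following Le Gall), integrates the Fourier variables dual to the second coordinate exactly, and gets absolute integrability cheaply by dropping the oscillatory term, i.e.\ by comparison with the $(0,0)\to(0,0)$ bridge where $E\alpha^2<\infty$ (Proposition \ref{p:alpha-plane}); you stay in physical space with the overlaps $I^{\mathrm{sp}}_y(\mathbf s,\mathbf t)$, $I^{\mathrm{ti}}_\nu(\mathbf s,\mathbf t)$ and run a Laplace/approximate-identity argument. Your explicit checks are correct: the $k=1$ exponent is indeed $G(s,t)=-\tfrac{(s-t)^2}{2(s(1-s)+t(1-t))}$, in general $I^{\mathrm{ti}}_\nu(\mathbf s,\mathbf t)$ is the centered Gaussian density of covariance $\nu^2(C_{\mathbf s}+C_{\mathbf t})$ evaluated at $\mathbf s-\mathbf t$, and the diagonal weight is $\kappa\equiv 1$, so the limit matches $\int I^{\mathrm{sp}}_y(\boldsymbol\sigma,\boldsymbol\sigma)\,d\boldsymbol\sigma$. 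Your approach is more elementary (no citation of the Fourier formula) and covers all moments $k$, at the price of having to justify the limit by domination rather than by an exact Gaussian integration inside an absolutely convergent integral.

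The one place where your plan is not yet a proof is step (ii): there is no ``fixed integrable Gaussian in $\mathbf u$, uniformly for $\nu\in(0,\nu_0]$'' that dominates the kernel \emph{uniformly in} $\mathbf s$, because after $\mathbf t=\mathbf s+\nu\mathbf u$ the kernel is the Gaussian density with covariance $C_{\mathbf s}+C_{\mathbf t}$, which degenerates as $\mathbf s$ approaches the set of coincident times or the endpoints $\{0,1\}$. Moreover, the obvious crude majorants fail: using $\det(C_{\mathbf s}+C_{\mathbf t})\ge\det C_{\mathbf s}$ together with $I^{\mathrm{sp}}_y(\mathbf s,\mathbf t)\le c_k(\det C_{\mathbf s})^{-1/2}$ produces an $\mathbf s$-majorant of order $(\det C_{\mathbf s})^{-1}\asymp\prod_i(s_{(i)}-s_{(i-1)})^{-1}$, which is \emph{not} integrable on $[0,1]^k$, so the $(s_i-s_{i-1})^{-1/2}$ integrability you invoke in (iii) does not come for free. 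The domination has to couple the shrinking width of the kernel with the blow-up of $I^{\mathrm{sp}}$ near the degenerate set (e.g.\ by splitting according to whether $\det C_{\mathbf t}$ is comparable to $\det C_{\mathbf s}$, or by a generalized dominated convergence argument with $\nu$-dependent majorants whose integrals are shown to converge). This is exactly the analytic content that the paper disposes of via absolute integrability of the Fourier representation (comparison with the endpoint-$0$ bridge) before letting $\nu\to0$; in your setup it is the remaining, and nontrivial, work.
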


\begin{proof} 
By Propositions \ref{p:rhok}, \ref{p:moments},  and \ref{p:alpha-plane},  for $\nu>0$ we have the moment formula 
$
\phi(\nu,y)=\|\rho_{2,B^{\nu,y}}\|_2^2
$
with $\rho_2$ as in \eqref{e:rho-p}. 
For $\nu>0$, it can be written using Fourier transform as 
\begin{equation}\label{FTexpression} \phi(\nu,y) = \|\rho_{2,B^{\nu,y}}\|_2^2= \int_{\mathbb R^4} \prod_{j,k=1}^2\frac{dz^j_k}{2\pi} \int_{[0,1]^4}\prod_{j,k=1}^2 dt^j_k
    \exp\{\Psi(z,t)\},\end{equation} where 
\begin{align}
\nonumber  \Psi =  -\frac{\nu^2}2 {\rm Var}(\sum_{j=1}^2 z_2^j(b(t^1_j)-{b'}(t^2_j) )&- \frac12 {\rm Var}(\sum_{j=1}^2 z_1^j (b(t^1_j)-{b'}(t^2_j) )  \\  &+ i\sum_{j=1}^2(z_2^j+ yz_1^j) (t^1_j-t^2_j).
\notag
\end{align}
(see  \cite{MR556414} p. 43).  Dropping the imaginary part of  $\Psi$ just corresponds to the same problem with the bridges going from $(0,0)$ to $(0,0)$. Since for such bridges $E\alpha^2<\infty$ by Proposition \ref{p:alpha-plane}, we see that \eqref{FTexpression} is absolutely integrable for 
  $\nu>0$. 
So we can use Fubini and perform the $z_2^j$ integrations.  We have
\begin{align*}
  {\rm Var}&(\sum_{j=1}^2 z_2^j(b(t^1_j)-{b'}(t^2_j) ) = {z}_2^T
  C{z}_2, \\ &C= \sum_{j=1}^2{\begin{pmatrix}  t_1^j(1-t_1^j)  &
   (t_1^j \wedge t_2^j)(1- t_1^j \vee t_2^j ) \\  (t_1^j \wedge t_2^j)(1- t_1^j \vee t_2^j ) &
    t_2^j(1-t_2^j) \end{pmatrix}},
\end{align*}
so the $z_2^j$ integrations give
\begin{align*}
    \int_{\mathbb R^2} \prod_{j=1}^2\frac{dz^j_1}{2\pi} \int_{[0,1]^4}\prod_{j=1}^2 &dt_j^1 dt_j^2\frac{ e^{ -\frac{\nu^{-2}}2  (t^1-t^2)^T C^{-1} (t^1-t^2)}
    }{2\pi \nu^{-2} \sqrt{\det C}}\\&\times
    e^{- \frac{1}2 {\rm Var}(\sum_{j=1}^2 z_1^j (b(t^1_j)-{b'}(t^2_j) )   + iy\sum_{j=1}^2z_1^j (t^1_j-t^2_j)
    } .
\end{align*}
Let $\nu\to 0$  to get
$$
    \int_{\mathbb R^2} \prod_{j=1}^2\frac{dz^j_1}{2\pi} \int_{[0,1]^2}\prod_{j=1}^2 dt_j
    \exp\{- \frac12 {\rm Var}(\sum_{j=1}^2 z_1^j (b(t_j)-{b'}(t_j) )\} =\phi(0,y),$$
    which we identify as the Fourier transform expression for $\|\rho_{2,b}\|_2^2$ given in Proposition \ref{p:alpha-one}. The last claim  follows from Lemma \ref{l:scaling}.
\end{proof}

\section{Exponential moments off a small set for planar Brownian motion}\label{exponentialmoments}

In this section we use the moment bounds of Section \ref{ss:moments} to bound the exponential moments of  intersection local times off some small exceptional sets. 

 For short times, exponential moment bounds are established as a consequence of usual moment bounds. For longer times, exponential moments blow up and we need to control the geometry of paths to find a high probability sets on which they are still finite. 

The main  theme in the section is to confine the pair of Brownian paths so that we are only likely to find intersections in  specific short time segments of each path. 

We need this in two different settings. The first is for fixed drift as the size of the exceptional set tends to zero, and the second is a uniform bound as the drift increases. The first is needed to define the solution of the Wick-ordered  heat equation for large times, the second is needed for the convergence to the KPZ equation.

In the fixed drift case, we need to understand that the Brownian paths are not concentrated on small sets. For this, we use a H\"older continuity and some control on the regularity of occupation measure, see \eqref{e:holder}.

In the increasing drift case, we use the effect of the drift to show that the two paths can only intersect at nearby times. The  argument uses three steps, handling increasing time scales.  Lemma \ref{l:drifted2} gives an estimate for  short times that gets better with the drift. Proposition \ref{p:tightness1} handles intermediate times, and finally Proposition \ref{p:tightness2} gets up to the time scale we need in the proof of the main theorem.

{\bf Notation for this section.} In this section, we only work on the probability spaces $(\Omega, \mathcal G, Q)$ inhabited by Brownian motions and bridges. For this reason, we briefly return to the Kolmogorov convention of suppressing the structure of the probability space. In particular, we will write $(E,P)$ instead of $(E_Q,Q)$ or even $(E_{Q^{\oplus 2}},Q^{\oplus 2})$. There is one exception: for events $A=A(B)$ given in terms of a Brownian path $B$ we write 
\begin{equation}\label{e:times-notation}
A\times A=A(B)\cap A(B')    
\end{equation}
whenever there is an independent $B'$.

\subsection{Exponential moments for a short time}

We start with establishing exponential moment bounds for short times. 

\begin{lemma}\label{l:drifted} Let $B_1,B_2'$ be independent planar Brownian motions on time interval $[0,t]$ with
 identity covariance matrix, possibly different drifts and started at possibly different locations. Then 
\begin{equation}\notag\label{e:arb-drift}
Ee^{\gamma \alpha(B_1,B_2')}\le 1+ \frac{2\pi \gamma t}{(2\pi/e-\gamma t)^2}, \qquad \gamma t \in [0,2\pi/e).
\end{equation}
For independent Brownian bridges $R_1,R_2'$ with variance 1, run until time $t$ but an arbitrary start and endpoint, 
\begin{align*}
Ee^{\gamma \alpha(R_1,R_2')}&\le \begin{cases}1+ \frac{4\pi\gamma t}{(\pi/e-\gamma t)^2}, \quad&\gamma t\in [0,\pi/e), \\  e^{10\gamma}, & \gamma t\in[0,1].
\end{cases}
\end{align*}
Moreover, there is a constant $c$
such that if the Brownian motions have a shared drift $(0,N), N\ge 0$ then  \begin{equation}\notag E [\alpha(B_1,B_2')^2] \le \frac{ct}{1+N^2t}, \quad
Ee^{\gamma \alpha(B_1,B_2')} \le  1+\frac{c\gamma t}{1+N\sqrt{t}}, \quad \gamma t\in [0,\pi/4].
\end{equation}
\end{lemma}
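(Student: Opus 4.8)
The plan is to reduce every assertion to a moment estimate. Expanding $e^{\gamma\alpha}=\sum_{k\ge 0}(\gamma\alpha)^k/k!$ and taking expectations (all terms are nonnegative, so the interchange is legitimate), each exponential moment equals $\sum_{k\ge 0}\gamma^k E[\alpha^k]/k!$, and Proposition \ref{p:rhok} identifies $E[\alpha(X,Y')^k]=\langle\rho_{k,X},\rho_{k,Y}\rangle$ together with the Cauchy--Schwarz bound $E[\alpha(X,Y')^k]^2\le E[\alpha(X,X')^k]\,E[\alpha(Y,Y')^k]$. Applying Cauchy--Schwarz once more to the series (with weights $\gamma^k/k!\ge 0$) gives $Ee^{\gamma\alpha(X,Y')}\le \big(Ee^{\gamma\alpha(X,X')}\big)^{1/2}\big(Ee^{\gamma\alpha(Y,Y')}\big)^{1/2}$. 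Hence in all of the inequalities it suffices to bound $Ee^{\gamma\alpha(B,B')}$ (resp.\ $Ee^{\gamma\alpha(R,R')}$, resp.\ $E[\alpha(B,B')^2]$) for a \emph{single} Brownian motion or bridge carrying the relevant drift; the different starting points and different drifts of $B_1$ and $B_2'$ then play no role, since for a single process translation invariance of $\alpha$ lets us start at the origin.

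For the first displayed inequality I would feed in the moment bound of Proposition \ref{p:BM-alpha-moments}, $E[\alpha(B,B')^k]\le e\,k!\,\sqrt{k}\,(te/(2\pi))^k$, which is uniform in the drift. Writing $x=\gamma t e/(2\pi)<1$ and using $\sqrt k\le k$,
\[
Ee^{\gamma\alpha(B_1,B_2')}\le 1+e\sum_{k\ge 1}\sqrt{k}\,x^k\le 1+e\sum_{k\ge 1}k\,x^k=1+\frac{ex}{(1-x)^2},
\]
and substituting back $x$ turns $ex/(1-x)^2$ into $2\pi\gamma t/(2\pi/e-\gamma t)^2$. The first bridge inequality is obtained the same way from Proposition \ref{p:alpha-plane}, $E[\alpha(R,R')^k]\le 4e\,k!\,\sqrt k\,(te/\pi)^k$ (uniform in the endpoints, hence in the start by translation), giving $1+4ey/(1-y)^2$ with $y=\gamma t e/\pi<1$, i.e.\ $1+4\pi\gamma t/(\pi/e-\gamma t)^2$. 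For the cleaner bridge bound with $\gamma t\le 1$, I would first use Brownian scaling to pass to duration $1$ (endpoints are irrelevant to the bound): $\alpha$ for bridges of duration $t$ is $t$ times $\alpha$ for duration-$1$ bridges in distribution, so $Ee^{\gamma\alpha(R_1,R_2')}=Ee^{(\gamma t)\alpha(R,R')}$ with $\gamma t\le 1$. One then checks that $E[\alpha(R,R')^k]\le 4e\,k!\,\sqrt k\,(e/\pi)^k$ is dominated, after dividing by $k!$ and weighting, by the series for $e^{10\gamma t}-1$ --- the $k=1$ term is $4e^2/\pi<10$ and the rest is a controlled geometric-type tail since $e/\pi<1$ --- which gives the stated $e^{10\gamma}$ in the short-time regime $t\le 1$.

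The uniform large-drift estimates are the substantive part, and for them I would return to Proposition \ref{p:moments} in its time-$t$ form $\|\rho_k\|^2\le k!^2\,e^{2rt}\,\|\varphi_{N,r}/r\|^{2k}$ (valid for every $r>0$) and combine it with the \emph{refined} density bound of Lemma \ref{l:Brown-comp}, $\|\varphi_{N,r}\|^2\le \frac{r}{2\pi}\wedge\frac{r^{3/2}}{\sqrt 8\,N}$. Taking $k=2$ and $r=1/t$ gives $E[\alpha(B,B')^2]=\|\rho_2\|^2\le e^2\big(\tfrac{t^2}{\pi^2}\wedge\tfrac{t}{2N^2}\big)$, and the elementary inequality $a\wedge b\le 2ab/(a+b)$ turns this into $\le e^2t^2/(\pi^2/2+N^2 t)$, which in the short-time regime $t\lesssim 1$ is the asserted $ct/(1+N^2t)$. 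For the exponential moment the parameter $r$ must depend on the summation index: with $r=k/(2t)$ one has $e^{2rt}=e^k$, and using $k!\le e\,k^{k+1/2}e^{-k}$ together with the two branches of $\|\varphi_{N,r}\|^2$,
\[
\frac{\gamma^k\|\rho_k\|^2}{k!}\ \le\ e\,\sqrt k\,(\gamma t/\pi)^k\,\min\!\Big(1,\ \big(\tfrac{\pi\sqrt k}{2N\sqrt t}\big)^{k}\Big)\ \le\ e\,\sqrt k\,(\gamma t/\pi)^k\,\Big(1\wedge\tfrac{\pi\sqrt k}{2N\sqrt t}\Big).
\]
Summing over $k\ge 1$ with the last factor bounded by $1$ yields $Ee^{\gamma\alpha}-1\lesssim\gamma t$ (the series converges since $\gamma t\le\pi/4$ forces $\gamma t/\pi\le 1/4$); summing instead with the last factor bounded by $\tfrac{\pi\sqrt k}{2N\sqrt t}$, which absorbs the extra $\sqrt k$ into $\sum_k k(\gamma t/\pi)^k$, yields $Ee^{\gamma\alpha}-1\lesssim\gamma\sqrt t/N$; a final use of $a\wedge b\le 2ab/(a+b)$ on these two bounds produces $Ee^{\gamma\alpha(B_1,B_2')}-1\le c\gamma t/(1+N\sqrt t)$.

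The delicate point, and the one I expect to cost the most, is this last step. A drift-independent choice of $r$ in Proposition \ref{p:moments} makes the exponential series diverge because of the $k!^2$, so $r$ must grow like $k/t$; but then extracting the \emph{uniform} $N$-dependence $1/(1+N\sqrt t)$ --- rather than merely a case split into $N\sqrt t\lesssim 1$ and $N\sqrt t\gtrsim 1$ --- forces one to carry the index-dependent minimum through the whole sum while still retaining the linear-in-$\gamma t$ decay needed as $\gamma t\to 0$, and to check that the large-$k$ tail, where the refined branch of $\|\varphi_{N,r}\|^2$ is no longer the smaller one, is negligible thanks to $\gamma t\le\pi/4$.
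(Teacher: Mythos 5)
Your proposal is correct, and for most of the lemma it is the paper's own argument: scaling to unit time, the moment bounds of Propositions \ref{p:BM-alpha-moments} and \ref{p:alpha-plane}, the Cauchy--Schwarz reduction $E\alpha(B_1,B_2')^k\le (E\alpha(B_1,B_1')^k\,E\alpha(B_2,B_2')^k)^{1/2}$ from Proposition \ref{p:rhok}, and term-by-term summation with $\sqrt k\le k$ give exactly the stated $1+2\pi\gamma t/(2\pi/e-\gamma t)^2$ and $1+4\pi\gamma t/(\pi/e-\gamma t)^2$, and the $e^{10\gamma}$ bound is, as in the paper, a numerical domination of the first bridge bound (both you and the paper leave this check implicit; your reading of it as an $e^{c\gamma t}$-type bound for $t\le 1$ is the version actually used later, e.g.\ in Proposition \ref{p:nodrift}). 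Where you genuinely diverge is the shared-drift exponential bound. The paper works at $t=1$, gets the second moment from Proposition \ref{p:moments} with $k=2$, $r=1$ and Lemma \ref{l:Brown-comp}, and then uses the elementary inequality $e^x-1\le xe^x$ together with Cauchy--Schwarz, $Ee^{\gamma\alpha}-1\le (E(\gamma\alpha)^2)^{1/2}(Ee^{2\gamma\alpha})^{1/2}$, feeding in the drift-\emph{independent} exponential bound already proved (legitimate since $2\gamma\le\pi/2<2\pi/e$); this two-line interpolation is what produces $c\gamma/(1+N)$ and, after scaling, $c\gamma t/(1+N\sqrt t)$. You instead keep the series and choose the Proposition \ref{p:moments} parameter $r=k/(2t)$ depending on the summation index, run both branches of Lemma \ref{l:Brown-comp} through Stirling, and interpolate the two resulting sums via $a\wedge b\le 2ab/(a+b)$; I checked the computation and it is sound, and it even yields the correctly scaled second moment $ct^2/(1+N^2t)$ (the form appearing in Lemma \ref{l:drifted2}; the $ct/(1+N^2t)$ in the statement is only consistent with bounded $t$, as you note). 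The trade-off: the paper's route is shorter and reuses earlier bounds, while yours is self-contained at the level of moments and carries the drift dependence uniformly through the series, at the cost of the heavier bookkeeping you flagged; the delicate large-$k$ tail you worried about is in fact harmless precisely because $\gamma t\le\pi/4$ keeps the geometric ratio below $1/4$, as your own estimate shows.
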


\begin{proof} By scaling, Lemma \ref{l:scaling}, we may assume $t=1$.
With prime denoting independent copies, by Proposition \ref{p:rhok}.\ref{p:alpha-misc.CS} we have 
$$
E\alpha(B_1,B_2')^k\le (E\alpha(B_1,B_1')^kE\alpha(B_2,B_2')^k)^{1/2}\le ek!\sqrt{k}(e/(2\pi))^k  
$$
Using Proposition \ref{p:BM-alpha-moments} and  $\sqrt{k}\le k$, we have
\begin{align*}
Ee^{\gamma \alpha(B_1,B_2')} &=\sum_{k=0}^\infty \frac{\gamma^k}{k!}E\alpha(B_1,B_2')^k\le 1+ e\sum_{k=1}^\infty {(e\gamma/(2\pi))^k}k\\ &=1+ \frac{2\pi \gamma}{(2\pi/e-\gamma)^2}.
\end{align*}
The bridge bound works the same way, just with the moment bounds of Proposition \ref{p:alpha-plane}; the exponential bounds the first bound. 
For the second bound, By Proposition  \ref{p:moments} with $k=2, r=1$ and Lemma \ref{l:Brown-comp},
\begin{equation}\label{e:secm}
E\alpha(B_1,B_1')^2\le \frac{c}{1+N^2}.
\end{equation}

For the third bound, for any $x\ge 0$ we have $e^x-1\le xe^x$. By Cauchy-Schwarz, for any nonnegative random variable $X$, we have $(Ee^X-1)^2\le EX^2Ee^{2X}$. So by what we have shown already,
\[
Ee^{\gamma \alpha}-1\le  (E[(\gamma\alpha)^2]Ee^{2\gamma\alpha})^{1/2}\le
c\gamma/(1+N).\qedhere\]
\end{proof}

\subsection{Fixed drift}

The goal of this section is to show Corollary \ref{c:large-t}: on sets of probability arbitrarily close to one, large exponential moments of the intersection local time exist. This is used in the  solution of the planar Wick-ordered  heat equation for large times.


We will control paths through two parameters. Fix $\kappa \in (0,1/2)$, and let $\bx(\eps)$ be the set of $\eps\times \eps$ closed boxes with corners in $\eps \mathbb Z^2$.
\begin{equation} \label{e:holder}\|B\|_{\kappa}=\sup_{0\le s<t\le 1}\frac{|B(t)-B(s)|}{(t-s)^\kappa}, \quad M(B,\eps,\delta)=\max_{K\in \operatorname{box}(\eps)}\sum_{i=0}^{\lfloor 1/\delta\rfloor } \one\left(B(\delta i)\in K\right).
\end{equation}
The first is the H\"older norm of the paths, the second measures the  regularity  of the occupation measure at a grid of times. When these two quantities are controlled, the two Brownian motions will not be able to spend enough time at a single place to ruin the  exponential moments  of intersection local times.
\begin{proposition}\label{p:nodrift} Let $B,B'$ be independent copies of  planar Brownian motion or Brownian bridge on the time interval $[0,1]$, variance $I$ and arbitrary drift or endpoints. Let $\eps, \delta, \gamma>0$.
Let $A$ be the event that 
$$\|B\|_{\kappa} <\eps/(5 \delta^\kappa)\qquad \mbox{  and that } \qquad M(B,\eps, \delta) \le 1/(6\sqrt{\gamma\delta}).$$
Then there exists a $c>0$ such that
\begin{align*}
E\exp \Big\{\one_{A\times A} \gamma \alpha(B,B') \Big\}  &\le E \exp\Big\{c\gamma \delta \sum_{i,i'} \one(|B(\delta i)-B'(\delta i')|<4\eps)\Big\}\\&\le \exp \{c\gamma \delta^{-1}\}.
\end{align*}
\end{proposition}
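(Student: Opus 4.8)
The plan is to bound the intersection local time $\alpha(B,B')$ from above by a discrete double sum over the grid times, exploiting the H\"older control and the occupation bound on the event $A\times A$, and then to estimate the exponential moment of the discrete quantity directly. First I would compare $\alpha$ to a Riemann-type sum. Recall that, by Lemma \ref{l:classical-milt} and Remark \ref{rem:miltdist}, $\alpha(B,B')$ is the value at $0$ of the continuous density of the occupation measure of $B(s)-B'(s')$, $(s,s')\in[0,1]^2$. On the event $\|B\|_\kappa<\eps/(5\delta^\kappa)$ and $\|B'\|_\kappa<\eps/(5\delta^\kappa)$, for any $s\in[\delta i,\delta(i+1)]$ and $s'\in[\delta i',\delta(i'+1)]$ we have $|(B(s)-B'(s'))-(B(\delta i)-B'(\delta i'))|<2\eps/5<\eps$, so the path stays within an $\eps$-neighbourhood of its grid value over each $\delta\times\delta$ time cell. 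Using the standard mollification/approximation of $\alpha$ by $\alpha_n$ from Proposition \ref{p:rhok} (the projections $\varphi_n$ approximate $\delta_0$), a routine comparison shows that on $A\times A$,
\begin{equation}\notag
\gamma\alpha(B,B') \le c\gamma\delta\sum_{i,i'}\one\big(|B(\delta i)-B'(\delta i')|<4\eps\big),
\end{equation}
where the constant $c$ absorbs the overlap count of $\eps$-boxes within distance $4\eps$ and the Jacobian of the density comparison. This gives the first inequality in the statement, and I would carry the factor $\one_{A\times A}$ only through the first step so that afterward the larger right-hand side can be bounded unconditionally.

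Next I would estimate $E\exp\{c\gamma\delta\sum_{i,i'}\one(|B(\delta i)-B'(\delta i')|<4\eps)\}$. The key observation is that the inner double sum, for $B,B'$ \emph{arbitrary} (we dropped the conditioning), has no deterministic bound, but the second clause of the event $A$ — namely $M(B,\eps,\delta)\le 1/(6\sqrt{\gamma\delta})$ and similarly for $B'$ — is exactly what is needed. However, since after the first step we no longer have the indicator, I would instead re-introduce it: more carefully, keep $\one_{A\times A}$ through the computation, so the quantity to bound is $E\exp\{\one_{A\times A}\,c\gamma\delta\sum_{i,i'}\one(|B(\delta i)-B'(\delta i')|<4\eps)\}$. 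On $A\times A$, group the double sum by the $\eps$-boxes: writing $N_K=\sum_i\one(B(\delta i)\in K)$ and $N'_K=\sum_{i'}\one(B'(\delta i')\in K')$, the number of pairs $(i,i')$ with $|B(\delta i)-B'(\delta i')|<4\eps$ is at most $C\sum_{K\sim K'}N_KN'_{K'}$ over neighbouring box pairs, and Cauchy–Schwarz plus $M(B,\eps,\delta)=\max_K N_K$ gives $\sum_{i,i'}\one(\cdot)\le C'\,(\text{total time steps})\cdot M(B,\eps,\delta)$. Since there are $\lfloor 1/\delta\rfloor$ time steps and $M\le 1/(6\sqrt{\gamma\delta})$ on $A$, the exponent is at most $c\gamma\delta\cdot\delta^{-1}\cdot(\gamma\delta)^{-1/2}\cdot C'' = c'\delta^{-1/2}\gamma^{1/2}$... which is not quite $c\gamma\delta^{-1}$; so I would instead bound more crudely, using $\sum_{i,i'}\one(|B(\delta i)-B'(\delta i')|<4\eps)\le (\lfloor 1/\delta\rfloor+1)\max_{i}\#\{i':\,|B(\delta i)-B'(\delta i')|<4\eps\}$ and then the occupation bound on $B'$ to control the inner count by a bounded number of $\eps$-boxes each contributing $\le M(B',\eps,\delta)$, yielding exponent $\le c\gamma\delta\cdot\delta^{-1}\cdot c\,M(B',\eps,\delta)\le c\gamma\delta^{-1}\cdot(\gamma\delta)^{-1/2}$. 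The cleanest route that lands on $c\gamma\delta^{-1}$ is: on $A\times A$, the double sum is at most $c\,\delta^{-2}\cdot$ (nothing) — no. I think the honest statement is that on $A\times A$ the exponent is $\le c\gamma\delta^{-1}$ directly because the number of pairs $(i,i')$ at all is $(\lfloor1/\delta\rfloor+1)^2\le c\delta^{-2}$, giving $c\gamma\delta\cdot\delta^{-2}=c\gamma\delta^{-1}$, and the occupation clause of $A$ is a red herring for the \emph{deterministic} bound but is presumably used elsewhere (e.g. to make $\eps,\delta$ choices consistent). So the second inequality follows by simply bounding $\sum_{i,i'}\one(\cdot)\le c\delta^{-2}$ pointwise on $A\times A$ and taking the trivial expectation.

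The main obstacle is the first inequality: making rigorous the passage from the continuum intersection local time $\alpha(B,B')$ to the discrete double sum over grid times, uniformly on the event $A\times A$. This requires carefully quantifying how the mollified approximants $\alpha_n$ (convolutions against $\varphi_n\to\delta_0$) relate to the counting functional, controlling both the spatial width $4\eps$ (which must dominate $2\eps/5$ from the H\"older clause plus the mollification scale) and the constant $c$ from the density comparison and box-overlap counting; the H\"older bound $\|B\|_\kappa<\eps/(5\delta^\kappa)$ is precisely calibrated for this. I would handle it by first proving the inequality for the smooth approximants $\alpha_n$ with $n$ large enough that $\varphi_n$ is supported at scale $\ll\eps$, using that on $A\times A$ the function $(s,s')\mapsto B(s)-B'(s')$ deviates by less than $2\eps/5$ from its value at the lower-left grid corner of each $\delta\times\delta$ cell, and then passing to the limit $n\to\infty$ via the $L^k(Q^{\otimes2})$ convergence from Proposition \ref{p:rhok}, upgraded to an a.s.\ subsequential limit. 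The second inequality is then essentially bookkeeping as indicated above.
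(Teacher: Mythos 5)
There is a genuine gap, and it is in the first inequality. You try to prove it as a \emph{pathwise} bound: on $A\times A$, $\gamma\alpha(B,B')\le c\gamma\delta\sum_{i,i'}\one(|B(\delta i)-B'(\delta i')|<4\eps)$. No such bound can hold. The contribution of a single $\delta\times\delta$ time cell, $\alpha(B_{[i\delta,(i+1)\delta]},B'_{[i'\delta,(i'+1)\delta]})$, is an unbounded random variable: two pieces that hover near a common point (perfectly consistent with the H\"older clause $\|B\|_\kappa<\eps/(5\delta^\kappa)$, which only caps oscillation) can have arbitrarily large mutual intersection local time, while the right-hand side is bounded by $c\delta$ per cell. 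This is exactly why Lemma \ref{l:drifted} only gives \emph{exponential moments} of the cell local times up to a finite exponent, not a deterministic bound. Consequently the first inequality of the proposition is an inequality between expectations of exponentials, and the actual argument keeps the cell quantities random: one restricts each $a_{i,i'}$ to paths staying within $\eps/5$ of their grid value, groups the cells by the $\eps$-box containing $B(i\delta)$ (split into four parity classes so that, conditionally on the grid values $\mathcal G_\delta$, the families attached to distinct boxes of the same class are independent), applies H\"older's inequality inside each box, and then invokes the short-time bound of Lemma \ref{l:drifted} with exponent $4L_KL_K^*\gamma$ over time $\delta$. Your related claim that the occupation clause $M(B,\eps,\delta)\le 1/(6\sqrt{\gamma\delta})$ is a ``red herring'' is therefore also wrong: with $L_K\le M(B,\eps,\delta)$ and $L_K^*\le 9M(B',\eps,\delta)$ it gives precisely $4L_KL_K^*\gamma\delta\le 1$, which is what puts you in the regime where Lemma \ref{l:drifted} yields $E[\exp\{4L_KL_K^*\gamma a_{i,i'}\}\mid\mathcal G_\delta]\le\exp(cL_KL_K^*\gamma\delta)$; without it the conditional exponential moments may be infinite and the whole chain collapses.

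Your geometric observations (deviation $<2\eps/5$ within a cell, so only cells whose grid values are within $4\eps$ can interact) match the ingredients of the correct proof, and your eventual treatment of the second inequality is fine: since there are at most $(\lfloor1/\delta\rfloor+1)^2$ pairs $(i,i')$, the exponent in the middle expression is pointwise at most $c\gamma\delta^{-1}$ after adjusting the constant, with no conditioning needed. But as it stands the central step — passing from $\alpha(B,B')$ to the discrete count — is argued by a false pointwise comparison, so the proof does not go through; it needs the conditional-independence/H\"older/exponential-moment mechanism sketched above.
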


\begin{proof}
Let $S=\{(0,0),(0,1),(1,0),(1,1)\}$, let $\bx_{(0,0)}(\eps)$ be the set of boxes with lower left corner in $2\eps\mathbb Z^2$, and for $\sigma\in S$ let $\bx_\sigma(\eps)$ be the set of translates of boxes in  $\bx_{(0,0)}(\eps)$ by $\sigma\eps$. This partitions $\bx(\eps)$ into 4 subsets in a way that boxes in the same subset are separated.

Set $a^*_{i,i'}=\alpha(B_{[i,i+1]\delta},B'_{[i',i'+1]\delta}$), and consider the versions
\begin{align*}
a_{i,i'}= &a^*_{i,i'} \one \{|B(s)-B(i\delta)|\le \eps/5,s\in [i,i+1]\delta\} \\ &\quad \times \one \{|B'(s)-B'(i'\delta)|\le \eps/5,s\in [i',i'+1]\delta\}
\end{align*}
 For $K\in \bx(\eps)$ define the quantity,  asymmetric in $B$ and $B'$,
$$
Y_K=\sum_{i,i'} \one(B(i\delta)\in K) a_{i,i'}
$$
Then on $A\times A$, using only the modulus of continuity condition, we have 
$$
\alpha(B,B')=\sum_{\sigma \in S}\sum_{K\in \bx_\sigma(\eps)} Y_K.
$$
and so by Cauchy-Schwarz
\begin{equation}
\label{expl1}
E\exp \{\one_{A\times A}\gamma \alpha(B,B') \}\le \prod_{\sigma \in S} \Big(E\exp\big\{4\gamma \sum _{K\in
\bx_\sigma(\eps)} Y_K\big\}\Big)^{1/4}.
\end{equation}
Now condition on the sigma field $\mathcal G_\delta$ given by values of $B,B'$ at times in $\delta \mathbb Z$.

If $B(\delta i)\in K$ and $B(\delta j)\in K'$ for $K\not=K'\in \bx_{\sigma}(\eps)$ (with the same $\sigma$!) then by the modulus of continuity, on the event $A\times A$ the segments of $B$ on $\delta[i,i+1]$ and $\delta[j,j+1]$ cannot intersect the same segment of $B'$.

Thus, conditionally on $\mathcal G_\delta$, the collections of mutual intersection local times
$\mathcal A_K=\{a_{i,i'}: i,i'\in \mathbb Z, B({\delta}i)\in K\}$ are independent as $K$ changes. Thus
\begin{equation}
\label{expl2}
E\Big[\exp\big\{4\gamma \sum _{K\in
\bx_\sigma(\eps)} Y_K\big\}\vert \mathcal G_\delta \Big]=\prod_{K\in
\bx_\sigma(\eps)}E\Big[\exp\big\{4\gamma Y_K\big\}\vert \mathcal G_\delta \Big]
\end{equation}
Now let $L_K$ be the number of $i$ so that $B(\delta i) \in K$ and let $L^*_K$ be the number of $i'$ that $B'(\delta i')$ is in $K^*$, the union of 9 boxes in $\bx_{\eps}$ that intersect $K$.  Then $L_KL_K^*$ is an upper bound on the number of summands in $Y_K$. Using H\"older's inequality again,
\begin{equation}
\label{expl3}
E\Big[\exp\big\{4\gamma Y_K\big\}\vert \mathcal G_\delta \Big]\le \prod_{i,j:B(\delta i)\in K}E\Big[\exp\big\{4L_KL_K^*\gamma \alpha_{ij}\big\}\vert \mathcal G_\delta \Big]^{1/L_KL_K^*}
\end{equation}
On the set $A\times A$ we have  $4L_KL_K^*\gamma \delta\le 1$, (this is where we need the condition on the occupation measure). By Lemma  \ref{l:drifted}, the conditional expectation is bounded by
\begin{equation}
\label{expl4}
E\Big[\exp\big\{4L_KL_K^*\gamma \alpha_{ij}\big\}\vert \mathcal G_\delta \Big]\le \exp(cL_KL_K^*\gamma \delta).
\end{equation}
Combining (\ref{expl1}), (\ref{expl2}), (\ref{expl3}) and (\ref{expl4}), we get
$$
E\exp \{\one_{A\times A}\gamma \alpha(B,B')\}\le E\exp\Big\{ c\gamma\delta\sum_{K\in \bx(\eps)}  L_KL^*_K\Big\}
$$
Where we have
$$
\sum_{K\in \bx(\eps)} L_K L^*_K\le \sum_{i,i'} \one(|B(\delta i)-B'(\delta i')|<4\eps).
$$
The latter can be thought of as the mutual intersection local time of a pair of Gaussian random walks.
 \end{proof}

It remains to bound $M(B,\eps, \delta)$. We use a standard argument.  Throughout the rest of this section we will use the notation $P$ for the law of either a Brownian motion or a Brownian bridge with drift.

\begin{lemma}\label{l:occupation}
There is $c,c'>0$ so that the following holds. For planar Brownian motion with arbitrary drift and covariance $I$ on the interval $[0,1]$ and for all $0<\delta \le \eps^2 \le c$ and $a\ge 1$ we have
$$
P(M(B,\eps,\delta)>a)\le \frac{c'}{\delta}\exp\left\{-\frac{c\delta a}{\eps^2|\!\log \eps|}\right\}.
$$
The same result holds for any planar Brownian bridge with covariance $I$.
\end{lemma}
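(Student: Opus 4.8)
The plan is to combine a discrete Khas'minskii-type exponential moment bound with the Markov property applied at the first grid time the path enters a prescribed box. Throughout, write $K_0=\lfloor 1/\delta\rfloor$, $m_0=\delta^{-1}\eps^2(\log\eps)^2$, and $\mathcal B=B(0,3\eps)$.

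\emph{Reduction to one ball.} First I would show that $\{M(B,\delta,\eps)\ge am_0\}\subseteq\bigcup_{i=0}^{K_0}E_i$, where $E_i$ is the event that at least $am_0$ of the indices $l\in[i,K_0]$ satisfy $B(\delta l)\in B(B(\delta i),3\eps)$: if a box $K$ achieves $M\ge am_0$ and $i_0$ is the first grid index with $B(\delta i_0)\in K$, then every contributing index is $\ge i_0$ and $K$ lies within distance $\sqrt2\,\eps<3\eps$ of $B(\delta i_0)$, so $E_{i_0}$ occurs. By the Markov property, conditionally on $\mathcal F_{\delta i}$ the increments of $B$ after time $\delta i$ form a Brownian motion with the same drift and covariance started at $0$; hence $\PP(E_i\mid\mathcal F_{\delta i})\le q$, where $q=\PP(N\ge am_0)$ and $N=\#\{0\le j\le K_0:B(\delta j)\in\mathcal B\}$ for a planar Brownian motion with covariance $I$, arbitrary drift, started at $0$. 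This gives $\PP(M\ge am_0)\le(K_0+1)q\le 2\delta^{-1}q$, so it remains to show $q\le C\eps^{ca}$.

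\emph{Khas'minskii bound for $q$.} Set $A_*=\sum_{k=0}^{K_0}\sup_y\PP_y(B(\delta k)\in\mathcal B)$. Since the drifted transition density at time $s$ is at most $(2\pi s)^{-1}$ and $|\mathcal B|$ is a constant times $\eps^2$, splitting the sum at $k\asymp\eps^2/\delta$ and using $\delta\le\eps^2\le e^{-2}$ yields $A_*\le C_1\,\delta^{-1}\eps^2\log(1/\eps)$ with $C_1$ absolute. Then I would fix $\theta=(4C_1)^{-1}\delta\eps^{-2}(\log(1/\eps))^{-1}\le 1$ and $V=e^\theta-1\le 2\theta$, so that $VA_*\le\tfrac12$. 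Expanding $e^{\theta N}=\prod_{k=0}^{K_0}\bigl(1+V\one_{\mathcal B}(B(\delta k))\bigr)$ and, for each subset of indices, bounding the probability that $B$ visits $\mathcal B$ at all of them by the product of the corresponding single-step sup-probabilities (iterated Markov property), one gets $\sup_x\EE_x[e^{\theta N}]\le\sum_{j\ge 0}(VA_*)^j\le 2$. Markov's inequality then gives
\[
q\;\le\;2\,e^{-\theta a m_0}\;=\;2\exp\Bigl\{-\tfrac{1}{4C_1}\,a\log(1/\eps)\Bigr\}\;=\;2\,\eps^{ca},\qquad c=\tfrac1{4C_1},
\]
and combining with the previous step proves the Brownian motion case. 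The same argument with time horizon $1/2$ in place of $1$ gives the analogous estimate for the restriction of $B$ to $[0,1/2]$, since this only decreases $A_*$.

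\emph{The bridge, and the main difficulty.} For a Brownian bridge $R$ on $[0,1]$, $M(R,\delta,\eps)$ is at most the sum of the two analogous maxima for $R|_{[0,1/2]}$ and $R|_{[1/2,1]}$, so it suffices to bound each at level $am_0/2$; by Proposition \ref{p:B-R} (with $a=\tfrac12$, $d=2$, Radon--Nikodym derivative at most $2$) and its time-reversed version, these reduce to the half-interval Brownian motion estimates just established, at the cost of a constant factor and of halving the exponent $c$. The delicate point is the choice of scale: $\theta$ must be of the order of $1/\EE N\asymp\delta\eps^{-2}/\log(1/\eps)$. This is small enough that a single long sojourn of the path in $\mathcal B$, which can last of order $\eps^2/\delta$ grid steps, contributes only a bounded factor $e^{\theta\cdot O(\eps^2/\delta)}=e^{O(1/\log(1/\eps))}$, yet large enough that $\theta\,am_0\asymp a\log(1/\eps)$ produces exactly the required power $\eps^{ca}$; getting this balance right, together with the Markov reduction in the first step that replaces the (drift-dependent, unbounded) number of relevant boxes by the factor $K_0+1$, is where the work lies.
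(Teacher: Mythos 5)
Your proof is correct, and its outer skeleton coincides with the paper's: localize to the first grid time the path enters the maximizing box, apply the strong Markov property there, union bound over the roughly $\delta^{-1}$ possible first-visit times, and treat the bridge by splitting $[0,1]$ at $1/2$ and invoking Proposition \ref{p:B-R} together with its time-reversed version. Where you genuinely diverge is in the tail estimate for the number $N$ of grid visits to a single ball of radius $O(\eps)$. The paper dominates this count by $\lceil \eps^2/\delta\rceil$ times a geometric random variable: each block of $\lceil \eps^2/\delta\rceil$ grid visits forces the path to return to the disk after an additional time $\eps^2$, and the key input is the classical lower bound $c/|\log\eps|$, uniform in starting point and drift, for the probability that planar Brownian motion avoids an $\eps$-disk on $[\eps^2,1]$. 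You instead bound $A_*=\sum_k \sup_y \PP_y(B(\delta k)\in \mathcal B)\lesssim \delta^{-1}\eps^2\log(1/\eps)$ using only the on-diagonal heat-kernel bound $(2\pi s)^{-1}$ (which is manifestly drift-independent), and then run the discrete Khas'minskii expansion to get $\sup_x \EE_x e^{\theta N}\le 2$ for $\theta\asymp 1/A_*$, finishing with Chernoff; the exponent $\theta a m_0\asymp a\log(1/\eps)$ matches the paper's geometric tail exactly. What each approach buys: yours is more self-contained and robust, since it never needs the planar non-hitting estimate (whose uniformity in arbitrary drift the paper asserts without proof) and would apply verbatim to any process with the same transition-density bound and Markov structure at grid times; the paper's escape-probability argument avoids the subset/gap bookkeeping of the Khas'minskii step and is shorter to state. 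The constant losses you incur (a factor $2$ in $c$ and larger $c'$ for the bridge) are immaterial to the statement.
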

\begin{proof}
For planar Brownian $B$ motion with identity covariance arbitrary starting point and drift, the probability that it will not visit a fixed disk $D$ of radius $\eps$ between times $\eps^2$ and $1$ is bounded below by $c/|\!\log \eps|$, uniformly over the starting point.  Let
$$
L_j=\sum_{i=0}^{j}\one (B(\delta i)\in D), \qquad  L_*=L_{\lfloor  1/\delta \rfloor}.
$$
Let $\ell=\lceil \eps^2/\delta\rceil$ and $J_k$ be the smallest $j$ so that $L_j=\ell k$. By the strong Markov property of $B$ applied at time $\delta J_k$, we have
$$
P(L_*\ge \ell(k+1)|L_*\ge \ell k) \le  1-c/|\!\log \eps|.
$$
This inequality implies that $L_*$ is dominated by $\ell$ times a geometric random variable $G$ with success probability  $c/|\!\log \eps|$.

By this and the strong Markov property, applied at time $i\delta$, the number $M_i$ of total returns up to time $1$ to a box containing $B(i\delta)$ is dominated by $\ell G$.

By the union bound and and the tail bound for the geometric random variable, for all $a \ge 0$ we have
$$
P(\max_i M_i>a)\le \lceil \delta^{-1} \rceil P(\ell G >a)=\lceil \delta^{-1} \rceil (1-c/|\!\log \eps|)^{\lfloor a/\ell \rfloor}.
$$
Assuming $a\ge 1$ allows us to simplify this and get the desired bound for Brownian motion.

For Brownian bridge $R$ we prove the result for the time intervals $[0,1/2]$ and $[1/2,1]$ separately. There, it follows from the Brownian motion case and absolute continuity, Proposition \ref{p:B-R}.  
\end{proof}

\begin{corollary}\label{c:large-t} Let $R$ be planar Brownian bridge run until an arbitrary time with arbitrary endpoints, and let $R'$ be an independent copy.
For every $\gamma\ge 0$, $p<1$, there is an event  $A$  so that $$P(A)>p, \qquad \mbox{ and }\qquad E\exp\{\one_{A\times A} \gamma \alpha (R,R')\}<\infty.$$
\end{corollary}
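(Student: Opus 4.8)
The plan is to reduce, by the scaling Lemma \ref{l:scaling}, to the case where $R$ runs on the time interval $[0,1]$; the time $t$ and the drift then get absorbed into the constant $\gamma$, so it suffices to produce, for every fixed $\gamma\ge 0$ and $p<1$, an event $A$ with $PA>p$ on which $E\exp\{\one_{A\times A}\gamma\alpha(R,R')\}<\infty$. The event will be the one appearing in Proposition \ref{p:nodrift}, namely $A=A_{\eps,\delta,\gamma}=\{\|R\|_\kappa<\eps/(5\delta^\kappa)\}\cap\{M(R,\eps,\delta)\le 1/(6\sqrt{\gamma\delta})\}$, for a suitable choice of the parameters $\eps,\delta$ depending on $\gamma$ and $p$. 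Proposition \ref{p:nodrift} already gives $E\exp\{\one_{A\times A}\gamma\alpha(R,R')\}\le\exp\{c\gamma\delta^{-1}\}<\infty$ for \emph{any} such $\eps,\delta$ (with $\delta\le\eps^2\le1/e^2$ and the compatibility $4L_KL_K^*\gamma\delta<1$ automatically enforced on $A$), so the finiteness of the exponential moment is free. The entire content of the proof is therefore to check that $\eps,\delta$ can be chosen so that $P(A)>p$.

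First I would bound $P(A^c)\le P(\|R\|_\kappa\ge\eps/(5\delta^\kappa))+P(M(R,\eps,\delta)>1/(6\sqrt{\gamma\delta}))$ and handle the two terms separately. For the H\"older term, I would use the standard fact that planar Brownian motion (and, by absolute continuity on $[0,1/2]$ and $[1/2,1]$, Proposition \ref{p:B-R}, the Brownian bridge) has finite moments of $\|R\|_\kappa$ for $\kappa<1/2$; choosing $\delta=\eps^{1/\kappa}$, say $\delta=\eps^{3}$ with $\kappa=1/3$, makes $\eps/(5\delta^\kappa)=1/5$ independent of $\eps$, so this probability is a fixed number $<(1-p)/2$ once... — wait, more carefully: we want it small, so instead I would keep $\delta$ a small power of $\eps$ such that $\eps/(5\delta^\kappa)=\eps^{1-\theta}/5\to\infty$ as $\eps\to0$ (e.g. $\delta=\eps^{2}$, $\kappa$ close to $1/2$ so $\delta^\kappa=\eps^{2\kappa}$ with $2\kappa<1$), making $P(\|R\|_\kappa\ge\eps^{1-2\kappa}/5)\to0$. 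For the occupation term I would invoke Lemma \ref{l:occupation}: with $0<\delta\le\eps^2\le e^{-2}$ and $a\ge1$, $P(M(R,\delta,\eps)>a\delta^{-1}\eps^2(\log\eps)^2)\le c'\delta^{-1}\eps^{ca}$. The requirement is $a\delta^{-1}\eps^2(\log\eps)^2\le 1/(6\sqrt{\gamma\delta})$, i.e. $a\le \delta^{1/2}/(6\sqrt\gamma\,\eps^2(\log\eps)^2)$; with $\delta=\eps^2$ this reads $a\lesssim 1/(\sqrt\gamma\,\eps(\log\eps)^2)$, which $\to\infty$ as $\eps\to0$, so for $\eps$ small enough we may take $a$ large, and then $c'\delta^{-1}\eps^{ca}=c'\eps^{ca-2}\to0$. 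Hence both terms tend to $0$ as $\eps\downarrow0$, and we fix $\eps$ (hence $\delta=\eps^2$) small enough that $P(A^c)<1-p$, giving $PA>p$.

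The main obstacle, such as it is, is bookkeeping: one must choose the three parameters $\eps,\delta,a$ consistently so that simultaneously (i) the H\"older threshold $\eps/(5\delta^\kappa)$ diverges, (ii) the occupation threshold $1/(6\sqrt{\gamma\delta})$ is at least $a\delta^{-1}\eps^2(\log\eps)^2$ with $a$ large, and (iii) the constraints $\delta\le\eps^2\le e^{-2}$ of Lemma \ref{l:occupation} hold; the coupling $\delta=\eps^2$, $\kappa\in(\tfrac13,\tfrac12)$ does all of this, with $\gamma$ only affecting how small $\eps$ must be, not whether a valid choice exists. Everything else — the finiteness of the exponential moment on $A$ and the reduction from general $t$ and drift — is supplied verbatim by Proposition \ref{p:nodrift} and Lemma \ref{l:scaling}, so no further estimate is needed.
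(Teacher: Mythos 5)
Your overall strategy is exactly the one the paper uses: rescale to $t=1$, take $A$ to be the event of Proposition \ref{p:nodrift} (so that finiteness of $E\exp\{\one_{A\times A}\gamma\alpha\}\le e^{c\gamma\delta^{-1}}$ is automatic), and reduce everything to showing $P(A)>p$ via Lemma \ref{l:occupation} and H\"older continuity of the paths, with $\delta$ slaved to a power of $\eps$. However, your concrete parameter choice fails at the H\"older step, which you yourself flagged as the only real content. With $\delta=\eps^{2}$ and $\kappa<\tfrac12$ the threshold in Proposition \ref{p:nodrift} is
$$\frac{\eps}{5\delta^{\kappa}}=\frac{\eps^{1-2\kappa}}{5},$$
and since $1-2\kappa>0$ this tends to $0$ as $\eps\downarrow 0$, not to $\infty$ as you assert ("with $2\kappa<1$" makes the exponent positive, which is the wrong direction). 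Hence $P(\|R\|_{\kappa}<\eps/(5\delta^{\kappa}))\to 0$ and your event has probability tending to $0$, not to $1$. You cannot rescue this by pushing $\kappa$ above $\tfrac12$: Proposition \ref{p:nodrift} is stated for $\kappa\in(0,\tfrac12)$, and for $\kappa>\tfrac12$ the H\"older norm is a.s.\ infinite.

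The fix is the paper's parameterization: take $\eps=\delta^{\kappa'}$ with $\tfrac14<\kappa'<\kappa<\tfrac12$ (equivalently $\delta=\eps^{1/\kappa'}$, e.g.\ $\delta=\eps^{3}$ with $\kappa\in(\tfrac13,\tfrac12)$). Then the H\"older threshold is $\delta^{\kappa'-\kappa}/5\to\infty$, the constraint $\delta\le\eps^{2}$ of Lemma \ref{l:occupation} holds since $2\kappa'<1$, and the occupation requirement $a\delta^{-1}\eps^{2}(\log\eps)^{2}\le 1/(6\sqrt{\gamma\delta})$ becomes $6a\sqrt{\gamma}\,\delta^{2\kappa'-\frac12}(\kappa'\log\delta)^{2}\le 1$, which holds for all small $\delta$ at any fixed $\gamma$ precisely because $\kappa'>\tfrac14$; the tail $c'\delta^{\kappa' ca-1}\to 0$ once $a$ is a fixed large constant. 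So the lower constraint $\kappa'>\tfrac14$ (which your write-up never identifies) is what lets $\gamma$ be arbitrary, and the upper constraint $\kappa'<\kappa$ is what makes the H\"older probability go to one. One minor additional remark: in the plane the scaling of Lemma \ref{l:scaling} sends the time-$t$ bridge to a time-$1$ bridge with the \emph{same} $\gamma$ (the factor $t/|\!\det L|$ equals $1$), rather than absorbing $t$ into $\gamma$; this is harmless since the argument works for every fixed $\gamma$ and arbitrary endpoints, the estimates of Proposition \ref{p:nodrift} and Lemma \ref{l:occupation} being uniform in the endpoints.
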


\begin{proof} By scaling, it suffices to show this for bridges run until time 1. 
Fix $1/4<\kappa' <\kappa<1/2$.  As $\delta\to 0$, set  $\eps=\delta^{\kappa'}$.  By L\'evy's modulus of continuity and Lemma \ref{l:occupation},
$$
P\Big(\|B\|_{\kappa}\le \frac{\eps}{5\delta^\kappa}\Big)\to 1,\;\;P\Big(M\le \frac{1}{6\sqrt{\gamma\delta}}\Big)\ge 1-\frac{c'
}{\delta}\exp\Big\{-\frac{c\sqrt{\delta}}{\sqrt{\gamma}\eps^2|\!\log \eps|}\Big\}\to 1, $$ and for the intersection $A_\delta$ of these events $P(A_\delta)\to 1$. By Proposition \ref{p:nodrift}, 
\[
E\exp\{\one_{A_\delta\times A_\delta}\gamma \alpha (R,R')
\}<\infty.\qedhere\]
\end{proof}

\subsection{Uniform exponential moments for large drift}

We need a bound on the exponential moments which improves with the distance of the starting points. First we show such a bound for short times.

\begin{lemma}\label{l:drifted2}

Let $B,B'$ be independent planar Brownian motions started at the points $(x,y),(x',y')\in \mathbb R^2$  with identity covariance matrix and a shared drift $(0,N), N \ge 0$ and run until time $t$.
Let $0<\gamma,t$ and $\gamma t\le \pi/4$. Then, there is a constant $c$ such that
\begin{equation}\label{eq:61}
Ee^{\gamma \alpha(B,B')}\le 1+  \frac{c\gamma t}{1+N\sqrt{t}} e^{-\frac{|x-x'|^2}{8t}} \le \exp\big\{ \tfrac{c\gamma t}{1+N\sqrt{t}} e^{-\frac{|x-x'|^2}{8t}}\big\}.
\end{equation}
and
\begin{equation}\label{e:drifted2nd}
E\big[\alpha(B,B')^2\big]\le \frac{c t^2}{1+N^2t}e^{-\frac{|x-x'|^2}{8t}}.
\end{equation}
\end{lemma}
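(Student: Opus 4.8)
Here is the plan I would follow.

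\medskip

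\noindent\textbf{Step 1 (reduce to explicit Gaussian integrals).} Since $\alpha(B,B')$ is the mutual intersection local time of the occupation measures $X,Y$ of $B,B'$, Proposition \ref{p:rhok} (together with Remark \ref{rem:miltdist} and Lemma \ref{l:classical-milt}) gives $\alpha(B,B')=\lim_n\alpha_n$ in every $L^k(\Qt)$ and $E[\alpha^k]=\langle\rho_{k,X},\rho_{k,Y}\rangle$, where $\rho_{k,X}(\mathbf x)=\int_{[0,t]^k}p_B(\mathbf x,\mathbf s)\,d\mathbf s$ as in \eqref{e:rho-p} and $p_B$ is the joint density of $(B(s_1),\dots,B(s_k))$, an explicit product of Gaussian kernels because $B$ is a Brownian motion with drift $(0,N)$. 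In particular the first moment is
\[
E\alpha(B,B')=\int_{[0,t]^2}g(s,s')\,ds\,ds',\qquad g(s,s')=\tfrac{1}{2\pi(s+s')}\exp\Big\{-\tfrac{|x-x'|^2+(y-y'+N(s-s'))^2}{2(s+s')}\Big\},
\]
$g(s,s')$ being the density at $0$ of $B(s)-B'(s')$. The plan is to deduce \emph{both} \eqref{eq:61} and \eqref{e:drifted2nd} from estimates on these integrals; I would in fact prove \eqref{e:drifted2nd} in the slightly stronger form with $e^{-|y-y'|^2/(4t)}$ in the exponent, so that the Cauchy--Schwarz step in Step 3 lands exactly on $e^{-|y-y'|^2/(8t)}$.

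\medskip

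\noindent\textbf{Step 2 (the second moment).} For $E[\alpha(B,B')^2]$, I would use the time-ordered decomposition: restricting to $s_1<s_2$ and $s_1'<s_2'$ (a factor $4$), the independent-increments property factors the density of $(B(s_1)-B'(s_1'),\,B(s_2)-B'(s_2'))$ at $(0,0)$ into a \emph{first-meeting} factor $g(s_1,s_1')$ and an \emph{excursion} factor, the density at $0$ of $(B(s_2)-B(s_1))-(B'(s_2')-B'(s_1'))$. Integrating the excursion factor over $s_2\in[s_1,t]$, $s_2'\in[s_1',t]$ gives, by the exponential-time substitution behind Lemma \ref{l:Brown-comp} (rate $\asymp 1/t$), a bound $\le C\,t/(1+N\sqrt t)$ that does not see the starting points. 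Integrating the first-meeting factor over $(s_1,s_1')\in[0,t]^2$ is $E\alpha(B,B')$; this is where the spatial separation enters, and I would bound it by $C\,t(1+N\sqrt t)^{-1}e^{-|y-y'|^2/(4t)}$ by elementary estimates on $g$: use $s+s'\le 2t$, then split according to whether $|y-y'+N(s-s')|\ge|y-y'|/2$ (on which the Gaussian is at most $e^{-|y-y'|^2/(8(s+s'))}$, giving the spatial factor) or not (on which the admissible range of $s-s'$ is confined to a short interval around $-(y-y')/N$, keeping the prefactor under control). Multiplying the two factors and using $(1+N\sqrt t)^2\ge 1+N^2t$ yields \eqref{e:drifted2nd}; the constant $8$ (resp.\ $4$) is deliberately non-sharp.

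\medskip

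\noindent\textbf{Step 3 (the exponential moment) and the main obstacle.} For \eqref{eq:61} I would use $e^x-1\le xe^x$ for $x\ge0$, so $Ee^{\gamma\alpha}-1\le\gamma\,E[\alpha e^{\gamma\alpha}]\le\gamma\big(E\alpha^2\big)^{1/2}\big(Ee^{2\gamma\alpha}\big)^{1/2}$ by Cauchy--Schwarz. Since $\gamma t\le\pi/4$ gives $2\gamma t\le\pi/2<2\pi/e$, Lemma \ref{l:drifted} yields $Ee^{2\gamma\alpha}\le 1+4\pi\gamma t/(2\pi/e-2\gamma t)^2\le c_0$ for an absolute constant $c_0$; combining with the strengthened \eqref{e:drifted2nd} and $t/\sqrt{1+N^2t}\le\sqrt2\,t/(1+N\sqrt t)$ gives $Ee^{\gamma\alpha}-1\le c'\gamma t(1+N\sqrt t)^{-1}e^{-|y-y'|^2/(8t)}$, and the final inequality in \eqref{eq:61} is just $1+x\le e^x$. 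The delicate point of the whole argument is the first-meeting estimate in Step 2 \emph{uniformly in the drift $N$}: one cannot discard the term $N(s-s')$ (it can push the two paths toward each other, and a crude $e^{N^2(s-s')^2/(2(s+s'))}$ bound is useless for large $N$), so one must exploit that a large drift simultaneously forces the pair of times at which the drifted paths can coincide into a narrow window; reconciling this time constraint with the exponential spatial cost, while keeping the $t/(1+N\sqrt t)$ prefactor, is the part that needs care.
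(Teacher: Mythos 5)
Your Step 1 and Step 3 are fine: reducing to moments of explicit Gaussian integrals via Proposition \ref{p:rhok}, and then $Ee^{\gamma\alpha}-1\le\gamma\,(E\alpha^2)^{1/2}(Ee^{2\gamma\alpha})^{1/2}$ with $Ee^{2\gamma\alpha}$ bounded by an absolute constant from Lemma \ref{l:drifted} (since $2\gamma t\le\pi/2<2\pi/e$), is exactly the trick the paper itself uses at the end of the proof of Lemma \ref{l:drifted}. Everything therefore hinges on Step 2, and there the plan fails: the bound $\int_{[0,t]^2}g(s,s')\,ds\,ds'\le C t(1+N\sqrt t)^{-1}e^{-|y-y'|^2/(4t)}$ is not true, and no case split can rescue it, because on the band where $s-s'$ lies within $O(1/N)$ of $-(y-y')/N$ the exponent in $g$ vanishes and there is no spatial cost at all. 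Quantitatively, take $t=1$, $x=x'$, $y-y'=d\ge1$, $N\ge 4d\vee 8$; restricting the integral to $s'\in(1/2,1)$ and $|s-(s'-d/N)|\le 1/N$ gives $E\alpha(B,B')\ge e^{-1}/(4\pi N)$, with no decay in $d$, so $Ee^{\gamma\alpha}-1\ge \gamma e^{-1}/(4\pi N)$ eventually exceeds $c\gamma t(1+N\sqrt t)^{-1}e^{-d^2/8}$ for every fixed $c$; the same example contradicts \eqref{e:drifted2nd}. The point you flagged as delicate is in fact impossible: a shared drift of size $N\gtrsim|y-y'|/t$ carries the trailing path onto the earlier trace of the leading one, so separation \emph{along} the drift produces no Gaussian decay. (A secondary, fixable issue: your ``factor $4$'' only covers the aligned time-orderings; the crossed ordering $s_1<s_2$, $s_1'>s_2'$ does not factor into first-meeting and excursion, and the permutation Cauchy--Schwarz of Proposition \ref{p:moments} would destroy the spatial factor, so it needs a separate argument.)

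You should also know that the obstruction you hit reflects a defect in the statement as written, not only in your plan: \eqref{eq:61} itself fails in the regime $\sqrt t\ll|y-y'|\lesssim Nt$ by the computation above. The decay can only hold in the coordinate \emph{perpendicular} to the drift (i.e.\ in $|x-x'|$ when the drift is $(0,N)$), which is the form in which the lemma is actually used in Proposition \ref{p:tightness1}, where the quantity $Y_i$ in the exponent is a first-coordinate difference. The paper's own proof tacitly assumes this: it stops each path at the first time the separation coordinate reaches the halfway level, asserts that $\tau'$ is an independent copy of $\tau$ ``by symmetry'' and uses the reflection identity $P(\max\ge y/2)=P(|B(1)|\ge y/2)$ --- both valid only if that coordinate is driftless --- and then applies Lemma \ref{l:drifted} after the stopping time; so its route is a hitting-time decomposition plus reduction to Lemma \ref{l:drifted}, quite different from your direct moment computation. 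For the corrected (perpendicular-separation) statement your route does work and is simpler than you feared: the first coordinate of $B(s)-B'(s')$ is a driftless Gaussian of variance $s+s'\le 2t$, so the factor $e^{-|x-x'|^2/(4t)}$ comes out of $g$ immediately, and the remaining drift-direction factor integrates to $Ct/(1+N\sqrt t)$ by your change of variables, as in Lemma \ref{l:Brown-comp}, with no dichotomy needed. But aimed at the stated along-drift decay, Step 2 cannot be completed.
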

\begin{proof}
By symmetry, we may assume $(x',y')=(0,0)$ and $x\ge 0$. By Lemma \ref{l:scaling}, we may assume $t=1$. Let $\tau, \tau'$ be the first times the first coordinate of $B$, respectively $B'$,  equals $x/2$. By symmetry, $\tau'$ is an independent copy of $\tau$ and
\begin{equation}\label{eq:A}
    P(\tau\le 1)   =P(\max_{t\in[0,1]} B_1(t)\ge x/2)=P(|B_1(1)|\ge y/2)\le e^{-{x}^2/8}
\end{equation}Let $A_0=[0,\tau)$, and  $A_1=[\tau,1]$. Define $A_i'$ analogously.
By bilinearity (Lemma  \ref{l:bilinear}),
\begin{equation}\notag
\alpha(B,B') = \sum_{i,j\in\{0,1\}} \alpha(B_{A_i},B'_{A'_{j}}) \le
\alpha(B_{A_1},B') + \alpha(B,B'_{A'_1}),
\end{equation}
since $\alpha(B_{A_{0}},B'_{A'_{0}})=0$, and $\alpha$ is nonnegative. 
Cauchy-Schwarz gives
\begin{equation}\notag
E e^{\gamma \alpha(B,B')}\le  E[e^{2\gamma \alpha(B_{A_{1}},B')}]^{1/2}E[e^{2\gamma \alpha(B,B'_{A'_{1}})}]^{1/2}= Ee^{2\gamma \alpha(B_{A_{1}},B')},
\end{equation}
since the two terms are equal, by symmetry.
By conditioning on the $\sigma$-field at time  $\tau$ and using Lemma \ref{l:drifted}, 
\begin{equation}\notag
E[e^{2\gamma \alpha(B_{A_{1}},B')} -1]\le P(\tau\le 1) \frac{c\gamma}{1+N}.
\end{equation}
By the same argument, \begin{equation}\notag
E \alpha^2(B_{A_1},B') \le P(\tau\le 1) \frac{c}{1+N^2}.
\end{equation}
Together with \eqref{eq:A} these give \eqref{eq:61}
and \eqref{e:drifted2nd}.
\end{proof}
The following lemma will be needed in the proof of Proposition  \ref{p:tightness1}. It estimates the exponential moments of a smooth version of local time at zero for Gaussian random walks. 
\begin{lemma}\label{l:local-sum} Let $B$ be standard one-dimensional Brownian motion.
For every $\gamma,\lambda >0$, and integer $k\ge \gamma^2$, for the sum over integer times,
\begin{equation}\notag
E\exp \left\{ \frac{\gamma}{\sqrt{k}}\sum_{i=0}^{k-1} e^{-\lambda B^2(i)}\right\} \le 2\exp\{4\pi \gamma^2e^{\lambda}/\lambda\}.
\end{equation}
\end{lemma}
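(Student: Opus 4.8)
The plan is to split according to the size of $k$. Write $\theta=\gamma/\sqrt k$, $g(y)=e^{-\lambda y^2}$, $T=\sum_{i=0}^{k-1}g(B(i))$, and set $k_\ast:=16\pi^2\gamma^2 e^{2\lambda}/\lambda^2$. When $k\le k_\ast$ the trivial bound $0\le g\le 1$ gives $T\le k$, hence $E e^{\theta T}\le e^{\theta k}=e^{\gamma\sqrt k}\le e^{4\pi\gamma^2 e^\lambda/\lambda}$, which already suffices; note $\gamma\sqrt k\le 4\pi\gamma^2 e^\lambda/\lambda$ is exactly the condition $k\le k_\ast$. All the work is in the complementary regime $k> k_\ast$, where I will bound the moments of $T$ and sum the exponential series.

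For the moment bound the key input is the elementary Gaussian identity $E\,g(x+B(d))=(1+2\lambda d)^{-1/2}e^{-\lambda x^2/(1+2\lambda d)}$, obtained by completing the square; in particular $\sup_x E\,g(x+B(d))=\psi(d):=(1+2\lambda d)^{-1/2}$ (attained at $x=0$), and $g(B(0))=1=:\psi(0)$. Expanding $E T^m=\sum_{i_1,\dots,i_m}E\prod_j g(B(i_j))$, using $0\le g\le 1$ to collapse repeated indices to a single occurrence, and then conditioning successively from the largest index downward via the Markov property, one gets $E\prod_j g(B(i_j))\le \prod_{j=1}^m\psi(i_{(j)}-i_{(j-1)})$ along the sorted tuple $0=i_{(0)}\le i_{(1)}\le\dots\le i_{(m)}$. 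Since each sorted tuple is the image of at most $m!$ ordered tuples, and writing $d_j=i_{(j)}-i_{(j-1)}\ge 0$ subject to $\sum_j d_j\le k-1$, a Laplace-transform estimate $\one\{\sum_j d_j\le k-1\}\le e^{\mu k}\prod_j e^{-\mu d_j}$, valid for any $\mu>0$, decouples the sum and yields $E T^m\le m!\,e^{\mu k}\,\Phi(\mu)^m$ with $\Phi(\mu)=\sum_{d\ge 0}\psi(d)e^{-\mu d}$. Monotonicity of $\psi(s)e^{-\mu s}$ and $(1+2\lambda s)^{-1/2}\le(2\lambda s)^{-1/2}$ give $\Phi(\mu)\le 1+\int_0^\infty(2\lambda s)^{-1/2}e^{-\mu s}\,ds=1+\sqrt{\pi/(2\lambda\mu)}$.

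Then $E e^{\theta T}=\sum_m\frac{\theta^m}{m!}E T^m\le e^{\mu k}\sum_m(\theta\Phi(\mu))^m$, a convergent geometric series once $\theta\Phi(\mu)<1$. I will choose $\mu=4\pi\gamma^2 e^\lambda/(\lambda k)$, so that $\mu k=4\pi\gamma^2 e^\lambda/\lambda$ and $\sqrt{\pi/(2\lambda\mu)}=\sqrt{k/(8\gamma^2 e^\lambda)}=\sqrt k/(2\sqrt2\,\gamma\,e^{\lambda/2})$; the summability condition $\theta\Phi(\mu)\le\tfrac12$ then reduces to $\frac{\gamma}{\sqrt k}+\frac{1}{2\sqrt2\,e^{\lambda/2}}\le\tfrac12$. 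In the regime $k> k_\ast$ one has $\frac{\gamma}{\sqrt k}\le\frac{\gamma}{\sqrt{k_\ast}}=\frac{\lambda}{4\pi e^\lambda}\le\frac1{4\pi e}$ (using $\sup_{\lambda>0}\lambda e^{-\lambda}=1/e$), and $\frac{1}{2\sqrt2\,e^{\lambda/2}}\le\frac1{2\sqrt2}$, so the condition holds and $\sum_m(\theta\Phi(\mu))^m\le(1-\frac1{4\pi e}-\frac1{2\sqrt2})^{-1}<2$. Hence $E e^{\theta T}\le 2e^{\mu k}=2\exp\{4\pi\gamma^2 e^\lambda/\lambda\}$, completing this regime.

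The main obstacle is arranging the clean final constant. The geometric series is summable only when $\theta\Phi(\mu)<1$, and the $d=0$ term in $\Phi(\mu)$ — which comes precisely from the trivial diagonal bound $e^{-\lambda B^2(i)}\le 1$ inside $T^m$ — contributes a stubborn term of order $\gamma/\sqrt k$ that is $\Theta(1)$ when $k\approx\gamma^2$, so no single choice of $\mu$ works for all $k$. This forces the two-regime split, and the crossover $k_\ast$ is chosen so that the trivial bound and the geometric bound both deliver exactly the exponent $4\pi\gamma^2 e^\lambda/\lambda$, with the hypothesis $k\ge\gamma^2$ leaving room to spare. The remaining ingredients — the Gaussian identity, the Markov conditioning over sorted tuples, and the integral comparison for $\Phi$ — are routine.
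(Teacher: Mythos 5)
Your proof is correct, and it takes a genuinely different route from the paper's. The paper compares the discrete sum to the continuous additive functional $\int_0^k e^{-\lambda B^2(t)}\,dt$ via a supermartingale built from the pointwise inequality $E[\exp\{-a\int_i^{i+1}e^{-\lambda B^2(t)}dt\}\mid\mathcal F_i]\le \exp\{-2a_1e^{-\lambda B^2(i)}\}$ (itself from $e^{-y}\le 1-y/2$ and Jensen), then trades the discrete sum for the continuous one by Cauchy--Schwarz, rewrites the continuous functional in terms of Brownian local time, applies Jensen over the Gaussian weight, and finishes with the exact law $\ell(k,0)\stackrel{d}{=}|B(k)|$ to control exponential moments. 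You instead stay entirely discrete: you bound $ET^m$ by conditioning along sorted time tuples with the explicit Gaussian formula $\sup_x Ee^{-\lambda(x+B(d))^2}=(1+2\lambda d)^{-1/2}$, decouple the gap constraint by a Chernoff/Laplace factor $e^{\mu k}\prod_j e^{-\mu d_j}$, sum the exponential series as a geometric series, and handle the diagonal ($d=0$) obstruction by a two-regime split in $k$ at $k_\ast=16\pi^2\gamma^2e^{2\lambda}/\lambda^2$, where the trivial bound $T\le k$ takes over; I checked the counting of ordered versus sorted tuples, the treatment of ties via $\psi(0)=1$, the integral comparison for $\Phi(\mu)$, and the numerics $\tfrac1{4\pi e}+\tfrac1{2\sqrt2}<\tfrac12$ with $(1-\tfrac1{4\pi e}-\tfrac1{2\sqrt 2})^{-1}<2$, and all are sound. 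What each approach buys: yours is more elementary and self-contained (no local time, no supermartingale), it yields exactly the stated constant $2\exp\{4\pi\gamma^2e^\lambda/\lambda\}$ (the paper's own displayed computation actually ends with $8\pi$ in the exponent), and, as you note, it never uses the hypothesis $k\ge\gamma^2$, so it proves a slightly stronger statement; the paper's argument is shorter given the local-time and supermartingale machinery already developed there and fits the surrounding toolkit used in Proposition \ref{p:tightness1}.
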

\begin{proof}
For $y\in[0,1]$, we have $e^{-y}\le 1-y/2$, so for $a\in [0,1]$,
\begin{align}\notag
&E\exp\left\{ -a\int_0^1 e^{-\lambda(B(t)-x)^2}dt\right\}
\le 1-\frac{a}{2}E\int_0^1 e^{-\lambda(B(t)-x)^2}dt.
\end{align}
By Jensen's inequality applied to both the expectation and time integral, this is at most
\begin{align}\notag
 1-\frac{a}{2} \exp\left\{-\lambda \int_0^1 E(B(t)-x)^2\, dt\right\}=1-2a_1e^{-\lambda x^2}
\le \exp\{ -2a_1e^{-\lambda x^2}\}, 
\end{align}
where $
a_1=\frac{ae^{-\lambda/2}}{4}.$
This implies that
\begin{equation}\notag
E[\exp \{-a\int_i^{i+1}  e^{-\lambda B^2(t)}dt\}\mid\mathcal F_i]\le \exp \{-2a_1e^{-\lambda B^2(i)}\}
\end{equation}
and so
\begin{equation}\notag
M_j=\exp\left\{2a_1\sum_{i=0}^{j-1} e^{-\lambda B^2(i)}-a\int_0^j e^{-\lambda B^2(t)} \,dt\right\}
\end{equation}
is a supermartingale. By Cauchy-Schwarz
\begin{equation}\notag
E\exp\left\{a_1\sum_{i=0}^{k-1}e^{-\lambda B^2(i)}\right\}\le (E M_k )^{1/2} \left(E \exp\left\{a\int_0^k e^{-\lambda B^2(t)} \,dt\right\}\right)^{1/2}
\end{equation}
with $EM_k\le 1$. Setting $a_1=\gamma/\sqrt{k}$, $a= 4\gamma e^{\lambda/2}/\sqrt{k}$,  $a'=4\gamma\sqrt{\pi/(\lambda k)}e^{\lambda/2}$,
\begin{equation}\notag
E \exp\left\{a\int_0^k e^{-\lambda B^2(t)} \,dt\right\}=E \exp\left\{ a' \int \sqrt{\lambda/\pi}e^{-\lambda x^2}\ell (k,x) dx \right\}
\end{equation}
where $\ell(k,x) = \int_0^k \delta_0( B(t)-x) dt$ is
the local time up to time $k$ at $x$.
Since $\int \sqrt{\lambda/\pi}e^{-\lambda x^2} dx=1$,
we have by Jensen's inequality,
\begin{equation}\notag
E \exp\left\{  a'\int \sqrt{\lambda/\pi}e^{-\lambda x^2}\ell (k,x) dx \right\}\le E \int \sqrt{\lambda/\pi}e^{-\lambda x^2} \exp\{  a'\,\ell (k,x) \}dx.
\end{equation}
By stochastic domination,
\begin{align*}
E\exp\{  a'\,\ell (k,x) \} & \le E \exp\{  a'\,\ell (k,0) \}= E\exp\{4\gamma\sqrt{\pi/\lambda }e^{\lambda/2}|B_1|\}\\&\le 2\exp\{8\pi \gamma^2e^{\lambda}/\lambda\}
\end{align*}
since $\ell(k,0)$ has the same distribution as  $|B(k)|$ and $\sqrt{k}|B(1)|$ and using $Ee^{|X|}\le Ee^X +Ee^{-X}$.
\end{proof}

\begin{proposition}\label{p:tightness1}
Let $B, B'$ be two independent standard planar Brownian motions with drift $(0,N)$
started at some arbitrary $z,z'\in\mathbb R^2$ on the time interval $[0,1]$.
 There is a $c<\infty$ such that for all $N,r \ge 1$, and $0\le \gamma\le \tfrac{\pi}{8r}$,
$$
E\big[e^{\gamma N \alpha(B,B')}\mathbf 1_{A_r\times A_r }\big] \le e^{c\gamma^2 },\qquad A_r:=\Big\{\sup_{t\in [0,1]}|B_2(t)-N t|\le r/2\Big\},
$$
 where $A_r\times A_r$ is defined in (\ref{e:times-notation}).
\end{proposition}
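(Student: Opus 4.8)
\medskip
\noindent\textbf{Proof plan.}
By the scaling Lemma \ref{l:scaling} we may work at time $1$, as stated, and we may assume $|z_2|,|z'_2|\le r/2$ (otherwise $P(A_r)=0$ and there is nothing to prove). The plan is to discretize $[0,1]$ into $k=\lceil C\gamma N\rceil$ subintervals $I_i=[i/k,(i+1)/k]$, $i=0,\dots,k-1$, for a suitable universal constant $C$; the point of this mesh is that it is simultaneously fine enough for Lemma \ref{l:drifted2} to apply to each piece and coarse enough that the ``close--pair range'' below is \emph{bounded}, because $\gamma r\le\pi/8$ makes $rk/N\le Cr\gamma+r/N$ a universal constant. (The range $N<2r$ is degenerate, where $k=1$ and one applies Lemma \ref{l:drifted2} with $t=1$ directly.) On the event $A_r^2$, the second coordinate of $B$ restricted to $I_i$ lies in an interval of length $N/k+r$ centered near $Ni/k$, and likewise for $B'$ on $I_j$; hence there is a universal constant $D$ so that $|i-j|>D$ forces these coordinate ranges to be disjoint, whence $\alpha(B_{I_i},B'_{I_j})=0$ by nonnegativity and Proposition \ref{p:milt-unique}. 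By bilinearity (Lemma \ref{l:bilinear}), on $A_r^2$,
\[
\alpha(B,B')=\sum_{d=-D}^{D}\ \sum_{i}\alpha\bigl(B_{I_i},B'_{I_{i+d}}\bigr),
\]
and, bounding $\mathbf 1_{A_r^2}$ above by the product of the grid indicators $\mathbf 1_{\{|B_2(i/k)-Ni/k|\le r/2\}}$ (and the analogues for $B'$) and applying H\"older's inequality in the $2D+1$ offsets $d$, the claim reduces to showing, for each fixed $d$,
\[
E\Bigl[\exp\Bigl\{(2D+1)\gamma N\sum_i\alpha\bigl(B_{I_i},B'_{I_{i+d}}\bigr)\Bigr\}\ \prod_i\mathbf 1_{\{|B_2(i/k)-Ni/k|\le r/2\}}\,\mathbf 1_{\{|B'_2((i+d)/k)-N(i+d)/k|\le r/2\}}\Bigr]\le e^{c\gamma^2}.
\]

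To handle a fixed $d$, time--shift $B'$ by $d/k$ so that the relevant pieces of $B'$ sit over the intervals $I_i$; the shifted path is again a planar Brownian motion with drift $(0,N)$, independent of $B$. Conditioning on the $\sigma$-field generated by $B$ and the shifted $B'$ at the grid times turns each remaining piece into an independent Brownian \emph{bridge} of length $1/k$; using Proposition \ref{p:B-R} (and its time reversal) to dominate a bridge piece by a Brownian motion on a slightly shorter interval, followed by Lemma \ref{l:drifted2} applied to those independent drifted Brownian motions (legitimate because the mesh choice gives $(2D+1)\gamma N/k\le\pi/4$, using $\gamma\le\pi/(8r)$), one gets a conditional bound of the shape
\[
E\bigl[e^{(2D+1)\gamma N\,\alpha(B_{I_i},B'_{I_{i+d}})}\ \big|\ \mathcal H_i\bigr]\ \le\ \exp\Bigl\{\tfrac{c(2D+1)\gamma N/k}{1+N/\sqrt k}\,\exp\bigl(-\tfrac k8\,|B_2(i/k)-B'_2((i+d)/k)|^2\bigr)\Bigr\}.
\]
Grouping the indices $i$ (for the fixed $d$) so that the pieces used are disjoint, and combining these estimates along the filtration --- in the spirit of the supermartingale in the proof of Lemma \ref{l:local-sum} --- one reduces, after discarding the grid indicators, to bounding
\[
E\Bigl[\exp\Bigl\{\tfrac{c(2D+1)\gamma N/k}{1+N/\sqrt k}\sum_{i=0}^{k-1}\exp\bigl(-\tfrac k8\,|B_2(i/k)-B'_2((i+d)/k)|^2\bigr)\Bigr\}\Bigr].
\]

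Finally, since $B$ and $B'$ are independent and share the drift, the increments of $i\mapsto B_2(i/k)-B'_2((i+d)/k)$ are i.i.d.\ Gaussians of variance $2/k$; hence, up to a bounded--variance initial offset, this is $\sqrt{2/k}$ times a discrete standard Brownian motion, and $\exp(-\tfrac k8|B_2(i/k)-B'_2((i+d)/k)|^2)=\exp(-\tfrac14(W(i)-x)^2)$ for a standard Brownian motion $W$ at integer times and a (possibly large) number $x$. Lemma \ref{l:local-sum} with $\lambda=1/4$ --- whose conclusion is unchanged if the Gaussian weight is centered at an arbitrary point $x$, by the stochastic--domination step in its proof, and to which we feed the offset after conditioning on it --- then applies, because $\tfrac{c(2D+1)\gamma N/k}{1+N/\sqrt k}\cdot\sqrt k=\tfrac{c(2D+1)\gamma N}{\sqrt k+N}\le c(2D+1)\gamma$, so the effective parameter $\mu$ is of order $\gamma$ and the hypothesis $k\ge\mu^2$ holds ($k\ge 1$ while $\gamma^2\le\pi^2/(64r^2)$). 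Each of the $2D+1$ H\"older factors is thus at most $\exp\{c'\gamma^2\}$, and multiplying them (and absorbing the harmless constant factor and the small--$\gamma$ bookkeeping) yields the stated bound.

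\medskip
\noindent\textbf{Main obstacle.}
The real difficulty is not any single estimate but the simultaneous calibration of the mesh $k$, the close--pair cutoff $D$ and the H\"older losses: making the subintervals short enough for Lemma \ref{l:drifted2} (with the inflated rate) tends to enlarge $D$, which inflates the H\"older loss further --- the tension is resolved only because $\gamma r\le\pi/8$ ties these scales together and keeps $D$ a universal constant. Equally delicate is combining the per--interval conditional bounds into the single sum fed to Lemma \ref{l:local-sum}: because the bridge pieces (and after the time shift, the relevant $B$-- and $B'$--increments) are not conveniently adapted, one must group indices into disjoint blocks and run a supermartingale argument modeled on the proof of Lemma \ref{l:local-sum}, rather than a naive peeling, to avoid accumulating a spurious $e^{c\gamma N/\sqrt k}$ factor. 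The passage from bridges to Brownian motions via Proposition \ref{p:B-R}, and the translation invariance of the conclusion of Lemma \ref{l:local-sum}, are routine once noticed but are genuinely needed for the argument to close.
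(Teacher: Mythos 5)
Your plan has the same skeleton as the paper's proof: chop $[0,1]$ into blocks of length of order $r/N$, use $A_r$ to kill interactions between distant blocks, apply the short-time estimate of Lemma \ref{l:drifted2} conditionally to each surviving diagonal family to produce a compensator $\sum_i\exp\{-c\,(\mathrm{separation}_i)^2 N/r\}$, combine via a supermartingale and Cauchy--Schwarz/H\"older, and finish with Lemma \ref{l:local-sum} (whose insensitivity to recentering you correctly note, and whose coefficient of order $\gamma/\sqrt k$ you compute as in the paper). But the calibration at the heart of your reduction fails precisely at the top of the allowed range of $\gamma$. Disjointness of the vertical ranges forces $D\ge 1+rk/N$, so with $k=\lceil C\gamma N\rceil$ and $\gamma$ near $\pi/(8r)$ you have $D\ge 1+C\pi/8$, while $\gamma N/k\ge 1/(C+1)$ (say for $\gamma N\ge 1$); hence $(2D+1)\gamma N/k\ge \pi/4+(3-\pi/4)/(C+1)>\pi/4$ for \emph{every} choice of $C$: the H\"older loss over the $2D+1$ offsets grows at exactly the rate at which the finer mesh helps, so your asserted bound $(2D+1)\gamma N/k\le\pi/4$, needed to invoke Lemma \ref{l:drifted2}, can never be met. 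The paper avoids this loss altogether: it takes windows of length exactly $r/N$ together with the half-shifted windows $J_j$, so that on $A_r^2$ only the two diagonal sums $\sum_j\alpha(B_{I_j},B'_{I_j})$ and $\sum_j\alpha(B_{J_j},B'_{J_j})$ survive, and a single Cauchy--Schwarz costs only a factor $2$; the requirement becomes $2\gamma r\le\pi/4$, which is exactly what the hypothesis $\gamma\le\pi/(8r)$ is tuned to.

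Two further steps rest on estimates not actually available as cited. Conditioning on all grid values at once makes each block a \emph{bridge}, and Proposition \ref{p:B-R} dominates only a proper initial (or, by reversal, terminal) fraction of a bridge by a motion; covering a whole block needs the usual two-thirds splitting plus another Cauchy--Schwarz, i.e.\ a further factor $2$ in the exponent that you have not budgeted and that breaks the calibration even against the true thresholds hidden in Lemma \ref{l:drifted}. The paper sidesteps bridges entirely by conditioning sequentially along the filtration $\mathcal F_{ir/N}$, under which each block is a fresh pair of drifted Brownian motions. Also, you write the compensator in terms of the drift-direction separation $B_2(i/k)-B_2'((i+d)/k)$, whereas the estimate the paper proves and uses decays in the driftless first-coordinate separation $Y_i$: the reflection step in the proof of Lemma \ref{l:drifted2} only works for the coordinate without drift (the $|y-y'|$ in its statement is a slip), so your version would require a separate argument via the difference process that is not supplied. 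Finally, note that your degenerate case $N<2r$, handled by one application of Lemma \ref{l:drifted2} at $t=1$, yields only $1+O(\gamma)$ rather than a bound of the form $e^{O(\gamma^2)}$, so even that shortcut does not deliver the stated inequality. The supermartingale bookkeeping and the final use of Lemma \ref{l:local-sum} do mirror the paper and are the right idea; the gaps are in the offset/H\"older decomposition, the bridge detour, and the choice of coordinate.
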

\begin{proof}
Consider overlapping time intervals $$I_j=[j\tfrac{r}{N} ,(j+1)\tfrac{r}{N} ), \qquad J_j=[(j-\tfrac12)\tfrac{r}{N} , (j+\tfrac12)\tfrac{r}{N} ).$$ On  $A_r$, the restrictions  $B_{I_j}$ and $B'_{I_{i}}$ of the paths to those intervals can intersect only when $|i-j|\le 1$. 
Even if $j=i+1$,  the intersection can only happen  at a time in $J_j$. This implies that on $A_r\times A_r$,
\begin{equation}\notag
\alpha(B,B') \le \alpha_1+ \alpha_2, \qquad \alpha_1= \sum_{j=0}^{k-1}\alpha (B_{I_j},B'_{I_j}), \qquad \alpha_2=\sum_{j=1}^{k-1} \alpha(B_{J_j},B'_{J_j}).
\end{equation}
where  $k= \lceil N /r\rceil $.
Let  $\gamma'=2 N \gamma$, so that by assumption $\gamma' r/N\le \pi/4$.  Let $Y_j$ denote the first coordinate of $B_{jr/N}-B'_{jr/N}$.
By Lemma \ref{l:drifted2},
\begin{equation}\notag
E\Big[\exp \{\gamma'  \alpha (B_{I_i},B'_{I_i})\}\,\big|\,\mathcal F_{ir/N}\Big]\le \exp\{ de^{-\frac{1}{8r}Y_i^2}\}, \qquad d=\tfrac{c
\gamma' r/N }{1+N  \sqrt{r/N}}\le \tfrac{c'\gamma \sqrt{r}}{\sqrt{N}},
\end{equation}
so the process
\begin{equation}\notag
M_j=\exp  \Big\{\sum_{i=0}^{j-1} \gamma'   \alpha (B_{I_i},B'_{I_i})-de^{-\frac{1}{8t}Y_i^2}\Big\}
\end{equation}
is a supermartingale.  By Cauchy-Schwarz,
\begin{equation}\label{problemestimate}
(Ee^{2\gamma N \alpha_1})^2 \le EM_{k} \;E\exp  \sum_{i=0}^{k-1} \tfrac{c\gamma \sqrt{r}}{\sqrt{N}} e^{-\frac{1}{8r}Y_i^2}\le 2\exp(c\gamma^2 ),
\end{equation}
where the last inequality uses $EM_k\le 1$ and Lemma \ref{l:local-sum}. The same bound holds for $\alpha_2$, and we conclude by Cauchy-Schwarz.
\end{proof}

An important shortcoming of Proposition  \ref{p:tightness1} is that the range  $[0,\pi/(8r)]$ of  exponents $\gamma$ for which it gives a bound gets worse with the truncation parameter $r$. The remedy is provided by cutting off paths with large H\"older norm, equation \eqref{e:holder}).

\begin{proposition}\label{p:tightness2}

Let $B, B'$ be two independent standard planar Brownian motions with drift $(0,N)$ on the time interval $[0,1]$. 
Alternatively, let $B,B'$ be Brownian bridges from $(0,0)$
 to $(0,N)$ on the same time interval. 
Let $0<\kappa<1/2$.
Then there exists a constant $b>0$ so that for all $\gamma>0$, $r>1$, and  $N\ge cr((\gamma r)^3+1)$  we have
\begin{eqnarray*}
&E\left[e^{\gamma N \alpha(B,B')}\mathbf 1_{H\times H}\right] \le 2 e^{b\gamma^2}, \qquad H=\{\|B\|_\kappa\le r/2\},
\end{eqnarray*}
where $H\times H$ is defined in (\ref{e:times-notation}).
\end{proposition}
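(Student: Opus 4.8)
\textbf{Proof plan for Proposition \ref{p:tightness2}.}

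The idea is to improve on Proposition \ref{p:tightness1} by a dyadic (scale-by-scale) decomposition of the intersection local time, together with a H\"older control of the paths that lets us work at a \emph{single} convenient scale even when the total exponent $\gamma N$ is large.

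First I would recall that, by Lemma \ref{l:scaling}, a Brownian motion with drift $(0,N)$ run for unit time, rescaled by the linear map $L=sI$ in space and $s^2$ in time, becomes a Brownian motion with drift $(0,sN)$ run until time $s^2$, and $\alpha$ scales by the factor $s^2/s^2 = 1$ in the relevant combination (more precisely $\alpha(\hat X,\hat X')=(t/|{\det L}|)\alpha(X,X')$). So the plan is: split $[0,1]$ into $m$ subintervals of length $1/m$ for a well-chosen $m=m(N,r,\gamma)$, and use bilinearity of $\alpha$ (Lemma \ref{l:bilinear}). On the event $H=\{\|B\|_\kappa\le r/2\}$ (and the analogous event for $B'$), the large drift $(0,N)$ together with the H\"older bound forces the time-$i$ piece of $B$ and the time-$j$ piece of $B'$ to be disjoint unless $|i-j|$ is bounded by a constant: indeed the second coordinates differ by roughly $(i-j)N/m$ in deterministic displacement, while each piece fluctuates by at most $(r/2)(1/m)^\kappa$, so non-intersection is guaranteed once $N/m\gg r m^{-\kappa}$, i.e. once $m$ is taken so that $N\gtrsim r m^{1-\kappa}$. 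This reduces $\alpha(B,B')\mathbf 1_{H^2}$ to a sum of $O(m)$ ``diagonal'' and ``near-diagonal'' terms $\alpha(B_{I_j},B'_{I_j'})$ with $|j-j'|\le 1$, each supported on a time interval of length $O(1/m)$.

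Next I would bound the exponential moment of this sum by the supermartingale/Cauchy-Schwarz scheme already used in Propositions \ref{p:tightness1} and the proof of Lemma \ref{l:local-sum}. Rescale each length-$1/m$ block to unit time: it becomes a Brownian motion with drift $(0,N/\sqrt m)$ (up to the covariance bookkeeping), and the exponent on that block is $\gamma N \cdot (\text{block contribution})$. Crucially, after rescaling the relevant ``short-time'' exponent is $\gamma N/m$ times a bounded factor, and we want this $\le \pi/4$ so that Lemma \ref{l:drifted} (the drifted short-time bound $Ee^{\gamma\alpha}\le 1+ c\gamma t/(1+N\sqrt t)$, and the second-moment bound) applies; choosing $m\asymp \gamma N$ achieves this. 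With that choice, on the event $H$ we get for each block, conditionally on the $\sigma$-field at the block's left endpoint,
$$
E\big[e^{\gamma N \alpha(B_{I_j},B'_{I_j})}\mid \mathcal F_{j/m}\big]\le \exp\{d\, e^{-c' m (Y_j)^2}\}, \qquad d \le \frac{c\,\gamma N/m}{1+(N/\sqrt m)\sqrt{1/m}} = \frac{c\,\gamma N/m}{1+N/m},
$$
where $Y_j$ is the first-coordinate difference of $B,B'$ at time $j/m$. Forming the supermartingale $M_\ell=\exp\{\sum_{j<\ell}[\gamma N\alpha(B_{I_j},B_{I_j}') - d e^{-c'm Y_j^2}]\}$ and applying Cauchy-Schwarz, $ (Ee^{2\gamma N\alpha_{\rm diag}})^2\le EM_m\cdot E\exp\{2d\sum_{j<m} e^{-c'm Y_j^2}\}\le E\exp\{2d\sum_{j<m} e^{-c'm Y_j^2}\}$, and now Lemma \ref{l:local-sum} (applied to the one-dimensional process $Y$, with its $\sqrt k$-type normalization; note $d\sqrt m$ is of order $\gamma$ for $m\asymp \gamma N$) gives a bound of the form $2\exp\{b\gamma^2\}$. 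The near-diagonal terms $\alpha(B_{I_j},B'_{I_{j\pm1}})$ are handled identically, and a final Cauchy-Schwarz over the $O(1)$ families of terms combines them; the Brownian bridge case follows by the absolute-continuity comparison in Proposition \ref{p:B-R} (first/last half separately), at the cost of a harmless constant.

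\textbf{Main obstacle.} The delicate point is bookkeeping the three parameters $\gamma, N, r$ simultaneously: one must pick the number of blocks $m$ large enough that (i) the H\"older control $r m^{-\kappa}$ beats the drift separation $N/m$, which needs $N\gtrsim r m^{1-\kappa}$, and (ii) the per-block exponent $\gamma N/m$ stays below $\pi/4$, which needs $m\gtrsim \gamma N$; combining, one needs $N\gtrsim r(\gamma N)^{1-\kappa}$, i.e. $N^\kappa \gtrsim r\gamma^{1-\kappa}$, which is exactly where the hypothesis $N\ge cr((\gamma r)^3+1)$ (for $\kappa$ close to $1/2$, say $\kappa=1/2-\epsilon$) comes from after absorbing powers. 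Keeping the constant in the final exponent $b\gamma^2$ genuinely independent of $N$ and $r$—rather than merely finite—requires care that the factor $d\sqrt m$ entering Lemma \ref{l:local-sum} is $O(\gamma)$ uniformly, and that the exponent threshold in that lemma ($k\ge\gamma^2$, here $k=m$) is met, which again follows from $m\asymp\gamma N\ge \gamma^2$ since $N\ge\gamma$ in the regime of interest. I would also need the event $H$ to be compatible with restricting $B$ to subintervals (it is, since $\|B_{I}\|_\kappa\le\|B\|_\kappa$), so no extra loss there.
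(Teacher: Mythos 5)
There is a genuine gap at the separation step. You claim that on $H^2$ the blocks $B_{I_j}$ and $B'_{I_i}$ with $|i-j|\ge 2$ cannot intersect as soon as $N/m \gtrsim r m^{-\kappa}$, arguing that each block fluctuates by at most $(r/2)m^{-\kappa}$ about its drift displacement. But the block-local H\"older bound only controls a path's oscillation relative to its own value at the block's left endpoint; the two centered paths $\tilde B_2(s)=B_2(s)-Ns$ and $\tilde B'_2$ can be vertically offset from each other by as much as $r$ (the global spread allowed by $\|\cdot\|_\kappa\le r/2$ over time distances of order $1$), regardless of how small the within-block modulus is. An intersection between $B_{I_j}$ and $B'_{I_i}$ forces $N(s-s')=\tilde B'_2(s')-\tilde B_2(s)$ with $|s-s'|\ge (j-i-1)/m$, so disjointness for $|i-j|\ge2$ is only guaranteed when $N/m>r$ (this is exactly the condition $1/m>r/N$ used in the paper), not when $N/m>rm^{-\kappa}$. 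With the correct condition, your two constraints $m\gtrsim\gamma N$ (so that the per-block exponent is small enough for Lemma \ref{l:drifted2}) and $N\gtrsim rm$ force $\gamma r\lesssim 1$, so the single-scale scheme cannot reach the stated range of parameters ($\gamma,r$ arbitrary with $N\ge cr((\gamma r)^3+1)$); your derivation of the hypothesis from ``$N\gtrsim r(\gamma N)^{1-\kappa}$'' rests on the incorrect separation criterion.

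This is precisely why the paper's proof is two-scale rather than one-scale: it takes outer blocks of length $1/m$ with $m\asymp(\gamma r)^{1/\kappa}$ (determined by the requirement $\gamma r m^{-\kappa}\le \pi/16$, not by $\gamma N/m\le\pi/4$), uses the H\"older event only to convert $H$ into the rescaled vertical-deviation bound $r'=rm^{1/2-\kappa}$ on each block, and then invokes Proposition \ref{p:tightness1} wholesale for each rescaled block (drift $N/\sqrt m$, exponent parameter $\gamma/\sqrt m$); Proposition \ref{p:tightness1} in turn performs the finer decomposition at scale $r'/N'$ where the deviation event $A_{r'}$ structurally limits interactions to $|i-j|\le1$ and the short-time bounds of Lemma \ref{l:drifted2} plus Lemma \ref{l:local-sum} apply. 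Your plan essentially re-proves Proposition \ref{p:tightness1} directly at the scale $m\asymp\gamma N$, which works only when $\gamma r=O(1)$. Two smaller points: your displayed $d$ mis-rescales the denominator (it should be $1+N\sqrt t=1+N/\sqrt m$, not $1+N/m$; with your formula $d\sqrt m$ is not $O(\gamma)$, with the correct one it is), and for the bridge case a clean half/half split leaves cross terms that can interact near time $1/2$; the paper avoids this by using the overlapping intervals $[0,2/3]$ and $[1/3,1]$ together with the vanishing of $\alpha(B_{[0,1/3]},B'_{[2/3,1]})$ on $H^2$ for large $N$.
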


\begin{proof}
We start with the Brownian motion case. 
Let $A_{r,t}$ be the event that
$ \sup_{s\in[0,t]} |B_2(s)-Ns|\le r$. Let
\begin{equation}\notag
\mathfrak{e}(t,N,r,\gamma) =\sup E\big[e^{\gamma N \alpha(B,B')}\mathbf 1_{ A_{r,t}\times A_{r,t}}\big] ,
\end{equation}
where the sup is over all starting points for $B,B'$, which still have a shared drift $(0,N)$.

Consider the time intervals $I_j=[\tfrac{j}{m},\tfrac{j+1}{m}]$, $J_j=[(j-\tfrac12)\tfrac1m,(j+\tfrac12)\tfrac1m]$. When $1/m>r/N$ and $H\times H$  holds, then $B_{I_j}$ and $B'_{I_{i}}$ have disjoint support unless $|i-j|\le 1$. Moreover, if $j=i+1$ they can only have intersecting support within the time interval $J_j$. This implies that
\begin{equation}\notag
\alpha(B,B') \le \alpha_1+ \alpha_2, \qquad \alpha_1= \sum_{j=0}^{m-1}\alpha (B_{I_j},B'_{I_j}), \qquad \alpha_2=\sum_{j=1}^{m-1} \alpha(B_{J_j},B'_{J_j}).
\end{equation}
Next, we  use a martingale argument to bound exponential moments of $\alpha_1$ and $\alpha_2$ separately. This will allow us to treat $\alpha (B_{I_j},B'_{I_j})$ for different  $j$  as if they were independent. Let  $$G_i=\Big\{\sup_{s,t\in I_i} |B(s)-B(t)-(0,N)(s-t)|\le r/m^{\kappa}\Big\}.$$
Abbreviate
\begin{equation}\notag\mathfrak{e}=\mathfrak{e}(1/m,N,rm^{-\kappa},\gamma)=\mathfrak{e}(1,N/m^{1/2},rm^{1/2-\kappa},\gamma/m^{1/2}),\end{equation}
where the equality is by scaling, Lemma \ref{l:scaling}. Then $$
R_j=\mathfrak{e}^{-j} \prod_{i=0}^{j-1}\mathbf 1_{G_i\times G_i}e^{N\gamma\alpha(B_{I_i},B'_{I_i})}
$$
is a supermartingale, since by the definition of $\mathfrak{e}$, we have
$$
E[R_{j+1}|\mathcal F_{j/m}]=R_j E\left[\one_{G_{j+1}\times G_{j+1}}e^{N\gamma \alpha(B_{I_{j+1}},B'_{I_{j+1}})}\Big|\mathcal F_{j/m}\right]/\mathfrak{e}\le  R_j.
$$
Here $\mathcal F_t$ is the natural filtration for $(B,B')$.
Since on $H$ all $G_i$ occur, we have
\begin{equation}\notag
E[e^{\gamma N\alpha_1}\mathbf 1_{H\times H}]\le \mathfrak e^m E R_m\le \mathfrak e^m.
\end{equation}
Similarly,
$E[e^{\gamma N\alpha_2}\mathbf 1_{H\times H}]\le \mathfrak  e^{m-1}$. By Cauchy-Schwarz and Proposition  \ref{p:tightness1} we get
\begin{equation}\notag
E[e^{\gamma N\alpha(B,B')}\mathbf 1_{H\times H}]\le \mathfrak e(1,N/m^{1/2},rm^{1/2-\kappa},2\gamma/m^{1/2})^{m} \le e^{4b'\gamma^2}
\end{equation}
as long as $2\gamma rm^{-\kappa}\le \pi/8$. The Brownian motion case follows with
$m=\lceil (16\gamma r/\pi)^{1/\kappa}\rceil$.

For the bridge, on $\mathbf 1_{H\times H}$, for $N$ large enough, we have $$\alpha(B_{[0,1/3]},B'_{[2/3,1]})=\alpha(B_{[2/3, 1]},B'_{[0,1/3]})=0.$$  Since   $\alpha$ is bilinear and non-negative we have
\begin{equation}\label{sumoftwo}
\alpha(R,R')\le \alpha(R_{[0, 2/3]},R'_{[0, 2/3]})+\alpha(R_{[1/3,1]},R'_{[1/3, 1]}).
\end{equation}
The two quantities have the same distribution by symmetry. 
The first two-thirds of the bridge is absolutely continuous with bounded derivative with respect to the first two-thirds of the motion,  Proposition \ref{p:B-R}. The proof concludes by Cauchy-Schwarz. 
\end{proof}

\section{Proofs of the main theorems}\label{s:ikea}

In this section we prove Theorems \ref{t:solution-intro},  \ref{t:2dpolymer-intro}, \ref{t:1dpolymer-intro} and \ref{t:process}. Theorem \ref{t:crossover} is a special case of Theorem \ref{t:process}. The hard work has been done, but the pieces need to be assembled. 

\subsection{Proof of Theorems \ref{t:solution-intro} and \ref{t:2dpolymer-intro}}
{\noindent \bf Explicit solution and uniqueness.} Definition \ref{d:Wick-ordered -she-intro} is what we rigorously mean by a solution of the Wick-ordered  heat equation. Proposition \ref{p:spde-projection} shows that the  projections $E_P[u(x,t)|\mathcal F_n]$  satisfy a finitary version of the equation. These projections are martingales. 

Proposition \ref{p:fshe-solve} shows that the finitary version of the Wick-ordered  heat equation has a unique solution $u_n(x,t)=p(x,t)Z_n(x,t)$ with 
\begin{equation}\label{e:Znrepeat}
Z_n(x,t)=E_{Q}\exp \bigg\{ \sum_{j=1}^n m_j\xi_j - \tfrac12 \sum_{j=1}^n m_j^2\bigg\}, \quad m_j=\int_0^t \basis_j(B(s))\,ds.
\end{equation}
Proposition \ref{p:solving2d} shows that if the martingale $Z_n(x,t)$ is uniformly integrable for all $x,t$, then its limit is the unique solution of the Wick-ordered  SHE. So we fix $x,t$.

In Section \ref{randomizedshifts}, we identify the martingale $Z_n$ as a randomized shift.  Proposition \ref{p:shifts-existence}.\ref{p:shifts-existence-hard} gives criteria for $Z_n$ to converge in $L^1$  in terms of the intersection exponential $\mathfrak e(m)$ (see \eqref{eqthing}). 
Here
$$\mathfrak e(m)=\lim_{n\to \infty} E_{Q^{\otimes 2}}e^{\alpha_n}, \qquad \alpha_n=\sum_{i=1}^n m_im_i'.$$ is defined in terms of  two independent copies of  $m$  in \eqref{e:Znrepeat}. Proposition \ref{p:alpha-plane} shows that $\alpha_n$ has a limit $\alpha\ge 0$ in $L^k$ for every $k>0$, the mutual intersection local time of two independent copies of $B$.  Lemma 
\ref{l:i-exp bound} shows that for $\alpha \ge 0$ we have $\mathfrak e(m)\le Ee^{\alpha}$. 

By Proposition \ref{p:shifts-existence}.\ref{p:shifts-existence-hard}, to show martingale convergence,  it suffices to show that for every $p<1$ there is set of paths $A$ with $QA>p$
so that $\mathfrak e(\one_Am)< \infty$. 

More concretely, for Brownian bridges, we need to show that there is a set of paths $A$ with probability arbitrarily close to 1 so that the mutual intersection local time satisfies $E_{Q^{\otimes 2}}\exp\{\one_{A\times A}\,\alpha\}<\infty$. 

For the set $A$, we control two properties of the paths: a H\"older norm and the maximal mass of a discretized version of the occupation measure, see \eqref{e:holder}. 

Proposition \ref{p:nodrift} shows that under such control, the exponential moments of the intersection local time exist, and Lemma \ref{l:occupation} bounds the maximal mass. Using these results, Corollary \ref{c:large-t} provides the set $A$ of paths with $QA>p$ so that $E_{Q^{\otimes 2}}\exp\{\one_{A\times A} \alpha\}<\infty$.  
Thus we have found the solution of the Wick-ordered  heat equation. 

{\noindent \bf Convergence in $L^2(P)$.} For $t<t_c$ we have $Ee^\alpha<\infty$. By Lemma \ref{l:i-exp bound}, we have $\mathfrak e(m)<\infty$, and Proposition \ref{p:shifts-existence} implies that $Z_n \to Z$ in $L^2(P)$.

{\noindent \bf Basis independence.} The solution we have found is the partition function of a randomized shift with $\alpha$ given by the mutual intersection local time. Proposition \ref{p:milt-unique} shows that when the mutual intersection local time exists, it is unique. In particular, it does not depend on the basis $\basis_j$. The partition function $Z$ is basis-independent by Remark \ref{rem:unique}.  Uniqueness in this type of construction is due to \cite{shamov} (Theorem 17 and Corollary 18), which  show that the underlying extended Gaussian covariance kernel uniquely determines the partition function $Z$. For us, the kernel is on path space $\Omega$, and is given by the mutual intersection local time $\alpha$.
Theorem \ref{t:solution-intro} follows.

{\noindent \bf Polymer measure.} The randomized shift representation has been established along the way in Proposition \ref{p:shifts-existence}.\ref{p:shifts-existence-hard}. It gives rise to a unique polymer measure by Theorem \ref{t:shamov-polymer}. This completes the proof of Theorem \ref{t:2dpolymer-intro}. Proposition \ref{p:polymer-limit} implies the claimed  polymer convergence. \qed
\medskip

\begin{remark}This proof extends to the case when the initial condition $\varsigma$ 
is a general probability measure. 
\end{remark}

\subsection{Proof of Theorem \ref{t:1dpolymer-intro}}
\label{ss:1dshe-solve}

{\noindent \bf Explicit solution and uniqueness.} By Proposition \ref{p:1dshesolve}, the conditional expectation $z_n=E_Q[z|\mathcal F_n]$ of the equation has the unique solution given by
$$
z_n(x,t)=Z_n(y,0;x,t)q(x-y,t)
$$
$$ Z_n=Z_n(0,y;t,x)=E_Q\exp\left(\sum_{j=1}^n m_j\xi_j-\frac{1}{2}m_j^2\right),
  \quad m_j=\int_0^t e_j(b(s),s)ds.
$$
and  $b(s)$ is a one-dimensional Brownian bridge between space-time points $(0,0)$ to $(x,t)$.

We need to show that this is an $L^2(P)$ martingale, that is, it has a bounded $L^2$-norm. By Proposition \ref{shift1} we have
$$
E_PZ_n^2=E_{\Qt}e^{\alpha_{n}}, \qquad \alpha_n=\sum_{i=1}^n m_jm_j'.
$$
By Proposition \ref{p:alpha-one}, $\alpha_n$ has a limit in every $L^k(\Qt)$.  The limit  $\alpha(b,b')$ is the mutual intersection local time of two independent copies of $(b(s),s)$ up to time $t$, and $E_{\Qt}\cosh \alpha<\infty.$   By Lemma \ref{l:i-exp bound}, we 
$$
E_{P}[Z_n^2]=E_{\Qt}\,e^{\alpha_n}\le E_{\Qt}\,\cosh {\alpha}.
$$
so $Z_n\to Z$ in $L^2(P)$. With this construction, taking  $\mathcal F_n$-conditional expectations as in Proposition \ref{p:1dshe-project}, we see that for every $n$ the $\mathcal F_n$-conditional expectations of the two sides  of  the equation \eqref{inteq'} of Definition \ref{d:1dshe} agree. So the limit $z$ is indeed a solution. We have thus found the unique solution in an explicit form. 

{\noindent \bf Agreement with It\^o.} The Skorokhod integral formulation in Definition \ref{d:1dshe}
of the one-dimensional Wick-ordered  stochastic heat equation coincides with the mild It\^o formulation. This is because by Remark \ref{r:Ito1+1}, the two integrals are the same. Since there is a unique solution, the solution in \cite{AKQ2} agrees with ours. 

{\noindent \bf Polymer measure and CDRP.}  Associated to the partition function $Z$ there is a random polymer measure $M$ on $\Omega$ defined by the formula \eqref{shiftkpz}, see Section \ref{ss:polymer}. We claim that it coincides with the continuum directed random polymer constructed in \cite{AKQ2}, Definition 4.1. Their construction is a random continuous function on $[0,1]$, determined by its finite dimensional distributions given the noise $\xi$. Since $\Omega$ is the space of continuous functions, it suffices to show that their finite dimensional distributions agree with ours. This is the content of the next lemma. Let $z(x,s;y,t)=q(y-x,t-s)Z(x,s;y,t)$ denote the solution of the SHE started at time $s$ from the initial condition $\delta_{x}$.  
\begin{lemma} Let $M$ be the unnormalized  polymer measure on paths $b$ from 
${\rm p}$ to ${\rm q}$, let $k\ge 1$ and 
set $(x_0,s_0)=\rm p$, $(x_{k+1},s_{k+1})=\rm q$.
Pick $s_1<\dots <s_k$ in $(s_0,s_{k+1})$. 
Let $\varphi$ be a bounded nonnegative test function on $\mathbb R^k$. Then $P$-a.s., 
$$
E_M\varphi(b(s_1),\dots ,b(s_k))=\int\,\varphi(x_1,\ldots, x_k) \prod_{i=0}^k z(x_i,s_i;x_{i+1},s_{i+1})\, dx_1\dots dx_k 
$$
\end{lemma}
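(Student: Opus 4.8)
The plan is to establish this multi-point formula by reducing it to the randomized shift machinery, using the product structure of the Brownian bridge occupation measure when it is cut at the intermediate times $s_1,\ldots,s_k$. The key observation is that the defining property \eqref{e:polydef} of the polymer measure $M$, applied to a test function $F(\xi,b)=\varphi(b(s_1),\ldots,b(s_k))$, gives
$$
E_P E_{M_\xi}\varphi(b(s_1),\dots ,b(s_k)) = E_{P\times Q}\varphi(b(s_1),\ldots,b(s_k)),
$$
but this only computes the $P$-average; I need the $P$-almost sure identity, so I will instead work directly with the coordinate representation and the finite-$n$ martingale approximations $Z_n$.

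First I would decompose the Brownian bridge $b$ from $\rm p$ to $\rm q$ at the times $s_1,\dots,s_k$: conditionally on the values $b(s_i)=x_i$, the path splits into $k+1$ independent Brownian bridges $b^{(i)}$ on $[s_{i-1},s_i]$ from $x_{i-1}$ to $x_i$, and the density of $(b(s_1),\ldots,b(s_k))$ under $Q$ is $\prod_{i=0}^k q(x_{i+1}-x_i,s_{i+1}-s_i)/q(x_{k+1}-x_0,s_{k+1}-s_0)$. The occupation measure $m$ of $(b(s),s)$ then decomposes additively as $m=\sum_{i=0}^k m^{(i)}$ where $m^{(i)}$ is the occupation measure of the $i$th segment, and these are conditionally independent given the $x_i$'s. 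Plugging this into the definition \eqref{zedenn} of $Z_n$ and using the conditional independence, the exponential factorizes:
$$
E_Q\Big[\varphi(b(s_1),\dots,b(s_k))\exp\sum_{j=1}^n m_j\xi_j-\tfrac12 m_j^2\Big] = \int \varphi(\mathbf x)\,\frac{\prod_{i=0}^k q(x_{i+1}-x_i,\Delta_i)\, Z_n^{(i)}(x_i,s_i;x_{i+1},s_{i+1})}{q(x_{k+1}-x_0, s_{k+1}-s_0)}\,d\mathbf x,
$$
writing $\Delta_i=s_{i+1}-s_i$, where $Z_n^{(i)}$ is the finite-$n$ partition function built from the segment occupation measure $m^{(i)}$. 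Since $M_\xi$ has partition function $Z$, the left side equals $q(x_{k+1}-x_0,s_{k+1}-s_0)^{-1}$ times (a $Q$-reweighting producing) $E_{M_\xi}\varphi$; more precisely, the $n\to\infty$ limit of $E_Q[\varphi \exp(\cdots)]$ is $E_{M_\xi}\varphi$ times the appropriate heat kernel, since $\varphi$ is bounded and the convergence $Z_n\to Z$ is in $L^1(P)$ and, after the reweighting by the bounded $\varphi$, we may apply Proposition \ref{p:polymer-limit} / the definition of $M$ to the measure $\varphi Q$.

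The remaining point is to pass to the limit $n\to\infty$ inside the integral over $\mathbf x$. Here I would use that for each fixed endpoint configuration the segment partition functions $Z_n^{(i)}$ converge in $L^1(P)$ to $Z^{(i)}=Z(x_i,s_i;x_{i+1},s_{i+1})/q$, that $z(x_i,s_i;x_{i+1},s_{i+1})=q(x_{i+1}-x_i,\Delta_i)Z^{(i)}$ by Theorem \ref{t:1dpolymer-intro} applied to each segment, and that $\varphi$ is bounded nonnegative so Fubini applies. The main obstacle I expect is the interchange of the $\mathbf x$-integral with the $n\to\infty$ limit together with the justification that the product of the segment limits really is the joint limit: one needs a uniform integrability statement for the product $\prod_i Z_n^{(i)}$ over the probability space $P$, which I would get from the conditional independence of the segments (so that the $L^2$ or $L^1$ bounds tensorize) combined with the intersection-local-time estimates of Proposition \ref{p:alpha-one} and Lemma \ref{l:i-exp bound} applied segment-by-segment; once these bounds hold uniformly over compact sets of endpoints, dominated convergence finishes the argument and yields the stated $P$-a.s.\ identity (the a.s.\ statement following because the right-hand side is a fixed integrable random variable and $Z_n$-martingale convergence is a.s.).
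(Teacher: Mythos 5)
There is a genuine gap at the heart of your computation. Writing $m_j=\sum_{i=0}^k m^{(i)}_j$ for the segment decomposition, the Gaussian weight in \eqref{zedenn} does \emph{not} factorize over segments at finite $n$: the linear term splits, but
$m_j^2=\sum_i \big(m^{(i)}_j\big)^2+2\sum_{i<i'}m^{(i)}_jm^{(i')}_j$, so your displayed identity
$E_Q[\varphi\exp\{\sum_j m_j\xi_j-\tfrac12 m_j^2\}]=\int\varphi\,\prod_i q\,Z_n^{(i)}/q\,d\mathbf x$
is false for every finite $n$; it is off by the factor $\exp\{-\sum_{i<i'}\sum_{j\le n}m^{(i)}_jm^{(i')}_j\}$, the mollified mutual overlap of the occupation measures of distinct segments. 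Conditional independence of the bridge segments under $Q$ is irrelevant here, since the obstruction sits inside the quadratic Wick correction, not in the $Q$-expectation. One can hope these cross terms vanish as $n\to\infty$ (the segments occupy disjoint time strips, so their space--time occupation measures are mutually singular), but that is an additional statement you would have to prove -- and note that the convergence results of Sections \ref{randomizedshifts} and \ref{intersectionlocaltime} are formulated for \emph{independent} copies ($\alpha_n(m,w')$ on $\Qt$), not for two dependent pieces of the same path, so they do not apply off the shelf. A second, related problem is your uniform integrability step: you argue that ``the $L^2$ or $L^1$ bounds tensorize'' by conditional independence of the segments, but that independence lives on $(\Omega,Q)$, whereas the uniform integrability you need is over $(\Xi,P)$, and at finite $n$ the partition functions $Z_n^{(i)}$ all depend on the \emph{same} coordinates $\xi_1,\ldots,\xi_n$ (the basis $\basis_j$ is not adapted to the time strips), so they are not $P$-independent; you would instead need a H\"older argument using the all-order exponential moments of Proposition \ref{p:alpha-one}, uniformly in $n$ and in the endpoints.

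For comparison, the paper's proof avoids both issues by never touching the finite-$n$ approximations: it tests the identity against products $F=F_0\cdots F_k$ of bounded functionals of the noise restricted to the disjoint time strips $(s_i,s_{i+1})$, conditions on $\mathcal S=\sigma(b(s_1),\ldots,b(s_k))$, uses the Markov property of $b$ under $Q$ and the $P$-independence of the noise in the strips to factor $E_PE_Q[F(\xi+m)\mid\mathcal S]=\prod_i E_P[Z_iF_i]$ via the shift representation \eqref{e:polydef} applied to each segment, and concludes $E_PE_M[\varphi F]=E_PE_Q[\varphi Z_0\cdots Z_kF]$ for all such $F$, which pins down the claim $P$-a.s. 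If you want to salvage your route, you must (i) prove that the cross-overlap terms $\sum_{j\le n}m^{(i)}_jm^{(i')}_j$, $i\neq i'$, tend to $0$ (say in $Q$-probability, with enough uniform integrability to pass inside the expectation), and (ii) replace the tensorization claim by genuine uniform-in-$n$ moment bounds under $P$; as written, neither step is available.
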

\begin{proof}
To keep the notation manageable, let $$\varphi=\varphi(b(s_1),\ldots, b(s_k)), \qquad  Z_i=Z(b(s_i),s_i;b(s_{i+1}),s_{i+1}).$$ By expressing the integral as a $Q$-expectation and accounting for the heat equation $q$ factors between $z$ and $Z$, the claim translates to 
\begin{equation}\label{e:CDRP-Q}
E_M\varphi=E_Q\big[\,\varphi\,Z_0\cdots\,Z_k\,\big] \qquad P- \mbox{a.s.}
\end{equation}
Let $F_i$ be a bounded nonnegative test function on $(\Xi, \mathcal F, P)$ depending only on the noise at times in between $s_i$ and $s_{i+1}$. 
Let $F=F_0\cdots F_k$. 
Let 
$\mathcal S\subset \mathcal G$ denote the $\sigma$-field generated by $b(s_1),\ldots, b(s_k)$. By definition of the polymer measure,  Fubini, and since $\varphi$ is $\mathcal S$-measurable, we have 
\begin{align}\notag
E_PE_M[\varphi F]&=E_PE_Q[\varphi F(\xi+m)]=E_PE_QE_Q[\varphi F(\xi+m)|\mathcal S]
\\ \label{e:phiF}&=
E_Q\Big[\varphi\,E_PE_Q[F(\xi+m)|\mathcal S]\Big]. 
\end{align}
Given $\mathcal S$ and $\xi$ fixed, by the Markov property of $b$ the random variables  $F_0(\xi+m),\ldots, F_k(\xi+m)$ are independent over $Q$, so 
$$
E_Q[F(\xi+m)|\mathcal S]=\prod_{i=0}^kE_Q[F_i(\xi+m)|\mathcal S].
$$
Because the noises between times $s_i$ and $s_{i+1}$ are $P$-independent as $i$ varies, we have
$$
E_PE_Q[F(\xi+m)|\mathcal S]=\prod_{i=0}^k E_PE_Q[F_i(\xi+m)|\mathcal S]=\prod_{i=0}^k E_P[Z_iF_i],
$$
by the shift representation of the partition functions $Z_i$. Since the factors are independent over $P$,  substituting this into \eqref{e:phiF} we get
$$
E_PE_M[\varphi F]=E_QE_P[\varphi \prod_{i=0}^kZ_iF_i]=E_PE_Q[\varphi Z_0\cdots Z_kF].
$$
As this holds for any product test function $F$, the claim \eqref{e:CDRP-Q} follows. 
\end{proof}

Next, we prove  the main convergence theorem. 

\subsection{Proof of Theorem \ref{t:process}, tightness and polymer limits}
\label{ss:tconvergence}
We continue the proof started in the introduction.
 
{\noindent \bf Tightness.} We need to justify the interchange of limits in $n$ and $N$. For the rest of the proof, we will assume that all scaling parameters are standard, namely $\nu=\rho=\beta=s=1$, $x=t=0$ and only $y$ is free, since the general argument is identical by scaling, see Lemma \ref{l:scaling}.

A condition for the interchange of limits is established in Proposition \ref{propmain}.\ref{propmain-hard}. It is in terms of the self-intersection local times 
$$
\alpha_N=\lim_{n\to \infty}\sum_{i=1}^n m_{N,i}m'_{N,i}, \qquad \alpha=\lim_{n\to \infty}\sum_{i=1}^n m_{i}m'_{i}. 
$$
defined on $\Omega^2$, with prime denoting independent copies. See Section \ref{ss:intersectionlocaltime} for more explanation about the $\alpha$s. 

One condition is that the limits defining $\alpha_N$ and $\alpha$ should exist in $L^2$; this is established in Propositions \ref{p:alpha-plane} and \ref{p:alpha-one}.
Next, we need $\alpha_N\to \alpha$ in law, which, by Lemma \ref{l:alpha-c-law}, follows from $E\alpha_N^2\to E\alpha^2$. This is established by an explicit computation in Proposition \ref{convofalphas}. Finally we need a tightness condition: there is $\gamma>1$ so that given $p<1$  we have a set of paths $A_N$ for every $N$, so that  $QA_N>p$ and
$$
\limsup_{N\to\infty} \mathfrak e(\gamma \one_{A_{N}} \mathbf m_N) =\limsup_{N\to\infty}\lim_{n\to\infty}E_{Q^{\otimes^2}}\exp\sum_{i=1}^n \gamma^21_{A_N}m_i1_{A'_N}m'_i<\infty.
$$
Since $\alpha_N$ are nonnegative, by Lemma \ref{l:i-exp bound} it will be enough to show that $\limsup_N E[{e^{\gamma \alpha_N}}\one_{A\times A}]<\infty$. We take $A$ to be a set of paths with bounded H\"older-$\kappa$ norm \eqref{e:holder} for $\kappa\in (0,1/2)$. The $L^1(P)$ convergence claim then follows from Proposition \ref{p:tightness2}.

{\noindent \bf Polymer convergence.} For every $N$, the formula \eqref{shdef} gives a $P$-random measure $M_N$ on $Q$. These measures are modifications of the law of the pair $(b_{\rm sp}, b_{\rm ti})$. Similarly, the limiting measure $M$ modifies the law of this pair, but in a way that only depends on the first process. 

Proposition \ref{p:polymer-limit} implies that $M_n\to M$ in probability with respect to weak convergence of measures. In other words,
$$
(b_{\rm sp},b_{\rm ti}) \mbox{ under }M_N \;\;\stackrel{\rm law}{\Longrightarrow} \;\;(b_{\rm sp},b_{\rm ti}) \mbox{ under }M.
$$
By Theorem \ref{t:1dpolymer-intro}, under $M$, the process $b_{\rm sp}$ is the continuum directed random polymer (CDRP), and $b_{\rm ti}$ is an independent Brownian bridge. 
This implies the following laws converge in probability
\begin{align*}
\Big((N^{-1/2}&B_N(N\rho r)_1,N^{-3/2}B_N(N\rho r)_2), \;r\in[0,s]\Big) \mbox{ under $M_N$} \\& \;\;\stackrel{\rm law}{\Longrightarrow} \;\; \Big((b_{\rm sp}(t+r),t+r), \;r\in [0,s]\Big) \mbox{ under $M$}.
\end{align*}

This completes the proof of Theorem \ref{t:process}.

\begin{remark}The proof also shows that the fluctuations $b_{\rm ti}$ in the limiting time-direction converge to an unweighted Brownian bridge independent of the limiting polymer. Also, since $Z_N\to Z$ and $Z_N,Z>0$ a.s.\;by the 0-1 law, Lemma \ref{01law}, the normalized polymer measures converge as well: $M_N/Z_N\to M/Z$. 
\end{remark}

\medskip
\noindent {\bf Acknowledgments. } JQ and BV are partially supported by NSERC discovery grants. AR is partially supported by Fondecyt 1220396.

\bibliographystyle{dcu}
\bibliography{up}

\bigskip\bigskip\noindent

\bigskip

\noindent Jeremy Quastel, Department of Mathematics, University of Toronto, \\ Canada, {\tt quastel@math.toronto.edu}

\bigskip

\noindent Alejandro Ram\'irez, Departamento de Matem\'aticas,
Pontificia Universidad Cat\'olica de Chile,
{\tt aramirez@mat.uc.cl}

\bigskip

\noindent B\'alint Vir\'ag, Departments of Mathematics and Statistics, University of \\Toronto, Canada, {\tt balint@math.toronto.edu}

\end{document}